\documentclass[11pt,letterpaper]{amsart} 
\usepackage{amsmath,amssymb,amsthm} 
\usepackage[all]{xy}
\usepackage[OT2,T1]{fontenc}
\usepackage[utf8]{inputenc}
\usepackage{enumitem}
\usepackage{graphicx}
\usepackage{hyperref}
\usepackage{tikz}
\usetikzlibrary{cd}
\usepackage{tikz-cd}
\usepackage{xcolor}
\usepackage{ stmaryrd }
\usepackage{mathrsfs}
\usepackage{mathtools}
\usepackage[english]{babel}
\usepackage{setspace}
\usepackage[normalem]{ulem}
\usepackage{mathdots}
\usepackage{microtype}

\usepackage{arydshln}

\usetikzlibrary{arrows,positioning,decorations.pathmorphing,
	decorations.markings
}
\tikzset{inner sep=0pt,
	root/.style={circle,draw,minimum size=7pt,thick},
	fatroot/.style={circle,draw,minimum size=10pt,thick},
	short root/.style={circle,fill,minimum size=7pt},
	doublearrow/.style={postaction={decorate},
		decoration={markings,mark=at position .7
			with {\arrow{angle 60}}},double distance=3pt,thick}
}

\DeclareUnicodeCharacter{00A0}{ }
\newtheorem{introthm}{Theorem}

\newtheorem{introconjecture}[introthm]{Conjecture}

\newtheorem{proposition}{Proposition}[section]
\newtheorem{definition}[proposition]{Definition}

\newtheorem{theorem}[proposition]{Theorem}
\newtheorem{lemma}[proposition]{Lemma}
\newtheorem{corollary}[proposition]{Corollary}

\numberwithin{equation}{section}

\DeclareMathOperator{\GL}{GL}
\DeclareMathOperator{\PGL}{PGL}

\DeclareMathOperator{\Lie}{Lie}

\DeclareMathOperator{\Hom}{Hom}

\DeclareMathOperator{\Gal}{Gal}

\DeclareMathOperator{\End}{End}

\DeclareMathOperator{\Iw}{Iw}

\DeclareMathOperator{\Spec}{Spec}

\DeclareMathOperator{\CNL}{CNL}

\DeclareMathOperator{\Art}{Art}
\DeclareMathOperator{\val}{val}

\newcommand{\cO}{\mathcal{O}}

\newcommand{\Frob}{\operatorname{Frob}}

\DeclareSymbolFont{cyrletters}{OT2}{wncyr}{m}{n}
\DeclareMathSymbol{\Sha}{\mathalpha}{cyrletters}{"58}

\DeclareMathOperator{\Res}{Res}

\DeclareMathOperator{\disc}{disc}
\DeclareMathOperator{\ord}{ord}

\title{Towards the Fontaine--Mazur conjecture for $\GL_2$}

\author{Jack A. Thorne}

\begin{document}

\maketitle

\begin{abstract}
    We combine a new type of modularity result with the geometry of numbers in order to prove some new cases of the Fontaine--Mazur conjecture for $\GL_{2}$.
\end{abstract}

\tableofcontents

\section{Introduction}

The main goal of this paper is to prove some new cases of the following well-known conjecture, a special case of conjectures stated in \cite{Fon95}:
\begin{introconjecture}\label{introconj_FM_general}
    Let $p$ be a prime, let $F$ be a totally real number field, and let $\rho : G_F \to \GL_2(\overline{\mathbb{Q}}_p)$ be a continuous, irreducible representation satisfying the following conditions:
     \begin{enumerate}
        \item $\rho$ is unramified at all but finitely many places of $F$. 
        \item For each $p$-adic place of $F$, $\rho|_{G_{F_v}}$ is de Rham, of distinct Hodge--Tate weights. 
        \item $\det \rho$ is totally odd. 
    \end{enumerate}
    Then $\rho$ arises from geometry, in the sense that there is an algebraic variety $X$ over $F$ such that $\rho$ is isomorphic to a subquotient of a  Tate twist of the \'etale cohomology of $X$ (with $\overline{\mathbb{Q}}_p$-coefficients). 
\end{introconjecture}
(See \S \ref{subsec_notation} below for any undefined notation.) Many cases of this conjecture are known, proved by combining modularity lifting theorems with the technique of potential modularity introduced in \cite{Tay02}. However, these results generally only apply in the case where the (semisimple) residual representation $\overline{\rho} : G_F \to \GL_2(\overline{\mathbb{F}}_p)$ is sufficiently non-degenerate (e.g.\ irreducible -- see \cite{Bar14} for general theorems of this type). This is due to the difficulty of proving modularity lifting theorems without this hypothesis. The paper \cite{Ski99} proves modularity theorems even in the residually reducible case, but gives generally applicable results only in the case $F = \mathbb{Q}$, via a method that has so far resisted generalisation to the case of a general totally real field. 

In this paper, we give a new approach to Conjecture \ref{introconj_FM_general} that applies equally well in the case where the residual representation $\overline{\rho}$ is reducible. Our main result is as follows. 
\begin{introthm}\label{introthm_potmod}
    Let $F$ be a totally real number field, let $p$ be a prime, and let $\rho : G_F \to \GL_2(\overline{\mathbb{Q}}_p)$ be a continuous, irreducible representation satisfying the following conditions:
    \begin{enumerate}
        \item $\rho$ is unramified at all but finitely many places. 
        \item For each place $v | p $ of $F$, $\rho|_{G_{F_v}}$ is potentially crystalline and ordinary of Hodge--Tate weights $\{0, 1 \}$. 
        \item $\det \rho$ is totally odd. 
    \end{enumerate}
     Then $\rho$ is potentially modular, in the sense that there is a finite totally real extension $F' / F$, an isomorphism $\iota : \overline{\mathbb{Q}}_p \to \mathbb{C}$, and a cuspidal, regular algebraic automorphic representation $\pi$ of $\GL_2(\mathbb{A}_{F'})$ such that $\rho|_{G_{F'}} \cong r_{\pi, \iota}$. 
\end{introthm}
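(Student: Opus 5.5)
The plan is to reduce to a situation where existing modularity lifting theorems apply, using potential modularity. The new difficulty is that $\overline{\rho}$ may be reducible, where standard Taylor--Wiles methods fail. The abstract suggests the strategy: a new kind of modularity result combined with geometry of numbers. So I will use abelian varieties (specifically Jacobians of curves in a family with a large monodromy group) to realise a residual representation congruent to ours modulo $p$ at one prime and to a residually irreducible, already modular representation modulo another prime $q$.

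\textbf{Step 1 (normalisation).} After a finite totally real base change and a twist by a finite-order character, I will arrange the following. The representation $\rho$ is unramified outside $p$ and a finite set $S$. It is crystalline ordinary with Hodge--Tate weights $\{0,1\}$ at each $v|p$. Its determinant is $\varepsilon^{-1}$. It takes values in $\GL_2(\mathcal{O})$ for a finite extension $E/\mathbb{Q}_p$. Then $\rho$ should "look like" the $p$-adic Tate module of a $\GL_2$-type abelian variety. It also preserves a perfect symplectic pairing on $\mathcal{O}^2$, since $\mathrm{SL}_2=\mathrm{Sp}_2$. The semisimplification of $\overline{\rho}$ may be reducible, say $\chi_1\oplus\chi_2$.

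\textbf{Step 2 (geometry of numbers / moduli).} Fix a family of curves, e.g.\ the universal family of odd hyperelliptic curves $y^2=f(x)$ or a suitable family with real multiplication whose Jacobians carry $\mathcal{O}$-action. I will look for a point of the family over a totally real field $F'$ (local conditions at archimedean places) such that the Jacobian $A$ satisfies two conditions. First, $A[p^N]$ relates to $\rho \bmod p^N$ (for large $N$, or at least residually with the right ordinary structure). Second, $A[q]$ is an induced or otherwise residually irreducible representation known to be modular, such as one coming from a CM or dihedral source, with large image. Finding such points means finding rational points on a twisted moduli space with prescribed local behaviour. This is where geometry of numbers enters. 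Rather than Moret-Bailly, which needs rationality of the moduli space over local fields and works fine, the reducible case likely needs control of the integral structure: one must count integral orbits (Bhargava-style) to find a curve whose $p$-adic Tate module is congruent to $\rho$ to high order, not merely residually. This is because residually-reducible lifting needs more than $\overline{\rho}$.

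\textbf{Step 3 (modularity).} At $q$, apply the residually irreducible ordinary modularity lifting theorem (as in \cite{Bar14}) to conclude $A$ is modular over $F'$. Then $T_pA$ is modular, hence $\rho_A \bmod p^N$ is modular. Now apply the "new type of modularity result": an ordinary $R=\mathbb{T}$ or Hida-family statement in the residually reducible case. It will say that if $\rho$ is ordinary with Hodge--Tate weights $\{0,1\}$ and sufficiently congruent, modulo $p^N$ with $N$ depending only on local data, to a modular ordinary representation, then $\rho$ is modular. I would try to prove this via Hida families: the ordinary Hecke algebra is finite over $\Lambda$, and a high-order congruence forces $\rho$ into the same component once $N$ exceeds a bound determined by the ramification of the deformation ring. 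Irreducibility of $\rho$ is used to avoid Eisenstein components.

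\textbf{Main obstacle.} I expect the hardest step to be Step 2 combined with Step 3. One must produce the auxiliary abelian variety with a congruence modulo a high power $p^N$, where $N$ is bounded effectively by the modularity theorem, and with totally real field of definition. The moduli space of level-$p^N$ structure is not rational, so Moret-Bailly alone gives only residual congruences. I would first try to use counting of integral points in orbits (equidistribution of $p$-adic Tate modules as $N$ grows) to get the needed high-order congruence.
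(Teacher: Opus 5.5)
Your overall architecture matches the paper: normalise, build a modular HBAV over a totally real $F'$ congruent to $\rho$ to high order, then apply a ``modularity by close approximation'' theorem. But the step that carries all the weight is missing.

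\textbf{Step 3.} The exponent $N$ must be fixed \emph{before} $F'$ is produced in Step 2, so it must be insensitive to the base change $F'/F$. Your Hida-theoretic sketch does not give this. Both $\rho$ and $\rho'$ have parallel weight $2$, so Hida theory does not separate them. Forcing $\rho$ onto the component of $\rho'$ needs a Hensel-type statement, as in Theorem~\ref{thm_application_of_Hensel}. Its inputs are that the cotangent space at $\rho'$ is free of the expected rank up to $\varpi^C$-torsion, a lower bound on the local dimension, and the fact that the component through $\rho'$ is modular.

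On the unpatched ordinary deformation ring, the torsion in $\mathfrak{p}/\mathfrak{p}^2$, which is your ``ramification'', is an adjoint Bloch--Kato Selmer group (congruence module) of $\mathrm{ad}^0\rho$ over $F'$. That is a global invariant, not local data, and it is not controlled under base change. So your bound on $N$ would depend on $F'$ and the argument is circular. Finiteness over $\Lambda$ also gives nothing like $R=\mathbb{T}$ at $\rho'$ in the residually reducible case.

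The paper's key idea is to patch with Taylor--Wiles primes chosen relative to $\rho$ itself rather than $\overline{\rho}$. It then uses the ``$c$-free'' bookkeeping of \S\ref{subsec_finite_abelian_groups} to show the patched cotangent spaces are generated by $q$ elements up to $\varpi^{40C(\rho)}$ (cf.\ Proposition~\ref{prop_control_on_augmented_tangent_space}). Here $C(\rho)$ depends on $(\rho\times\epsilon)(G_F)$ as well as the local representations at $\Sigma\cup S_p$, so not only on local data as you claim. The rest of the argument runs as follows:
\begin{enumerate}
\item The patched module gives $\dim R_{\infty,\mathfrak{p}'_\infty}\ge q$.
\item Theorem~\ref{thm_application_of_Hensel} then puts $\mathfrak{p}_\infty$ on the regular component through $\mathfrak{p}'_\infty$.
\item Auslander--Buchsbaum identifies $R_\infty$ with $\mathbb{T}_\infty$ there.
\item A $q$-expansion multiplicity-one argument descends from patched level to minimal level.
\end{enumerate}
Correspondingly, $F'$ must be split at $\Sigma\cup S_p$ and linearly disjoint from the field cut out by $\rho\times\epsilon$, so that $C(\rho|_{G_{F'}})=C(\rho)$. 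You also need Zariski-dense projective image of $\rho|_{G_{F(\zeta_{p^\infty})}}$, not just irreducibility; the dihedral case is handled separately by automorphic induction.

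\textbf{Step 2.} This step rests on a misconception. Moret-Bailly's theorem needs neither rationality nor residual level. The moduli space of polarised $M$-HBAVs with level-$\mathfrak{p}^n$ structure compatible with Weil pairings is smooth and geometrically connected for every $n$. So Theorem~\ref{thm_moret-bailly} applies directly and yields $F'$ split at $S$ and disjoint from any given extension.

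What must be supplied are \emph{local points}:
\begin{enumerate}
\item at $v\mid p$, an HBAV over $F_v$ with good ordinary reduction and $X[\mathfrak{p}^n]^\vee\cong\rho|_{G_{F_v}}\otimes\cO/(\varpi^n)$;
\item at $v\in\Sigma$, an HBAV with toric reduction realising $\rho|_{G_{F_v}}\bmod\varpi^n$, via Tate uniformisation.
\end{enumerate}
These ensure $\rho'$ has the Steinberg ramification and residual unit roots that the modularity theorem demands; a merely residual congruence ``with the right ordinary structure'' is not enough.

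The $p$-adic local problem reduces to finding an ordinary $q_v$-Weil number, in a CM field with suitable splitting, congruent to the unit root $\alpha_v$ modulo $\varpi^n$. One then does a Serre--Tate lift with a prescribed class in $H^1_f$. This is exactly where the geometry of numbers enters: counting lattice points of a coset of $p^nM\mathbb{Z}^g$ in the dilated region of Weil polynomials (Lemma~\ref{lem_existence_of_Weil_polynomials}). Global orbit counting and equidistribution of Tate modules are not needed. Your proposal leaves this local construction, which is the actual obstruction, unaddressed.
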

We recall that a potentially modular Galois representation necessarily lives in a compatible system, even over the base field \cite[Theorem 1.1]{Die04}. In the context of Theorem \ref{introthm_potmod}, $\rho$ will provably arise from geometry provided e.g.\ that there is a finite place of $F$ such that $\mathrm{WD}(\rho|_{G_{F_v}})$ is not potentially unramified. (This ensures the existence of a Jacquet--Langlands transfer of the automorphic representation potentially associated to $\rho$ to a Shimura curve, in which case one can appeal to \cite{Car86}.) 

Theorem \ref{introthm_potmod} applies in particular when the semisimple residual representation $\overline{\rho}$ is reducible. (One can generate examples of representations $\rho$ of this type using \cite[Theorem 5.2]{Fak22}.) As such, it has applications to the following conjecture in the case $F = \mathbb{Q}$, also extracted from \cite{Fon95}: 
\begin{introconjecture}\label{introconj_FM}
    Let $p$ be a prime, and let $\rho : G_\mathbb{Q} \to \GL_2(\overline{\mathbb{Q}}_p)$ be a continuous, irreducible representation satisfying the following conditions:
    \begin{enumerate}
        \item $\rho$ is unramified at all but finitely many primes.
        \item $\rho|_{G_{\mathbb{Q}_p}}$ is de Rham of distinct Hodge--Tate weights.
        \item $\det \rho$ is odd. 
    \end{enumerate}
    Then $\rho$ is modular, in the sense that there is a cuspidal, regular algebraic automorphic representation $\pi$ of $\GL_2(\mathbb{A}_\mathbb{Q})$ and an isomorphism $\iota : \overline{\mathbb{Q}}_p \to \mathbb{C}$ such that $\rho \cong r_{\pi, \iota}$. 
\end{introconjecture}
At this point in time, the vast majority of cases of this conjecture are known, thanks to the combination of (1) Serre's conjecture, asserting the modularity of the residual representation $\overline{\rho} : G_\mathbb{Q} \to \GL_2(\overline{\mathbb{F}}_p)$ \cite{Kha09, Kha09a}; and (2) powerful modularity lifting theorems, often depending on the $p$-adic local Langlands correspondence for $\GL_2(\mathbb{Q}_p)$ (see e.g.\ \cite{Col14, Kis09a, Eme11}). In particular, Conjecture \ref{introconj_FM} is known when $p \geq 3$ \cite{Pan25, Zha25}, and when $p = 2$, provided that $\overline{\rho}$ is irreducible, with non-solvable image \cite{Tun21}. (See also \cite{All14} for results in the solvable but irreducible case.) We prove the following result, which applies in particular in the open case $p = 2$, $\overline{\rho}$ reducible:
\begin{introthm}\label{introthm_FM}
     Let $p$ be a prime, and let $\rho : G_\mathbb{Q} \to \GL_2(\overline{\mathbb{Q}}_p)$ be a continuous, irreducible representation satisfying the following conditions:
     \begin{enumerate}
         \item $\rho$ is unramified at all but finitely many primes.
         \item $\rho|_{G_{\mathbb{Q}_p}}$ is potentially crystalline and ordinary of Hodge--Tate weights $\{ 0, 1 \}$.
         \item $\det \rho$ is odd.
     \end{enumerate}
     Then $\rho$ is modular. 
\end{introthm}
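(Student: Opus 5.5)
Theorem \ref{introthm_FM} will be deduced from Theorem \ref{introthm_potmod} by taking $F = \mathbb{Q}$ and then descending. First, Theorem \ref{introthm_potmod} gives a finite totally real extension $F'/\mathbb{Q}$ with $\rho|_{G_{F'}}$ modular. We may enlarge $F'$ to be Galois over $\mathbb{Q}$: replacing $F'$ by its Galois closure $F''$ keeps it totally real, and $\rho|_{G_{F''}}$ remains modular by solvable base change along the chain of subextensions provided the restriction stays irreducible. Alternatively, we invoke the form of potential modularity that yields Galois $F'$ directly, as in the usual Moret-Bailly setup. By \cite[Theorem 1.1]{Die04}, $\rho$ then lies in a weakly compatible system $(\rho_\lambda)$ of $\GL_2$-representations of $G_\mathbb{Q}$, indexed by places $\lambda$ of some number field $E$. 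This system is pure, odd, of Hodge--Tate weights $\{0,1\}$, and each member is potentially modular over $F'$.

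The key step is to show that the whole compatible system is modular over $\mathbb{Q}$. Irreducibility of $\rho$ combined with potential modularity implies that $\rho_\lambda$ is irreducible for all $\lambda$, since a reducible member would force the automorphic representation over $F'$ to be non-cuspidal or CM-induced. I then choose a prime $\ell \geq 3$ and a place $\lambda \mid \ell$ such that $\rho_\lambda$ is irreducible, odd, and de Rham with distinct Hodge--Tate weights $\{0,1\}$ at $\ell$. The known cases of Conjecture \ref{introconj_FM} for $p \geq 3$ \cite{Pan25, Zha25} then apply, so $\rho_\lambda \cong r_{\pi,\iota'}$ for a cuspidal regular algebraic $\pi$ of $\GL_2(\mathbb{A}_\mathbb{Q})$. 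Alternatively, one can use Serre's conjecture plus modularity lifting at a prime $\ell$ where $\overline{\rho}_\lambda$ has large image; such an $\ell$ exists for non-CM systems by the standard argument. If the system is CM, i.e.\ $\rho$ is induced from a Hecke character of an imaginary quadratic field, modularity follows directly by automorphic induction, so this case should be split off first.

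Finally, I transfer modularity back from $\lambda$ to the original $p$-adic member. The Frobenius traces of $\rho$ and $r_{\pi,\iota}$ agree at almost all primes, because both match the Hecke eigenvalues of $\pi$ under compatible embeddings. By Chebotarev density and Brauer--Nesbitt, together with the irreducibility of $\rho$, we get $\rho \cong r_{\pi,\iota}$ for a suitable $\iota : \overline{\mathbb{Q}}_p \to \mathbb{C}$. The main obstacle I expect is the middle step: making sure the chosen $\rho_\lambda$ with $\ell \geq 3$ really satisfies the hypotheses of the cited theorems. This needs irreducibility and odd determinant, which are inherited via compatibility, and de Rham-ness at $\ell$, which \cite{Die04} provides because the system comes from an automorphic representation over $F'$. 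One must also handle the descent of the compatible system from $F'$ to $\mathbb{Q}$ carefully, which is exactly the content of \cite[Theorem 1.1]{Die04}.
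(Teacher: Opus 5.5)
Your proposal is correct and is essentially the paper's argument: Theorem \ref{introthm_potmod} together with \cite[Theorem 1.1]{Die04} places $\rho$ in a weight $0$, rank $2$ compatible system, and the modularity of that system transfers back to $\rho$; the paper attributes that modularity to Serre's conjecture via \cite{Kha10}, while you get it from \cite{Pan25, Zha25} (or Serre plus modularity lifting) at an auxiliary prime $\ell \geq 3$. One side remark is wrong as stated: the Galois closure of $F'$ need not be solvable over $F'$, so solvable base change does not justify replacing $F'$ by it, but your alternative of making $F'$ Galois already in the Moret-Bailly step is the right fix.
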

    (We deduce this from Theorem \ref{introthm_potmod}, together with Serre's conjecture: the theorem implies that $\rho$ lives in a weight 0, rank 2 compatible system, and the modularity of such compatible systems is a well-known consequence of Serre's conjecture \cite{Kha10}.)

    We now describe our approach to proving Theorem \ref{introthm_potmod}. The first main tool is a new kind of modularity theorem, Theorem \ref{thm_modularity_by_close_approximation}. This asserts that if $\rho, \rho' : G_F \to \GL_2(\cO)$ are continuous representations (with coefficients in a $p$-adic DVR $\cO$, of uniformizer $\varpi$), $\rho$ is modular, and $\rho$ is `close enough' to $\rho'$, then $\rho'$ is also modular. Here `close enough' is taken to mean that there is an isomorphism $\rho \text{ mod }\varpi^C \cong \rho' \text{ mod }\varpi^C$ for some integer $C = C(\rho) \geq 1$. 
    
    The classical case of a modularity lifting theorem is when $C = 1$; the important point in our theorem is that $C$ can be taken only to depend on local data (e.g. the representations $\rho|_{G_{F_v}}$, for ramified places $v$) and abstract global data (e.g. the image subgroup $\rho(G_F) \leq \GL_2(\cO)$), but not on subtle global invariants such as the rank or cardinality of associated Selmer groups. In particular, one can always find, given $\rho$, a finite set $T$ of finite places of $F$ such that if $F' / F$ is any $T$-split totally real extension, then $C(\rho) = C(\rho|_{G_{F'}})$.

    The next main tool constructs, for any given $\rho : G_F \to \GL_2(\cO)$, suitable modular approximations $\rho' : G_{F'} \to \GL_2(\cO)$ (i.e.\ modular Galois representations such that $\rho|_{G_{F'}} \text{ mod }\varpi^C \cong \rho' \text{ mod }\varpi^C$). Again, the case $C = 1$ is well-studied (see e.g.\ \cite{Tay02, Tay06}), but few results of this type exist for general $C$ (although see \cite{Gur10}). For our application, we must construct abelian varieties over completions $F_v$ whose Tate modules approximate the given Galois representation $\rho|_{G_{F_v}}$, and the geometry of numbers makes an appearance here. 

    With these two ingredients in hand, Theorem \ref{introthm_potmod} follows easily.
   
    \subsection*{Acknowledgements}

    The author's work is funded by the European Union (ERC CoG-101169866). Views and opinions expressed are however those of the author(s) only and do not necessarily reflect those of the European Union or the European Research Council. Neither the European Union nor the granting authority can be held responsible for them.

    \subsection{Notation}\label{subsec_notation}

A base number field $F$ having been fixed, we will also choose algebraic closures $\overline{F}$ of $F$ and $\overline{F}_v$ of $F_v$ for every finite place $v$ of $F$. If $p$ is a prime, then we will also write $S_p$ for the set of places of $F$ above $p$, $\overline{\mathbb{Q}}_p$ for a fixed choice of algebraic closure of $\mathbb{Q}_p$, and $\val_p$ for the $p$-adic valuation on $\overline{\mathbb{Q}}_p$ normalized so that $\val_p(p) = 1$. These choices define the absolute Galois groups $G_F = \Gal(\overline{F}/F)$ and $G_{F_v} = \Gal(\overline{F}_v/F_v)$.  We write $I_{F_v} \subset G_{F_v}$ for the inertia subgroup. We also fix embeddings $\overline{F} \hookrightarrow \overline{F}_v$, extending the canonical embeddings $F \hookrightarrow F_v$. This determines for each place $v$ of $F$ an embedding $G_{F_v} \to G_{F}$. We write $\mathbb{A}_F$ for the adele ring of $F$, and $\mathbb{A}_F^\infty = \prod'_{v \nmid \infty} F_v$ for its finite part. If $v$ is a finite place of $F$, then we write $k(v)$ for the residue field at $v$ and $q_v = | k(v) |$. If we need to fix a choice of uniformizer of $\cO_{F_v}$, then we will denote it $\varpi_v$. 

If $S$ is a finite set of finite places of $F$, then we write $F_S$ for the maximal subfield of $\overline{F}$ unramified outside $S$ and the archimedean places, and $G_{F, S} = \Gal(F_S/F)$; this group is naturally a quotient of $G_F$. If $v \not\in S$ is a finite place of $F$, then the map $G_{F_v} \to G_{F, S}$ factors through the unramified quotient of $G_{F_v}$, and we write $\Frob_v \in G_{F, S}$ for the image of a \emph{geometric} Frobenius element. We write $\epsilon : G_F \to \mathbb{Z}_p^\times$ for the $p$-adic cyclotomic character; if $v$ is a finite place of $F$, not dividing $p$, then $\epsilon(\Frob_v) = q_v^{-1}$. 

If $\rho : G_F \to \GL_n(\overline{\mathbb{Q}}_p)$ is a continuous representation, we say that $\rho$ is de Rham if for each place $v | p$ of $F$, $\rho|_{G_{F_v}}$ is de Rham. In this case, we can associate to each embedding $\tau : F \hookrightarrow \overline{\mathbb{Q}}_p$ a multiset $\mathrm{HT}_\tau(\rho)$ of Hodge--Tate weights, which depends only on $\rho|_{G_{F_v}}$, where $v$ is the place of $F$ induced by $\tau$. This multiset has $n$ elements, counted with multiplicity. There are two natural normalizations for $\mathrm{HT}_\tau(\rho)$ which differ by a sign, and we choose the one with $\mathrm{HT}_\tau(\epsilon) = \{ -1 \}$ for every choice of $\tau$. A representation $\rho : G_{F_v} \to \GL_2(\overline{\mathbb{Q}}_p)$ is ordinary with Hodge--Tate weights $\{ 0, 1 \}$ if there is an isomorphism
\[ \rho \sim \left( \begin{array}{cc} \psi_1 & \ast \\ 0 & \epsilon^{-1} \psi_2 \end{array} \right), \]
where $\psi_1, \psi_2 : G_{F_v} \to \overline{\mathbb{Q}}_p^\times$ are finitely ramified characters (any such representation is de Rham, with $\mathrm{HT}_\tau(\rho) = \{ 0, 1 \}$ for any $\tau$). 

If $v$ is a finite place of $F$ and $\rho : G_{F_v} \to \GL_n(\overline{\mathbb{Q}}_p)$ is a continuous representation, which is de Rham if $v | p$, then we write $\mathrm{WD}(\rho)$ for the associated Weil--Deligne representation, which is uniquely determined, up to isomorphism.

We will call a finite extension $E/\mathbb{Q}_p$ inside $\overline{\mathbb{Q}}_p$ a coefficient field. A coefficient field $E$ having been fixed, we will write $\cO$ for its ring of integers, $k$ for its residue field, and $\varpi$ for a fixed choice of uniformizer. If $M$ is an $\cO$-module, then we write $M^\vee = \Hom_\cO(M, E / \cO)$. 

If $A$ is a complete Noetherian local $\cO$-algebra with residue field $k$, then we write $\mathfrak{m}_A \subset A$ for its maximal ideal, and $\CNL_A$ for the category of complete Noetherian local $A$-algebras with residue field $k$. We endow each object $R \in \CNL_A$ with its profinite ($\mathfrak{m}_R$-adic) topology. 

\section{Commutative algebra}\label{sec_commuative_algebra}

In this section, we prove some preparatory results that, although elementary, play a decisive role in the proof of our main modularity result, Theorem \ref{thm_modularity_by_close_approximation}. 

First, in \S \ref{subsec_finite_abelian_groups}, we prove some results about finite abelian groups, that make precise the notion of being `close to' a finite free $\mathbb{Z} / p^N \mathbb{Z}$-module; these will be useful in analyzing Selmer groups with torsion coefficients. 

Second, in \S \ref{subsec_hensels_lemma}, we give some consequences of Hensel's lemma, proving in particular the key Theorem \ref{thm_application_of_Hensel}. This states that, if $R$ is a complete Noetherian local $\mathbb{Z}_p$-algebra, and $f, f' : R \to \mathbb{Z}_p$ are homomorphisms that are sufficiently close in the $p$-adic topology on $(\Spec R)(\mathbb{Z}_p)$, then they define points on the same irreducible component of $\Spec R$. This result enters during the usual approach to proving modularity by analyzing the support of a suitable space of modular forms, viewed as a closed subspace of the spectrum of a Galois (pseudo-)deformation ring. 

\subsection{Finite abelian groups}\label{subsec_finite_abelian_groups}

Let $E / \mathbb{Q}_p$ be a coefficient field.
\begin{lemma}\label{lem_submodule_of_finite_module}
    Let $A, B$ be finite $\cO / (\varpi^N)$-modules. Then the following are equivalent:
    \begin{enumerate}
        \item There exists an injective homomorphism $B \to A$.
        \item There exists a surjective homomorphism $A \to B$.
        \item Writing $A \cong \oplus_{i=1}^r \cO / (\varpi^{d_i})$ and $B \cong \oplus_{i=1}^s \cO / (\varpi^{e_i})$ with $d_1 \geq d_2 \geq \dots \geq d_r \geq 1$ and $e_1 \geq e_2 \geq \dots \geq e_s \geq 1$, we have $r \geq s$ and $d_i \geq e_i$ for each $1 \leq i \leq s$. 
    \end{enumerate}
\end{lemma}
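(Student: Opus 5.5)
The plan is to reduce everything to invariants of finite $\cO$-modules that can be read off from the elementary divisors. For a finite $\cO$-module $M$ and $j \geq 1$, put $\lambda_j(M) = \dim_k \varpi^{j-1}M/\varpi^j M$. If $M \cong \oplus_i \cO/(\varpi^{d_i})$, then $\lambda_j(M) = \#\{ i : d_i \geq j\}$. So the partition $(d_i)$ and its conjugate partition $(\lambda_j)$ determine each other. The condition in (3), namely $r \geq s$ and $d_i \geq e_i$ for $i \le s$, says exactly that the Young diagram of $(e_i)$ sits inside that of $(d_i)$. This is equivalent to $\lambda_j(A) \geq \lambda_j(B)$ for all $j$. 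We will prove (1)$\Leftrightarrow$(3) and (2)$\Leftrightarrow$(3).

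For (3)$\Rightarrow$(1) and (3)$\Rightarrow$(2) we simply write down maps. Inside $\cO/(\varpi^{d_i})$ take the summand $\cO/(\varpi^{e_i})$ via multiplication by $\varpi^{d_i - e_i}$; the direct sum of these over $i \leq s$, together with zero on the remaining summands of $A$, gives an injection $B \to A$. Dually, reduction $\cO/(\varpi^{d_i}) \to \cO/(\varpi^{e_i})$ for $i \le s$, and zero on the summands with $i > s$, gives a surjection $A \to B$.

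For (1)$\Rightarrow$(3), the natural invariant to use is the $\varpi^j$-torsion. For $B \subseteq A$ we have $B[\varpi^j] \subseteq A[\varpi^j]$, and this is the point to be careful about: quotients like $\varpi^{j-1}M/\varpi^jM$ are not monotone under inclusion, but torsion is. For $M$ with invariants $(d_i)$,
\[ \dim_k \varpi^{j-1}M[\varpi^j]/M[\varpi^{j-1}] \cdots \]
is awkward, so instead use lengths: $\ell(M[\varpi^j]) = \sum_i \min(d_i, j) = \sum_{t \le j} \lambda_t(M)$. Monotonicity of length under inclusion only gives inequalities of these partial sums, which is not enough by itself. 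The better move is to use the socle layers $(\varpi^{j-1} M) \cap M[\varpi]$. This is a $k$-vector space of dimension $\#\{i : d_i \geq j\} = \lambda_j(M)$, and it is visibly monotone under injections: $\varpi^{j-1}B \cap B[\varpi] \subseteq \varpi^{j-1}A \cap A[\varpi]$. Hence $\lambda_j(B) \le \lambda_j(A)$ for all $j$, which gives (3).

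For (2)$\Rightarrow$(3), use Pontryagin duality $M \mapsto M^\vee$, which is exact, contravariant and preserves isomorphism type since $(\cO/\varpi^d)^\vee \cong \cO/\varpi^d$. It turns a surjection $A \to B$ into an injection $B^\vee \to A^\vee$, reducing to the previous case. Alternatively, one can argue directly that $\varpi^{j-1}A/\varpi^jA$ surjects onto $\varpi^{j-1}B/\varpi^jB$.

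The only step needing thought is choosing an invariant that is monotone under injections, namely the socle filtration. Everything else is routine bookkeeping with conjugate partitions.
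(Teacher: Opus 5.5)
Your proof is correct and is essentially the paper's argument. The paper also proves (1)$\Rightarrow$(3) by comparing, for each $t$, the number of elementary divisors $\geq t$ through an invariant that is monotone under injections, namely $M[\varpi^t]/M[\varpi^{t-1}]$; multiplication by $\varpi^{t-1}$ identifies this naturally with your socle layer $\varpi^{t-1}M \cap M[\varpi]$, and the paper handles (2) by duality just as you do, while your partition-conjugation step is exactly its argument taking the minimal $i$ with $e_i > d_i$ and setting $t = e_i$.
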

\begin{proof}
    (1) and (2) are equivalent by duality. It is clear that (3) implies (1). We show that (1) implies (3). We have $r = \dim_k A[\varpi]$ and $s = \dim_k B[\varpi]$, so if $B \leq A$ then $B[\varpi] \leq A[\varpi]$ and hence $s \leq r$. 
    
    On the other hand, for any $t \geq 1$, $\dim_k A[\varpi^{t}] / A[\varpi^{t-1}]$ is equal to the number of $i$ such that $d_i \geq t$ (and similarly for $B$). Suppose for contradiction that $e_i > d_i$ for some $i$, and let $i$ be minimal with this property. Then $e_1 \leq d_1, \dots e_{i-1} \leq d_{i-1}$, but $e_i > d_i$. Taking $t  = e_i$, we see that we have
    \[ \dim_k A[\varpi^{t}] / A[\varpi^{t-1}] = i-1, \]
    \[ \dim_k B[\varpi^{t}] / B[\varpi^{t-1}] \geq i, \]
    a contradiction, since there is an embedding
    \[ B[\varpi^{t}] / B[\varpi^{t-1}] \to A[\varpi^{t}] / A[\varpi^{t-1}]. \]
\end{proof}
\begin{lemma}\label{lem_c-free_module}
    Let $A$ be a finite $\cO / (\varpi^N)$-module. Suppose given integers $a, c, N \geq 0$ such that $N > 2 c$. Then the following are equivalent:
    \begin{enumerate}
        \item There exists a homomorphism $(\cO / (\varpi^N))^a \to A$ with kernel and cokernel annihilated by $\varpi^c$. 
        \item There exists an isomorphism $A \cong \oplus_{i=1}^a \cO / (\varpi^{N - x_i}) \oplus T$, where $T$ is an $\cO$-module such that $\varpi^c T = 0$, and $0 \leq x_i \leq c$.
    \end{enumerate}
    In either of these cases, $a$ is uniquely determined by $A$. 
\end{lemma}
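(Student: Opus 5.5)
The plan is to reduce everything to the elementary divisors of $A$, using Lemma \ref{lem_submodule_of_finite_module} to compare modules through injections. Throughout, write $A \cong \oplus_{i=1}^r \cO/(\varpi^{d_i})$ with $d_1 \geq \dots \geq d_r \geq 1$, and put $F = (\cO/(\varpi^N))^a$.

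For (2) $\Rightarrow$ (1), I will take the obvious map $F \to A$. It sends the $i$-th basis vector to a generator of the summand $\cO/(\varpi^{N-x_i})$. Its kernel is $\oplus_i \varpi^{N-x_i}\cO/(\varpi^N)$, which is killed by $\varpi^{\max x_i}$ and hence by $\varpi^c$. Its cokernel is $T$, which is killed by $\varpi^c$ by hypothesis.

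For (1) $\Rightarrow$ (2), let $f : F \to A$ be as in (1), with kernel $K$ and image $I$.
\begin{itemize}
\item First I describe $I$. Since $\varpi^c K = 0$, we have $K \subseteq F[\varpi^c] = \varpi^{N-c}F$. Hence $I/\varpi^{N-c}I \cong F/\varpi^{N-c}F$, which is free of rank $a$ over $\cO/(\varpi^{N-c})$. Since $N - c \geq 1$, $I$ has exactly $a$ cyclic summands. Each summand has exponent at least $N-c$ (from the free quotient) and at most $N$. So $I \cong \oplus_{i=1}^a \cO/(\varpi^{N-x_i})$ with $0 \leq x_i \leq c$.
\item Next I obtain a lower bound on the $d_i$. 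Since $I \hookrightarrow A$, Lemma \ref{lem_submodule_of_finite_module} gives $r \geq a$ and $d_i \geq N - x_i \geq N-c$ for $i \leq a$.
\item Then I obtain an upper bound. Since $\varpi^c A \subseteq I$ (the cokernel is killed by $\varpi^c$) and $\varpi^c A \cong \oplus_{d_i > c} \cO/(\varpi^{d_i - c})$, applying Lemma \ref{lem_submodule_of_finite_module} to $\varpi^c A \hookrightarrow I$ shows that at most $a$ indices satisfy $d_i > c$.
\end{itemize}
Now $N - c > c$ because $N > 2c$, so the first $a$ indices all have $d_i > c$. Therefore exactly $a$ indices have $d_i > c$: these are $i \leq a$, with $N-c \leq d_i \leq N$. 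All remaining indices have $d_i \leq c$. Setting $x_i = N - d_i$ for $i \leq a$ and $T = \oplus_{i>a}\cO/(\varpi^{d_i})$ gives (2).

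For uniqueness, suppose $A$ has a decomposition as in (2). Because $N - x_i \geq N - c > c$, the number $a$ equals the number of elementary divisors $d_i$ of $A$ with $d_i > c$. Equivalently, $a = \dim_k(\varpi^c A)[\varpi]$, which depends only on $A$ (for the given $c$). If instead (1) holds, the argument above produces such a decomposition, so the same characterization applies.

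The only delicate point is the counting step. One must ensure that the large summands of $A$ are exactly $a$ in number and the rest are small. This is precisely where the hypothesis $N > 2c$ is used: it separates exponents $\geq N-c$ from exponents $\leq c$. I expect the two applications of Lemma \ref{lem_submodule_of_finite_module} to handle this cleanly.
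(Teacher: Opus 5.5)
Your proof is correct and is essentially the paper's argument. You apply Lemma~\ref{lem_submodule_of_finite_module} to $I \hookrightarrow A$ to get $d_1,\dots,d_a \geq N-c$ and to $\varpi^c A \hookrightarrow I$ to bound the small summands, with uniqueness of $a$ coming from $N-c>c$; you also make explicit the structure of the image $I$, which the paper uses only tacitly.

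One small remark: since the $d_i$ are decreasing, $d_{a+1}>c$ would already give $a+1$ summands in $\varpi^c A$. So $N>2c$ is genuinely needed only for the uniqueness of $a$, not for (1)$\Rightarrow$(2).
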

\begin{proof}
    It is immediate that (2) implies (1). To see that (1) implies (2), suppose given a homomorphism $f : (\cO / (\varpi^N))^a \to A$ with the claimed property, and let $I$ denote the image of $f$, $Q = A / I$. Write $A \cong \oplus_{i=1}^r \cO / (\varpi^{d_i})$ with $d_1 \geq d_2 \geq \dots \geq d_r \geq 1$. Applying Lemma \ref{lem_submodule_of_finite_module} to $I \leq A$, we see that we have $a \leq r$ and $d_1 \geq d_2 \geq \dots d_a \geq N - c$. Since $\varpi^c Q = 0$, we have $\varpi^c A \leq I$. Applying again Lemma \ref{lem_submodule_of_finite_module}, we find that $c \geq d_{a+1} \geq \dots \geq d_r$. Thus $A$ has the claimed form.

    It remains to show that $a$ is uniquely determined by $A$. This follows from the characterization in (2) provided that $N-c > c$, i.e. that $N > 2c$. This completes the proof. 
\end{proof}
\begin{definition}
    Let $A$ be a finite $\cO / (\varpi^N)$-module. Suppose given integers $a, c, N \geq 0$ such that $N > 2 c$. We say that $A$ is a $c$-free $\cO / (\varpi^N)$-module of rank $a$ if it satisfies the equivalent conditions of Lemma \ref{lem_c-free_module}.
\end{definition}
If $A$ is a $c$-free $\cO / (\varpi^N)$-module, and we're given a homomorphism $A \to B$ with kernel and cokernel killed by $\varpi^d$, then $B$ is $c+d$-free of the same rank (provided that $N > 2(c+d)$). The following proposition will also be useful to prove $c$-freeness in  certain cases: 
\begin{proposition}\label{prop_euler_characteristic}
    Let $n \geq 1$, let $N > 2^n c$, and suppose given an exact sequence 
    \[ 0 \to A_0 \to A_1 \to \dots \to A_n \to 0 \]
    of $\cO / (\varpi^N)$-modules, where each $A_i$ ($i = 1, \dots, n$) is $c$-free of rank $a_i \geq 0$. Then $A_0$ is $2^{n-1} c$-free of rank $a_0 = \sum_{i=1}^n (-1)^{i-1} a_i$.
\end{proposition}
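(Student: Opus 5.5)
We argue by induction on $n$, reducing everything to the case $n = 2$. For $n = 1$ the map $A_0 \to A_1$ is an isomorphism, so $A_0$ is $c$-free of rank $a_1$. For $n \geq 3$, let $K$ be the image of $A_1 \to A_2$, which equals the kernel of $A_2 \to A_3$. This splits the sequence into
\[ 0 \to A_0 \to A_1 \to K \to 0 \quad \text{and} \quad 0 \to K \to A_2 \to \dots \to A_n \to 0 . \]
The second sequence has $n-1$ terms after $K$, and $N > 2^n c > 2^{n-1} c$. So by induction $K$ is $2^{n-2}c$-free of rank $a_2 - a_3 + \dots$. Both $A_1$ and $K$ are then $c'$-free with $c' = 2^{n-2}c$, since $c$-free implies $c'$-free when $c \le c'$ and $N > 2c'$. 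The case $n = 2$ applied with $c'$ in place of $c$ requires $N > 4c' = 2^n c$, which holds. It gives that $A_0$ is $2c' = 2^{n-1}c$-free of rank $a_1 - \operatorname{rk} K = \sum_i (-1)^{i-1} a_i$.

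So everything rests on the short exact case $0 \to A_0 \to A_1 \to A_2 \to 0$, with $A_1, A_2$ $c$-free and $N > 4c$. Here I would work with the numerical invariants
\[ \delta_t(A) = \dim_k A[\varpi^t]/A[\varpi^{t-1}] = \#\{ i : d_i \geq t\} \]
already used in the proof of Lemma \ref{lem_submodule_of_finite_module}. By Lemma \ref{lem_c-free_module}, a module $A$ is $e$-free of rank $a$ (with $N > 2e$) if and only if $\delta_t(A) = a$ for all $e < t \leq N - e$. Indeed, the summands with $d_i \geq N-e$ contribute at every such $t$, and the torsion summands with $d_i \le e$ contribute at none. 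It therefore suffices to show $\delta_t(A_0) = a_1 - a_2$ for $2c < t \leq N - 2c$.

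Equivalently, in terms of $\ell_A(t) := \mathrm{length}\, A[\varpi^t] = \sum_i \min(d_i, t)$, it suffices to show that $\ell_{A_0}$ is linear of slope $a_1 - a_2$ on $[2c, N-2c]$. For a $c$-free $A$ of rank $a$ we have $\ell_A(t) = a t + \mathrm{length}(T)$ for $c \leq t \leq N - c$, by the explicit form in Lemma \ref{lem_c-free_module}(2). I would combine two inputs.

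\emph{Upper bound.} Left exactness gives $0 \to A_0[\varpi^t] \to A_1[\varpi^t] \to A_2[\varpi^t]$.

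\emph{Lower bound.} Dualising, $0 \to A_2^\vee \to A_1^\vee \to A_0^\vee \to 0$ is exact, and $A_i^\vee \cong A_i$ non-canonically. So the same analysis applies to $A_0$ viewed as a quotient of $A_1$ with kernel $A_2$, and one has $\ell_{A_0}(t) = \mathrm{length}(A_0/\varpi^t A_0)$.

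The goal is to use these to show that the image of $A_1[\varpi^t] \to A_2[\varpi^t]$ has length $a_2 t + O(1)$. The error term should be bounded by the lengths of the torsion parts $T_1, T_2$ and be independent of $t$, uniformly for $2c \le t \le N-2c$. Taking differences in $t$ would then pin down the slope of $\ell_{A_0}$.

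For the image estimate I would first try the following concrete argument. Choose $f_2 : (\cO/\varpi^N)^{a_2} \to A_2$ with kernel and cokernel killed by $\varpi^c$, and lift it to $g : (\cO/\varpi^N)^{a_2} \to A_1$; this is possible since free modules are projective. If $y \in A_2[\varpi^{t}]$ with $t \le N - 2c$, then $\varpi^c y = f_2(z)$ for some $z$. Moreover $\varpi^{t} f_2(z) = \varpi^{t+c} y = 0$, so $\varpi^t z \in \ker f_2$, and hence $\varpi^{t+c} z = 0$. Thus $\varpi^{c} z$ can be taken to lie in $(\cO/\varpi^N)^{a_2}[\varpi^t]$, and $g(\varpi^{c} z) \in A_1[\varpi^t]$ maps to $\varpi^{2c} y$. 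Hence the image of $A_1[\varpi^t]$ contains $\varpi^{2c} A_2[\varpi^t]$, whose colength in $A_2[\varpi^t]$ is at most $2c a_2 + \mathrm{length}(T_2)$, independently of $t$.

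This is the step I expect to be the main obstacle. A bounded colength for the image gives $\ell_{A_0}(t) = (a_1 - a_2)t + O(1)$. To get the slope exactly, rather than up to bounded error, I would combine this with concavity of $\ell_{A_0}$ and the structure theorem $A_0 \cong \oplus \cO/\varpi^{e_j}$: the slope $\delta_t(A_0)$ is non-increasing in $t$ and integer-valued, so any drift would have to be absorbed at the endpoints. This is exactly where the loss from $c$ to $2c$, and the hypothesis $N > 4c$, should enter.
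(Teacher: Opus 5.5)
Your reduction to the short exact case is fine: it mirrors the paper's induction, which peels off $0\to K_r\to A_r\to A_{r+1}\to 0$ at the right end rather than the left, with the same constants. Your characterisation of $e$-freeness via $\delta_t$ is also correct.

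The gap is in the case $n=2$, which is the real content of the proposition. Write $\operatorname{coker}_t = A_2[\varpi^t]/\phi(A_1[\varpi^t])$. Your "upper bound" is in fact an equality:
\[ \ell_{A_0}(t) = (a_1-a_2)t + \ell(T_1) - \ell(T_2) + \ell(\operatorname{coker}_t), \qquad c\le t\le N-c. \]
So exact slope on $[2c,N-2c]$ is equivalent to $\ell(\operatorname{coker}_t)$ being constant there. Your lifting argument only gives $0\le \ell(\operatorname{coker}_t)\le 2ca_2+\ell(T_2)$. Concavity cannot turn a bounded error into an exact slope on a window of prescribed size. A non-increasing integer slope can equal $a_1-a_2+1$ on about $B$ consecutive values of $t$ without violating an error bound of size $B\approx 2ca_2+\ell(T_1)+\ell(T_2)$. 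The "drift at the endpoints" therefore occupies windows whose width grows with $a_2$ and the torsion lengths, not with $c$. Your argument proves only that $A_0$ is $c''$-free with $c''$ depending on these quantities, not that it is $2c$-free. This loss matters: the proposition feeds into Proposition~\ref{prop_application_of_Poitou--Tate}, where the point is that the freeness constant depends only on $C(\rho)$. It must not depend on ranks or on the size of torsion in Selmer groups.

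The missing idea is to argue structurally at a single level rather than numerically across all $t$.
\begin{enumerate}
\item Reduce modulo $\varpi^{N-c}$. Then $A_i/(\varpi^{N-c})\cong(\cO/(\varpi^{N-c}))^{a_i}\oplus T_i$ with $\varpi^cT_i=0$.
\item Split the free part of $A_2/(\varpi^{N-c})$ off $A_1/(\varpi^{N-c})$. This shows that $\ker\overline{\phi}$ lies between $\varpi^cK'$ and $K'$, where $K'\cong(\cO/(\varpi^{N-c}))^{a_1-a_2}\oplus T_1$.
\item Apply the snake lemma to multiplication by $\varpi^{N-c}$. This gives $0\to X\to A_0/(\varpi^{N-c})\to\ker\overline{\phi}\to 0$, where $X$ is a subquotient of $\varpi^{N-c}A_1$. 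Since $\varpi^NA_1=0$, $X$ is killed by $\varpi^c$, which is sharper than the $\varpi^{2c}$ your lifting gives.
\item Apply Lemma~\ref{lem_submodule_of_finite_module} to the surjections $A_0/(\varpi^{N-c})\to\ker\overline{\phi}$ and $\ker\overline{\phi}\to\varpi^c\bigl(A_0/(\varpi^{N-c})\bigr)$; the latter is induced by multiplication by $\varpi^c$, which kills $X$.
\end{enumerate}
Step 4 bounds the elementary divisors of $A_0$ directly: the first $a_1-a_2$ are $\ge N-2c$, and the rest are $\le 2c$. This is essentially the paper's argument.
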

\begin{proof}
    The proof is by induction. If $n = 1$, then $A_0 = A_1$ and there is nothing to prove. If $n = 2$, then we have a short exact sequence
    \[ 0 \to A_0 \to A_1 \to A_2 \to 0. \]
    Let $\phi : A_1 \to A_2$ denote the map appearing in this sequence, and let $\overline{\phi} : A_1 / (\varpi^{N-c}) \to A_2 / (\varpi^{N-c})$ be the map arising by passage to quotient. Then we have short exact sequences
    \[ 0 \to \ker \overline{\phi} \to A_1 / (\varpi^{N-c}) \to A_2 / (\varpi^{N-c}) \to 0 \]
    and (by the snake lemma)
    \[ A_2[\varpi^{N-c}] / \phi A_1[\varpi^{N-c}] \to A_0 / (\varpi^{N-c}) \to \ker \overline{\phi} \to 0. \]
    The first of these shows that $a_1 \geq a_2$ and that there is an isomorphism $\ker \overline{\phi} \cong \cO / (\varpi^{N-c})^{a_1 - a_2} \oplus T$, where $\varpi^c T = 0$. The group $A_2[\varpi^{N-c}] / \phi A_1[\varpi^{N-c}]$ is annihilated by $\varpi^c$, so the second short exact sequence shows that there is an isomorphism $A_0 / (\varpi^{N-c}) \cong (\cO / (\varpi^{N-c}))^{a_1 - a_2} \oplus T'$, where $\varpi^{2c} T' = 0$. It follows that $A_0$ is $2c$-free of rank $a_1 - a_2$. This completes the proof of the desired statement in the case $n = 2$.

    For the induction step, suppose that the result is known for $1 \leq n \leq r$, with $r \geq 2$, and consider an exact sequence
    \[ 0 \to A_0 \to A_1 \to \dots \to A_r \to A_{r+1} \to 0, \]
    where each $A_i$ ($i = 1, \dots, r+1$) is $c$-free of rank $a_i$. Consider the two exact sequences
    \[ 0 \to K_r \to A_r \to A_{r+1} \to 0 \]
    and
    \[ 0 \to A_0 \to A_1 \to \dots \to K_r \to 0. \]
    Applying the proposition in the case $r = 2$, we see that $K_r$ is $2c$-free of rank $a_r - a_{r+1}$. Applying the proposition in the case $n = r$, we deduce that $A_0$ is $2^{r-1} \cdot 2c = 2^r c$-free of rank $\sum_{i=1}^{r+1} (-1)^{i-1} a_i$, as required. 
\end{proof}
Here's a slight variation that will also be useful. 
\begin{lemma}\label{lem_existence_of_homomorphism}
    Let $M$ be a finite $\cO$-module, and let $q, C \geq 0$. Then the following are equivalent:
    \begin{enumerate}
        \item There exists a morphism $\cO^q \to M$ with cokernel annihilated by $\varpi^C$.
        \item There exists a morphism $\cO^q \to M / (\varpi^{C+1})$ with cokernel annihilated by $\varpi^C$. 
    \end{enumerate}
    If these conditions hold and  $\dim_E M \otimes_\cO E = q$, then there is an isomorphism $M \cong \cO^q \oplus T$, where $T$ is a finite $\cO$-module that is annihilated by $\varpi^C$. 
\end{lemma}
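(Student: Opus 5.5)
The implication (1)$\Rightarrow$(2) is immediate: compose a morphism $\cO^q \to M$ with the quotient map $M \to M/(\varpi^{C+1})$. Its cokernel is a quotient of the original cokernel, so it is still killed by $\varpi^C$.

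For (2)$\Rightarrow$(1), I will lift and then apply Nakayama. Given $\bar f : \cO^q \to M/(\varpi^{C+1})$ with cokernel killed by $\varpi^C$, lift the images of the standard basis vectors to elements of $M$. This gives a morphism $f : \cO^q \to M$ whose reduction is $\bar f$. Let $I = f(\cO^q)$. The hypothesis says
\[
\varpi^C M \subseteq I + \varpi^{C+1} M .
\]
Set $N = \varpi^C M$. Since $\varpi^{C+1}M = \varpi N$, the hypothesis becomes $N \subseteq I + \varpi N$, and hence
\[
N \subseteq (I \cap (N + I)) + \varpi N .
\]
The cleanest route is to pass to the finite $\cO$-module $(N + I)/I$. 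The displayed inclusion shows this module equals $\varpi$ times itself. Indeed, any $n \in N$ can be written $n = i + \varpi n'$ with $i \in I$ and $n' \in N$, so $n \equiv \varpi n' \pmod I$. Since $(N+I)/I$ is finitely generated over the local ring $\cO$, Nakayama gives $(N+I)/I = 0$. Thus $\varpi^C M = N \subseteq I$, i.e.\ $\operatorname{coker} f$ is killed by $\varpi^C$. This step is the only place where the extra power $\varpi^{C+1}$, rather than $\varpi^C$, is used, and it is the crux of the equivalence.

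For the final assertion, take $f : \cO^q \to M$ as in (1) and suppose $\dim_E M\otimes E = q$. Tensoring with $E$, the cokernel becomes $0$, so $f \otimes E$ is a surjection between $q$-dimensional spaces, hence an isomorphism. Since $\cO^q$ is torsion-free, $f$ is therefore injective.

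Write $M \cong \cO^{q'} \oplus T$, with $T = M_{\mathrm{tors}}$ and $q' = q$ by the rank count. Composing $f$ with the projection to the free part gives an injective map $g : \cO^q \to \cO^q$, since $\ker g = f^{-1}(T)$ is torsion in $\cO^q$ and so zero. Its cokernel is a quotient of $M/f(\cO^q)$, so it is killed by $\varpi^C$. It remains to check that $T$ is killed by $\varpi^C$. The image $f(\cO^q)$ is torsion-free, so $f(\cO^q) \cap T = 0$, and therefore $T$ injects into $\operatorname{coker} f$. Hence $\varpi^C T = 0$, which gives the decomposition $M \cong \cO^q \oplus T$ claimed in the statement.
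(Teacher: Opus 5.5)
Your proof is correct and follows the paper's argument. You lift the map, apply Nakayama to $(\varpi^C M + I)/I$, and for the final claim use injectivity of $f$ together with the fact that the torsion submodule meets the torsion-free image $f(\cO^q)$ trivially, so it embeds in the cokernel. The auxiliary map $g$ onto the free part is not needed, though it does no harm.
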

\begin{proof}
    It is clear that (1) implies (2). Suppose that (2) holds. Then we can lift our morphism to a morphism $\cO^q \to M$. Let $N$ denote its image. Our hypothesis states that $\varpi^C M \leq N + \varpi^{C+1} M$. We must show that $\varpi^C M \leq N$. This follows on applying Nakayama's lemma to the quotient module $(\varpi^C M + N) / N$.

    If (1) holds, and $\dim_E M \otimes_\cO E = q$, then we need to show that $\varpi^C$ annihilates the torsion submodule of $M$. The morphism $\cO^q \to M$ must be injective; let $N$ be its image. If $x \in M$ is a torsion element, then $\varpi^C x \in N$, hence $\varpi^C x = 0$ (as $N$ is torsion-free). This concludes the proof. 
\end{proof}

\subsection{Some consequences of Hensel's Lemma}\label{subsec_hensels_lemma}

The approach we take here extends \cite[Ch. III, \S 4]{Bou89} and \cite{Fis97}. 

We fix a coefficient field $E / \mathbb{Q}_p$. If $f_1, \dots, f_r \in \cO \llbracket X_1, \dots, X_n\rrbracket$, and $\underline{a} = (a_1, \dots, a_n) \in (\varpi)^n$, then we write $M_f(\underline{a}) = (\partial_j f_i(\underline{a})) \in M_{r \times n}(\cO)$ for the matrix of partial derivatives. 
\begin{lemma}\label{lem_inverting_power_series}
    Let $n \geq 1$, and let $f_1, \dots, f_n \in (X_1, \dots, X_n) \cO \llbracket X_1, \dots, X_n \rrbracket$. Suppose that $\det M_f(0) \in \cO^\times$. Then there exist 
    \[ g_1, \dots, g_n \in (X_1, \dots, X_n) \cO \llbracket X_1, \dots, X_n \rrbracket \]
    such that $\underline{f}(\underline{g}(\underline{X})) = (X_1, \dots, X_n)$. The tuple $\underline{g}$ is unique and satisfies $\underline{g}(\underline{f}) = (X_1, \dots, X_n)$.
\end{lemma}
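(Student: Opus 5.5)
This is the formal inverse function theorem over $\cO$. I plan to reduce to the case where the linear part is the identity and then construct $\underline{g}$ degree by degree.

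\emph{Reduction to linear part the identity.} Let $A = M_f(0) \in \GL_n(\cO)$; this matrix is invertible over $\cO$ because $\det A \in \cO^\times$. Replace $\underline{f}$ by $A^{-1}\underline{f}$, meaning the tuple whose $i$-th entry is $\sum_j (A^{-1})_{ij} f_j$. If $\underline{h}$ inverts $A^{-1}\underline{f}$ on the right, then $\underline{f}(\underline{h}) = A\underline{X}$. Hence $\underline{g}(\underline{X}) = \underline{h}(A^{-1}\underline{X})$ satisfies $\underline{f}(\underline{g}) = \underline{X}$. Composition is well-defined here because all series involved have zero constant term. So we may assume $\underline{f}(\underline{X}) = \underline{X} + \underline{q}(\underline{X})$, where each $q_i$ lies in $(X_1,\dots,X_n)^2$.

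\emph{Existence of a right inverse.} We build $\underline{g} = \sum_{d\ge1} \underline{g}^{(d)}$, with $\underline{g}^{(d)}$ homogeneous of degree $d$ and $\underline{g}^{(1)} = \underline{X}$. The equation $\underline{f}(\underline{g}) = \underline{X}$ reads
\[ \underline{g} = \underline{X} - \underline{q}(\underline{g}). \]
Since $\underline{q}$ has no terms of degree below $2$, the degree-$d$ part of $\underline{q}(\underline{g})$ depends only on $\underline{g}^{(1)},\dots,\underline{g}^{(d-1)}$. This gives a recursion for $\underline{g}^{(d)}$ that uses only ring operations in $\cO$, so all coefficients stay in $\cO$ and no denominators appear. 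This is the point where integrality must be checked, and it is automatic. Equivalently, the map $\underline{g} \mapsto \underline{X} - \underline{q}(\underline{g})$ is a contraction for the $(X)$-adic topology on $n$-tuples in $(X)\cO\llbracket \underline{X}\rrbracket$, and that space is complete. The same recursion shows the solution is unique: any two solutions agree degree by degree by induction. Uniqueness after the reduction step follows because $A$ is invertible.

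\emph{Two-sided inverse.} Apply the existence result to $\underline{g}$ itself, which is legitimate since $M_g(0) = A^{-1}$ is invertible. This gives $\underline{h}$ with $\underline{g}(\underline{h}) = \underline{X}$. Using associativity of substitution (valid since all constant terms are zero),
\[ \underline{f} = \underline{f}(\underline{g}(\underline{h})) = (\underline{f}\circ\underline{g})(\underline{h}) = \underline{h}. \]
Therefore $\underline{g}(\underline{f}) = \underline{g}(\underline{h}) = \underline{X}$.

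I expect no real obstacle. The only care needed is to justify associativity of composition of formal power series without constant term, and to observe that the recursion stays integral.
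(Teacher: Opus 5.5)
Your proof is correct. The paper gives no argument of its own here: it simply defers to \cite[Ch.~III, \S 4.4, Proposition~5]{Bou89}, in line with the subsection's stated aim of extending Bourbaki's Hensel-lemma framework. What you supply is the standard self-contained proof:
\begin{itemize}
\item normalise the linear part to the identity using $M_f(0) \in \GL_n(\cO)$;
\item solve $\underline{g} = \underline{X} - \underline{q}(\underline{g})$ degree by degree (your $(X)$-adic contraction remark is the Hensel mechanism in this setting);
\item get the left-inverse property by inverting $\underline{g}$ on the right and using associativity of substitution.
\end{itemize}
The only step you leave implicit is $M_g(0) = M_f(0)^{-1}$. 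This follows at once by comparing linear terms in $\underline{f}(\underline{g}) = \underline{X}$.

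Your version makes visible that integrality needs only the unit determinant, since no denominators ever arise. It also shows that existence, uniqueness and two-sidedness all come from one short argument. The paper uses each of these later, e.g.\ in Corollary~\ref{cor_power_series_injective} and in the appeal to uniqueness of $\underline{h}$ in the proof of Proposition~\ref{prop_implicit_function_theorem}. The citation simply buys brevity.
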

\begin{proof}
    See \cite[Ch. III, \S 4.4, Proposition 5]{Bou89}. 
\end{proof}
\begin{corollary}\label{cor_power_series_injective}
    Let $n \geq 1$, and let  $f_1, \dots, f_n \in (X_1, \dots, X_n) \cO \llbracket X_1, \dots, X_n \rrbracket$. Suppose that $\det M_f(0) \in \cO^\times$. Then the map $(\varpi)^n \to (\varpi)^n$, $\underline{a} \mapsto \underline{f}(\underline{a})$, is bijective. 
\end{corollary}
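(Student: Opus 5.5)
The plan is to deduce the corollary from Lemma \ref{lem_inverting_power_series} by evaluating power series at points of $(\varpi)^n$. First, $\underline{f}$ really defines a map $(\varpi)^n \to (\varpi)^n$. Each $f_i$ has coefficients in $\cO$, so for $\underline{a} \in (\varpi)^n$ the series $f_i(\underline{a})$ converges $\varpi$-adically in $\cO$, because the degree-$d$ part lies in $(\varpi^d)$. Since $f_i$ has zero constant term, every monomial contributes an element of $(\varpi)$, and as $(\varpi)$ is closed, $f_i(\underline{a}) \in (\varpi)$.

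Next, take $\underline{g}$ as in Lemma \ref{lem_inverting_power_series}. The $g_j$ also lie in $(X_1, \dots, X_n)\cO\llbracket X_1, \dots, X_n \rrbracket$, so $\underline{g}$ likewise defines a map $(\varpi)^n \to (\varpi)^n$. The key step is compatibility of evaluation with composition:
\[ (\underline{f}\circ\underline{g})(\underline{a}) = \underline{f}(\underline{g}(\underline{a})) \quad \text{for } \underline{a} \in (\varpi)^n. \]
To justify it, note that evaluation at $\underline{a}$ is the continuous $\cO$-algebra homomorphism
\[ \mathrm{ev}_{\underline{a}} : \cO\llbracket X_1,\dots,X_n\rrbracket \to \cO, \quad X_j \mapsto a_j, \]
which is well defined and continuous because $\cO$ is $\varpi$-adically complete and the $a_j$ are topologically nilpotent.

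Substituting $\underline{g}$ into $\underline{f}$ is the continuous homomorphism
\[ \phi : \cO\llbracket \underline{X}\rrbracket \to \cO\llbracket \underline{X}\rrbracket, \quad X_j \mapsto g_j. \]
Then $\mathrm{ev}_{\underline{a}}\circ\phi$ and $\mathrm{ev}_{\underline{g}(\underline{a})}$ are continuous homomorphisms that agree on each $X_j$. Polynomials are dense, so the two maps agree everywhere. This density argument is the only point needing care, and I expect it to be routine.

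Given the compatibility, the identity $\underline{f}(\underline{g}(\underline{X})) = \underline{X}$ evaluated at $\underline{a}$ gives $\underline{f}(\underline{g}(\underline{a})) = \underline{a}$, so the map $\underline{a}\mapsto\underline{f}(\underline{a})$ is surjective. Likewise $\underline{g}(\underline{f}(\underline{X})) = \underline{X}$ gives $\underline{g}(\underline{f}(\underline{a})) = \underline{a}$, so it is injective. Hence it is a bijection, with inverse $\underline{a}\mapsto\underline{g}(\underline{a})$.
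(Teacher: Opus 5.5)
Your proposal is correct and is the argument the paper intends. The paper states this corollary with no written proof, as an immediate consequence of Lemma~\ref{lem_inverting_power_series}: you evaluate the identities $\underline{f}(\underline{g}) = \underline{X}$ and $\underline{g}(\underline{f}) = \underline{X}$ at points of $(\varpi)^n$, and your density argument for the compatibility of evaluation with substitution supplies the detail the paper leaves implicit.
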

\begin{proposition}\label{prop_inverse_function_theorem}
    Let $n \geq 1$, and let $f_1, \dots, f_n \in (X_1, \dots, X_n) \cO \llbracket X_1, \dots, X_n \rrbracket$. Suppose that there exists $C \geq 0$ and $M' \in M_{n \times n}(\cO)$ such that $M_f(0) \cdot M' = \varpi^C 1_n$. Then there exist $g_1, \dots, g_n, h_1, \dots, h_n \in (X_1, \dots, X_n) \cO \llbracket X_1, \dots, X_n \rrbracket$ satisfying the following conditions:
    \begin{enumerate}
    \item $\underline{f}(\varpi^C \underline{X}) = \varpi^C M_f(0) \underline{g}(\underline{X})$. 
    \item $M_g(0) = 1_n$ and $\underline{g}(\underline{h}) = 1_n$.
        \item $\underline{f}( \varpi^C \underline{h}(X)) = \varpi^C M_f(0) \cdot \underline{X}$.
    \end{enumerate}
\end{proposition}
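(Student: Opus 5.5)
The plan is to define $\underline{g}$ by (1), check it has identity linear part, and then get $\underline{h}$ by inverting $\underline{g}$ via Lemma \ref{lem_inverting_power_series}. Note that (2) should read $\underline{g}(\underline{h}) = (X_1, \dots, X_n)$, i.e.\ $\underline{h}$ is the compositional inverse of $\underline{g}$.

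\emph{Construction of $\underline{g}$.} Expand $\underline{f}(\underline{X}) = M_f(0)\underline{X} + \underline{Q}(\underline{X})$, where each component of $\underline{Q}$ lies in $(X_1,\dots,X_n)^2$. Substituting $\varpi^C \underline{X}$ gives
\[ \underline{f}(\varpi^C\underline{X}) = \varpi^C M_f(0)\underline{X} + \varpi^{2C}\underline{Q}'(\underline{X}), \]
where $\underline{Q}' \in (X_1,\dots,X_n)^2\cO\llbracket \underline{X}\rrbracket^n$. This holds because a monomial of degree $d \geq 2$ acquires the factor $\varpi^{dC}$, which is divisible by $\varpi^{2C}$. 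We want to write $\varpi^{2C}\underline{Q}' = \varpi^C M_f(0)\underline{R}$ with $\underline{R}$ integral. Since $M_f(0) M' = \varpi^C 1_n$, we may take $\underline{R} = M'\underline{Q}'$: indeed $\varpi^C M_f(0) M' \underline{Q}' = \varpi^{2C}\underline{Q}'$. Setting $\underline{g} = \underline{X} + M'\underline{Q}'$ then gives (1), with $\underline{g} \in (X)\cO\llbracket X\rrbracket^n$ and $M_g(0) = 1_n$, since $M'\underline{Q}'$ has no linear terms. If $C = 0$, then $M_f(0)$ is invertible, and the same formula works, with $M' = M_f(0)^{-1}$.

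\emph{Construction of $\underline{h}$.} Since $\det M_g(0) = 1$ is a unit, Lemma \ref{lem_inverting_power_series} supplies $\underline{h} \in (X)\cO\llbracket X\rrbracket^n$ with $\underline{g}(\underline{h}) = \underline{X}$ (and $\underline{h}(\underline{g}) = \underline{X}$). This is (2).

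\emph{Proof of (3).} Substitute $\underline{X} = \underline{h}(\underline{X})$ into identity (1), which is legitimate because $\underline{h}$ has no constant term. This gives $\underline{f}(\varpi^C\underline{h}) = \varpi^C M_f(0)\,\underline{g}(\underline{h}) = \varpi^C M_f(0)\underline{X}$.

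The only point needing care is that the division by $M_f(0)$ introduces no denominators. This is handled entirely by the factor $\varpi^{2C}$ from the quadratic terms together with the identity $M_f(0)M' = \varpi^C 1_n$. Composition of power series without constant term is well defined over $\cO$, so no convergence issues arise.
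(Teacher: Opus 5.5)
Your proposal is correct and matches the paper's proof essentially verbatim: the paper also writes $\underline{f}(\varpi^C \underline{X}) = \varpi^C M_f(0)\underline{X} + \varpi^{2C}\underline{r}(\underline{X})$, sets $\underline{g} = \underline{X} + M'\underline{r}(\underline{X})$, and deduces (2) and (3) from Lemma~\ref{lem_inverting_power_series}. You are also right that (2) should read $\underline{g}(\underline{h}) = (X_1, \dots, X_n)$, and your explicit substitution of $\underline{h}$ into (1) to obtain (3) spells out what the paper leaves implicit.
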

\begin{proof}
We have $\underline{f}(\varpi^C \underline{X}) = \varpi^C M_f(0) \cdot \underline{X} + \varpi^{2C} \underline{r}(X)$, where each $r_i(\underline{X})$ $(i = 1,\dots, n$) lies in $(X_1, \dots, X_n)^2 \cO \llbracket X_1, \dots, X_n \rrbracket$. Applying our hypothesis, we have
\[ \underline{f}(\varpi^C \underline{X}) = \varpi^C M_f(0) \cdot \underline{X} + \varpi^{C} M_f(0) \cdot M' \cdot \underline{r}(\underline{X}) = \varpi^C M_f(0) \cdot \underline{g}(\underline{X}),   \]
where we take $\underline{g}(\underline{X}) = \underline{X} + M' \cdot \underline{r}(\underline{X})$. This gives (1). Then (2) and (3) follow from Lemma \ref{lem_inverting_power_series}. 
\end{proof}
\begin{corollary}\label{cor_Hensel_uniqueness}
    Let $n \geq 1$, and let $f_1, \dots, f_n \in (X_1, \dots, X_n) \cO \llbracket X_1, \dots, X_n \rrbracket$. Suppose that there exists $C \geq 0$ and $M' \in M_{n \times n}(\cO)$ such that $M_f(0) \cdot M' = \varpi^C 1_n$. If $\underline{a}, \underline{a}' \in (\varpi^{C+1})^n$ and $\underline{f}(\underline{a}) = \underline{f}(\underline{a}')$, then $\underline{a} = \underline{a}'$.
\end{corollary}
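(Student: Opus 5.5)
Write $\underline{a} = \varpi^C \underline{b}$ and $\underline{a}' = \varpi^C \underline{b}'$ with $\underline{b}, \underline{b}' \in (\varpi)^n$; this is possible because $\underline{a}, \underline{a}' \in (\varpi^{C+1})^n$. The idea is to transport the equation $\underline{f}(\underline{a}) = \underline{f}(\underline{a}')$ through the factorisation of Proposition \ref{prop_inverse_function_theorem}(1). That factorisation reduces the question to the injectivity of the power series $\underline{g}$ on $(\varpi)^n$, and this injectivity follows from Corollary \ref{cor_power_series_injective}.

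First, evaluate the identity $\underline{f}(\varpi^C \underline{X}) = \varpi^C M_f(0)\, \underline{g}(\underline{X})$ at $\underline{X} = \underline{b}$ and at $\underline{X} = \underline{b}'$. Evaluation is legitimate because all the series have coefficients in $\cO$ and no constant term, so they converge at points of $(\varpi)^n$. This gives
\[ \varpi^C M_f(0)\, \underline{g}(\underline{b}) = \varpi^C M_f(0)\, \underline{g}(\underline{b}'). \]

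Next, cancel $\varpi^C M_f(0)$. Multiplying on the left by $M'$ does not work directly, since the hypothesis is $M_f(0) M' = \varpi^C 1_n$, with $M'$ on the right. However, over the field $E$ this identity shows that $M_f(0)$ is invertible with inverse $\varpi^{-C} M'$. Hence $M' M_f(0) = \varpi^C 1_n$ as well, and in any case $\varpi^C M_f(0)$ is injective on $\cO^n$, because $\cO^n$ is torsion-free. We conclude $\underline{g}(\underline{b}) = \underline{g}(\underline{b}')$.

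Finally, by Proposition \ref{prop_inverse_function_theorem}(2) we have $M_g(0) = 1_n$ and $\underline{g}$ has no constant term. Corollary \ref{cor_power_series_injective} therefore says that $\underline{g}$ induces a bijection $(\varpi)^n \to (\varpi)^n$. So $\underline{b} = \underline{b}'$, and hence $\underline{a} = \underline{a}'$.

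There is no serious obstacle. The only points needing care are the justification for evaluating power series at $\varpi$-adic points and the cancellation of the matrix $M_f(0)$, which uses invertibility over $E$ rather than over $\cO$.
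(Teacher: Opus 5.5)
Your proposal is correct and is essentially the paper's argument: evaluate the factorisation $\underline{f}(\varpi^C \underline{X}) = \varpi^C M_f(0)\, \underline{g}(\underline{X})$ at $\varpi^{-C}\underline{a}$ and $\varpi^{-C}\underline{a}'$, cancel, and apply Corollary \ref{cor_power_series_injective} to $\underline{g}$. The only difference is that you justify the cancellation of $\varpi^C M_f(0)$ via its invertibility over $E$, a step the paper leaves implicit.
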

\begin{proof}
    Substituting into Proposition \ref{prop_inverse_function_theorem}, we obtain
    \[ \underline{f}(\underline{a}) = \varpi^C M_f(0) \underline{g}(\varpi^{-C} \underline{a}) = \varpi^C M_f(0) \underline{g}(\varpi^{-C} \underline{a}'), \]
    hence $\underline{g}(\varpi^{-C} \underline{a}) = \underline{g}(\varpi^{-C} \underline{a}')$. Corollary \ref{cor_power_series_injective} then implies that $\underline{a} = \underline{a}'$, as desired. 
\end{proof}
\begin{proposition}\label{prop_implicit_function_theorem}
    Let $q \geq r \geq 1$, and let $f_1, \dots, f_r \in (X_1, \dots, X_q) \cO \llbracket X_1, \dots, X_q \rrbracket$. Let $N \in M_{r \times r}(\cO)$ denote the matrix obtained by taking the first $r$ columns of $M_f(0)$, and suppose that there exists $C \geq 0$ and $M' \in M_{r \times r}(\cO)$ such that $N \cdot M' = \varpi^C 1_r$. Then we can find $\phi_{r+1}, \dots, \phi_{q} \in (X_{r+1}, \dots, X_q) \cO \llbracket X_{r+1}, \dots, X_q \rrbracket$ such that 
    \[ \underline{f}(\varpi^C \phi_{1}, \dots, \varpi^C \phi_r, \varpi^{2C} X_{r+1}, \dots, \varpi^{2C} X_q) = 0. \]
\end{proposition}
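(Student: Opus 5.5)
This is an implicit function theorem with a denominator of \(\varpi^C\), reduced to Proposition \ref{prop_inverse_function_theorem}. Write \(\underline{Y} = (X_{r+1}, \dots, X_q)\). Substitute \(X_i = \varpi^C Z_i\) for \(i \le r\) and \(X_j = \varpi^{2C} Y_j\) for \(j > r\). The plan is to show that the resulting system in \(\underline{Z}\) has the form \(\varpi^C N\) times an invertible system, and then solve it by Lemma \ref{lem_inverting_power_series} over the base ring \(\cO\llbracket \underline{Y} \rrbracket\).

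First expand \(\underline{f}\) to first order. Write
\[ \underline{f}(\underline{X}) = N \cdot (X_1, \dots, X_r)^T + P \cdot (X_{r+1}, \dots, X_q)^T + \underline{s}(\underline{X}), \]
where \(P\) is the remaining block of \(M_f(0)\) and each \(s_i\) lies in \((X_1,\dots,X_q)^2\). After the substitution:

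\begin{itemize}
\item the \(N\)-term becomes \(\varpi^C N \underline{Z}\);
\item the \(P\)-term becomes \(\varpi^{2C} P \underline{Y} = \varpi^C N M' P \underline{Y}\);
\item every quadratic monomial acquires a factor of at least \(\varpi^{2C}\), so \(\underline{s}(\ldots) = \varpi^{2C}\underline{\tilde r}(\underline{Z},\underline{Y}) = \varpi^C N M'\underline{\tilde r}(\underline{Z},\underline{Y})\), with \(\tilde r_i \in (\underline{Z}, \underline{Y})^2\).
\end{itemize}

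This uses \(N M' = \varpi^C 1_r\). Hence
\[ \underline{f}(\varpi^C\underline{Z}, \varpi^{2C}\underline{Y}) = \varpi^C N \cdot \underline{G}(\underline{Z}, \underline{Y}), \qquad \underline{G} = \underline{Z} + M' P \underline{Y} + M'\underline{\tilde r}. \]
It therefore suffices to find \(\phi_1,\dots,\phi_r \in (\underline{Y})\cO\llbracket \underline{Y}\rrbracket\) with \(\underline{G}(\underline{\phi}, \underline{Y}) = 0\).

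Next, solve \(\underline{G} = 0\) for \(\underline{Z}\). Consider the \(q\)-variable map \(\Phi(\underline{Z},\underline{Y}) = (\underline{G}(\underline{Z},\underline{Y}), \underline{Y})\). Its Jacobian at \(0\) is block upper triangular with identity diagonal blocks, hence has unit determinant. Lemma \ref{lem_inverting_power_series} then gives an inverse \(\Psi\) with \(\Phi\circ\Psi = \mathrm{id}\). Since the last \(q-r\) components of \(\Phi\) are the identity, \(\Psi\) has the form \(\Psi(\underline{W},\underline{Y}) = (\underline{\psi}(\underline{W},\underline{Y}), \underline{Y})\). Set \(\phi_i(\underline{Y}) = \psi_i(0,\underline{Y})\). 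These lie in \((\underline{Y})\), because \(\psi_i\) has no constant term, and they satisfy \(\underline{G}(\underline{\phi},\underline{Y}) = 0\).

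Multiplying by \(\varpi^C N\) gives the claimed identity
\[ \underline{f}(\varpi^C \phi_{1}, \dots, \varpi^C \phi_r, \varpi^{2C} X_{r+1}, \dots, \varpi^{2C} X_q) = 0. \]
(The statement indexes the functions as \(\phi_{r+1},\dots,\phi_q\); I take this to mean the \(r\) functions \(\phi_1,\dots,\phi_r\) above, which depend only on \(X_{r+1},\dots,X_q\).)

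The main point needing care is the factoring step, in particular that the quadratic terms really are divisible by \(\varpi^{2C}\). This holds because every variable is scaled by at least \(\varpi^C\), which is exactly why the \(\underline{Y}\)-variables must be scaled by \(\varpi^{2C}\): that way the linear \(P\)-term is also divisible by \(\varpi^{2C}\). The remaining substitutions are legitimate because all series involved have zero constant term.
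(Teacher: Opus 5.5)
Your proof is correct and is essentially the paper's argument. Both use $NM' = \varpi^C 1_r$ to pull out a factor $\varpi^C N$, then invert an augmented map $(\,\cdot\,, X_{r+1}, \dots, X_q)$ with unipotent Jacobian via Lemma~\ref{lem_inverting_power_series}; you are also right that the solutions should be indexed $\phi_1, \dots, \phi_r$. The only difference is packaging: the paper applies Proposition~\ref{prop_inverse_function_theorem} to $(f_1, \dots, f_r, X_{r+1}, \dots, X_q)$ with uniform scaling $\varpi^C$ and then evaluates at $(-M'P\underline{Y}, \varpi^C \underline{Y})$ (in your notation) to cancel the off-diagonal block, whereas your anisotropic scaling absorbs that block into $\underline{G}$, so you can simply set $\underline{W} = 0$.
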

\begin{proof}
    We apply Proposition \ref{prop_inverse_function_theorem} to $\underline{u} = (f_1, \dots, f_r, X_{r+1}, \dots, X_{q})$. We have
    \[ M_u(0) = \left( \begin{array}{cc} N & Z \\ 0 & 1_{q-r} \end{array} \right) \]
    for some $Z \in M_{r \times (q-r)}(\cO)$, hence
    \[ M_u(0) \cdot \left( \begin{array}{cc} M' & - M' Z \\ 0 & \varpi^C 1_{q-r} \end{array} \right) = \varpi^C 1_q,  \]
    so this application is justified. Inspecting the proof of Proposition \ref{prop_inverse_function_theorem}, we see that we will have $g_i(\underline{X}) = X_i$ if $i = r+1, \dots, q$, and hence $h_i(\underline{X}) = X_i$ if $i = r+1, \dots, q$ (by uniqueness of $\underline{h}$). 

    Writing $\underline{X} = (\underline{X}^{(1)}, \underline{X}^{(2)})$, where the first tuple has $\underline{X}^{(1)} = (X_1, \dots, X_r)$ and $\underline{X}^{(2)} = (X_{r+1}, \dots, X_q)$, we see that we have
    \[ \underline{f}(\varpi^C \underline{h}(-M' Z \underline{X}^{(2)}, \varpi^C \underline{X}^{(2)})) = - \varpi^C N M' Z \underline{X}^{(2)} + \varpi^{2C} Z \underline{X}^{(2)} = 0. \]
    The proof is therefore complete on taking $\phi_i = h_i(-M' Z \underline{X}^{(2)}, \varpi^C \underline{X}^{(2)})$ for $i = 1, \dots, r$. 
\end{proof}
\begin{corollary}\label{cor_close_components}
    Let $g \geq 1$, and let $f_1, \dots, f_r \in (\varpi, X_1, \dots, X_g) \cO \llbracket X_1, \dots, X_g \rrbracket$. Let $B = \cO \llbracket X_1, \dots, X_g \rrbracket / (f_1, \dots, f_r)$, and suppose given $\cO$-algebra homomorphisms $\psi_1, \psi_2 : B \to \cO$ and an integer $C \geq 0$ satisfying the following conditions:
    \begin{enumerate}
        \item Let $P_i = \ker \psi_i$. Then there exists an isomorphism $P_1 / P_1^2 \cong \cO^{g-r} \oplus T$, where $T$ is a finite $\cO$-module such that $\varpi^C T = 0$.
        \item $\psi_1 \equiv \psi_2 \text{ mod }\varpi^{2C+1}$. 
    \end{enumerate}
    Then there is a unique minimal prime $Q$ of $B$ contained in $P_1$, and it is contained in $P_2$ also. 
\end{corollary}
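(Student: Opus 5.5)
Throughout, the idea is to recentre at the point $\psi_1$, use the hypothesis on $P_1/P_1^2$ to arrange that the Jacobian has the shape required by Proposition \ref{prop_implicit_function_theorem}, and build from it a domain through which both $\psi_1$ and $\psi_2$ factor.

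\emph{Normalisation.} The homomorphism $\psi_1$ corresponds to a point $\underline{a} \in (\varpi)^g$ with $\underline{f}(\underline{a}) = 0$. After the substitution $\underline{X} \mapsto \underline{X} + \underline{a}$, I may assume $\underline{a} = 0$, so that each $f_i \in (X_1, \dots, X_g)$ and $P_1 = (X_1, \dots, X_g)B$. Then $P_1/P_1^2$ is the cokernel of the transpose of $M_f(0) : \cO^g \to \cO^r$. By Smith normal form, hypothesis (1) says exactly that $M_f(0)$ has rank $r$ and all its elementary divisors have valuation $\leq C$. Changing the $f_i$ by an element of $\GL_r(\cO)$ and the variables by an element of $\GL_g(\cO)$ (neither changes $B$, $P_1$ or the congruence (2)), I may assume that the first $r$ columns of $M_f(0)$ form a matrix $N$ with $N M' = \varpi^C 1_r$ for some $M' \in M_{r \times r}(\cO)$.

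\emph{The candidate prime $Q$.} Proposition \ref{prop_implicit_function_theorem} gives $\phi_1, \dots, \phi_r$, and hence an $\cO$-algebra homomorphism
\[ \theta : B \to \cO\llbracket Y_{r+1}, \dots, Y_g \rrbracket \]
defined by $X_i \mapsto \varpi^C \phi_i$ for $i \leq r$ and $X_j \mapsto \varpi^{2C} Y_j$ for $j > r$. Its kernel $Q$ is prime, since the target is a domain.
\begin{itemize}
\item[(a)] \emph{$Q \subseteq P_1$.} Setting all $Y_j = 0$ recovers $\psi_1$, because each $\phi_i$ has no constant term.
\item[(b)] \emph{$Q \subseteq P_2$.} Let $\underline{a}'$ be the point corresponding to $\psi_2$; by (2), $\underline{a}' \in (\varpi^{2C+1})^g$. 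Specialise $Y_j = \varpi^{-2C} a'_j \in (\varpi)$ for $j > r$. This gives a zero $\underline{b}$ of $\underline{f}$ with $b_j = a'_j$ for $j > r$ and $b_i = \varpi^C \phi_i(\cdot) \in (\varpi^{C+1})$ for $i \leq r$. Now apply Corollary \ref{cor_Hensel_uniqueness} to $\underline{u} = (f_1, \dots, f_r, X_{r+1}, \dots, X_g)$. Its Jacobian admits the matrix exhibited in the proof of Proposition \ref{prop_implicit_function_theorem}, and $\underline{u}(\underline{a}') = \underline{u}(\underline{b})$ with both points in $(\varpi^{C+1})^g$. Hence $\underline{b} = \underline{a}'$, so $\psi_2$ factors through $\theta$.
\end{itemize}

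\emph{Minimality and uniqueness.} Let $\mathfrak{p} = P_1[1/\varpi]$, a maximal ideal of $B[1/\varpi]$. Its cotangent space is $P_1/P_1^2 \otimes E$, which has dimension $g - r$. On the other hand, $\cO\llbracket \underline{X}\rrbracket[1/\varpi]$ is regular of dimension $g$ at the corresponding point, and $B$ is cut out there by $r$ equations. So Krull's principal ideal theorem gives $\dim B[1/\varpi]_{\mathfrak{p}} \geq g - r$. Therefore $B[1/\varpi]_{\mathfrak{p}}$ is regular, hence a domain. Since $\varpi \notin P_1$, this means $P_1$ contains a unique minimal prime $Q_0$ of $B$.

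It remains to show $Q = Q_0$. As $Q_0 \subseteq Q$ and $(B/Q_0)[1/\varpi]_{\mathfrak{p}}$ is a domain of dimension exactly $g-r$, it suffices to show that $(B/Q)[1/\varpi]_{\mathfrak{p}}$ also has dimension at least $g - r$. For this I would use that the composite $\cO\llbracket X_{r+1}, \dots, X_g\rrbracket \to B/Q \hookrightarrow \cO\llbracket \underline{Y}\rrbracket$ is injective. This should force the completed local ring of $B/Q$ at $\mathfrak{p}$ (a quotient of $E\llbracket X_1, \dots, X_g \rrbracket$) to contain $E\llbracket X_{r+1}, \dots, X_g\rrbracket$ injectively, which gives the dimension bound. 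Concretely, I would map it to the completion of $\cO\llbracket\underline{Y}\rrbracket[1/\varpi]$ at $\underline{Y}=0$, which is $E\llbracket \underline{Y} \rrbracket$.

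I expect this last comparison to be the main obstacle, and in particular the passage to completions after inverting $\varpi$. The Hensel-type steps before it are routine given the earlier results.
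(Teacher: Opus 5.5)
Your argument is correct and follows the paper's proof step for step: recentring at $\psi_1$, Smith normal form, Proposition~\ref{prop_implicit_function_theorem} to build $\theta$, Corollary~\ref{cor_Hensel_uniqueness} to factor $\psi_2$ through $\theta$, and regularity of $B_{P_1}$ for uniqueness. It has two small improvements: you justify the regularity of $B_{P_1}$, which the paper only asserts, and you apply Corollary~\ref{cor_Hensel_uniqueness} at the origin to $(f_1,\dots,f_r,X_{r+1},\dots,X_g)$, whereas the Jacobian of the paper's recentred $r$-variable map is $\operatorname{diag}(\varpi^{d_i})$ only modulo $\varpi^{2C+1}$.

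However, the final step you flag as the main obstacle is not needed. $\ker\theta$ is a prime contained in $P_1$, so it contains the unique minimal prime $Q_0$ below $P_1$, as you yourself observe. Step (b) gives $\ker\theta\subseteq P_2$, hence $Q_0\subseteq P_2$, which is all the statement asks. The paper concludes exactly this way and never identifies $\ker\Psi$ with the minimal prime.
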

Expressed in more geometric language: $P_1$ lies on a unique irreducible component of $\Spec B$, and $P_2$ lies on the same component. 
\begin{proof}
    After a change of variable, we can assume that the homomorphism $\cO \llbracket X_1, \dots, X_g \rrbracket \to \cO$ induced by $\psi_1$ is given by evaluation at $0$ (and consequently, $f_i \in (X_1, \dots, X_g) \cO \llbracket X_1, \dots, X_g \rrbracket$ for each $i = 1 ,\dots, r$). Then $\psi_2$ is given by evaluation at some point $\underline{a} \in (\varpi^{2C+1})^g$. 

    The $\cO$-module $P_1 / P_1^2$ is isomorphic to the cokernel of the map $\cO^r \to \cO^g$ induced by $M_f(0)^t$. Thanks to the existence of the Smith normal form, we can assume, after acting by $\GL_g(\cO)$ on the variables $X_1, \dots, X_g$ and by $\GL_r(\cO)$ on the equations $f_1, \dots, f_r$, that we have
    \[ M_f(0) = \left( \begin{array}{cccc|} \varpi^{d_1} & & & \\ & \varpi^{d_2} & & \\ & & \ddots & \\ & & & \varpi^{d_r} \end{array} \,\,\,\, 0_{g-r} \right), \]
    where $T \cong \oplus_{i=1}^r \cO / (\varpi^{d_i})$, hence (as $\varpi^C T = 0$) $d_i \leq C$ for each $i = 1, \dots, r$. In particular, we are in the situation of Proposition \ref{prop_implicit_function_theorem}, and so we have $\phi_1, \dots, \phi_r \in (X_{r+1}, \dots, X_g) \cO \llbracket X_{r+1}, \dots, X_g \rrbracket$ such that 
    \[ \underline{f}(\varpi^C \phi_1, \dots, \varpi^C \phi_r, \varpi^{2C} X_{r+1}, \dots, \varpi^{2C} X_g) = 0. \]
    We can define an $\cO$-algebra homomorphism $\Psi : B \to \cO \llbracket X_{r+1}, \dots, X_g \rrbracket$ by the formula $X_i \mapsto \varpi^C \phi_i$ if $1 \leq i \leq r$, $X_i \mapsto \varpi^{2C} X_i$ if $r+1 \leq i \leq g$. 

    The homomorphism $\psi_1$ factors through $\Psi$, so $\ker \Psi \leq P_1$. The image of $\Psi$ is a domain, so $\ker \Psi$ is prime. Moreover, $B_{P_1}$ is regular, so there is in fact a unique minimal prime $Q$ of $B$ contained in $P_1$, satisfying $Q \leq \ker \Psi \leq P_1$. We must show that $Q \leq P_2$; it will suffice to show that $\ker \Psi \leq P_2$, or equivalently that $\psi_2$ factors through $\Psi$. Define another homomorphism $\psi_3 : B \to \cO$ by the formula
    \[ \psi_3 = \operatorname{ev}_{\varpi^{-2C} \underline{a}^{(2)}} \circ \Psi = \operatorname{ev}_{(\varpi^C \underline{\phi}(\varpi^{-2C} \underline{a}^{(2)}), a^{(2)})}. \]
    We'll be done if we can show that $\psi_2 = \psi_3$; equivalently, if $\underline{a}^{(1)} = \varpi^C \underline{\phi}(\varpi^{-2C} \underline{a}^{(2)})$. To this end, we set $\underline{b} = \varpi^C \underline{\phi}(\varpi^{-2C} \underline{a}^{(2)}) \in (\varpi^{C+1})^r$, and $\underline{u} = \underline{f}(X_1, \dots, X_r, \underline{a}^{(2)}) - \underline{f}(0, \underline{a}^{(2)}) \in (X_1, \dots, X_r) \cO \llbracket X_1, \dots, X_r \rrbracket$. Then $M_u(0) = \operatorname{diag}(\varpi^{d_1}, \dots, \varpi^{d_r})$, so Corollary \ref{cor_Hensel_uniqueness} implies that $\underline{u}$ is injective on $(\varpi^{C+1})^r$. In particular, since we have $\underline{u}(\underline{a}^{(1)}) = \underline{u}(\underline{b})$, we conclude that $\underline{a}^{(1)} = \underline{b}$. This completes the proof. 
\end{proof}
\begin{theorem}\label{thm_application_of_Hensel}
    Let $q \geq 0$, let $R$ be a complete Noetherian local $\cO$-algebra, and let $\psi_1, \psi_2 : R \to \cO$ be $\cO$-algebra homomorphisms. Suppose given an integer $C \geq 0$ satisfying the following conditions:
    \begin{enumerate}
        \item Let $P_i = \ker \psi_i$. Then $\dim R_{P_1} \geq q$, and there is an $\cO$-algebra homomorphism $\cO^q \to P_1 / P_1^2$ with cokernel annihilated by $\varpi^C$.
        \item $\psi_1 \equiv \psi_2 \text{ mod }\varpi^{2C+1}$.
    \end{enumerate}
    Then $R_{P_1}$ is regular, there is a unique minimal prime $Q$ of $R$ contained in $P_1$, and this minimal prime is contained in $P_2$ also. 
\end{theorem}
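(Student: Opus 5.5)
We reduce to Corollary \ref{cor_close_components} by presenting $R$ as a quotient of a power series ring. Choose a surjection $\cO\llbracket X_1, \dots, X_g \rrbracket \to R$ with $g$ minimal (so $g = \dim_k \mathfrak{m}_R/(\varpi, \mathfrak{m}_R^2)$). After translating variables we may assume that $\psi_1$ is induced by evaluation at $0$. The kernel $J$ is finitely generated, say by $f_1, \dots, f_s \in (X_1, \dots, X_g)$. Then $P_1/P_1^2$ is the cokernel of the map $\cO^s \to \cO^g$ given by $M_f(0)^t$. By Smith normal form, after changing variables by $\GL_g(\cO)$ and generators by $\GL_s(\cO)$, we may take $M_f(0)$ to be diagonal, with entries $\varpi^{d_1}, \dots, \varpi^{d_{r'}}$ (possibly some of them $0$) in the first columns and zeros elsewhere.

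The first step is to extract $r$ good equations. Put $r = g - q$. Hypothesis (1), via Lemma \ref{lem_existence_of_homomorphism}-type reasoning, gives that $P_1/P_1^2$ is generated up to $\varpi^C$ by $q$ elements. Its rank is $g - \operatorname{rank} M_f(0)$, and the torsion-free part is spanned by the zero diagonal entries and the zero columns. Hence at least $g - q$ of the diagonal entries are nonzero with $d_i \le C$. More precisely, every elementary divisor beyond the first $q$ of the cokernel is killed by $\varpi^C$, so $M_f(0)$ has at least $r$ diagonal entries $\varpi^{d_i}$ with $d_i \le C$. Reorder so that these are $f_1, \dots, f_r$, in the variables $X_1, \dots, X_r$.

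Let $B = \cO\llbracket X_1, \dots, X_g\rrbracket/(f_1, \dots, f_r)$, so that $R$ is a quotient of $B$. For $B$ and its prime $P_1^B$ (evaluation at $0$), the cotangent module $P_1^B/(P_1^B)^2$ is $\cO^{g-r} \oplus \bigoplus_i \cO/(\varpi^{d_i})$, so Corollary \ref{cor_close_components} applies to $\psi_1, \psi_2$ composed with $B \to R$. In fact its proof gives more. The map $\Psi : B \to \cO\llbracket X_{r+1}, \dots, X_g\rrbracket$ has prime kernel $Q_B$, and $B_{P_1}$ is regular of dimension $g - r + 1 = q + 1$ (counting $\varpi$; equivalently the relative dimension is $q$). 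Thus $\operatorname{Spec} B$ near $P_1$ is a single irreducible component $V(Q_B)$, which contains $P_2$ by the corollary.

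The second step uses the dimension hypothesis and is the main obstacle. Since $R_{P_1}$ is a quotient of the regular local ring $B_{P_1}$, which is a domain of dimension equal to $\dim R_{P_1}$ (interpreting the bound $\dim R_{P_1} \ge q$ consistently with the normalization above, i.e.\ as $\dim B_{P_1}$), the kernel of $B_{P_1} \to R_{P_1}$ must be zero. A proper quotient of a regular local domain by a nonzero ideal drops dimension. Hence $R_{P_1} = B_{P_1}$ is regular, and $Q := Q_B R$ is the unique minimal prime of $R$ in $P_1$. Concretely, $J B_{Q_B} = 0$, so $J \subseteq Q_B$, and $Q_B$ is a prime of $R$. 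Since $Q_B \subseteq \ker(B \to \cO)$ along $\psi_2$, we get $Q \subseteq P_2$.

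The delicate points are bookkeeping. First, one must match the dimension convention in (1) with $\dim B_{P_1}$; I expect $\dim R_{P_1}$ is meant for the localization at a height-$q$ characteristic-zero point, and $B_{P_1}$ has dimension $g - r = q$ there, since $P_1$ is not maximal. Second, one must check that the minimal prime $Q_B$ of $B$ at $P_1$ contains $J$. If needed, this last point can be argued directly: $B_{P_1}$ is a regular domain of dimension $q$, so any prime of $B$ inside $P_1$ containing $J$ with $\dim \ge q$ must be $Q_B$.
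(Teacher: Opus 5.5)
Your proposal is correct and follows essentially the paper's route: normalize $\psi_1$ to evaluation at $0$, use Smith normal form to isolate $r = g-q$ equations with $d_i \le C$, apply Corollary~\ref{cor_close_components} to $B = \cO\llbracket X_1, \dots, X_g \rrbracket/(f_1, \dots, f_r)$, and use $\dim R_{P_1} \geq q$ to see that $B_{P_1} \to R_{P_1}$ is an isomorphism, so the minimal prime transfers. The only difference is a harmless reordering: you get the $r$ good equations directly from the $\varpi^C$-generation hypothesis, rather than first proving $R_{P_1}$ regular as the paper does.

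One correction: $\varpi \notin P_1$ becomes a unit in $B_{P_1}$, so $\dim B_{P_1} = \operatorname{ht} P_1 = g - r = q$ exactly. Your parenthetical ``$q+1$'' is a slip, as your last paragraph recognizes, and this value matches hypothesis (1) as literally stated, with no reinterpretation needed.
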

\begin{proof}
    Fix a surjection $ A = \cO \llbracket X_1, \dots, X_g \rrbracket \to R$ of $\cO$-algebras, and let $f_1, \dots, f_s \in A$ denote a generating set for the ideal $I = \ker(A \to R)$. After a change of variable, we can assume that $\psi_1$ is given by evaluation at $\underline{X} = 0$. 

    Our hypotheses imply that $R_{P_1}$ is regular of dimension $q$. In particular, there is an isomorphism $P_1 / P_1^2 \cong \cO^q \oplus T$, where $\varpi^C T = 0$. The $\cO$-module $P_1 / P_1^2$ is isomorphic to the cokernel of the map $\cO^s \to \cO^g$ given by the matrix $M_f(0)^t$. Thanks to the existence of Smith normal form, we can assume, after acting by an element of $\GL_g(\cO)$ on $X_1, \dots, X_g$ and by an element of $\GL_s(\cO)$ on $f_1, \dots, f_s$, that we have
    \[ M_f(0) = \left( \begin{array}{cccccc} \varpi^{d_1} & & & & \\ & \varpi^{d_2} & & & \\ & & \ddots & & \\ & & & \varpi^{d_r}  & \\ & & & & 0 \\ & & & & & \ddots \end{array}  \right), \]
    where $r = g - q$ and $0 \leq d_i \leq C$ for each $i = 1, \dots, r$. Let $B = A / (f_1, \dots, f_r)$. Then the map $A \to R$ factors through $B$; let $P_1', P_2'$ denote the pullback of $P_1, P_2$, respectively, to $B$. Then $B_{P_1'}$ is regular of dimension $q$; consequently, the map $B_{P_1'} \to R_{P_1}$ is an isomorphism. Let $Q$ denote the unique minimal prime of $R$ contained in $P_1$, and let $Q'$ denote its pullback to $B$; then $Q'$ is the unique minimal prime of $B$ contained in $P_1'$. 

    The hypotheses of Corollary \ref{cor_close_components} apply to $B$, and it follows that $P_1'$ and $P_2'$ contain $Q'$. Since $B / Q' \to R / Q$ is an isomorphism, it follows that $P_1$ and $P_2$ both contain $Q$; and this is what we needed to show. 
\end{proof}

\subsection{Group determinants} 
Let $E$ be a coefficient field, and let $\Delta \leq \GL_n(\cO)$ be a closed subgroup that is absolutely irreducible in its tautological action on $E^n$. The notion of group determinant is defined in \cite{Che14}, and the relation between the deformation theory of a representation and its associated group determinant studied in \cite{New23}. We record a result  in this direction for later use. 
\begin{proposition}\label{prop_deforming_a_pseudocharacter}
    There exists an integer $C \geq 0$ with the following properties. Let $\Gamma$ be a profinite group, and let $\rho : \Gamma \to \Delta$ be a surjective, continuous homomorphism. Let $A = \cO \oplus \epsilon E / \cO$, and let $\alpha_{C} : A \to A$ be the $\cO$-algebra homomorphism that acts as multiplication by $\varpi^{C}$ on $\epsilon E / \cO$. Then:
    \begin{enumerate}
        \item Let $\widetilde{D}$ be a continuous group determinant of $\Gamma$ over $A$ lifting $D = \det(X - \rho)$. Then there exists a continuous representation $\widetilde{\rho} : \Gamma \to \GL_n(A)$ lifting $\rho$ such that $\alpha_{C} \circ \widetilde{D} = \det(X - \widetilde{\rho})$.
        \item Let $\widetilde{\rho}_1, \widetilde{\rho}_2 : \Delta \to \GL_n(A)$ be liftings of $\rho$ such that $\det(X - \widetilde{\rho}_1) = \det(X - \widetilde{\rho}_2)$. Then $\alpha_{C} \circ \widetilde{\rho}_1$ and $\alpha_{C} \circ \widetilde{\rho}_2$ are conjugate under the action of the group $1 + \epsilon M_n(E/ \cO)$; and if $1 + \epsilon X$ centralizes $\widetilde{\rho}_1$, then $\varpi^{C} X$  is a scalar matrix. 
    \end{enumerate}
\end{proposition}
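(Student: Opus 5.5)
The constant $C$ has to depend only on $\Delta$, so the plan is to extract everything from one piece of data attached to $\Delta$. Let $\Lambda \subset M_n(\cO)$ be the closed $\cO$-subalgebra generated by $\Delta$, i.e.\ the image of $\cO\llbracket \Delta \rrbracket$. Since $\Delta$ is absolutely irreducible on $E^n$, Burnside's theorem gives $\Lambda \otimes_\cO E = M_n(E)$. Hence there is $C_0 \geq 0$ with $\varpi^{C_0} M_n(\cO) \subset \Lambda$.

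Fix once and for all elements $\delta_1, \dots, \delta_{n^2} \in \Delta$ whose images span $M_n(E)$. Their Gram matrix $G = (\operatorname{tr}(\delta_i \delta_j))$ for the trace pairing has determinant of some valuation $v_0$. I expect $C$ to be a fixed linear combination of $C_0$ and $v_0$. Since $\rho$ is surjective, we may choose lifts $g_i \in \Gamma$ of the $\delta_i$, so none of these constants depend on $\Gamma$. Every element $x \in M_n(E/\cO)$ and every element of $\Lambda$ is determined, up to $\varpi^{v_0}$-torsion ambiguity, by its traces against the $\delta_i$. This is the integral substitute for the rational statement of Chenevier that an absolutely irreducible determinant over $E \oplus \epsilon E$ comes from a unique representation.

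I would prove part (2) first, since part (1) reduces to the same linear algebra. Write $\widetilde{\rho}_k = (1+\epsilon c_k)\rho$ with $c_k : \Gamma \to M_n(E/\cO)$ a $1$-cocycle for $\operatorname{ad}\rho$, and put $c = c_1 - c_2$. Expanding the equality of determinants to first order in $\epsilon$, and applying it to all elements of $\cO[\Gamma]$ (the group determinant is multiplicative and polynomial), gives the following. The $\cO$-linear map $\phi : \cO[\Gamma] \to M_n(E/\cO)$, defined by $\gamma \mapsto c(\gamma)\rho(\gamma)$, satisfies
\[ \phi(xy) = \phi(x)\rho(y) + \rho(x)\phi(y), \]
and $\operatorname{tr}(\phi(x)\rho(y)) = 0$ for all $x, y$.
\begin{itemize}
\item The trace identity shows that if $\rho(x) = 0$ then $\operatorname{tr}(\phi(x)\rho(\delta_i)) = 0$ for all $i$, so $\varpi^{v_0}\phi(x) = 0$. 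Therefore $\varpi^{v_0}\phi$ descends to a derivation $\Lambda \to M_n(E/\cO)$.
\item Restricted to $\varpi^{C_0} M_n(\cO) \subset \Lambda$, this gives a derivation of $M_n(\cO)$ scaled by $\varpi^{C_0}$. Derivations of $M_n(\cO)$ are inner; one can check this directly on the matrix units $e_{ij}$.
\item After multiplying by a further power of $\varpi$, the derivation on all of $\Lambda$ is therefore $x \mapsto [Y, x]$ for some $Y \in M_n(E/\cO)$. This says precisely that $\alpha_C \circ \widetilde{\rho}_1$ and $\alpha_C \circ \widetilde{\rho}_2$ are conjugate by $1 + \epsilon Y$.
\end{itemize}
The centralizer claim is easier. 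If $X$ commutes with $\rho(\Delta)$, then $X$ commutes with $\Lambda \supset \varpi^{C_0} M_n(\cO)$, so $\varpi^{C_0} X$ commutes with every $e_{ij}$ and is therefore scalar.

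For part (1), we are given $\widetilde{D}$ lifting $D$. I would first produce a candidate from traces. For $\gamma \in \Gamma$, let $t_i(\gamma) \in A$ be the trace of $\widetilde{D}$ (its coefficient of $X^{n-1}$) evaluated at $\gamma g_i$. Define $\widetilde{\rho}(\gamma) \in M_n(A)$ as the unique lift of $\rho(\gamma)$ whose $\epsilon$-part $y$ satisfies $\operatorname{tr}(y\,\delta_i) = $ the $\epsilon$-part of $t_i(\gamma)$ for each $i$. This is solvable after multiplying the $\epsilon$-part by $\varpi^{v_0}$, by inverting $G$ up to $\varpi^{v_0}$.

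The main obstacle is showing that, after a further bounded scaling $\alpha_{C}$, this candidate is a homomorphism and has characteristic polynomial exactly $\alpha_C \circ \widetilde{D}$. The tool I would use is the Cayley--Hamilton quotient $S = A\llbracket\Gamma\rrbracket/\operatorname{CH}(\widetilde{D})$. Its $\epsilon$-part is an extension of $\Lambda$ by a module on which all trace identities hold, and the multiplicativity defect of $\widetilde{\rho}$ is a $2$-cocycle with values in $M_n(E/\cO)$. That defect is killed by the same mechanism as in part (2): factor it through $\Lambda$ using the trace identities, which costs $\varpi^{v_0}$, then restrict to $\varpi^{C_0}M_n(\cO)$, where the Hochschild cohomology of $M_n(\cO)$ vanishes. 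If this becomes messy, my fallback is to import the integral comparison between representation and determinant deformations from \cite{New23}, with constants depending on $\Lambda$ alone.
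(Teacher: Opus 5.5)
Your argument for part (2) is sound. It is also a more explicit route than the paper's, which simply quotes \cite[Proposition 2.7]{New23}: your constants $C_0$ and $v_0$ make it transparent that $C$ depends only on $\Delta$. One correction: the identity $\operatorname{tr}(\phi(x)\rho(y)) = 0$ for all $x, y$ is false. It already fails for $\phi = [Y, \rho(\cdot)]$, i.e.\ for conjugate lifts, which is exactly the situation you are trying to reach. Equality of the determinants gives only $\operatorname{tr}\phi(x) = 0$ for all $x \in \cO[\Gamma]$, hence $\operatorname{tr}(\phi(x)\rho(y)) = -\operatorname{tr}(\rho(x)\phi(y))$. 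This does vanish when $\rho(x) = 0$, which is all your first bullet uses.

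Part (1), by contrast, has a genuine gap: the step you identify as the main obstacle is false as stated. The reconstruction identity you need is $\operatorname{tr}(\widetilde\rho(\gamma)\widetilde\rho(g_i)) = \operatorname{tr}\widetilde D(\gamma g_i)$. Its $\epsilon$-part is $\operatorname{tr}(y(\gamma)\delta_i) + \operatorname{tr}(\rho(\gamma) z_i)$, where $z_i$ is the unknown $\epsilon$-part of $\widetilde\rho(g_i)$. Pairing against $\delta_i$ drops the second term, an error linear in $\rho(\gamma)$ which is in general neither a derivation nor of bounded torsion.

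For instance, take $\Gamma = \Delta \times \mathbb{Z}_p$, $\rho$ the projection, $g_i = (\delta_i, 1)$, and $\widetilde D = \det(X - (1 + \epsilon\chi)\rho)$ with $\chi(\delta, t) = t\varpi^{-m} \bmod \cO$. Your candidate is $\gamma \mapsto (1 + \epsilon(\chi(\gamma) + \varpi^{-m}))\rho(\gamma)$ up to $\varpi^{v_0}$-torsion. It sends $1$ to $1 + \epsilon\varpi^{-m}$, so no rescaling by $\varpi^C$ with $C$ independent of $m$ makes it multiplicative, although $\widetilde D$ visibly comes from a representation. In general the $\epsilon$-part of your $\widetilde\rho(1)$ is governed by the $\epsilon$-parts of $\operatorname{tr}\widetilde D(g_i)$, which nothing in your construction controls.

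The mechanism of part (2) does not transfer either. There, $\operatorname{tr}\phi \equiv 0$ is what forced $\phi$ to kill $\ker(\cO[\Gamma] \to \Lambda)$ up to $\varpi^{v_0}$. In (1), the $\epsilon$-part of any solution genuinely sees $\ker(\Gamma \to \Delta)$, as the same example shows. So at best the multiplicativity defect, not the candidate, could factor through $\Lambda \otimes \Lambda$, and you give no argument that it does. That would have to come from the Cayley--Hamilton identities of $\widetilde D$ rather than from its trace values. Afterwards you would still have to match the full determinant law $\alpha_C \circ \widetilde D$, not just its trace, which is a separate step when $p \le n$.

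What is actually required is algebra-level control of $\widetilde D$. For example, one needs that $A[\Gamma]/\mathrm{CH}(\widetilde D)$ is a matrix algebra over $A$, compatibly with $\widetilde D$, up to torsion of exponent bounded in terms of $\Delta$ alone. This comparison between determinant and representation deformations is exactly what the paper imports from \cite[Proposition 2.7]{New23}. So your fallback coincides with the paper's proof, but your own route to (1) does not go through.
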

\begin{proof}
    This is a restatement of \cite[Proposition 2.7]{New23} that makes slightly clearer what it means by the integer $C(\rho)$ `depending only on the image $\rho(\Gamma)$'.
\end{proof}
In the situation of the proposition, we will define $C(\Delta)$ to be the smallest value of $C \geq 0$ such that the conclusion of the proposition is satisfied. 

\section{HBAVs}\label{sec_application_of_moret-bailly}

Let $M$ be a totally real number field, and let $S$ be a scheme. In this paper, we follow \cite{Rap78} in defining an $M$-HBAV over $S$ to be pair $(X, m)$, where $X$ is a abelian scheme over $S$ and $m : \cO_M \to \End(X)$ is a homomorphism such that the $\cO_S \otimes \cO_M$-module $\Lie X$ is free, locally on $S$. If $(X, m)$ is an $M$-HBAV, then we define
\[ \mathcal{P}(X, m) = \{ \lambda \in \Hom_{\cO_M}(X, X^\vee) \mid \lambda = \lambda^\vee \}. \]
\'Etale locally on $S$, $\mathcal{P}(X, m)$ is a free $\cO_M$-module of rank 1, and has the structure of an ordered invertible $\cO_M$-module: in other words, for each embedding $M \to \mathbb{R}$, the specification of a connected component of
\[ \mathcal{P}(X, m) \otimes_{\cO_M} \mathbb{R} - \{ 0 \}, \]
namely the one containing some (equivalently, all) polarizations of $X$ \cite[Proposition 1.17]{Rap78}. If $\mathfrak{a} \leq M$ is a fractional $\cO_M$-ideal, then we give it the ordering determined by the natural one on $M$. We call a polarised $M$-HBAV over $S$ a triple $(X, m, j)$, where $(X, m)$ is an $M$-HBAV and $j : \mathfrak{d}_{M / \mathbb{Q}}^{-1} \to \mathcal{P}(X, m)$ is an isomorphism of \'etale sheaves of ordered $\cO_M$-modules. 
\begin{lemma}\label{lem_scalar_extension_and_polarisations}
    Let $M' / M$ be an extension of totally real number fields, and let $(X, m, j)$ be a polarised $M$-HBAV over a field $k$. Let $X' = X \otimes_{{\cO_M}} \cO_{M'}$, let $m' : \cO_{M'} \to \End(X')$ be the scalar extension of $m$, and let $j' :  \mathfrak{d}_{M' / \mathbb{Q}}^{-1} = \mathfrak{d}_{M / \mathbb{Q}}^{-1} \otimes_{\cO_M} \mathfrak{d}_{M' / M}^{-1} \to \mathcal{P}(X', m')$ be the scalar extension of $j$. Then $(X', m', j')$ is a polarised $M'$-HBAV.
\end{lemma}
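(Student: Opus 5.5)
We need to check three things: that $X'$ is an abelian variety with a suitable $\cO_{M'}$-action satisfying the Rapoport condition, and that $j'$ is an isomorphism of ordered $\cO_{M'}$-modules onto $\mathcal{P}(X',m')$.

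First, we fix the construction. Since $\cO_{M'}$ is a finite projective $\cO_M$-module, say of rank $d = [M':M]$, the Serre tensor construction $X' = X \otimes_{\cO_M} \cO_{M'}$ is an abelian variety of dimension $d \dim X$. Concretely, if $\cO_{M'}$ is a direct summand of $\cO_M^n$, then $X'$ is the corresponding direct summand of $X^n$, cut out by an idempotent in $M_n(\cO_M) \subset \End(X^n)$. The action $m'$ comes from the action of $\cO_{M'}$ on the second factor. Moreover $\Lie X' = \Lie X \otimes_{\cO_M} \cO_{M'}$. Since $\Lie X$ is free over $k \otimes \cO_M$, it follows that $\Lie X'$ is free over $k \otimes \cO_{M'}$, so $(X', m')$ is an $M'$-HBAV.

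Second, we treat the dual and symmetric homomorphisms. There is a canonical identification
\[ (X')^\vee \cong X^\vee \otimes_{\cO_M} \Hom_{\cO_M}(\cO_{M'}, \cO_M) \cong X^\vee \otimes_{\cO_M} \mathfrak{d}_{M'/M}^{-1}, \]
where the second isomorphism uses the trace pairing. To justify the first isomorphism, we reduce to the case of free modules by taking direct summands and use additivity of duality. It follows that
\[ \Hom_{\cO_{M'}}(X', X'^\vee) \cong \Hom_{\cO_M}(X, X^\vee) \otimes_{\cO_M} \mathfrak{d}_{M'/M}^{-1}. \]
One checks that this identification is compatible with the Rosati-type involution $\lambda \mapsto \lambda^\vee$. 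This gives $\mathcal{P}(X',m') \cong \mathcal{P}(X,m) \otimes_{\cO_M} \mathfrak{d}_{M'/M}^{-1}$, so $j'$ is an isomorphism of $\cO_{M'}$-modules. This can be checked étale locally, after extending scalars to $\bar k$.

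Third, the positivity. We must show that $j'$ carries the positive cone of $\mathfrak{d}_{M'/\mathbb{Q}}^{-1}$ to the cone of $\mathcal{P}(X',m')$ containing the polarizations. Since both sides are ordered invertible modules and $j'$ is $\cO_{M'}$-linear, it suffices to exhibit one totally positive $x \in \mathfrak{d}_{M'/\mathbb{Q}}^{-1}$ with $j'(x)$ a polarization. Write $x = y \otimes t$ with $y \in \mathfrak{d}_{M/\mathbb{Q}}^{-1}$ totally positive, so that $j(y) = \lambda$ is a polarization, and with $t \in \mathfrak{d}_{M'/M}^{-1}$ totally positive. We then need to show that the homomorphism $\lambda \otimes t : X' \to X'^\vee$ is a polarization.

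The main obstacle is this last point. I would verify it after a further isogeny: choose an integer $N$ with $N\cO_{M'} \subset \bigoplus \cO_M e_i$ for a basis $e_i$, and compare $X'$ with $X^d$ via an isogeny. Under this comparison, $\lambda \otimes t$ pulls back to the homomorphism $X^d \to (X^d)^\vee$ given by the matrix $(\operatorname{tr}_{M'/M}(t e_i e_j))_{i,j}$ acting through $\lambda$. This matrix is a totally positive definite symmetric matrix over $M$, because the trace form twisted by a totally positive $t$ is positive definite at every real place. The remaining claim is the standard fact that $\lambda \circ S$ is a polarization when $S$ is a totally positive symmetric matrix over $\cO_M$. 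I would prove it by diagonalizing $S$ over $M$ and using the fact that $\lambda \circ m(a)$ is a polarization for totally positive $a$; the latter holds because $\mathcal{P}(X,m)$ is ordered with polarizations forming the positive cone, as in \cite[Proposition 1.17]{Rap78}. Finally, a pullback of a homomorphism along an isogeny is a polarization if and only if the original is, up to positive integer multiples, which concludes the argument.
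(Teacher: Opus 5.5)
Your proposal is correct and follows the paper's route: identify $(X')^\vee \cong X^\vee \otimes_{\cO_M} \Hom_{\cO_M}(\cO_{M'},\cO_M) \cong X^\vee \otimes_{\cO_M} \mathfrak{d}_{M'/M}^{-1}$, deduce $\Hom_{\cO_{M'}}(X',X'^\vee)\cong \Hom_{\cO_M}(X,X^\vee)\otimes_{\cO_M}\mathfrak{d}_{M'/M}^{-1}$ so that $j'$ is an isomorphism, and check positivity by showing that $\lambda\otimes t$ is a polarisation when $\lambda$ is one (the paper takes $t=1$). The paper simply asserts this last fact, so your pullback to $X^d$ via the totally positive definite trace-form matrix $(\operatorname{tr}_{M'/M}(t e_i e_j))$ supplies the justification it omits. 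Your check that $\Lie X'$ is free over $k\otimes\cO_{M'}$ likewise fills in a point the paper leaves implicit.
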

\begin{proof}
    We have $(X')^\vee = X^\vee \otimes_{\cO_M} \Hom_{\cO_M}(\cO_{M'}, \cO_M) \cong X^\vee \otimes_{\cO_M} \mathfrak{d}_{M' / M}^{-1}$. This shows that $j'$ is defined. We need to check that it is an isomorphism, and that it preserves positivity. It is an isomorphism because we have
    \[ \Hom_{\cO_{M'}}( X \otimes_{\cO_M} \cO_{M'}, (X \otimes_{\cO_M} \cO_{M'})^\vee) \cong \Hom_{\cO_M}(X, X^\vee) \otimes_{\cO_M} \mathfrak{d}_{M' / M}^{-1}. \]
    It preserves positivity because, if $\lambda : X \to X^\vee$ is a polarisation, then $\lambda \otimes 1 \in \Hom_{\cO_{M'}}( X \otimes_{\cO_M} \cO_{M'}, X^\vee \otimes_{\cO_M} \mathfrak{d}_{M'  / M}^{-1})$ is a polarisation.
\end{proof}
The goal of this section is to prove the following result.
\begin{theorem}\label{thm_pot_mod}
    Let $F$ be a totally real number field, let $p$ be a prime, and let $E$ be a coefficient field. Suppose given a continuous, irreducible representation $\rho : G_F \to \GL_2(\cO)$ satisfying the following conditions:
    \begin{enumerate}
        \item $\det \rho = \epsilon^{-1}$, and $\rho$ is unramified at all but finitely many places of $F$.
        \item For each place $v | p$ of $F$, $\rho|_{G_{F_v}}$ is crystalline ordinary, of Hodge--Tate weights $\{ 0, 1 \}$.
        \item For every finite place $v$ of $F$, $\rho|_{G_{F_v}}$ is semistable. 
    \end{enumerate}
    Let $\Sigma$ denote the set of places $v$ of $F$ such that $\mathrm{WD}(\rho|_{G_{F_v}})$ is ramified. Then, for any $n \geq 1$, we can find the following data:
    \begin{enumerate}
        \item A totally real number field $F' / F$ such that $(\rho \times \epsilon)(G_{F'}) = (\rho \times \epsilon)(G_F)$, and each place $v$ such that $\rho|_{G_{F_v}}$ is ramified splits in $F'$. (In other words, such that each place $v \in \Sigma$ or $v | p$ splits in $F'$.)
        \item A totally real number field $M$, together with an $M$-HBAV $(A, m, j)$ over $F'$ that is modular; that has good reduction away from $\Sigma$; that has multiplicative reduction at each place of $F'$ lying above $\Sigma$; and that has ordinary reduction at each place $v | p$.
        \item A prime ideal $\mathfrak{p} \leq \cO_M$ and an embedding $E \to M_\mathfrak{p}$ making $M_\mathfrak{p}$ a compositum of quadratic extensions of $E$ such that, for each $v | p$, the unit roots of the linearised crystalline Frobenius on $\rho$ and $T_{\mathfrak{p}} A$ have the same image in the residue field of $M_\mathfrak{p}$.
        \item An isomorphism $\rho|_{G_{F'}} \otimes_\cO \cO_{M_\mathfrak{p}} / (\varpi^n) \cong (T_{\mathfrak{p}} A)/ (\varpi^n)^\vee $ of $\cO_{M_\mathfrak{p}}[G_{F'}]$-modules. 
    \end{enumerate}
\end{theorem}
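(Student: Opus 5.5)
The plan is a Moret-Bailly argument applied to a Hilbert modular variety with level structure encoding $\rho \bmod \varpi^n$. The structure is the usual one for potential modularity, with one change: the local points must match $\rho|_{G_{F_v}}$ to high $\varpi$-adic precision, not just residually.

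\emph{Step 1: choosing $M$, $\mathfrak{p}$ and the moduli problem.} Choose a totally real field $M$ and a prime $\mathfrak{p}$ of $M$ with an embedding $E \to M_\mathfrak{p}$ such that $M_\mathfrak{p}$ is a compositum of quadratic extensions of $E$. The quadratic extensions give room to adjust the unit roots at $v | p$ in part (3) and to realise Tate-curve parameters at $v \in \Sigma$. Next consider the moduli space $Y$ over $F$ of polarised $M$-HBAVs $(X, m, j)$ together with an $\cO_M$-linear isomorphism $(T_\mathfrak{p} X)/(\varpi^n)^\vee \cong \rho \otimes_\cO \cO_{M_\mathfrak{p}}/(\varpi^n)$, compatible with the Weil pairing via $\det \rho = \epsilon^{-1}$. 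This $Y$ is a twisted form of a Hilbert modular variety of level $\mathfrak{p}^n$. Add auxiliary level at an extra prime $\mathfrak{q}$ so that $Y$ is a smooth, geometrically connected quasi-projective variety; connectedness needs the standard strong approximation argument for the determinant-one group. Lemma~\ref{lem_scalar_extension_and_polarisations} lets us build candidate local points by scalar extension from elliptic curves, or from $\GL_2$-type abelian varieties over smaller fields.

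\emph{Step 2: local points.} We need a nonempty open set of $F_v$-points for each $v \in \Sigma \cup S_p \cup \{\infty\}$ with the prescribed reduction type.
\begin{itemize}
\item At real places: use complex multiplication or the oddness of $\det$ to produce real points.
\item At $v \in \Sigma$: $\rho|_{G_{F_v}}$ is semistable with ramified $\mathrm{WD}$, so it is an extension of $\psi$ by $\psi\epsilon^{-1}$ with $\psi$ unramified, given by a Kummer class $q \in F_v^\times \hat{\otimes} \cO$. Approximate this class modulo $\varpi^n$ by a Tate-type HBAV $\mathbb{G}_m \otimes \mathfrak{a} / q^{\mathfrak{b}}$ with real multiplication by $\cO_M$, choosing the period lattice in $(F_v^\times)\otimes \cO_M$ to match the Kummer class modulo $\varpi^n$.
\item At $v | p$: $\rho|_{G_{F_v}}$ is crystalline ordinary, so it is an extension of an unramified character times $\epsilon^{-1}$ by an unramified character. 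Realise it by the Tate module of an ordinary $p$-divisible group with $\cO_M$-action (a Serre--Tate deformation of an ordinary HBAV over $k(v)$). Choose the canonical-lift coordinate to match the extension class modulo $\varpi^n$, and the unit root up to the residue-field condition in (3).
\end{itemize}
In each case the local condition is open, because good/multiplicative/ordinary reduction and the mod $\varpi^n$ Galois module are locally constant on $Y(F_v)$ by Krasner-type continuity. This is where the geometry of numbers enters. One must find global $\cO_M$-lattices and periods that simultaneously realise a prescribed extension class to precision $\varpi^n$ and a polarisation of type $\mathfrak{d}^{-1}_{M/\mathbb{Q}}$. This means finding elements of an ordered invertible $\cO_M$-module that are totally positive and lie in prescribed congruence classes, so that the Tate uniformisation is polarised with the correct $j$. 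I expect this step to be the main obstacle, especially matching extension classes in $H^1(F_v, \cO(\epsilon^{-1}))/\varpi^n$ precisely rather than residually.

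\emph{Step 3: Moret-Bailly and modularity.} Apply Moret-Bailly's theorem to $Y$, with the local open sets above and an auxiliary set $T$ of places at which we demand splitting, to obtain a totally real $F'/F$ with $F'$-points of $Y$. Choose $T$, by Chebotarev, to force $(\rho\times\epsilon)(G_{F'}) = (\rho\times\epsilon)(G_F)$ via linear disjointness from the fixed field of $\rho\times\epsilon$, together with splitting at $\Sigma \cup S_p$. The resulting $(A,m,j)$ then has the required reduction types and level structure, which gives (4).

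For modularity in (2), run the same construction with a second prime $\mathfrak{r}$ of $M$. There we also impose a level structure matching an induced representation from a CM field, or an ordinary modular representation, so that $\overline{\rho}_{A,\mathfrak{r}}$ has large image and is modular. Then apply a standard modularity lifting theorem for HBAVs (ordinary or Barnet-Lamb--Gee--Geraghty type) at $\mathfrak{r}$, or use Taylor's potential modularity, folded into the choice of $F'$. Modularity at one prime propagates to the compatible system $\{T_\lambda A\}$.
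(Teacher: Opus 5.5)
Your global architecture matches the paper's: a moduli space of polarised $M$-HBAVs with $\mathfrak{p}^n$-level structure twisted by $\rho$, local points, Moret-Bailly with splitting at $\Sigma \cup S_p$, and linear disjointness. But Step 2 at $v \mid p$ fails as stated, and this is where the substance of the proof lies.

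\textbf{The gap at $v \mid p$.} For an ordinary HBAV over $\cO_{F_v}$, the unramified character on the \'etale part of $X[\mathfrak{p}^\infty]$ is fixed by the special fibre $X_0/k(v)$. It sends $\Frob_v$ to the $\mathfrak{p}$-adic unit root of the $q_v$-Frobenius of $X_0$. A Serre--Tate deformation changes only the extension class, never this character.
\begin{itemize}
\item Because $v$ splits in $F'$, we have $F'_w = F_v$ and the residue field does not grow.
\item So the isomorphism in (4) forces this unit root to be congruent to $\alpha_v = \alpha(\Frob_v)$ \emph{modulo $\varpi^n$}, where $\rho|_{G_{F_v}}$ is an extension of $\epsilon^{-1}\alpha^{-1}$ by the unramified character $\alpha$.
\item Matching it only in the residue field, as you propose, is not enough.
\end{itemize}

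\textbf{What is actually needed.} Before any deformation theory, you need an ordinary abelian variety over $k(v)$ with $\cO_N$-action, where $N$ is a CM field with $N^+ = M$, such that:
\begin{itemize}
\item its Frobenius is an ordinary $q_v$-Weil number $\beta_v \in N$ with $\beta_v \equiv \alpha_v \bmod \varpi^n$ under some $N_\mathfrak{p} \cong E$;
\item $N/N^+$ is ramified at some finite place, so that the polarisation can be taken of type $\mathfrak{d}^{-1}_{M/\mathbb{Q}}$.
\end{itemize}
Elliptic curves, or scalar extensions of a fixed example, cannot do this. The element $\alpha_v \in \cO^\times$ is arbitrary, there are only finitely many $q_v$-Weil numbers of bounded degree, and scalar extension does not change Frobenius eigenvalues.

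\textbf{How the paper does it.} Lemmas \ref{lem_existence_of_Weil_polynomials} and \ref{lem_existence_of_Weil_numbers} and Proposition \ref{prop_existence_of_AV} proceed as follows.
\begin{enumerate}
\item A lattice-point count (Widmer) in the region of coefficient vectors of $q_v^\ell$-Weil polynomials, whose volume grows with $\ell$, produces an ordinary $q_v^\ell$-Weil polynomial with prescribed congruences modulo $p^m$ and modulo auxiliary primes.
\item Taking an $\ell$-th root, with $\ell \equiv 1$ modulo $|(\cO/\varpi^m)^\times|$, gives a $q_v$-Weil number with the same reduction.
\item A dihedral argument keeps $N/N^+$ ramified at an $r_2$-adic place.
\item Honda--Tate theory supplies the abelian variety.
\end{enumerate}
This is where the geometry of numbers enters. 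It also means $M$ cannot be chosen at the outset as in your Step 1: it is assembled from these CM fields over $v \mid p$.

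\textbf{What you flagged as hard is easy.}
\begin{itemize}
\item At $v \mid p$, the extension class lifts because $H^1_f(F_v, \cO(\beta^2\epsilon)) \to H^1_f(F_v, \cO/\varpi^n\cO(\beta^2\epsilon))$ is surjective (Lemma \ref{lem_lifting_ordinary_p_divisible_groups}).
\item At $v \in \Sigma$, one only has to lift the Kummer class to some $q \in \cO_{M'} \otimes F_v^\times$ with $\ord_v(q)$ totally positive, which is elementary.
\item The quadratic extensions in (3) are not there to adjust unit roots or Tate periods. They ensure $\cO^\times_{M'_{\mathfrak{p}'}} \subset (\cO^\times_{M_\mathfrak{p}})^2$, so that each local $\eta_v$ can be rescaled by a unit to identify $e_\rho$ with the Weil pairing.
\end{itemize}
Your auxiliary-prime argument for the modularity in (2) follows the \cite{Tay02} template and is reasonable as a sketch; the paper's proof does not discuss this point explicitly.
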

To prove Theorem \ref{thm_pot_mod}, we will follow the template of \cite{Tay02} in constructing a suitable moduli space of HBAV, and then appealing to the results of \cite{Mor89}. We will in fact use the following extension of the main result of \emph{op. cit.}, given in \cite[\S 3]{Mor90}:
\begin{theorem}\label{thm_moret-bailly}
    Let $K$ be a number field, and let $S$ be a finite set of places of $K$, containing the archimedean ones. Let $f : X \to \Spec \cO_{K, S}$ be a flat, surjective morphism of finite type, with $X_K$ geometrically integral.

    Let $\Sigma \subset S$ be a subset, and suppose given for each $v \in \Sigma$ a non-empty subset $\Omega_v \subset X(K_v)$, contained in the smooth locus of $X_{K_v}$. Finally, suppose that one of the following two conditions is satisfied:
    \begin{enumerate}
        \item $\Sigma \neq S$. 
        \item $\Sigma = S$, and there exists an open immersion $j : X_K \to \overline{X}_K$, where $\overline{X}_K$ is a projective $K$-scheme, and $\overline{X}_K - X_K$ has codimension $\geq 2$ in $\overline{X}_K$. 
    \end{enumerate}
    Then we can find a finite extension $L / K$ and a point $P \in X(\cO_{L, S})$ with the following property: for all $v \in \Sigma$, $v$ splits in $L$; and for each place $w | v$ of $L$, $P$ lies in $\Omega_v \subset X(L_w)$. 
\end{theorem}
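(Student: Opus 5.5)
This result is not proved in the paper; it is quoted from \cite[\S 3]{Mor90}. The sketch below is how I would expect the argument to run, reducing to the main theorem of \cite{Mor89}, and I have not checked it in detail. Throughout, \cite{Mor89} should be read as allowing $\Omega_v$ to be a non-empty open subset of the smooth locus of $X(K_v)$, and it produces $L/K$ in which every $v\in\Sigma$ splits completely.

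\emph{Case (1), $\Sigma \neq S$.} The hypotheses are then exactly those of the main theorem of \cite{Mor89}.
\begin{itemize}
\item That theorem uses the freedom at a place $v_0\in S-\Sigma$ as follows. Integrality is only required outside $S$, and nothing is imposed at $v_0$.
\item This is what allows one to take $K$-rational points of an auxiliary curve, which need not be integral at $v_0$.
\item I would therefore invoke \cite{Mor89} directly and only check two things: the shrinking of the $\Omega_v$ to open subsets of the smooth locus, and the flatness and surjectivity needed for its statement.
\end{itemize}

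\emph{Case (2), $\Sigma = S$.} Here integrality must hold at every place outside $S$, and local conditions must hold at every place of $S$. I would reduce to the case of a proper curve in four steps.
\begin{enumerate}
\item \textbf{Choosing a curve.} Use the projective compactification $\overline{X}_K$ and a Bertini-type argument to find a geometrically integral curve $C_K \subset \overline{X}_K$ that is a complete intersection of very ample hypersurfaces. I would require $C_K$ to pass close to chosen smooth points in each $\Omega_v$, so that $C_K(K_v)\cap\Omega_v$ is a non-empty open subset of the smooth locus of $C_K$. This uses the openness of $\Omega_v$ and the implicit function theorem over $K_v$.
\item \textbf{Avoiding the boundary over $K$.} Since $\overline{X}_K - X_K$ has codimension $\geq 2$, a generic complete intersection curve misses it. Then $C_K \subset X_K$, and $C_K$ is proper.
\item \textbf{Avoiding the boundary integrally.} Take the closure $\mathcal{C}$ of $C_K$ in $X$ and compare it with its closure in a projective model $\overline{\mathcal{X}}$ of $\overline{X}_K$ over $\cO_{K,S}$. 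The complement $\overline{\mathcal{C}} - \mathcal{C}$ lies over finitely many closed points of $\Spec \cO_{K,S}$. I would aim to show that, after a suitable choice of the hypersurfaces, this complement is empty. To do so, I would choose the hypersurfaces to avoid the closure of $\overline{X}_K - X_K$ in each of the finitely many bad fibres as well. That closure has codimension $\geq 2$ fibrewise after discarding finitely many components, which can be handled separately.
\item \textbf{Applying the proper case.} Then $\mathcal{C}$ is proper over $\cO_{K,S}$, so its $\cO_{L,S}$-points are just its $L$-points. Apply the proper (projective) version of Moret-Bailly's theorem to $\mathcal{C}$, which holds with $\Sigma = S$. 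This yields $L$ and $P \in \mathcal{C}(\cO_{L,S}) \subset X(\cO_{L,S})$ with the required local behaviour.
\end{enumerate}

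\emph{Expected main obstacle.} The key difficulty is Step 3: making the auxiliary curve avoid the boundary not just generically but over every fibre of the integral model. Codimension $\geq 2$ over $K$ does not obviously persist in the finitely many special fibres where $X$ is badly behaved. If it does not, the fallback is to blow up or modify $\overline{\mathcal{X}}$ over those finitely many primes, and then to use the local conditions at those places (which are in $S$) to force the point into the good region. This matches the role of the codimension-two hypothesis in \cite{Mor90}.
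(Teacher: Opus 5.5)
The paper gives no argument here: it simply quotes \cite[\S 3]{Mor90}. So deferring to the references matches what the paper does, and case (1) is fine as a citation of \cite{Mor89}. (As your Step 1 assumes, the $\Omega_v$ must be taken open. The printed statement omits this, and an arbitrary subset cannot be ``shrunk'' to an open one.) However, the reconstruction you sketch for case (2) does not work as written, and it fails at your Step 3.

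\textbf{Step 3 cannot succeed.} Its goal is to make $\overline{\mathcal{C}}-\mathcal{C}$ empty, so that $\mathcal{C}$ is proper over $\cO_{K,S}$. This is unattainable in general. Take $K=\mathbb{Q}$, $S=\{\infty\}$, a prime $\ell$, and $X=\mathbb{P}^d_{\mathbb{Z}}\smallsetminus V(\ell,x_0)$, which removes the hyperplane $x_0=0$ in the fibre at $\ell$.
\begin{itemize}
\item $X$ is flat and surjective, and $X_K=\mathbb{P}^d_K$, so hypothesis (2) holds with empty boundary.
\item Yet for every curve $C_K$, the fibre at $\ell$ of its closure in $\mathbb{P}^d_{\mathbb{Z}}$ is a projective curve in $\mathbb{P}^d_{\mathbb{F}_\ell}$. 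It therefore meets $\{x_0=0\}$, so $\overline{\mathcal{C}}-\mathcal{C}\neq\emptyset$.
\item The removed locus is not in the closure of $\overline{X}_K-X_K$ (which is empty), so choosing hypersurfaces to avoid that closure achieves nothing.
\end{itemize}
The conclusion of the theorem still holds in this example, but only because integrality at $\ell$ can be imposed as an open local condition.

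\textbf{Steps 1--2 do not make $\mathcal{C}$ surjective.} They impose conditions only over $K$ and at places of $S$, so they cannot guarantee that $\mathcal{C}$ meets every fibre. Take $X=\mathbb{P}^3_{\mathbb{Z}}\smallsetminus V(x_0,x_1)$, whose boundary has codimension $2$. The line $C_K=V(x_0+\ell F_1,\,x_1+\ell F_2)$, with $F_1,F_2$ independent linear forms in $x_2,x_3$, lies in $X_K$. It can be perturbed, keeping $H_i\equiv x_{i-1}\bmod \ell$, to meet any approximation conditions at $\infty$. But its closure reduces modulo $\ell$ into the removed line, so $C_K$ has no $\ell$-integral points over any $L$.

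\textbf{The fallback does not repair this.} It assumes the bad places lie in $S$, but they do not. $\overline{\mathcal{C}}-\mathcal{C}$ lies over closed points of $\Spec\cO_{K,S}$, i.e.\ places outside $S$. There the statement demands integrality and supplies no $\Omega_v$. Modifying $\overline{\mathcal{X}}$ does not change $X(\cO_{L,S})$.

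\textbf{What is needed instead.} There are two parts.
\begin{enumerate}
\item Choose the curve so that its closure $\mathcal{C}$ in $X$ surjects onto $\Spec\cO_{K,S}$. Let $d=\dim X_K$, and spread $X_K\subset\overline{X}_K$ out to an open immersion into a projective $\overline{\mathcal{X}}$ over $\cO_{K,T_0}$, for a finite $T_0\supseteq S$, with complement $Z$ of dimension at most $d-1$. Then:
\begin{itemize}
\item Choose the hypersurfaces one at a time by graded prime avoidance, so that $H_1\cap\dots\cap H_{d-1}\cap Z$ is a finite set of closed points. At each step this imposes finitely many Zariski-open conditions and finitely many $v$-adically open conditions. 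These are compatible with your approximation conditions at $S$ by weak approximation on the space of hypersurfaces.
\item Make the curve pass near local integral points of $X$ at the places of $T_0-S$.
\end{itemize}
Every fibre of $\overline{\mathcal{C}}$ is pure of dimension one, so $\mathcal{C}$ then meets every fibre.
\item Do not try to make $\mathcal{C}$ proper over $\cO_{K,S}$. Instead, enlarge $S$ to a finite $T$ over which $\mathcal{C}$ is proper.
\begin{itemize}
\item For $v\in T-S$, impose the extra open local condition $\Omega_v=\mathcal{C}(\cO_{L_v})\cap C_K^{\mathrm{sm}}(L_v)$, for a finite Galois $L_v/K_v$ making it non-empty. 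This is possible by part 1 and flatness.
\item Typically $L_v\neq K_v$, so you need Moret-Bailly's theorem in the form allowing non-split local conditions, not just the form stated here.
\item Apply the theorem over $K$, with no integrality requirement and with local conditions at every place of $T$.
\item The resulting point is integral above $T-S$ by construction, and at all other finite places by the valuative criterion.
\end{itemize}
\end{enumerate}
This is the substantive content of case (2). Your Steps 1--2 supply only the routine generic Bertini argument.
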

Our first task is to construct HBAVs over the completions $F_v$, that will witness the existence of local points on our moduli spaces. We begin with a consideration of Weil numbers.
\begin{lemma}\label{lem_honda_tate}
    Let $q$ be a prime power, and let $f(x) \in \mathbb{Z}[x]$ be a monic polynomial of degree $2g$, such that every complex root has absolute value $q^{1/2}$. Then the following are equivalent:
    \begin{enumerate}
        \item We can write $f(x) = x^{2g} + \sum_{i=1}^{g-1} a_i (x^{2g-i} + q^i x^{i})+a_g x^g + q^g$, where $(a_g, q) = 1$.
        \item There is an ordinary abelian variety $A$ over $\mathbb{F}_q$ such that $f(x)$ is the characteristic polynomial of $\Frob_q \in \End(A)$. 
    \end{enumerate}
\end{lemma}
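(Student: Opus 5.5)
We will deduce the lemma from Honda--Tate theory together with the standard characterization of ordinary abelian varieties via Newton polygons. Recall that an abelian variety $A$ of dimension $g$ over $\mathbb{F}_q$ is ordinary exactly when $A[p](\overline{\mathbb{F}}_q)$ has order $p^g$. By Dieudonné theory (or the Manin--Oort description of slopes), this happens exactly when $g$ of the roots of its Frobenius characteristic polynomial are $p$-adic units, and the other $g$ have valuation equal to $\val_p(q)$. The lemma therefore reduces to two claims. First, for a $q$-Weil polynomial $f$ of degree $2g$, half of the roots are $p$-adic units (the ordinary Newton polygon) if and only if the middle coefficient $a_g$ is prime to $p$, i.e.\ $(a_g,q)=1$. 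Second, every such polynomial actually arises as the characteristic polynomial of Frobenius on some abelian variety over $\mathbb{F}_q$.

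For (2) $\Rightarrow$ (1), the symmetry of the coefficients comes from the functional equation: the roots of $f$ are stable under $\alpha \mapsto q/\alpha$ (complex conjugation, since $|\alpha|^2=q$). This gives $x^{2g} f(q/x) = q^g f(x)$, which is exactly the shape in (1) with constant term $q^g$. For the middle coefficient, write $\alpha_1,\dots,\alpha_g$ for the unit roots and $q/\alpha_i$ for the others. Then $(-1)^g a_g$ is the $g$-th elementary symmetric function of the $2g$ roots. The single term $\prod_i \alpha_i$ is a unit, and every other product of $g$ roots involves at least one root $q/\alpha_j$, so it is divisible by $p$ in $\overline{\mathbb{Z}}_p$. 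Hence $a_g$ is a $p$-adic unit.

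For (1) $\Rightarrow$ (2), look at the Newton polygon of $f$. It runs from $(0,\val_p(q^g)) = (0, g\val_p q)$ to $(2g,0)$, and passes through $(g,0)$ because $a_g$ is a unit. The vertical drop from $x=0$ to $x=g$ is $g\val_p q$, so exactly $g$ roots are units. By the root symmetry the slopes are symmetric, so these slopes are forced: $g$ slopes equal $0$ and $g$ equal $\val_p q$. The main obstacle is existence. Honda--Tate attaches to each irreducible Weil $q$-number $\pi$ a simple abelian variety $B_\pi$ whose Frobenius characteristic polynomial is $h_\pi^{e}$, with $h_\pi$ the minimal polynomial of $\pi$ and $e$ the local-invariant index, and $e$ may exceed $1$. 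We will use the fact that $e=1$ in the ordinary case. The local invariants of $\End^0(B_\pi)$ at places $w \mid p$ of $\mathbb{Q}(\pi)$ are $\val_w(\pi)[\mathbb{Q}(\pi)_w:\mathbb{Q}_p]/\val_w(q)$ modulo $1$. When every root has slope $0$ or $\val_p q$, each of these invariants is an integer, hence zero. The ordinarity hypothesis also rules out real Weil numbers $\pm\sqrt q$ (those have slope $\tfrac12\val_p q$), so there are no invariants at real places either. Thus $e=1$.

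Finally we assemble. Factor $f = \prod_\pi h_\pi^{m_\pi}$ into irreducible factors over $\mathbb{Q}$, and take $A = \prod_\pi B_\pi^{m_\pi}$. Then $A$ has characteristic polynomial $f$ and dimension $g$. It is ordinary because its Frobenius slopes are those of $f$, namely half $0$ and half $\val_p q$.
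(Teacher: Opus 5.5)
Your proof is correct and follows the same route as the paper, which simply invokes Honda--Tate theory (citing \cite{van25}). You supply the details it omits: that $p\nmid a_g$ is equivalent to the ordinary Newton polygon, and that the local invariants of $\End^0(B_\pi)$ all vanish in the ordinary case, so $e=1$.

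One small repair is needed in (2)$\Rightarrow$(1). Stability of the roots under $\alpha\mapsto q/\alpha$ only yields $x^{2g}f(q/x)=f(0)f(x)$ with $f(0)=\pm q^g$ (e.g.\ $x^2-q$). You should add that $f(0)=\deg\Frob_q=q^g$, or that ordinarity excludes the real roots $\pm\sqrt q$.

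Note also that this identity makes the coefficient of $x^i$ equal to $q^{g-i}a_i$. So the printed $q^i x^i$ in (1) is a typo for $q^{g-i}x^i$; the two agree only for $g\le 2$. The corrected form is the one your proof actually uses.
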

\begin{proof}
    This follows from Honda--Tate theory -- see \cite{van25}. 
\end{proof}
    We call a polynomial satisfying the equivalent conditions of Lemma \ref{lem_honda_tate} an ordinary $q$-Weil polynomial of degree $2g$. Note that a monic polynomial of even degree is an ordinary $q$-Weil polynomial if and only if its irreducible factors are ordinary $q$-Weil polynomials. 
\begin{lemma}\label{lem_existence_of_Weil_polynomials}
    Let $q$ be a power of the prime $p$, and let $M \geq 1$ be an integer prime to $q$. Let 
    \[ a(x)  = x^g + a_1 x^{g-1} + \dots + a_g \in \mathbb{Z}_p[x] \]
    be a monic polynomial of degree $g$, with $a_g \in \mathbb{Z}_p^\times$, and let $b(x) \in (\mathbb{Z} / M \mathbb{Z})[X]$ be a monic polynomial of degree $2g$ of the form \[ b(x) = x^{2g} + \sum_{i=1}^{g-1} b_i (x^{2g-i} + q^i x^{i}) + b_g x^g + q^g. \] 
    Fix $n \geq 1$. Then for all sufficiently large $\ell \geq 1$ with $\ell \equiv 1 \text{ mod } \phi(M)$, we can find an ordinary $q^\ell$-Weil polynomial $f(x) \in \mathbb{Z}[x]$ of degree $2g$ satisfying the following conditions:
    \begin{enumerate}
        \item $f(x) \equiv x^g a(x) \text{ mod }p^{n}$.
        \item $f(x) \equiv b(x) \text{ mod }M$.
    \end{enumerate}
\end{lemma}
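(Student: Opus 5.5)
We will construct $f$ by prescribing its real roots and then adjusting coefficients, working with the "trace" polynomial. An ordinary $Q$-Weil polynomial of degree $2g$ (with $Q = q^\ell$) of the form in Lemma \ref{lem_honda_tate} can be written as $f(x) = x^g h(x + Q/x)$, where $h(y) = y^g + c_1 y^{g-1} + \dots + c_g \in \mathbb{Z}[y]$. The condition that all complex roots have absolute value $Q^{1/2}$ is equivalent to $h$ having all its roots real and lying in the open interval $(-2Q^{1/2}, 2Q^{1/2})$. Ordinarity is the condition $(a_g, q) = 1$ on the middle coefficient. The plan is therefore to choose $h$ with integer coefficients that satisfies a system of congruences modulo $p^n M$ and whose roots are real, distinct and small compared with $2Q^{1/2}$.

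\textbf{Step 1: translate the congruences.} Since $Q = q^\ell \equiv 0 \bmod p^n$ once $\ell$ is large, we have $x^g h(x + Q/x) \equiv x^g h(x) \bmod p^n$. Condition (1) therefore becomes $h \equiv a \bmod p^n$, which is a congruence on the $c_i$. The middle coefficient of $f$ is then $\equiv a_g \bmod p$, and hence prime to $q$, so ordinarity is automatic. Modulo $M$, the hypothesis $\ell \equiv 1 \bmod \phi(M)$ together with $(q,M)=1$ gives $Q \equiv q \bmod M$. The map $h \mapsto x^g h(x + q/x)$ from monic degree $g$ polynomials to palindromic-type polynomials of the shape of $b$ is a bijection over $\mathbb{Z}/M$, since it is triangular with unit diagonal in the coefficients. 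Hence $b$ determines a unique $\bar h \bmod M$, and condition (2) becomes $h \equiv \bar h \bmod M$. By the Chinese remainder theorem, both conditions amount to $(c_1, \dots, c_g) \equiv (\gamma_1,\dots,\gamma_g) \bmod p^n M$ for some fixed residues $\gamma_i$, which do not depend on $\ell$.

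\textbf{Step 2: realize the congruences with real-rooted $h$.} Fix distinct integers, or reals, $t_1 < \dots < t_g$ whose spacing is large relative to $p^nM$. One concrete choice is $h_0(y) = \prod_i (y - D t_i)$ with $D$ large. Now perturb each coefficient of $h_0$ by at most $p^n M$ to obtain $h$ in the required residue classes. Because the roots of $h_0$ are separated by gaps of size $\gg D$, a perturbation of size $O(p^nM)$ in the coefficients moves the roots by only a bounded amount once $D$ is large. Equivalently, $h$ changes sign at the same points as $h_0$ when evaluated at the midpoints between roots, since $|h_0|$ there is of order $D^g$ while the perturbation contributes $O(p^nM D^{g-1})$. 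The intermediate value theorem then gives $g$ distinct real roots, all of size $O(D)$. Choosing $D$ first and then $\ell$ large enough that $2q^{\ell/2}$ exceeds all these roots, we obtain $f(x) = x^g h(x + Q/x)$, a $Q$-Weil polynomial satisfying (1) and (2), with ordinarity coming from Step 1.

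The main obstacle is the bookkeeping in Step 1. We must check that the change of variables between $f$ and $h$ is an integral, unipotent-triangular transformation, so that congruences on $f$ correspond exactly to congruences on $h$ modulo both $p^n$ and $M$. We also need that the reduction modulo $p^n$ really kills all terms involving $Q$. These facts require $\ell \geq n$, which is why only sufficiently large $\ell$ are allowed. The real-rootedness argument in Step 2 is a routine root-perturbation estimate.
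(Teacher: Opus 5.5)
Your proof is correct, but it follows a different route from the paper's. The paper stays in the coordinates $(f_1,\dots,f_g)$ given by the top coefficients of $f$. There the two congruences are imposed directly: $f_i\equiv a_i \bmod p^n$ and $f_i\equiv b_i\bmod M$, using $q^\ell\equiv 0\bmod p^n$ and $q^\ell\equiv q\bmod M$. After the rescaling $\phi_{q^\ell}$, the problem becomes whether the shrinking lattice coset $\phi_{q^\ell}(\underline{c}+p^nM\mathbb{Z}^g)$ meets the DiPippo--Howe region $V_g$. This is settled by Widmer's lattice-point count \cite{Wid12}, whose main term $\operatorname{vol}(V_g)q^{\ell g(g+1)/4}(p^nM)^{-g}$ dominates the error term for large $\ell$.

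You instead pass through the unitriangular substitution $f=x^gh(x+Q/x)$. You check that the congruences become $\ell$-independent residue classes for $(c_1,\dots,c_g)$ modulo $p^nM$. You then produce a single integral $h$ in that class with $g$ distinct real roots, by perturbation and the intermediate value theorem. Since real-rootedness does not involve $Q$, this one $h$ works for every sufficiently large $\ell\equiv 1\bmod\phi(M)$.

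What each buys: your argument is elementary and self-contained (no input from \cite{DiP98} or \cite{Wid12}) and gives a choice uniform in $\ell$. The paper's argument gives an asymptotic count of such polynomials, which is more than the pure existence statement used in Lemma \ref{lem_existence_of_Weil_numbers}.

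Two small remarks. First, having all roots of $h$ real in the \emph{open} interval $(-2Q^{1/2},2Q^{1/2})$ is sufficient for $f$ to be a Weil polynomial, not equivalent to it (the closed interval gives the equivalence); sufficiency is all you use. Second, your parametrization enforces $x^{2g}f(Q/x)=Q^gf(x)$, i.e.\ it pairs $x^{2g-i}$ with $Q^{g-i}x^i$. This is the correct reading of the displayed shape in the statement, whose $q^ix^i$ agrees with it only when $g\le 2$.
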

\begin{proof}
    We use the formalism of \cite{DiP98}. Let $V_g \subset \mathbb{R}^g$ denote the set of vectors $\underline{v} = (v_1, \dots, v_g)$ such that the polynomial 
    \[ g_{\underline{b}}(x) = x^{2g} + \sum_{i=1}^{g-1} v_i (x^{2g-i} +  x^{i}) + v_g x^g + 1 \in \mathbb{R}[x] \]
    has all of its roots in $\mathbb{C}$ on the unit circle. Then $V_g$ is compact, with Lipschitz-parameterizable boundary and non-empty interior (see \cite[Lemma 2.3.3]{DiP98}).

    For any $r > 0$, let $\phi_r : \mathbb{R}^g \to \mathbb{R}^g$ denote the map 
    \[ \phi_r : (v_1, \dots, v_g) \mapsto (r^{-1/2} v_1, \dots, r^{-g/2} v_g). \]
    Then $\phi_r^{-1}(V_g)$ is in bijective correspondence with the set of real polynomials with an even number of real roots, and all complex roots of absolute value $r$. 

    Let $\ell_0 \geq 1$ be such that $q^{\ell_0} \geq p^n$. Let $\underline{c} \in \mathbb{Z}^g$ be such that $\underline{c} \equiv (a_1, \dots, a_g) \text{ mod }p^n$ and $\underline{c} \equiv (b_1, \dots, b_g) \text{ mod }M$. If $\ell \geq \ell_0$, $\ell \equiv 1 \text{ mod }\phi(M)$, and $(f_1, \dots, f_g) \in (\underline{c} + p^n M \mathbb{Z}^g) \cap \phi_{q^\ell}^{-1}(V_g)$, then the polynomial 
    \[ f(x) = x^{2g} + \sum_{i=1}^{g-1} f_i (x^{2g-i} +  q^{\ell i} x^{i}) + f_g x^g + q^{\ell g} \]
    will satisfy the conditions of the lemma (because $q^\ell \equiv q \text{ mod }M$). 

    To complete the proof, we therefore need to show that when $\ell$ is sufficiently large, $\phi_{q^\ell}( \underline{c} + p^n M \mathbb{Z}^g) \cap V_g$ is non-empty. By \cite[Theorem 2.4]{Wid12}, there is a constant $c(V_g) > 0$ such that for any lattice coset $\underline{t} + \Lambda \subset \mathbb{R}^g$, we have
    \[ \left| | V_g \cap (\underline{t} + \Lambda) | - \frac{\operatorname{vol}(V_g)}{\operatorname{covol}(\Lambda)} \right| \leq c(V_g) \max_{0 \leq i < n} (\lambda_1 \lambda_2 \dots \lambda_i)^{-1}. \]
    (The statement there is given for lattices, i.e.\ subgroups, of $\mathbb{R}^g$, but the case of cosets follows by translation of the set $S$ of \emph{loc. cit.} -- note that this does not change the constants associated to the Lipschitz parameterization of $S$.) The successive minima of $\phi_{q^\ell}(p^n M \mathbb{Z}^g)$ are $(p^n M) \cdot (q^{-g\ell/2}, q^{-(g-1)\ell/2}, \dots, q^{- \ell/2})$, so in our case we find
    \begin{multline*}    \left| |V_g \cap \phi_{q^\ell}( \underline{c} + p^n M \mathbb{Z}^g)| - \operatorname{vol}(V_g) q^{\ell g(g+1)/4} (p^n M)^{-g} \right| \\ \leq c(V) (p^n M)^{1-g} q^{\ell (g(g+1)/4 - 1/2)}. 
    \end{multline*}
    The volume $\operatorname{vol}(V_g)$ is positive (and indeed is computed precisely in \cite{DiP98}). We see that as soon as $\ell$ is large enough, the desired intersection must indeed be non-empty. 
\end{proof}
Let $F$ be a totally real number field, and let $E$ be a coefficient field. Let $v$ be a $p$-adic place of $F$, and let $M$ be a discrete $\cO[G_{F_v}]$-module, torsion as $\cO$-module, on which the action of $G_{F_v}$ is unramified. In this case, we define a subgroup
\[ H^1_f(F_v, M(1)) \leq H^1(F_v, M(1)) \]
as the pre-image under the map
\[ H^1(F_v, M(1)) \to H^1(I_{F_v}, M(1)) \cong (F_v^{ur})^\times \otimes_{\mathbb{Z}} M \]
of the subgroup $(\cO_{F_v}^{ur})^\times \otimes_{\mathbb{Z}} M$. If instead $M$ is an $\cO[G_{F_v}]$-module, finite free as $\cO$-module, such that for every $n \geq 1$, $M / \varpi^n M$ is a discrete $\cO[G_{F_v}]$-module on which the action of $G_{F_v}$ is unramified, then we define
\[ H^1_f(F_v, M(1)) = \varprojlim_n H^1_f(F_v, M(1) / \varpi^n M(1)) \leq H^1(F_v, M(1)). \]
\begin{lemma}\label{lem_lifting_ordinary_p_divisible_groups}
    Let $\alpha : G_{F_v} \to \cO^\times$ be a continuous unramified character. Then the natural homomorphism
    \[ H^1_f(F_v, \cO(\alpha \epsilon)) \to H^1_f(F_v, \cO / (\varpi^n)(\alpha \epsilon)) \]
    is surjective.
\end{lemma}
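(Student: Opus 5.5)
The plan is to lift a given class step by step, from $\cO/(\varpi^n)(\alpha\epsilon)$ to $\cO/(\varpi^N)(\alpha\epsilon)$ for every $N \geq n$, keeping each lift in $H^1_f$, and then pass to the limit. Write $T = \cO(\alpha\epsilon)$, $T_m = T/\varpi^m T$ and $M_m = \cO/(\varpi^m)(\alpha)$, so that $T_m = M_m(1)$. Since all the groups $H^1_f(F_v, T_m)$ are finite, it suffices to prove the following claim: for every $N \geq n$, the reduction map $H^1_f(F_v, T_N) \to H^1_f(F_v, T_n)$ is surjective. A compactness (Mittag-Leffler) argument then produces a compatible system of lifts of a given class $c \in H^1_f(F_v, T_n)$, hence an element of $\varprojlim_N H^1_f(F_v, T_N) = H^1_f(F_v, T)$ lifting $c$.

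\emph{Step 1: lift to $H^1(F_v, T_N)$.} Fix $c \in H^1_f(F_v, T_n)$ and $N > n$. The short exact sequence $0 \to T_{N-n} \to T_N \to T_n \to 0$, with first map multiplication by $\varpi^n$, shows that $c$ lifts to $H^1(F_v, T_N)$ if and only if its image $\delta(c) \in H^2(F_v, T_{N-n})$ vanishes. By local Tate duality, $H^2(F_v, T_{N-n})$ is dual to $H^0(F_v, T_{N-n}^\ast(1))$, and $T_{N-n}^\ast(1) \cong \cO/(\varpi^{N-n})(\alpha^{-1})$ is unramified. By the standard compatibility of connecting maps with cup products, pairing $\delta(c)$ against $x \in H^0$ gives, up to sign, $c \cup \delta'(x)$. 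Here $\delta'(x) \in H^1(F_v, \cO/(\varpi^n)(\alpha^{-1}))$ is the Bockstein of $x$ for the dual sequence of unramified modules, so it is an unramified class. Now $H^1_f(F_v, M_n(1))$ and $H^1_{ur}(F_v, M_n^\ast)$ are orthogonal under local duality: this is the usual fact that Kummer classes of units pair trivially with unramified classes, via the valuation and local class field theory, applied after tensoring with the unramified module $M_n$. Hence $\delta(c) = 0$, and $c$ lifts to some $c' \in H^1(F_v, T_N)$.

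\emph{Step 2: correct $c'$ into $H^1_f$.} Consider the valuation map $H^1(F_v, T_N) \to H^1(I_{F_v}, T_N)^{\Frob} \to \bigl((F_v^{ur})^\times/\cO^{ur,\times}\bigr)\otimes M_N = M_N$. The class $c'$ lies in $H^1_f$ exactly when this image vanishes. Since $c$ lies in $H^1_f$, the image of $c'$ is killed by reduction to $M_n$, so it lies in $\varpi^n M_N \cong M_{N-n}$. This image is then also Frobenius-invariant in the appropriate twisted sense, because it comes from an $F_v$-rational class. Pick $d \in H^1(F_v, T_{N-n})$ whose valuation is this element; for instance, take the Kummer class of $\varpi_v \otimes m$, checking that Frobenius-equivariance is compatible. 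Then $c' - \varpi^n d$ still reduces to $c$, because $\varpi^n d$ maps to zero in $H^1(F_v, T_n)$, and it has valuation zero, so it lies in $H^1_f(F_v, T_N)$. To make this precise, I would use the Hochschild--Serre description $0 \to H^1(\hat{\mathbb{Z}}, T_N^{I}) \to H^1(F_v, T_N) \to H^1(I_{F_v}, T_N)^{\Frob} \to 0$. The right-hand map is onto because $\hat{\mathbb{Z}}$ has cohomological dimension $1$. The correction can then be chosen from any preimage of the relevant element of $(F_v^{ur})^\times \otimes M_N$.

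\emph{Main obstacle.} The delicate point is Step 2, together with the orthogonality statement in Step 1 at finite level. The valuation map must be handled carefully because $\alpha$ is only unramified, not trivial. So "Frobenius-invariant valuation" means invariance for the twisted Frobenius action on $M_N$, and one must check that the correction class exists with exactly the prescribed valuation. If this direct correction proves awkward, I would fall back on the orthogonality statement itself. It implies that $H^1_f(F_v, T_m)$ is the exact annihilator of $H^1_{ur}$ of the dual. The cardinality of $H^1_{ur}(F_v, \cO/(\varpi^m)(\alpha^{-1}))$ equals $|H^0|$, and this cardinality stabilises in a way compatible with reduction. Combining the two would give the surjectivity at finite level by counting.
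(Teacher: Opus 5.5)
Your proof is correct, and it takes a genuinely different route from the paper. The paper gives no argument of its own; it simply invokes \cite[Lemma 2.4.2]{Kis09}. You instead prove surjectivity of $H^1_f(F_v,T_N)\to H^1_f(F_v,T_n)$ directly at finite level, in two moves.

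\begin{itemize}
\item First you kill the obstruction in $H^2(F_v,T_{N-n})$ to lifting in full $H^1$. By Tate duality it pairs with $H^0(F_v,\cO/(\varpi^{N-n})(\alpha^{-1}))$ only through Bockstein classes, which are unramified.
\item Then you remove the valuation of the lift by subtracting the image of a class from $H^1(F_v,T_{N-n})$.
\end{itemize}

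Your worry about Step 2 is unfounded. Since $\varpi_v\in F_v$, the section $m\mapsto\varpi_v\otimes m$ of the valuation $(F_v^{ur})^\times\otimes M_m\to M_m$ is Frobenius-equivariant. So every element of $M_{N-n}^{\Frob}$ is the valuation of a class in $H^1(F_v,T_{N-n})$, exactly as you propose, and no counting fallback is needed.

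The one input you state too quickly is the orthogonality of $H^1_f(F_v,M_n(1))$ and $H^1_{ur}(F_v,M_n^\ast)$ when $\alpha\not\equiv 1\bmod\varpi^n$. ``Tensoring with $M_n$'' is not literally available for a non-trivial Galois module, though the statement is true. One clean fix:
\begin{itemize}
\item Restrict to the unramified extension $K'/F_v$ over which $\alpha\bmod\varpi^n$ becomes trivial. There the statement is the classical fact that Kummer classes of units pair trivially with unramified characters.
\item Descend via the projection formula. Corestriction on unramified $H^1$ is the surjection $(M_n^\ast)_{\Frob^f}\to(M_n^\ast)_{\Frob}$, and it is compatible with the invariant maps on $H^2$.
\end{itemize}
Alternatively, identify both groups with flat cohomology of Cartier-dual finite flat group schemes over $\cO_{F_v}$.

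As for what each route buys: the citation is economical and matches how the lemma is used in Proposition \ref{prop_existence_of_AV}, where classes in $H^1_f$ are read as extensions of ordinary $p$-divisible groups. Your argument is self-contained, using only Kummer theory, inflation--restriction for $I_{F_v}\subset G_{F_v}$, and local Tate duality.
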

\begin{proof}
    This follows immediately from \cite[Lemma 2.4.2]{Kis09}.
\end{proof}
\begin{lemma}\label{lem_existence_of_Weil_numbers}
    Let $v$ be a $p$-adic place of $F$, let $E$ be a coefficient field, and let $\alpha \in \cO^\times$. Fix $n \geq 1$. Then we can find the following data:
\begin{enumerate}
    \item A CM number field $N$ such that, writing $N^+$ for its maximal totally real subfield, the extension $N / N^+$ is ramified at some finite place, and each $p$-adic place of $N^+$ splits in $N$.
    \item A prime $\mathfrak{p} | p \cO_N$, and a continuous isomorphism $E \cong N_\mathfrak{p}$.
    \item An ordinary $q_v$-Weil number $\beta_v \in N$ such that the images of $\alpha, \beta_v$ in $\cO / (\varpi^n) \cong \cO_{N_\mathfrak{p}} / (\varpi^n)$ are the same. 
\end{enumerate}
\end{lemma}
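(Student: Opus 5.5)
Since $E$ is an arbitrary finite extension of $\mathbb{Q}_p$ and must be realised as a completion $N_\mathfrak{p}$ of a CM field, the strategy is to first build the field and then find the Weil number by approximation. Because $\alpha$ only matters modulo $\varpi^n$, we may replace $\alpha$ by any element congruent to it mod $\varpi^n$.

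\emph{Step 1: a CM field $N$ with $N_\mathfrak{p} \cong E$.} By Krasner's lemma, $E = \mathbb{Q}_p(\theta)$ with $\theta$ a root of a monic polynomial $h \in \mathbb{Z}[x]$ that is irreducible over $\mathbb{Q}_p$. By weak approximation at $p$ and $\infty$, we can further choose $h$ to have only real roots. Then $K^+ = \mathbb{Q}(\theta)$ is totally real, $p$ is inert-like in the sense that the prime of $K^+$ above $p$ has completion $E$, and in fact that prime is unique. Next set $N = K^+(\sqrt{-d})$ for a positive integer $d$ chosen as follows. Require $-d$ to be a nonzero square in $\mathbb{Q}_p$ (e.g.\ $-d \equiv 1 \bmod 8$ if $p=2$, and $-d$ a square unit mod $p$ otherwise), so that every $p$-adic place of $K^+$ splits in $N$. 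Also require $d$ to be divisible by a prime $\ell$ exactly once with $\ell$ unramified in $K^+$, so that $N/K^+$ is ramified above $\ell$. Then $N$ is CM with $N^+ = K^+$, and taking $\mathfrak{p}$ above $p$ gives $N_\mathfrak{p} \cong K^+_{\mathfrak{p}^+} \cong E$. This gives (1) and (2).

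\emph{Step 2: finding $\beta_v$.} Write $q = q_v$. We need $\beta \in N$ that is a $q$-Weil number, meaning $\beta\bar\beta = q$ under every complex embedding, and ordinary, meaning its minimal polynomial satisfies the condition of Lemma \ref{lem_honda_tate}. Equivalently, $\beta$ should be a unit at $\mathfrak{p}$ and at every prime above $p$ in the $\mathfrak{p}$-half of the CM splitting, with $\beta \equiv \alpha \bmod \varpi^n$. Strict equality $\beta\bar\beta=q$ is hard to arrange directly, so we first allow a power of $q$. Let $\mathfrak{p}_1,\dots,\mathfrak{p}_s$ be the primes of $N$ above $p$, with $\bar{\mathfrak{p}}_i$ the conjugates; by construction there is one prime of $N^+$ above $p$, so $s=1$ and $p\cO_N = (\mathfrak{p}\bar{\mathfrak{p}})^e$. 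Let $h_N$ be the class number of $N$, and choose $\gamma$ generating $\bar{\mathfrak{p}}^{e f_v m h_N}$, where $f_v$ is defined by $q = p^{f_v}$ and $m$ is chosen so that the fractional ideal generated by $\gamma\bar\gamma$ is $(q^{k})$ for some $k$. Then $\gamma\bar\gamma = u q^k$ with $u$ a totally positive unit of $N^+$. Replacing $\gamma$ by $\gamma^2$ and $u$ by $u^2 = u\bar u$, the quotient $\gamma/u$ satisfies $(\gamma/u)\overline{(\gamma/u)} = q^{2k}$. The resulting $\beta_0 = \gamma/u$ is a unit at $\mathfrak{p}$ and has positive valuation at $\bar{\mathfrak{p}}$, so it is an ordinary $q^{2k}$-Weil number.

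\emph{Step 3: getting $q$ rather than $q^{2k}$, and matching $\alpha$ mod $\varpi^n$.} This is the main obstacle. The conclusion asks for a $q_v$-Weil number, not a $q_v^{K}$-Weil number, and it must hit a prescribed residue class mod $\varpi^n$. A fixed $N$ from Step 1 typically cannot achieve both, so $N$ has to be enlarged. My plan is to choose $\beta$ first and build $N$ around it. Apply Lemma \ref{lem_existence_of_Weil_polynomials} with $g=1$ or larger, $a(x)$ congruent mod $p^n$ to a polynomial having $\alpha$ as a root, and $M$ a suitable auxiliary modulus. This produces an ordinary $q^\ell$-Weil polynomial $f$ whose unit root is congruent to $\alpha$ modulo a high power of $p$.

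For the exponent, I would try to see whether the lemma can be applied to $q_v$ itself, or alternatively use the freedom in the statement: $N$ is only required to satisfy $N_\mathfrak{p}\cong E$, so arguably one could instead work with roots of $x^2 - t x + q$. That route would proceed by choosing an integer $t$ with $t^2<4q$ and $t \equiv \alpha + q\alpha^{-1}$ mod $p^n$; this requires $q$ to be large relative to $p^n$, which fails for a fixed $q_v$.

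The approach I would actually follow is to take $f$ irreducible with a root field $N' = \mathbb{Q}(\beta)$ such that Hensel's lemma (using the approximation $f \equiv x^g a(x)$, with $a$ having a simple root near $\alpha$ in $E$) embeds $N'$ into $E$ at a prime above $p$ with the unit root mapping near $\alpha$. Then take $N$ to be the compositum of $N'$ with the field from Step 1, ensuring $N_\mathfrak{p}=E$ by choosing the Step 1 field generator compatibly via Krasner's lemma. Finally, adjoin $\sqrt{-d}$ as in Step 1 to secure the ramification and splitting conditions of (1).
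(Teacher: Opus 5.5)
There is a genuine gap, in fact two. Your Hensel step and your totally real field with $p$-adic completion $E$ are fine; the paper uses both, taking $a=f_\alpha$, the minimal polynomial of $\alpha$, at precision $p^m$ with $m=n+\ord_\varpi f_\alpha'(\alpha)$.

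\textbf{First gap: you never descend from $q_v^\ell$ to $q_v$.} The plan you say you would actually follow takes $\beta$ to be a root of the polynomial $f$ from Lemma~\ref{lem_existence_of_Weil_polynomials}. That root is an ordinary $q_v^\ell$-Weil number for some large $\ell$, and the lemma genuinely needs $\ell\gg 0$, as does your $t^2<4q$ variant. So (3) is never achieved. The missing idea is to extract an $\ell$-th root.
\begin{itemize}
\item Choose $\ell$ to be a large odd prime with $\ell\equiv 1 \bmod |(\cO/\varpi^m)^\times|\cdot\phi(M)$. This is allowed because the lemma holds for every sufficiently large $\ell\equiv 1\bmod\phi(M)$.
\item Then $\ell\neq p$ and $\ell\nmid |k^\times|$, so $x\mapsto x^\ell$ is a bijection of $\cO^\times$. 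It also induces the identity on $(\cO/\varpi^n)^\times$.
\item Let $\beta_1$ be the root of $f$ and suppose $\beta_1\mapsto a\equiv\alpha \bmod \varpi^n$. Then a root $\beta$ of $x^\ell-\beta_1$ can be sent to the unique $\ell$-th root of $a$ in $\cO^\times$, which is still $\equiv\alpha \bmod\varpi^n$.
\item This $\beta$ is an ordinary $q_v$-Weil number.
\end{itemize}

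\textbf{Second gap: adjoining $\sqrt{-d}$ at the end cannot secure ramification of $N/N^+$.} Suppose $N'$ and $N\supseteq N'$ are both CM. Then $N'^+\subseteq N^+$ and $N=N^+N'$, so $N/N^+$ is a base change of $N'/N'^+$. Hence if $N'/N'^+$ is unramified at all finite places, so is $N/N^+$, whatever you adjoin. Even ramified pieces can cancel: $\mathbb{Q}(i,\sqrt{-3})/\mathbb{Q}(\sqrt 3)$ is unramified at every finite place. Your Step~1 worked only because $K^+$ was totally real.

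So the ramification must be built into $\mathbb{Q}(\beta)$ itself, and nothing in your choice of $f$ does this. The paper does it in three stages.
\begin{itemize}
\item It imposes auxiliary congruences on $f$. Modulo $r_1$ this forces irreducibility. Modulo $r_2^2$, via $s_2$ with $\ord_{r_2}(s_2^2-4q_v)=1$, it forces some $r_2$-adic place of $\mathbb{Q}(\beta_1)^+$ to be unramified there but ramified in $\mathbb{Q}(\beta_1)$.
\item The $\ell$-th root step changes the field, so it must check the ramification survives. When $x^\ell-\beta_1$ is irreducible, $\Gal(\mathbb{Q}(\beta,\zeta_\ell)/\mathbb{Q}(\beta_1+q_v^\ell/\beta_1,\zeta_\ell))$ is dihedral of order $2\ell$. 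An inertia group above $r_2$ has order $2$, and all involutions are conjugate. Hence some inertia group equals $\Gal(\mathbb{Q}(\beta,\zeta_\ell)/\mathbb{Q}(\beta+q_v/\beta,\zeta_\ell))$, so $\mathbb{Q}(\beta)/\mathbb{Q}(\beta)^+$ is ramified above $r_2$.
\item Only then does it compose with a totally real field, split at $r_2$ and with all $p$-adic completions $E$, and with no $\sqrt{-d}$. This preserves both the ramification and the splitting of $p$-adic places.
\end{itemize}
Your Step~2, the class-number construction, plays no role and can be dropped.
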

\begin{proof}
    Let $f_\alpha(x) \in \mathbb{Z}_p[x]$ be the minimal polynomial of $\alpha$ over $\mathbb{Z}_p$, and let $g$ be its degree. Let $k = \ord_\varpi f'_\alpha(\alpha)$. We are free to increase $n$, so let's assume that $n > k$, and set $m = n + k$. Let $C$ denote the cardinality of $(\cO / \varpi^m \cO)^\times$. We can choose distinct primes $r_1, r_2 \neq p$ with the following properties:
    \begin{itemize}
        \item There exists an irreducible polynomial $R_1(x) \in \mathbb{F}_{r_1}[x]$, monic of degree $g$, such that $x^g R_1(x + q_v /  x) \in \mathbb{F}_{r_1}[x]$ is separable.
        \item There exist $s_2 \in \mathbb{Z}$ such that the $r_2$-adic valuation of the discriminant of $x^2 - s_2 x + q_v$ is $1$; and $R_2(x) \in \mathbb{Z} / r_2^2 \mathbb{Z}[x]$, monic of degree $g$, such that $R_2(x) \text{ mod }r_2$ is separable and $R_2(s_2) \equiv 0 \text{ mod }r_2^2$. 
    \end{itemize}
    Let $M = r_1 r_2^2$. By Lemma \ref{lem_existence_of_Weil_polynomials}, we can find an odd prime $\ell$ such that $\ell \equiv 1 \text{ mod }C\phi(M)$, $p^m | q_v^\ell$, and there exists an ordinary $q_v^{\ell}$-Weil polynomial $f(x) \in \mathbb{Z}[x]$ of degree $2g$ such that $f(x) \equiv x^g f_\alpha(x) \text{ mod }p^m$, $f(x) \equiv x^g R_1(x + q_v / x) \text{ mod }r_1$, and $f(x) \equiv x^g R_2(x + q_v / x) \text{ mod }r_2^2$. 

    The congruence modulo $r_1$ implies that $f(x)$ is irreducible; let $N_1 = \mathbb{Q}[x] / (f(x))$, and let $\beta_1 = x + (f(x))$. Then $N_1$ is a CM field. Writing $N_1^+ = \mathbb{Q}(\beta_1 + q_v^\ell / \beta_1)$ for its maximal totally real subfield, we see that $r_2$ is unramified in $N_1^+$, but ramified in $N_1$, and each $p$-adic place of $N_1^+$ splits in $N_1$ (because $\beta_1$ is ordinary). The congruence modulo $p^m$ implies that we have $f(\alpha) \in \varpi^m \cO$, so by Hensel's Lemma, there exists a unique $a \in \cO$ such that $f(a) = 0$, and $a \equiv \alpha \text{ mod }\varpi^{k+1}$; and in fact, we have $a \equiv \alpha \text{ mod }\varpi^{n}$ (since $n = m-k$). In other words, there is an embedding $N_1 \to E$ such that the image of $\beta_1$ in $\cO / \varpi^n \cO$ is $\alpha \text{ mod }\varpi^n$. 

    We now consider the polynomial $x^\ell - \beta_1 \in N_1[x]$. We split into cases according to whether or not it has a root in $N_1$. If it has a root, then there is an element $\beta \in N_1$ such that $\beta^\ell = \beta_1$. This $\beta$ is an ordinary $q_v$-Weil number. Our choice of $\ell$ implies that $\cO^\times$ is uniquely $\ell$-divisible, and that raising to the $\ell$th power acts as the identity on $(\cO / \varpi^n \cO)^\times$. Thus the image of $\beta$ in $\cO / \varpi^n \cO$ is again equal to $\alpha \text{ mod }\varpi^n$. To complete the proof, we take $N$ to be the compositum of $N_1$ with a totally real extension $N_2 / \mathbb{Q}$, linearly disjoint from $N_1$, which is split at $r_2$ and such that, for each $p$-adic place $v$ of $N_2$, there is an isomorphism $N_{2, v} \cong E$. Then $N / N^+$ is ramified at some $r_2$-adic place, each $p$-adic place of $N^+$ splits in $N$, $\beta_v = \beta \in \cO$ is an ordinary $q_v$-Weil number, and we can choose an extension of the embedding $N_1 \to E$ to an embedding $N \to E$ with dense image. The proof is complete on taking $\mathfrak{p}$ to be the prime of $\cO_N$ induced by this embedding. 

    If $x^\ell - \beta_1$ does not have a root in $N_1$, then it is irreducible (as $\ell$ is an odd prime). In this case, we take $N_3 / N_1$ to be the extension determined by this polynomial, and write $\beta \in N_3$ for a root, which is an ordinary $q_v$-Weil number. For the same reasons as in the previous paragraph, there is an embedding $N_3 \to E$ such that the image of $\beta$ in $\cO / (\varpi^n)$ is equal to $\alpha \text{ mod }\varpi^n$. We claim that there is an $r_2$-adic place of $N_3^+$ that is ramified in $N_3$. To see this, let us first consider the diagram of field extensions
    \[ \xymatrix{ N_3 = \mathbb{Q}(\beta) \ar@{-}[r]^-2 \ar@{-}[d]^\ell& N_3^+ = \mathbb{Q}(\beta + q_v / \beta) \ar@{-}[d]^\ell \\ N_1 = \mathbb{Q}(\beta_1) \ar@{-}[r]^-2 & N_1^+ = \mathbb{Q}(\beta_1 + q_v^\ell / \beta_1).} \]
    The labels indicate the degree of each extension. We next adjoin $\zeta_\ell$ to each field, to obtain a diagram
    \[ \xymatrix{ \mathbb{Q}(\beta, \zeta_\ell) \ar@{-}[r]^-2 \ar@{-}[d]^\ell& \mathbb{Q}(\beta + q_v / \beta, \zeta_\ell) \ar@{-}[d]^\ell \\ \mathbb{Q}(\beta_1, \zeta_\ell) \ar@{-}[r]^-2 &  \mathbb{Q}(\beta_1 + q_v^\ell / \beta_1, \zeta_\ell).} \]
    (The left extension has degree $\ell$ because $\ell$ is prime to $[ \mathbb{Q}(\zeta_\ell) : \mathbb{Q} ]$. The bottom extension has degree 2 because there is an $r_2$-adic prime ramified in $N_1 / N_1^+$, so $N_1$ cannot be contained in $N_1^+(\zeta_\ell)$.) The extension $\mathbb{Q}(\beta, \zeta_\ell) / \mathbb{Q}(\beta_1 + q_v^\ell / \beta_1, \zeta_\ell)$ is Galois, with dihedral Galois group generated by $\sigma(\beta) = q_v / \beta$ and $\rho(\beta) = \zeta_\ell \beta$. If we can show that the extension $\mathbb{Q}(\beta, \zeta_\ell) / \mathbb{Q}(\beta + q_v / \beta, \zeta_\ell)$ is ramified, it will follow that $N_3 / N_3^+$ is ramified, as required.

    However, by construction, there is an $r_2$-adic place $v_1$ of $N_1^+(\zeta_\ell)$ whose ramification index in $N_3(\zeta_\ell)$ is equal to 2. Therefore, any inertia group associated to $v_1$ is cyclic of order 2. All involutions in the dihedral group $D_{2\ell}$ are conjugate, so it follows that we can find a place $v_3$ of $N_3(\zeta_\ell)$ lying above $v_1$ such that $I_{v_3 / v_1} = \Gal( N_3(\zeta_\ell) / N_3^+(\zeta_\ell))$. In particular, the $r_2$-adic place of $N_3^+(\zeta_\ell)$, given by the restriction of $v_3$ to this field, ramifies in $N_3(\zeta_\ell)$.

    To conclude the proof in this case, choose a totally real extension $N_2 / \mathbb{Q}$, split at $r_2$ and linearly disjoint from $N_3 / \mathbb{Q}$, such that for each $p$-adic place $v$ of $N_2$, there is an isomorphism $N_{2, v} \cong E$. We take $N = N_3 \cdot N_2$, $\beta_v = \beta$, and choose $\mathfrak{p}$ by extending the given embedding $N_3 \to E$ to an embedding $N \to E$ with dense image, and taking $\mathfrak{p}$ to be the corresponding prime of $\cO_N$. 
\end{proof}
\begin{proposition}\label{prop_existence_of_AV}
    Let $v$ be a $p$-adic place of $F$, let $E$ be a coefficient field, and let $\rho : G_{F_v} \to \GL_2(\cO)$ be a continuous, ordinary representation with Hodge--Tate weights $\{0, 1 \}$, with $\det \rho = \epsilon^{-1}$, and which is moreover crystalline. Fix $n \geq 1$. Then we can find the following data:
    \begin{enumerate}
        \item A totally real number field $M$.
        \item A prime $\mathfrak{p} | p \cO_M$, and an isomorphism $M_{\mathfrak{p}} \cong E$. 
        \item A polarised $M$-HBAV $(X, m, j)$ over $F_v$, which has good ordinary reduction, together with an isomorphism $X[\mathfrak{p}^n]^\vee \cong \rho \otimes_\cO \cO / (\varpi^n)$ of $\cO[G_{F_v}]$-modules.
    \end{enumerate}
\end{proposition}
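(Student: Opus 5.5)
We plan to build $X$ in three steps: first a CM abelian variety over the residue field whose Frobenius matches the unit root of $\rho$ modulo $\varpi^n$, then its canonical lift to $F_v$, and finally a Serre--Tate deformation that adjusts the extension class.

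\emph{Step 1 (Weil number and CM field).} Since $\rho$ is crystalline ordinary with $\det \rho = \epsilon^{-1}$, we can write
\[ \rho \sim \begin{pmatrix} \psi_1 & \ast \\ 0 & \epsilon^{-1}\psi_2 \end{pmatrix} \]
with $\psi_1, \psi_2$ unramified and $\psi_1\psi_2 = 1$. Let $\alpha = \psi_1(\Frob_v) \in \cO^\times$. Lemma \ref{lem_existence_of_Weil_numbers}, applied with a slightly larger $n$, produces a CM field $N$, a prime $\mathfrak{P} | p$ with $N_{\mathfrak{P}} \cong E$, and an ordinary $q_v$-Weil number $\beta_v \in N$ with $\beta_v \equiv \alpha \bmod \varpi^n$. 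By Honda--Tate, $\beta_v$ determines a simple ordinary abelian variety $\bar X$ over $k(v)$ with CM by an order of $N$. Because $N/N^+$ is ramified at a finite place, standard Honda--Tate invariants show that the endomorphism algebra is exactly $N$ and $\dim \bar X = [N^+:\mathbb{Q}]$. After an isogeny we may take $\cO_N \subset \End(\bar X)$. Set $M = N^+$, which is totally real, and $\mathfrak{p} = \mathfrak{P} \cap \cO_M$. Since $\mathfrak{p}$ splits in $N$, we get $M_\mathfrak{p} \cong N_\mathfrak{P} \cong E$. Restricting the action to $\cO_M$ makes $\bar X$ an $M$-HBAV: the Lie algebra is free because the CM type relevant here is ordinary, given by the slopes at $\mathfrak{P}$ versus $\overline{\mathfrak{P}}$. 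We then adjust by an $\cO_M$-isogeny so that $\mathcal{P}$ is isomorphic to $\mathfrak{d}^{-1}_{M/\mathbb{Q}}$, which is possible since isogenies change $\mathcal{P}$ by ideal classes.

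\emph{Step 2 (lift to $F_v$).} Take the Serre--Tate canonical lift, or more generally any lift in the Serre--Tate deformation space, which is a torus over $W(k(v))$ modulo the relevant parameters. The $\mathfrak{p}$-divisible group $\bar X[\mathfrak{p}^\infty]$ is an extension of an étale $\cO_E$-height-one group by a multiplicative one. Frobenius acts on the étale part through a unit root congruent to $\alpha$ (or $\alpha^{-1}$, depending on normalisation) modulo $\varpi^n$, so the unramified characters match those of $\rho$ modulo $\varpi^n$, after taking duals as in the statement.

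\emph{Step 3 (matching the extension class).} This is the main obstacle: we must realise the class $\ast$ of $\rho \bmod \varpi^n$ as the extension class of $X[\mathfrak{p}^n]^\vee$. Serre--Tate theory identifies $\cO_M$-linear deformations of $\bar X[\mathfrak{p}^\infty]$ with $\cO_M$-equivariant extensions, parametrised by $H^1_f(F_v, \cO(\alpha\epsilon))$ integrally. A crystalline extension class of $\rho$ lies in $H^1_f$, and by Lemma \ref{lem_lifting_ordinary_p_divisible_groups} its reduction modulo $\varpi^n$ lifts to an integral class. We choose the Serre--Tate coordinate at $\mathfrak{p}$ to be this class, and the canonical coordinate at the other primes. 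Algebraisation follows because the polarisation extends: $\cO_M$-linear deformations of an HBAV respect the symmetric pairing, since the Serre--Tate coordinates on $\mathfrak{p}$ and its dual are compatible. The deformation is then defined over $\cO_{F_v}$ after possibly using the $\cO$-structure on the unit group. The remaining points to verify carefully are the twist and dual conventions and the compatibility with the polarisation $j$, both of which we expect to be routine.
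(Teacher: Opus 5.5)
Your architecture matches the paper's proof: Lemma \ref{lem_existence_of_Weil_numbers}, then Honda--Tate, an isogeny to obtain an $\cO_N$-action, $M = N^+$, an isogeny to fix $\mathcal{P}$, and finally a Serre--Tate lift using Lemma \ref{lem_lifting_ordinary_p_divisible_groups} at $\mathfrak{p}$ and arbitrary lifts at the other $\mathfrak{q} \mid p$. However, Step 1 contains a claim that fails as written, and the polarisation adjustment is missing the one idea it needs.

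\emph{The abelian variety in Step 1.} In general $N$ is strictly larger than $\mathbb{Q}(\beta_v)$. In Lemma \ref{lem_existence_of_Weil_numbers} it is the compositum of $\mathbb{Q}(\beta_v)$ with a totally real field $N_2$, added precisely to force $N_{\mathfrak{P}} \cong E$. The simple abelian variety attached to $\beta_v$ has endomorphism algebra $\mathbb{Q}(\beta_v)$. This algebra is commutative because $\beta_v$ is ordinary, so all local invariants vanish; ramification in $N/N^+$ plays no role here. Its dimension is $\frac{1}{2}[\mathbb{Q}(\beta_v):\mathbb{Q}] < [N^+ : \mathbb{Q}]$, so it admits no action of $N$. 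You need the isotypic power $X_0^r$ with $r = [N : \mathbb{Q}(\beta_v)]$, together with an embedding $N \hookrightarrow M_r(\mathbb{Q}(\beta_v)) = \End(X_0^r) \otimes \mathbb{Q}$ sending $\beta_v$ to Frobenius. This is what the paper does.

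\emph{The polarisation step.} The ramification of $N/N^+$ at a finite place is needed exactly where you did not use it. Replacing $\bar X$ by $\mathfrak{a} \otimes_{\cO_N} \bar X$ moves the class of the ordered module $\mathcal{P}$ in the narrow class group $\mathrm{Cl}^+(M)$ by $\mathrm{Nm}_{N/M}(\mathfrak{a})^{\pm 1}$. So you can reach the class of $\mathfrak{d}_{M/\mathbb{Q}}^{-1}$ only if the norm map $\mathrm{Cl}(N) \to \mathrm{Cl}^+(M)$ is surjective. By class field theory its cokernel is $\Gal((N \cap H_M^+)/M)$, where $H_M^+$ is the narrow Hilbert class field. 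This is trivial because $N/M$ is ramified at a finite place while $H_M^+/M$ is not. This is the argument of \cite[Proof of Lemma 1.2]{Tay02} that the paper invokes; saying that isogenies change $\mathcal{P}$ by ideal classes does not by itself supply it.

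\emph{A smaller point in Step 3.} The class of $\rho$ lies in $H^1_f(F_v, \cO(\alpha^2\epsilon))$, not in $H^1_f(F_v, \cO(\alpha\epsilon))$. The Serre--Tate parameters of $\bar X$ at $\mathfrak{p}$ live in $H^1_f(F_v, \cO(\beta^2\epsilon))$, where $\beta$ is the unramified character with $\beta(\Frob_v) = \beta_v$. Lifting $c_{\rho,n}$ integrally for $\alpha^2\epsilon$ is trivial, since $c_\rho$ itself does it. The content of Lemma \ref{lem_lifting_ordinary_p_divisible_groups} is that $c_{\rho,n}$ lifts for $\beta^2\epsilon$, using $H^1_f(F_v, \cO/\varpi^n(\alpha^2\epsilon)) = H^1_f(F_v, \cO/\varpi^n(\beta^2\epsilon))$, which holds because $\alpha \equiv \beta \bmod \varpi^n$.
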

\begin{proof}
    There is an isomorphism
    \[ \rho \sim \left( \begin{array}{cc} \alpha & \ast \\ 0 & \epsilon^{-1} \alpha^{-1} \end{array} \right) \]
    for an unramified character $\alpha : G_{F_v} \to \cO^\times$, and the extension class is defined by an element of $H^1_f(F_v, \cO(\alpha^2 \epsilon))$. Let $\alpha_v = \alpha(\Frob_v)$. By Lemma \ref{lem_existence_of_Weil_numbers}, we can find a CM  number field and an ordinary $q_v$-Weil number $\beta_v \in N$ with the following properties:
    \begin{itemize}
        \item Let $M$ denote the maximal totally real subfield of $N$. Then each $p$-adic place of $M$ splits in $N$, and there exists a finite place of $M$ that is ramified in $N$.
        \item There is a prime $\mathfrak{p} | p \cO_N$ and an isomorphism $N_{\mathfrak{p}} \cong E$ (and hence $\cO_N / \mathfrak{p}^n \cong \cO / (\varpi^n)$) sending $\beta_v \text{ mod }\mathfrak{p}^n$ to $\alpha_v \text{ mod }\varpi^n$.
    \end{itemize}
    By Honda--Tate theory, we can find an ordinary abelian variety $X$ over $k(v)$ and an embedding $N \to \End(X) \otimes \mathbb{Q}$ sending $\beta_v$ to $\Frob_v$. After replacing $X$ by an isogenous abelian variety, we can assume that this embedding comes from an isomorphism $\cO_N \to \End(X)$ (cf. \cite[Theorem 3.13]{Wat69}). Then $X$ is an $M$-HBAV over $k(v)$.  After again passing to an isogenous abelian variety, we can assume that there exists an isomorphism $j : \mathfrak{d}^{-1}_{M / \mathbb{Q}} \to \mathcal{P}(X, m)$ of ordered sheaves of $\cO_M$-modules (the argument is given in \cite[Proof of Lemma 1.2]{Tay02}, and uses the existence of a ramified prime in the quadratic extension $N / M$). Thus $(X, m, j)$ is a polarised $M$-HBAV.

    By Serre--Tate theory, and \cite[Proposition 1.9]{Rap78}, lifting $X$ to a polarised $M$-HBAV over $\cO_{F_v}$ is equivalent to lifting its $p$-divisible group with $\cO_M$-structure $T_p X$. Decomposing $T_p X = \oplus_{\mathfrak{q} | p \cO_M} T_{\mathfrak{q}} X$, we can lift the components $T_{\mathfrak{q}} X$ for $\mathfrak{q} \neq \mathfrak{p}$ arbitrarily; we need to be more careful with $T_{\mathfrak{p}} X$. Let $\beta : G_{F_v} \to \cO^\times$ be the unramified character sending $\Frob_v$ to $\beta_v$, let $c_\rho \in H^1_f(F_v, \cO(\alpha^2 \epsilon))$ be the extension class specified by $\rho$, and let 
    \[ c_{\rho, n} \in H^1_f(F_v, \cO / \varpi^n \cO(\alpha^2 \epsilon)) = H^1_f(F_v, \cO / \varpi^n \cO(\beta^2 \epsilon)) \]
    be its image under reduction modulo $\varpi^n$. By Lemma \ref{lem_lifting_ordinary_p_divisible_groups}, we can choose a class $c' \in H^1(F_v, \cO (\beta^2 \epsilon))$ that also maps to $c_{\rho, n}$ under reduction modulo $\varpi^n$. We take  $T_{\mathfrak{p}} X$ to be the extension of $(E / \cO)(\beta)$ by $(E / \cO)(\beta)^D$ specified by $c'$. The corresponding lift of $(X, m, j)$ then fulfills the requirements of the proposition.
\end{proof}
We can now give the proof of Theorem \ref{thm_pot_mod}.
\begin{proof}[Proof of Theorem \ref{thm_pot_mod}]
    We are free to increase $n$. We can therefore assume that $n$ is chosen so that, if $E' / E$ denotes the compositum of all quadratic extensions in $\overline{\mathbb{Q}}_p$, then $\ker(\GL_2(\cO') \to \GL_2(\cO' / \varpi^n \cO'))$ has no non-trivial elements of finite order. 

    Let us write $V$ for the rank 2 $\cO[G_F]$-module which is the  $\cO$-linear dual of $\rho$, and fix an isomorphism $e_\rho : \wedge^2 V \cong \cO(1)$. For each place $v | p$,
    Proposition \ref{prop_existence_of_AV} gives us the existence of the following data:
    \begin{itemize}
        \item A totally real number field $M_v$, a prime $\mathfrak{p}_v | p \cO_{M_v}$, and an isomorphism $M_{v, \mathfrak{p}_v} \cong E$.
        \item A polarised $M_v$-HBAV $(X'_v, m'_v, j'_v)$ over $F_v$, which has good ordinary reduction, together with an isomorphism
        $X'_v[\mathfrak{p}_v^n] \cong V / (\varpi^n)$ of $\cO[G_{F_v}]$-modules.
    \end{itemize}
    Let $M'$ denote the  compositum of the fields $M_v$ ($v | p$) inside $E$; then $M'$ comes equipped with a prime $\mathfrak{p}' | p \cO_{M'}$ lying above each $\mathfrak{p}_v$, and an isomorphism $M'_{\mathfrak{p}'} \cong E$. We choose a finite totally real extension $M / M'$ and a prime $\mathfrak{p} \leq \cO_M$ lying above $\mathfrak{p}'$ such that $\cO_{M'_{\mathfrak{p'}}}^{\times} \subset (\cO_{M_\mathfrak{p}}^\times)^2$. Lemma \ref{lem_scalar_extension_and_polarisations} shows that, if $v |p$ and we write $(X_v, m_v, j_v)$ for the triple arising from $(X'_v, m'_v, j'_v)$ by scalar extension, then $(X_v, m_v, j_v)$ is a polarised $M$-HBAV over $F_v$, and there is an isomorphism $\eta_v : X_v[\mathfrak{p}^n] \cong V \otimes_\cO \cO_{M_\mathfrak{p}} / (\varpi^n)$ of $\cO_{M_\mathfrak{p}} / (\varpi^n)[G_{F_v}]$-modules. Multiplying $\eta_v$ by an element of $\cO_{M_\mathfrak{p}}^\times$, we can (and do) assume that $\eta_v$ identifies $e_\rho$ with the isomorphism $e_v : \wedge^2 X_v[\mathfrak{p}^n] \cong \cO_{M_\mathfrak{p}} / (\varpi^n)(1)$ determined by the $j_v$-Weil pairing. 

    We next claim that for each $v \in \Sigma$, we can find a polarised $M$-HBAV $(X_v, m_v, j_v)$ over $F_v$ with the following properties:
    \begin{itemize}
        \item $X_v$ is semistable, with purely toric reduction.
        \item There is an isomorphism $\eta_v : X_v[\mathfrak{p}^n] \cong V \otimes_\cO \cO_{M_\mathfrak{p}} / (\varpi^n)$ of $\cO_{M_\mathfrak{p}} / (\varpi^n)[G_{F_v}]$-modules that identifies $e_\rho$ with the isomorphism $e_v$ determined by the $j_v$-Weil pairing. 
    \end{itemize}
    By the same argument as in the good reduction case, it suffices to find a polarised $M'$-HBAV $(X'_v, m'_v, j'_v)$ over $F_v$ with toric semistable reduction, such that there is an isomorphism $X'_v[\mathfrak{p}'^n] \cong V / (\varpi^n)$ of $\cO[G_{F_v}]$-modules. We do this using the analytic theory of \cite[\S 2]{Rap78}. The character $\alpha$ satisfies $\alpha^2 = 1$; if it is non-trivial, then we can make an unramified quadratic twist to reduce to the case that it is trivial. If $q \in \cO_{M'} \otimes_\mathbb{Z} F_v^\times$, then $\ord_v(q) \in \cO_{M'}$. If $\ord_v(q)$ is totally positive, then the algebraization $X'$ of the rigid analytic variety $X'^{an} = \cO_{M'} \otimes \mathbb{G}_m / \cO_{M'} \cdot q$ is a polarised $M'$-HBAV over $F_v$ with toric semistable reduction, and there is an isomorphism
    \[ T_{\mathfrak{p}'} X' \sim \left( \begin{array}{cc} \epsilon & \ast \\ 0 & 1 \end{array} \right), \]
    where the extension class is the image of $q$ in 
    \[ H^1(F_v, \cO_{M'_{\mathfrak{p}'}}(1)) \cong \cO_{M'_{\mathfrak{p}'}} \otimes_{\widehat{\mathbb{Z}}} \widehat{F_v^\times}. \]
    Let $c_{\rho, v} \in H^1(F_v, \cO(1))$ denote the class determined by $\rho|_{G_{F_v}}$, and let $c_{\rho, v, n}$ denote its image in $H^1(F_v, (\cO / \varpi^n \cO)(1))$. To construct an $X'$ with the desired properties, it is therefore enough to show that we can lift $c_{\rho, v, n}$ to an element $q \in \cO_{M'} \otimes_\mathbb{Z} F_v^\times$ such that $\ord_v(q) \in \cO_{M'}$ is totally positive. This is true. 

    We now consider the moduli space $Y$ over $F$, classifying for an $F$-scheme $R$ the set of isomorphism classes of tuples $(X, m, j, \eta)$, where $(X, m, j)$ is a polarised $M$-HBAV over $R$, and $\eta : X[\mathfrak{p}^n] \to V_R \otimes_\cO \cO_{M_\mathfrak{p}} / (\varpi^n)$ is an isomorphism of \'etale sheaves of $\cO_{M_\mathfrak{p}} / (\varpi^n)$-modules that identifies $e_{\rho}$ with the isomorphism determined by the $j$-Weil pairing. Such tuples have no non-trivial automorphisms, by construction, so $Y$ is a smooth algebraic space over $F$, that is geometrically connected  (cf. \cite[Th\'eor\`eme 1.28]{Rap78}). In fact, taking $S = \Sigma \cup \{ v | p \}$, $Y$ can naturally be extended to a smooth, surjective morphism $Y \to \Spec \cO_{F, S}$ (by extending the moduli problem in the obvious way). Moreover, $Y$ is a quasi-projective scheme, and it admits an open embedding $Y \to \overline{Y}$, where $\overline{Y}$ is a flat, projective scheme over $\cO_{F, S}$, with the property that the complement $\overline{Y} - Y$ is finite over $\cO_{F, S}$; in particular, the complement $\overline{Y}_F - Y_F$ has codimension $\dim Y_F = [M : \mathbb{Q}] \geq 2$ in $\overline{Y}_F$. (For the quasi-projectivity of $Y$, see e.g.\ the discussion in \cite[\S 2.2.1]{Dia23}; for the structure of the minimal compactification, see \cite[\S 6.4]{Dia23}.)

    By Theorem \ref{thm_moret-bailly}, we can find a finite totally real extension $F' / F$ and a point $P \in Y(\cO_{F', S})$ such that for each $v \in S$, $v$ splits in $F'$ and the image of $P$ in $Y(F'_w)$ (for any place $w | v$ of $F'$) approximates the tuple $(X_v, m_v, j_v, \eta_v)$ to any desired degree of precision. In particular, we can suppose that $P$ corresponds to an $M$-HBAV $(X, m, j)$ over $F'$ with the following properties:
    \begin{itemize}
        \item $X$ has good reduction outside $\Sigma$.
        \item For each place $w | p$ of $F'$, $X_{F'_w}$ has good ordinary reduction.
        \item For each place $w$ of $F'$ lying above $\Sigma$, $X_{F'_w}$ has toric semistable reduction.
        \item There is an isomorphism $X[\mathfrak{p}^n] \cong V \otimes_\cO \cO_{M_\mathfrak{p}} / (\varpi^n)$ of $\cO_{M_\mathfrak{p}} / (\varpi^n)[G_{F'}]$-modules.
    \end{itemize}
    By a standard argument, we can moreover require that $F' / F$ is linearly disjoint from any given finite extension of $F$ (see e.g.\ \cite[Proposition 3.1.1]{Bar14}). Let $L_\infty / F$ be the extension cut out by $(\rho \times \epsilon)$, and let $L / F$ be the compositum of all subextensions $L_\infty / K / F$ such that $K / F$ is a finite Galois extension with $\Gal(K / F)$ simple. Then $L / F$ is finite (being the compositum of a finite number of finite extensions), and if $F ' /F$ is linearly disjoint from $L / F$, then $(\rho \times \epsilon)(G_{F'}) = (\rho \times \epsilon)(G_F)$. This completes the proof. 
\end{proof}

\section{Modularity}
Let $F$ be a totally real number field, let $p$ be a prime, let $E_0 / \mathbb{Q}_p$ be a coefficient field with ring of integers $\cO_0$, and let $\rho : G_F \to \GL_2(\cO_0)$ be a continuous representation satisfying the following conditions:
\begin{itemize}
    \item $\det \rho = \epsilon^{-1}$, and $\rho$ is unramified at all but finitely many places.
    \item For each place $v | p$ of $F$, $\rho|_{G_{F_v}}$ is crystalline and ordinary, with Hodge--Tate weights $\{ 0, 1 \}$ with respect to any embedding $F_v \hookrightarrow \overline{\mathbb{Q}}_p$. 
    \item For every finite place $v$ of $F$, $\rho|_{G_{F_v}}$ is semistable.
    \item The projective image of $\rho|_{G_{F(\zeta_{p^\infty})}} \otimes_{\cO_0} E_0$ in $\PGL_2$ is Zariski dense. 
    \item $E_0$ is large enough, in the sense that every $p$-adic embedding of $F$ takes values in $E_0$.
\end{itemize}
In this section, we will state and prove a modularity lifting theorem for $\rho$ -- or more precisely, a theorem asserting the modularity of $\rho$, given the existence of an auxiliary modular Galois representation approximating $\rho$ to a high enough degree of ($p$-adic) precision. See Theorem \ref{thm_modularity_by_close_approximation} below. Before giving the statement, we need to introduce some notation.

Let $E_1 / E_0$ denote the compositum of all quadratic extensions of $E_0$ in $\overline{\mathbb{Q}}_p$, and let $E / E_1$ denote the compositum of all quadratic extensions of $E_1$. Let $\cO_1 \leq E_1$ and $\cO \leq E$ denote the respective rings of integers. 

Let $\Sigma$ denote the set of places such that $\mathrm{WD}(\rho|_{G_{F_v}})$ is ramified, and let $S_p$ denote the set of $p$-adic places of $F$. For any set $S$ of finite places of $F$ containing $\Sigma \cup S_p$, we introduce the ring $R_{\overline{D}, S} \in \CNL_\cO$, the universal pseudodeformation ring of the group determinant $\overline{D} = \det(X-\overline{\rho})$, classifying pseudodeformations of determinant $\epsilon^{-1}$.

We write $W_\cO = \operatorname{ad}^0 \rho \otimes_{\cO_0} \cO$ for the adjoint representation on traceless matrices, $W_m = W_\cO / (\varpi^m)$, and $W_{E / \cO} = (W_\cO \otimes_\cO E) / W_\cO$. We write $W_\cO^\ast = \Hom_\cO(W_\cO, \cO(1))$ for the Tate dual, and define $W_m^\ast$, $W_{E / \cO}^\ast$ similarly. 

Let $\overline{\rho} : G_F \to \GL_2(k)$ denote the reduction of $\rho$ modulo $\varpi$. We fix local lifting rings for each $v \in \Sigma \cup S_p$ as follows:
\begin{itemize}
    \item If $v \in S_p$, then $R_v$ classifies lifts of $\overline{\rho}|_{G_{F_v}}$ of determinant $\epsilon^{-1}$ which are crystalline with Hodge--Tate weights in $[0, 1]$. 
    \item If $v \in \Sigma$, then $R_v$ classifies all lifts of determinant $\epsilon^{-1}$. 
\end{itemize}
We write $R_v^\square$ for the unrestricted (determinant $\epsilon^{-1}$) lifting ring in each case, so $R_v$ is a quotient of $R_v^\square$. If $v \in \Sigma \cup S_p$, then $\rho|_{G_{F_v}}$ determines a map $R_v \to \cO$ of kernel $P_v$; we write $P_v^\square$ for its pre-image in $R_v^\square$. We define constants associated to these objects as follows:
\begin{itemize}
    \item $C_0$ is the exponent of $H^0(F, W_{E / \cO}) \oplus H^0(F, W_{E / \cO}^\ast)$.
    \item $C_1 \geq 0$ is the minimal integer such that $\operatorname{ad}^0 \rho(\cO[G_{F(\zeta_{p^\infty})}])$ contains $\varpi^{C_1} \End_\cO(W_\cO)$.
    \item $C_2 \geq 0$ is the exponent of $H^1(L_\infty / F, W_{E / \cO}^\ast)$, where $L_\infty / F$ is the extension cut out by $\rho \times \epsilon$. (This group is finite because $H^1(L_\infty / F, W_E^\ast) = 0$, e.g. by the argument of \cite[Lemma 4.28]{All23}.)
    \item $C_3 \geq 0$ is the integer $C(\rho(G_{F, S}))$ associated to the subgroup $\rho(G_{F, S}) \leq \GL_2(\cO)$ by Proposition \ref{prop_deforming_a_pseudocharacter}. 
    \item $C_4 \geq 0$ is the smallest integer such that there exists $\sigma \in G_{F(\zeta_{p^\infty})}$ such that the eigenvalues $\alpha, \beta \in \cO$ of $\rho(\sigma)$ satisfy $\ord_\varpi(\alpha - \beta)^2 = C_4$.
    \item $C_5 \geq 0$ is the smallest integer such that there exists $\sigma \in G_F$ such that $\epsilon(\sigma) \equiv 1 \text{ mod }\varpi^{C_5+1}$, $\epsilon(\sigma) \not\equiv 1 \text{ mod }\varpi^{C_5+2}$, and the eigenvalues $\alpha, \beta \in \cO$ of $\rho(\sigma)$ satisfy $\ord_\varpi(\alpha-\beta)^2 \leq C_5$. (To see that an integer with this property exists, choose $\sigma_0 \in G_{F(\zeta_{p^\infty})}$ such that $\rho(\sigma_0)$ is regular semi-simple, with characteristic polynomial discriminant of valuation $C_6 \geq 0$. Since $\epsilon$ is non-trivial on open subgroups of $G_F$, there exists $\sigma_1$ such that $(\rho \times \epsilon)(\sigma_0)  \equiv (\rho \times \epsilon)(\sigma_1) \text{ mod }\varpi^{C_6+1}$ and $\epsilon(\sigma_1) \neq 1$.)
    \item If $v \in \Sigma \cup S_p$, then $C_v$ is the exponent of the torsion subgroup of 
    \[ H^0(F_v, W_{E / \cO})^\vee \oplus H^0(F_v, W_{E / \cO}^\ast)^\vee  \oplus P_v / P_v^2 \oplus \ker( P_v^\square / (P_v^\square)^2 \to P_v / P_v^2). \] 
\end{itemize}
We define $C(\rho) = \max(C_0, C_1, C_2, C_3, C_4, 3C_5 + 1, \{ C_v \}_{v \in \Sigma \cup S_p}, \ord_\varpi 2)$. 

We can now state the main theorem of this section: 
\begin{theorem}\label{thm_modularity_by_close_approximation}
    Suppose we can find a continuous Galois representation $\rho' : G_F \to \GL_2(\cO_1)$ satisfying the following conditions:
    \begin{enumerate}
        \item $\rho'$ is modular: there exists an isomorphism $\iota : \overline{\mathbb{Q}}_p \to \mathbb{C}$ and a cuspidal, automorphic, ordinary regular algebraic automorphic representation $\pi$ of $\GL_2(\mathbb{A}_F)$ of weight 0 and trivial central character such that $\rho' \otimes_{\cO_1} \overline{\mathbb{Q}}_p \cong r_{\pi, \iota}$.
        \item There is an isomorphism $\rho \text{ mod }\varpi^{80 C(\rho)+1} \cong \rho' \text{ mod }\varpi^{80C(\rho)+1}$.
        \item For each finite place $v$ of $F$, $\rho'|_{G_{F_v}}$ is semistable; moreover, $\mathrm{WD}(\rho'|_{G_{F_v}})$ is ramified if and only if $\mathrm{WD}(\rho|_{G_{F_v}})$ is ramified (equivalently, if and only if $v \in \Sigma$).
        \item For each place $v | p$ of $F$, the unit eigenvalues of $\Frob_v$ on $\mathrm{WD}(\rho|_{G_{F_v}})$ and on $\mathrm{WD}(\rho'|_{G_{F_v}})$ have the same image in $k$. 
        \item $|\Sigma|$ and $[F : \mathbb{Q}]$ are even. 
    \end{enumerate}
    Then $\rho$ is also modular. 
\end{theorem}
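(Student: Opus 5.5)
Take $\mathbb{T}$ to be an ordinary Hecke algebra acting on weight $0$ Hilbert modular forms. Because $|\Sigma|$ and $[F:\mathbb{Q}]$ are both even, we may use a definite quaternion algebra ramified exactly at $\Sigma$ and at the infinite places. Take the level to be Iwahori at $\Sigma$ and hyperspecial elsewhere, localize at $\overline{\rho}$, and impose the ordinary condition at $p$. Since $\overline{\rho}$ may be reducible, $\mathbb{T}$ is a quotient of the pseudodeformation ring $R_{\overline{D},S}$. The representation $\rho'$ gives a point $\psi_2:\mathbb{T}\to\cO$. We must show that $\rho$ gives a point $\psi_1:R_{\overline{D},S}\to\cO$ lying in $\operatorname{Supp}\mathbb{T}$. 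The plan is to feed Theorem \ref{thm_application_of_Hensel} a ring $R$ that surjects onto $\mathbb{T}$ after patching, together with the close points $\psi_1,\psi_2$. The key point is that the tangent space at $\psi_1$ is controlled by Selmer groups of $W$ only up to bounded torsion, with bound governed by $C(\rho)$.

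First, patching. Choose Taylor--Wiles sets $Q_N$ of primes $v$ with $q_v\equiv 1 \bmod p^N$ and $\rho(\Frob_v)$ regular semisimple modulo a bounded power of $\varpi$. Such primes exist by the constants $C_1,C_2,C_4,C_5$: Chebotarev together with the existence of $\sigma$ as in $C_5$ gives them even though $\overline{\rho}$ may be reducible. We use them to kill the dual Selmer group $H^1_{\mathcal{L}^\perp}(F,W^\ast_{m})$ up to $\varpi^{O(C)}$-torsion, working with $W_m$ for $m\approx 80C$ rather than with $\overline{\rho}$. Proposition \ref{prop_euler_characteristic} and Lemma \ref{lem_c-free_module} are the tools here. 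They show that the relevant Selmer groups, local cohomology, and tangent spaces $P_v/P_v^2$ (torsion bounded by $C_v$), together with the global terms $C_0$, are $c$-free with $c$ a fixed multiple of $C$ and of the expected ranks. This step uses the Poitou--Tate/Wiles formula with $q=\#Q_N$.

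Next, pseudocharacters versus representations. Proposition \ref{prop_deforming_a_pseudocharacter} with constant $C_3$ converts tangent vectors of the pseudodeformation ring at $\psi_1$ into genuine deformations of $\rho$, and vice versa, up to multiplication by $\varpi^{C_3}$. Combining this with the local framed rings $R_v$, we obtain a patched ring $R_\infty=(\widehat{\otimes}_v R_v)\llbracket x_1,\dots,x_g\rrbracket$ with a point $\psi_1$ (from $\rho$) and a point $\psi_2$ (from $\rho'$), congruent modulo $\varpi^{80C+1}$. The point $\psi_1$ satisfies hypothesis (1) of Theorem \ref{thm_application_of_Hensel}: $\dim R_{\infty,P_1}\ge q$ equals the expected dimension, and there is a map $\cO^q\to P_1/P_1^2$ with cokernel killed by $\varpi^{C'}$, where $2C'+1\le 80C+1$. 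The dimension bound comes from $R_v[1/p]$ at $\rho|_{G_{F_v}}$: these are smooth points of the expected dimension, since $\rho$ is crystalline ordinary at $p$ and Steinberg at $\Sigma$. Condition (4) guarantees that $\rho$ and $\rho'$ lie on matching ordinary components at $p$, and condition (3) that they lie on the same Steinberg-type components at $\Sigma$.

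Finally, Theorem \ref{thm_application_of_Hensel} puts $\psi_1$ and $\psi_2$ on the same irreducible component $Q$ of $\Spec R_\infty$, with $R_{\infty,P_1}$ regular. The patched module $M_\infty$ is maximal Cohen--Macaulay over $S_\infty$ and has full support on each component containing a modular point. This follows from the usual dimension count $\dim R_\infty=\dim S_\infty$ via Auslander--Buchsbaum, since $\psi_2\in\operatorname{Supp}M_\infty$. Hence $Q\subset\operatorname{Supp}M_\infty$, so $\psi_1\in\operatorname{Supp}M_\infty$. Descending to level zero shows that $\rho$ (through its group determinant, and then by irreducibility) arises from $\mathbb{T}$, and Jacquet--Langlands gives $\pi$. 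The main obstacle is the patching step: obtaining Taylor--Wiles primes and Selmer control with uniformly bounded torsion when $\overline{\rho}$ is not absolutely irreducible. Here every $\varpi^{C_i}$ loss must be tracked so that the total stays below $40C$. The first thing to try is to prove $c$-freeness of $H^1_{\mathcal{L}_{Q}}(F,W_m)$ via the Euler characteristic proposition, using $C_2$ for inflation-restriction.
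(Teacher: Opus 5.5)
Your outline has the right ingredients: patching, bounded-torsion control of tangent spaces at a characteristic-zero point, then Theorem \ref{thm_application_of_Hensel}. But the ring you patch does not exist in the residually reducible case, and the steps that depend on it fail.

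\textbf{The patched ring.} The Hecke algebras are only quotients of the pseudodeformation rings $R_{Q_N}$. These receive no map from the local lifting rings $R_v$, since there is no universal representation over $R_{\overline{D},S}$ when $\overline{\rho}$ is reducible. Proposition \ref{prop_deforming_a_pseudocharacter} only compares first-order deformations at the single point $\psi_1$; it gives no presentation of a complete local ring. A presentation $(\widehat{\otimes}_v R_v)\llbracket x_1,\dots,x_g\rrbracket\twoheadrightarrow R_{Q_N}$ with $g$ independent of $N$ and $\dim R_\infty=\dim S_\infty$ would also need dual Selmer groups with residual coefficients to be killed. That is exactly what is unavailable here. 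The Taylor--Wiles places only satisfy $\ord_\varpi(\alpha_v-\beta_v)^2=C_4$, and the $W_m$-Selmer estimates only control $\mathfrak{p}_Q/\mathfrak{p}_Q^2$. The paper therefore patches the $R_{Q_N}$ themselves via ultraproducts. It knows essentially nothing about $R_\infty$ beyond a map $\cO^q\to\mathfrak{p}_\infty/\mathfrak{p}_\infty^2$ with cokernel killed by $\varpi^{40C(\rho)}$. Consequently, your check of $\dim R_{\infty,P_1}\ge q$ at the point of $\rho$, via smoothness of $R_v[1/p]$, has no basis. Neither does the global Cohen--Macaulay argument giving $M_\infty$ full support on components through $\psi_2$, which would in any case need $\psi_2$ to lie on a single component.

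\textbf{The missing idea.} You should apply Theorem \ref{thm_application_of_Hensel} with the roles reversed: $P_1=\mathfrak{p}'_\infty$, the \emph{modular} point, and $P_2=\mathfrak{p}_\infty$.
The dimension bound then comes from the patched module. The elements $\delta_i$ act through elements of $R_\infty$ not in $\mathfrak{p}'_\infty$, because $\rho'\equiv\rho$ to high order. So after localizing and completing at $\mathfrak{p}'_\infty$, the patched module is nonzero and finite free over $\widehat{S}_\infty^W$, a regular local ring of dimension $q$. This gives $\dim R_{\infty,\mathfrak{p}'_\infty}\ge q$.
The tangent-space bound is controlled by $C(\rho)$ only at $\mathfrak{p}_\infty$, so it must be transported to $\mathfrak{p}'_\infty$. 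One checks $(\mathfrak{p}'_\infty/(\mathfrak{p}'_\infty)^2)\otimes\cO/(\varpi^D)\cong(\mathfrak{p}_\infty/\mathfrak{p}_\infty^2)\otimes\cO/(\varpi^D)$ for $D=40C(\rho)+1$, and applies Lemma \ref{lem_existence_of_homomorphism}.
Regularity at $\mathfrak{p}'_\infty$ and Auslander--Buchsbaum then give $\widehat{R}_\infty\cong\widehat{\mathbb{T}}_\infty$. Hence the component through $\mathfrak{p}'_\infty$, which contains $\mathfrak{p}_\infty$, lies in $\Spec\mathbb{T}_\infty$.

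\textbf{The descent.} Your ``descending to level zero'' is not routine. Since $\mathfrak{m}$ may be Eisenstein, the quaternionic space contains functions factoring through the reduced norm. Also, patched and unpatched modules agree only up to $\delta^2$. The paper handles this in several steps:
\begin{enumerate}
\item It lifts $\mathfrak{p}_\infty$ to an augmented Hecke algebra by a Fitting ideal argument.
\item It kills the norm-factoring part modulo $\varpi^{C_4}$ using $T_v^2-(q_v+1)^2S_v$.
\item It transfers to Hilbert modular forms, where multiplicity one follows from $q$-expansions.
\item It descends from $V_1(Q_N)$ to $V_0(Q_N)$, and then to $V_0$ at a cost of $\varpi^{qC_4}$.
\item It lets $M\to\infty$.
\end{enumerate}
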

The point here is that the constant $C(\rho)$ depends only on the group $(\rho \times \epsilon)(G_F)$ and the local representations $\rho|_{G_{F_v}}$ at ramified places, in a sense made precise by the following lemma, whose proof is immediate:
\begin{lemma}
    Let $F' / F$ be a finite totally real extension such that $\Sigma \cup S_p$ is split in $F'$ and $(\rho \times \epsilon)(G_F) = (\rho \times \epsilon)(G_{F'})$. Then $C(\rho) = C(\rho|_{G_{F'}})$.
\end{lemma}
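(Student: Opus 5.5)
The plan is to verify that each ingredient of $C(\rho)$, namely $C_0,\dots,C_5$ and the local constants $C_v$ for $v\in\Sigma\cup S_p$, takes the same value for $\rho|_{G_{F'}}$ as for $\rho$. Some ingredients depend only on the image, and the others are Galois cohomology groups of a fixed Galois extension. We may use the hypothesis $(\rho\times\epsilon)(G_F)=(\rho\times\epsilon)(G_{F'})$ as follows. Let $L_\infty/F$ be the extension cut out by $\rho\times\epsilon$. Then $F'\cap L_\infty=F$, and restriction gives an isomorphism $\Gal(F'L_\infty/F')\cong\Gal(L_\infty/F)$ that is compatible with $\rho\times\epsilon$. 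The extension of $F'$ cut out by $\rho|_{G_{F'}}\times\epsilon$ is $F'L_\infty$.

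\textbf{Global constants.} The modules $W_{E/\cO}$ and $W^\ast_{E/\cO}$ are representations of $G_F$ that factor through $(\rho\times\epsilon)(G_F)$. Hence $H^0(F,W_{E/\cO})=H^0(F',W_{E/\cO})$, and likewise for the dual, so $C_0$ is unchanged.

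The group $G_{F(\zeta_{p^\infty})}$ corresponds to the subgroup of the image on which $\epsilon$ is trivial. Since the images of $\rho\times\epsilon$ agree, the images $\rho(G_{F(\zeta_{p^\infty})})$ and $\rho(G_{F'(\zeta_{p^\infty})})$ agree. This gives equality of $C_1$ and $C_4$, and $C_5$ is unchanged because it depends only on the set of pairs $(\rho(\sigma),\epsilon(\sigma))$.

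For $C_2$, the group $H^1(L_\infty/F,W^\ast_{E/\cO})$ is group cohomology of $\Gal(L_\infty/F)$, which is identified with $\Gal(F'L_\infty/F')$ compatibly with the coefficients. So $C_2$ is unchanged.

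For $C_3$, the image $\rho(G_{F,S})=\rho(G_F)$ equals $\rho(G_{F'})$, and $C(\Delta)$ depends only on $\Delta$. The term $\ord_\varpi 2$ is obviously unchanged.

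\textbf{Local constants.} Because each $v\in\Sigma\cup S_p$ splits completely in $F'$, for every place $w|v$ of $F'$ we have $F'_w=F_v$, and $\rho|_{G_{F'_w}}$ is identified with $\rho|_{G_{F_v}}$ via the chosen embeddings. The places of $F'$ at which $\mathrm{WD}$ is ramified, together with the $p$-adic places, are exactly the places above $\Sigma\cup S_p$. This uses that $\mathrm{WD}$ ramification at $w$ depends only on $\rho|_{G_{F'_w}}$, and that an unramified $\mathrm{WD}$ stays unramified on restriction.

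The local lifting rings $R_w$ and $R_w^\square$, the primes $P_w$ and $P_w^\square$, and the local $H^0$ groups are then literally those attached to $v$. So $C_w=C_v$, and the maximum over local constants is unchanged.

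\textbf{The only subtle point.} The coefficient fields $E_0$, $E_1$ and $E$ are defined from $\rho$ and do not change. We check that the standing hypotheses still hold for $\rho|_{G_{F'}}$, which is needed for the constants to be defined. Zariski density of the projective image of $\rho|_{G_{F'(\zeta_{p^\infty})}}$ follows from the equality of images noted above. The condition that every $p$-adic embedding of $F'$ lands in $E_0$ holds because $S_p$ splits in $F'$.

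No step poses a real obstacle. The only thing requiring care is to phrase the identification of $G_{F_v}$ with $G_{F'_w}$ so that the local deformation data match exactly.
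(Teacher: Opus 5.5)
Your proposal is correct, and it is the constant-by-constant verification that the paper omits as immediate. One small point: for places $w \mid v$ of $F'$ other than the one determined by the fixed embedding $\overline{F} \hookrightarrow \overline{F}_v$, the restriction $\rho|_{G_{F'_w}}$ is only conjugate (by an element of $\rho(G_F)$) to $\rho|_{G_{F_v}}$, not literally equal to it. This is harmless, since each $C_v$ is invariant under isomorphism of the local representation.
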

This makes Theorem \ref{thm_modularity_by_close_approximation} suitable for combination with the results of \S \ref{sec_application_of_moret-bailly}. 

We now begin the proof of Theorem \ref{thm_modularity_by_close_approximation}. Using the Chebotarev density theorem, we can choose a place $u \not\in \Sigma \cup S_p$ of $F$ such that $q_u \equiv 1 \text{ mod }\varpi^{C_5+1}$, $q_u \not\equiv 1 \text{ mod }\varpi^{C_5+2}$, and the eigenvalues $\alpha_u, \beta_u \in \cO$ of $\rho(\Frob_u)$ satisfy $\ord_\varpi(\alpha_u - \beta_u)^2 \leq C_5$. We record some properties of this place.
\begin{lemma}
    We have $H^0(F_u, W_E^\ast) = 0$. Moreover, the torsion subgroups of $H^0(F_u, W_{E / \cO})^\vee$ and $H^0(F_u, W_{E / \cO}^\ast)^\vee$ are each annihilated by $\varpi^{C(\rho)}$. 
\end{lemma}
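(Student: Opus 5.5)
Since $u \notin \Sigma \cup S_p$, the restriction $\rho|_{G_{F_u}}$ is unramified. I would therefore compute both $H^0$ groups explicitly in terms of the eigenvalues $\alpha_u, \beta_u$ of $\rho(\Frob_u)$. The hypothesis $\ord_\varpi(\alpha_u-\beta_u)^2 \leq C_5$ makes $\rho(\Frob_u)$ regular semisimple over $E$. Diagonalising over $E$ (the eigenvalues lie in $\cO$ by the choice of $u$), $\operatorname{ad}^0\rho(\Frob_u)$ acts on $W_E$ with eigenvalues $1$, $\alpha_u/\beta_u$ and $\beta_u/\alpha_u$.

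For the Tate dual, $\Frob_u$ acts on $W_E^\ast$ with eigenvalues $q_u^{-1}$, $q_u^{-1}\alpha_u/\beta_u$ and $q_u^{-1}\beta_u/\alpha_u$, with the convention that $\epsilon(\Frob_u) = q_u^{-1}$ for geometric Frobenius. So $H^0(F_u, W_E^\ast) = 0$ amounts to the statement that none of these equals $1$. We have $q_u \neq 1$. The equality $\alpha_u/\beta_u = q_u$ would force $\alpha_u\beta_u = \det\rho(\Frob_u) = q_u$, hence $\beta_u^2 = 1$ and $\alpha_u = \pm q_u$. But then $(\alpha_u-\beta_u)^2 = (q_u \mp 1)^2$ has valuation either $2\ord_\varpi(q_u-1) = 2(C_5+1) > C_5$, which contradicts the choice of $u$, or $2\ord_\varpi(q_u+1) = 2\ord_\varpi 2$. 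This last case needs a separate argument: in the $\beta_u = -1$ case the eigenvalues are $q_u, -1$ with product $-q_u$, not $q_u$. Indeed $\beta_u^2 = 1$ gives $\alpha_u = q_u/\beta_u = q_u\beta_u$, and the pair $\{q_u\beta_u, \beta_u\}$ has product $q_u\beta_u^2 = q_u$. So $\beta_u = -1$ gives eigenvalues $-q_u, -1$ and $(\alpha_u-\beta_u)^2 = (1-q_u)^2$, whose valuation is again $2(C_5+1)$. Both cases contradict the choice of $u$.

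For the torsion statements, write $W_{E/\cO}$ via an $\cO$-lattice. Its $H^0$ is the kernel of $\Frob_u - 1$ on $W_{E/\cO}$, whose Pontryagin dual is the cokernel of $\Frob_u - 1$ on $W_\cO^{\vee\vee}$, i.e.\ on the dual lattice. The torsion of this cokernel is controlled by the nonzero eigenvalues of $\Frob_u - 1$ together with the failure of $W_\cO$ to decompose integrally into eigenspaces. I would bound it by noting that $\Frob_u - 1$ is annihilated on the complement of the fixed line by a polynomial. Concretely, the operator $(\Frob_u-1)$ restricted to the generalized nonunit part is inverted up to $\varpi^{e}$, where $e$ comes from the resultant of the characteristic factors, together with the projector onto the $1$-eigenspace. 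I would express these via the polynomials $(x-\alpha_u/\beta_u)(x-\beta_u/\alpha_u)$ and $(x-1)$, whose resultant has valuation $\ord_\varpi(\alpha_u-\beta_u)^4/\ldots$, i.e.\ about $2C_5$.

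This gives exponent at most roughly $2C_5$ (plus $\ord_\varpi 2$ issues), which is $\le 3C_5+1 \le C(\rho)$. Similarly, for $W^\ast$ every eigenvalue of $\Frob_u - 1$ is nonzero, with valuations at most $C_5+1$ from $q_u^{-1}-1$ and at most $\ord(q_u-\alpha_u/\beta_u)$ for the others. The latter is bounded using $\alpha_u/\beta_u \equiv q_u$ would force, as above, $(\alpha_u-\beta_u)^2$ to be close to $(q_u-1)^2$. Combined with the discriminant bound, this gives exponent $\le 3C_5+1$.

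The main obstacle is making the integral eigen-decomposition precise, so that the total torsion exponent really is $\le 3C_5+1$. I would first try Cayley–Hamilton/resultant identities $a(x)P(x)+b(x)Q(x) = \operatorname{Res}$ with $a, b \in \cO[x]$ to split the lattice up to $\varpi^{\ord \operatorname{Res}}$.
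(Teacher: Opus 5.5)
Your strategy is the paper's. Since $\rho|_{G_{F_u}}$ is unramified, everything is governed by the eigenvalues of $\Frob_u$: these are $1,\lambda,\lambda^{-1}$ on $W$, with $\lambda=\alpha_u/\beta_u$, and $q_u^{-1},q_u^{-1}\lambda,q_u^{-1}\lambda^{-1}$ on $W^\ast$, which are exactly the Jordan--H\"older factors the paper uses. Your proof that $H^0(F_u,W_E^\ast)=0$ is correct once the self-correction is taken into account: for $\beta_u=\pm1$ one gets $(\alpha_u-\beta_u)^2=(q_u-1)^2$ in both cases, of valuation $2C_5+2>C_5$. The paper's route is quicker and also gives the estimate you need afterwards. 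Since $\alpha_u\beta_u=q_u$ is a unit, $\ord_\varpi(\lambda-1)=\ord_\varpi(\alpha_u-\beta_u)\le C_5/2<C_5+1=\ord_\varpi(q_u-1)$. The ultrametric inequality then gives $\ord_\varpi(q_u^{-1}\lambda^{\pm1}-1)=\ord_\varpi(\lambda-1)$, so in particular these are nonzero.

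What is missing is a proof of the torsion statements.
\begin{itemize}
\item You leave the integral step as ``the main obstacle''.
\item The resultant of $x-1$ and $(x-\lambda)(x-\lambda^{-1})$ is miscomputed. It equals $(1-\lambda)(1-\lambda^{-1})$, of valuation $\ord_\varpi(\alpha_u-\beta_u)^2\le C_5$, not about $2C_5$.
\item ``Roughly $2C_5$ plus $\ord_\varpi 2$ issues'' is not a bound. An additive $\ord_\varpi 2$ would not be absorbed by $C(\rho)$, which is a maximum, not a sum.
\end{itemize}
The missing ingredient is just the adjugate identity $\det(A)\,v=A\operatorname{adj}(A)v$ on a suitable saturated lattice. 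No Bezout splitting is needed and no $2$-adic correction arises.

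For $W^\ast$: $A=\Frob_u-1$ is invertible on $W^\ast_E$ by the first part. Hence $H^0(F_u,W^\ast_{E/\cO})=A^{-1}W^\ast_\cO/W^\ast_\cO$, which is killed by $\det A$. By the estimate above, $\det A$ has valuation $(C_5+1)+2\ord_\varpi(\lambda-1)\le 2C_5+1$.

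For $W$: the dual of $H^0(F_u,W_{E/\cO})$ is $L/AL$, where $L=\Hom_\cO(W_\cO,\cO)$ and $A$ is now the transpose of $\Frob_u-1$.
\begin{itemize}
\item If $\varpi^n x\in AL$, then $x$ lies in $L_2=L\cap A(L\otimes_\cO E)$, the saturation of the sum of the $\lambda^{\pm1}$-eigenspaces.
\item $L_2$ is $A$-stable, and $\det(A|_{L_2})=(1-\lambda)(1-\lambda^{-1})$.
\item Therefore $\det(A|_{L_2})\,x\in AL_2\subseteq AL$.
\end{itemize}
So the torsion is killed by $\varpi^{e}$ with $e=\ord_\varpi(\alpha_u-\beta_u)^2\le C_5$. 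Both bounds lie below $3C_5+1\le C(\rho)$; the paper records the cruder bounds $3C_5+1$ and $2C_5$.
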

\begin{proof}
    The Jordan--H\"older factors of $W_E^\ast$ are $\mathrm{ur}_{ q_u^{-1} \alpha_u / \beta_u}$, $\mathrm{ur}_{q_u^{-1}}$, and $\mathrm{ur}_{\beta_u / \alpha_u q_u^{-1}}$. These have trivial invariants, since $\ord_\varpi (q_u - 1) = C_5+1$ and $\ord_\varpi (\alpha_u / \beta_u q_u^{\pm 1} - 1) \leq C_5$, showing that $H^0(F_u, W_E^\ast) = 0$, and moreover that $H^0(F_u, W_{E / \cO}^\ast)$ is $\varpi^{3 C_5+1}$-torsion. Similarly, we see that the torsion subgroup of $H^0(F_u, W_{E / \cO})^\vee$ is annihilated by $\varpi^{2C_5}$. 
\end{proof}
Now fix $S = \Sigma \cup S_p \cup \{ u \}$. Let $R_u^\square = R_u$ be the lifting ring classifying all lifts of $\overline{\rho}|_{G_{F_u}}$ of determinant $\epsilon^{-1}$. We define $R_\emptyset$ to be the quotient of $R_{\overline{D}, S}$ classifying pseudodeformations which are crystalline with Hodge--Tate weights in $[0, 1]$ at places $v \in S_p$ (this condition being defined for pseudodeformations as in \cite{Wak19}).

We next define sets of Taylor--Wiles places, and associated Selmer groups. Let $Q$ be a finite set of finite places of $F$, disjoint from $S$. A tuple $(Q, ( \alpha_v, \beta_v )_{v \in Q})$ is called a Taylor--Wiles datum of level $N \geq 1$ if it satisfies the following conditions:
\begin{itemize}
    \item For each $v \in Q$, $q_v \equiv 1 \text{ mod }p^N$.
    \item For each $v \in Q$, $\alpha_v, \beta_v \in \cO$ are the distinct eigenvalues of $\rho(\Frob_v)$, and $\ord_\varpi(\alpha_v-\beta_v)^2 = C_4$.
\end{itemize}
If $Q$ is part of a Taylor--Wiles datum, we call it a Taylor--Wiles set, and  define $R_Q$ to be the quotient of $R_{\overline{D}, S \cup Q}$ by the same set of relations as those defining $R_\emptyset$.
We write $\mathfrak{p}_Q$ for the kernel of the map $R_Q \to \cO$ classifying the pseudodeformation $\det(X - \rho)$.

For any $m \geq 1$ and $v \in S$, we define $\mathcal{L}_{v, m} \leq H^1(F_v, W_m)$ to be the subspace of cohomology classes $[\phi]$ such that the lifting $\rho_\phi = \rho(1 + \epsilon \phi) : G_{F_v} \to \GL_2(\cO \oplus \epsilon \cO / (\varpi^m))$ is of type $R_v$. Writing $\mathcal{L}_{v, m}^1$ for its pre-image in $Z^1(F_v, W_m)$, we have a canonical identification
\[ \mathcal{L}^1_{v, m} = \Hom_\cO( P_v / P_v^2, \cO / (\varpi^m)), \]
given essentially by $\phi \mapsto \rho_\phi$. We write $\mathcal{L}_{v, m}^\perp \leq H^1(F_v, W_m^\ast)$ for the orthogonal complement of $\mathcal{L}_{v, m}$ with respect to the Tate local duality pairing. In the presence of a Taylor--Wiles datum, we will define $\mathcal{L}_{v, m} = H^1(F_v, W_m)$ for $v \in Q$. We can thus define, for any $Q$, the Selmer groups
\[ H^1_{\mathcal{L}_{S \cup Q}}(W_m) = \ker( H^1(F_{S \cup Q} / F, W_m) \to \oplus_{v \in S \cup Q} H^1(F_v, W_m) / \mathcal{L}_{v, m}) \]
and
\begin{multline*} H^1_{\mathcal{L}_{S \cup Q}^\perp}(W_m^\ast) = \ker(H^1(F_{S \cup Q} / F, W_m^\ast) \\ \to \oplus_{v \in S \cup Q} H^1(F_v, W_m^\ast) / \mathcal{L}_{v, m}^\perp \bigoplus \oplus_{v \in S_\infty} H^1(F_v, W_m^\ast)). 
\end{multline*}
\begin{lemma}\label{lem_comparison_of_tangent_spaces}
    For any Taylor--Wiles set $Q$ of level $N \geq 1$, there is a morphism of $\cO$-modules $H^1_{\mathcal{L}_{S \cup Q}}(W_N) \to \Hom_\cO(\mathfrak{p}_Q / \mathfrak{p}_Q^2, \cO / \varpi^N)$, with kernel and cokernel annihilated by $\varpi^{4 C(\rho)}$.
\end{lemma}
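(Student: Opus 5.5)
The plan is to build the map in three stages, each losing at most $\varpi^{C(\rho)}$. First, $\Hom_\cO(\mathfrak{p}_Q/\mathfrak{p}_Q^2, \cO/\varpi^N)$ is identified with $\cO$-algebra maps $R_Q \to \cO \oplus \epsilon\, \cO/(\varpi^N)$ lifting the map $R_Q \to \cO$ with kernel $\mathfrak{p}_Q$. Equivalently, these are pseudodeformations $\widetilde{D}$ of $D = \det(X-\rho)$ over this ring that have determinant $\epsilon^{-1}$, are unramified outside $S \cup Q$, and are crystalline at $S_p$. Second, $H^1(F_{S\cup Q}/F, W_N)$ is identified with liftings $\rho(1+\epsilon\phi)$ modulo conjugation by $1+\epsilon M_2(\cO/\varpi^N)$. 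The map is then $[\phi] \mapsto \det(X - \rho(1+\epsilon\phi))$. We compose the inclusion $A_N := \cO\oplus\epsilon\,\cO/(\varpi^N) \hookrightarrow \cO \oplus \epsilon E/\cO$, sending $\epsilon \mapsto \varpi^{-N}\epsilon$, so that Proposition~\ref{prop_deforming_a_pseudocharacter} applies with constant $C_3 \le C(\rho)$. The map is well defined because conjugate lifts have equal determinants. The local condition at $v \in S_p$ needs care: the lift $\rho_\phi|_{G_{F_v}}$ being of type $R_v$ should imply that the pseudodeformation is crystalline in the sense of \cite{Wak19}. The conditions at $\Sigma$, $u$ and $Q$ are unrestricted, so they impose nothing.

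\emph{Kernel.} Suppose $[\phi]$ maps to the trivial pseudodeformation. Part (2) of Proposition~\ref{prop_deforming_a_pseudocharacter} then says that $\varpi^{C_3}\phi$ is a coboundary. This uses the embedding into $E/\cO$-coefficients, together with the comparison $H^1(W_N) \to H^1(W_{E/\cO})[\varpi^N]$, whose kernel is a quotient of $H^0(F, W_{E/\cO})$ and is therefore killed by $\varpi^{C_0}$. Hence the kernel is killed by $\varpi^{C_0 + C_3} \le \varpi^{2C(\rho)}$.

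\emph{Cokernel.} Given $\widetilde{D}$, part (1) of the proposition produces a genuine lift $\widetilde\rho = \rho(1+\epsilon\phi)$ with $\det(X-\widetilde\rho) = \varpi^{C_3}\widetilde D$; here the $E/\cO$-coefficients are again used, and we take $N$-torsion. The lift is unramified outside $S\cup Q$, so $\phi$ is a class in $H^1(F_{S\cup Q}/F, W_{E/\cO})[\varpi^N]$. We must (a) descend it to $H^1(W_N)$, at a cost of $\varpi^{C_0}$ from the same long exact sequence, and (b) check the local conditions. The only nontrivial local condition is at $v\in S_p$: we know $\widetilde\rho|_{G_{F_v}}$ has crystalline pseudocharacter and need it to be a lift of type $R_v$. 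Here we would use $C_v$. Both $P_v^\square/(P_v^\square)^2$ and $P_v/P_v^2$ measure tangent directions at $\rho|_{G_{F_v}}$. The torsion of $\ker(P_v^\square/(P_v^\square)^2 \to P_v/P_v^2)$ and the local $H^0$ terms are killed by $\varpi^{C_v}$, and this bounds the failure: after multiplying by $\varpi^{C_v}$, a direction whose pseudocharacter lies in the crystalline locus comes from an element of $\mathcal{L}^1_{v,N}$. This step also uses part (2) of Proposition~\ref{prop_deforming_a_pseudocharacter} applied locally, or alternatively the local $H^0$ torsion, to adjust by local conjugation. Altogether the cokernel is killed by $\varpi^{C_3 + C_0 + C_v}$ plus possibly another $C(\rho)$ for this local adjustment, giving at most $\varpi^{4C(\rho)}$.

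The main obstacle is (b) at $p$: the crystallinity of the pseudodeformation in the sense of \cite{Wak19} has to be translated into the lifting condition defining $R_v$, and the loss has to be controlled by $C_v$. The global steps, by contrast, are formal consequences of Proposition~\ref{prop_deforming_a_pseudocharacter} and the definition of $C_0$. If the direct comparison is awkward, the fallback is to define $R_\emptyset$'s crystalline condition so that it is checked on the associated lifting over the generic point $R_Q[1/p]$. In that case the factor $\varpi^{C_v}$ enters exactly through the torsion of $P_v/P_v^2$ versus that of its unrestricted analogue.
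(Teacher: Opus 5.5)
Your global architecture is the one the paper has in mind. Its proof only cites \cite[Proposition 2.15]{New23}, which is exactly the map $[\phi]\mapsto \det(X-\rho(1+\epsilon\phi))$, with Proposition \ref{prop_deforming_a_pseudocharacter} and the $H^0$-comparison controlling kernel and cokernel. There is one bookkeeping slip in the cokernel. The cocycle $\phi$ produced by part (1) is not known to give a class killed by $\varpi^N$. Applying part (2) to $\alpha_N\circ\widetilde\rho$ and $\rho$ only shows that its class is killed by $\varpi^{N+C_3}$. You should replace $\phi$ by $\varpi^{C_3}\phi$. After that, no $C_0$ is needed, because $H^1(F_{S\cup Q}/F,W_N)\to H^1(F_{S\cup Q}/F,W_{E/\cO})[\varpi^N]$ is surjective.

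The genuine gap is the step the paper calls the ``minor variation'': the condition at $v\in S_p$. The mechanism you propose there fails.

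You argue that because $\widetilde\rho|_{G_{F_v}}$ has crystalline pseudocharacter, $\varpi^{C_v}$ times the corresponding tangent direction lies in $\mathcal{L}^1_{v,N}$. But $\rho|_{G_{F_v}}$ is ordinary, hence reducible, and at a reducible point the local characteristic polynomial does not see extension data.

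For example, suppose $\rho|_{G_{F_v}}\cong \alpha\oplus\epsilon^{-1}\alpha^{-1}$. Take a cocycle supported in the lower-left entry, i.e.\ a class in $H^1(F_v,\alpha^{-2}\epsilon^{-1})$. This space has dimension $[F_v:\mathbb{Q}_p]$, and its $H^1_f$ is zero. Such a cocycle leaves $\det(X-\rho(1+\epsilon\phi))|_{G_{F_v}}$ unchanged, yet the lift is not of type $R_v$. These are non-torsion directions, so the implication you want already fails after inverting $p$. No constant can rescue it, and certainly not $C_v$, which only measures torsion in lifting-ring data ($P_v/P_v^2$, $\ker(P_v^\square/(P_v^\square)^2\to P_v/P_v^2)$, local $H^0$'s).

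The lemma can only hold because the condition ``as in \cite{Wak19}'' is imposed on a global object: the Cayley--Hamilton algebra of the global pseudodeformation, viewed as a $G_{F_v}$-module. Since $\rho$ is absolutely irreducible, this algebra is $M_2$ of the coefficients up to bounded torsion, so it sees $\widetilde\rho|_{G_{F_v}}$ itself. To close the gap you would need two things.
\begin{enumerate}
\item A Cayley--Hamilton analogue of Proposition \ref{prop_deforming_a_pseudocharacter}. It should show that the specific lift $\widetilde\rho$ from part (1) satisfies the condition at $v$ after multiplying by a bounded power of $\varpi$; the fact that $\rho(\cO[G_F])$ contains $\varpi^c M_2(\cO)$ for some $c$ is the natural input. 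It should also show the converse: $\det(X-\rho_\phi)$ satisfies the condition whenever $\rho_\phi|_{G_{F_v}}$ is of type $R_v$.
\item A comparison between the condition on lifts over $\cO\oplus\epsilon\,\cO/(\varpi^N)$ and factoring through the $\cO$-flat quotient $R_v$.
\end{enumerate}
Neither is supplied. Your fallback, redefining the condition via the generic fibre, changes the ring $R_Q$ and still gives no integral control of $\mathfrak{p}_Q/\mathfrak{p}_Q^2$.
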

\begin{proof}
    This is essentially the same as \cite[Proposition 2.15]{New23}, with the minor variation that we impose a crystalline condition at places $v \in S_p$. 
\end{proof}
\begin{lemma}\label{lem_dimensions_of_spaces_of_local_conditions}
    \begin{enumerate}
        \item If $v \in \Sigma \cup \{ u \}$, then $\dim_E (P_v / P_v^2) \otimes_\cO E = 3$.
        \item If $v \in S_p$, then $\dim_E (P_v / P_v^2) \otimes_\cO E = 3 + [F_v : \mathbb{Q}_p]$.
    \end{enumerate}
\end{lemma}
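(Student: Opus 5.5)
The plan is to identify $P_v/P_v^2 \otimes_\cO E$ with the tangent space of the generic fibre of $\Spec R_v$ at the $E$-point defined by $\rho|_{G_{F_v}}$. Concretely, $\Hom_\cO(P_v/P_v^2, E) = \varinjlim_m \Hom_\cO(P_v/P_v^2, \cO/(\varpi^m))\otimes$, which by the identification above is $\mathcal{L}^1_{v}\otimes E$. So we must compute the dimension of the space of $E[\epsilon]$-valued lifts of $\rho|_{G_{F_v}}$ of the relevant type, with fixed determinant $\epsilon^{-1}$: framed cocycles rather than cohomology classes. This space sits in an exact sequence
\[ 0 \to H^0(F_v, W_E) \to W_E \to Z^1_{\mathcal{L}}(F_v, W_E) \to \mathcal{L}_{v,E} \to 0, \]
so its dimension is $3 - h^0(F_v, W_E) + \dim \mathcal{L}_{v,E}$.

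For $v \in \Sigma \cup \{u\}$ the condition is unrestricted, so $\mathcal{L}_{v,E} = H^1(F_v, W_E)$. The local Euler characteristic formula for $\ell \neq p$ gives $h^1 = h^0 + h^2 = h^0(W_E) + h^0(W_E^\ast)$. Hence the dimension is $3 + h^0(F_v, W_E^\ast)$, and it remains to show $H^0(F_v, W_E^\ast) = 0$.
\begin{itemize}
\item For $v = u$ this is exactly the preceding lemma.
\item For $v \in \Sigma$, $\rho|_{G_{F_v}}$ is semistable with ramified $\mathrm{WD}$, hence a twist of Steinberg: $\rho \sim \begin{pmatrix} \chi\epsilon^{-1} & \ast \\ 0 & \chi\end{pmatrix}$ with $\ast$ nonsplit and $\chi^2=1$ unramified. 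In this case $W_E$ is the indecomposable unipotent-type module with composition factors $\epsilon^{-1},1,\epsilon$, and $W_E^\ast = W_E(1)$ has factors $1,\epsilon,\epsilon^{2}$ with nontrivial monodromy.
\item Its $G_{F_v}$-invariants are therefore contained in the kernel of $N$ on a generalized $\epsilon^{0}$-part. A direct check using the nonsplit extension (via $\mathrm{WD}$: $N$-invariants of $\mathrm{ad}^0$ are the span of $N$ itself, of Frobenius weight $q_v^{\pm1}$ type, twisted by $|\cdot|$, hence not weight $0$) shows these invariants vanish.
\end{itemize}

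For $v \in S_p$, we use that $\rho|_{G_{F_v}}$ is crystalline, and that the crystalline locus with Hodge--Tate weights $\{0,1\}$ (distinct at each embedding) is smooth of the expected dimension at such points by Kisin. The computation is then
\[ \dim H^1_f(F_v, W_E) = h^0(F_v, W_E) + \dim \big(D_{dR}(W_E)/\mathrm{Fil}^0\big) = h^0(W_E) + [F_v:\mathbb{Q}_p]. \]
This uses the Bloch--Kato formula and $\mathrm{ad}^0$ having Hodge--Tate weights $\{-1,0,1\}$, with exactly one negative weight per embedding. Plugging into the framed count gives $3 + [F_v:\mathbb{Q}_p]$.

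The step I expect to be the main obstacle is justifying that $\mathcal{L}_{v,E}$ really equals $H^1_f(F_v, W_E)$. Explicitly, I need that the tangent space of $R_v[1/p]$ at our point is the space of crystalline first-order deformations, rather than something larger because of nonreducedness. I would handle this by citing Kisin's result that $R_v[1/p]$ is regular, with tangent space given by $H^1_f$ in the framed sense. If needed, I would add the formula
\[ \dim = \dim \mathrm{ad}^0 + \dim_{E} \big(D_{dR}/\mathrm{Fil}^0\big)(\mathrm{ad}^0) \]
for the framed crystalline deformation space.
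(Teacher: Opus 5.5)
Your argument is correct in substance, but it goes a different way from the paper, whose proof is entirely by citation. Part (1) is quoted from \cite[Proposition 1.2.2]{All16}. Part (2) is reduced to \cite[Theorem 1.2.7]{All16} by the argument at the end of the proof of \cite[Lemma 2.19]{New23}, which uses results of Kisin and Liu to identify $R_v[1/p]$.

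You instead unpack what those results contain. You identify $(P_v/P_v^2)\otimes_\cO E$ with the dual of the Zariski tangent space of $\Spec R_v[1/p]$ at the given point, that is, the space of framed $E[\epsilon]$-lifts of the given type. This space has dimension $3 - h^0(F_v,W_E) + \dim \mathcal{L}_{v,E}$. You then compute $\dim\mathcal{L}_{v,E}$ by the local Euler characteristic formula and Tate duality when $v \nmid p$, and by the Bloch--Kato formula for $H^1_f$ when $v \mid p$.

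Your route is more self-contained and shows exactly where genericity, $H^0(F_v,W_E^\ast)=0$, enters: for $u$ it is the preceding lemma, and for $v\in\Sigma$ it is the Steinberg computation. The citations additionally give smoothness of the point, which this lemma does not need.

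For $v\mid p$, the obstacle you flag is not serious. Kisin's crystalline quotient is characterized on finite $E$-algebras $B$: a map $R_v^\square\to B$ factors through $R_v$ exactly when the $B$-valued lift is crystalline with the prescribed Hodge--Tate weights. Here $\det=\epsilon^{-1}$ forces those weights to be $\{0,1\}$. This already identifies the tangent space with the cocycles whose class lies in $H^1_f(F_v,W_E)$, with no regularity input. Alternatively, regularity together with Kisin's dimension count gives $3+[F_v:\mathbb{Q}_p]$ directly, without Bloch--Kato.

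One step needs correcting, and it is exactly the sign you leave vague. For $v\nmid p$ there is no nonsplit extension of $\chi$ by $\chi\epsilon^{-1}$, since $H^1(F_v,E(\epsilon^{-1}))=0$. The Steinberg case is therefore $\rho|_{G_{F_v}}\sim\begin{pmatrix}\chi & \ast\\ 0 & \chi\epsilon^{-1}\end{pmatrix}$, not the orientation you wrote.

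Intrinsically, for $\sigma\in I_{F_v}$ we have $\rho(g\sigma g^{-1})=\exp(\epsilon(g)t_p(\sigma)N)$, which gives $\operatorname{Ad}(\rho(g))N=\epsilon(g)N$. So the inertia invariants $\ker(\operatorname{ad}N)=E\cdot N$ form a copy of $E(\epsilon)$ inside $W_E$. In $W_E^\ast\cong W_E(1)$ this line becomes $E(\epsilon^2)$, on which $\Frob_v$ acts by $q_v^{-2}\neq 1$, so $H^0(F_v,W_E^\ast)=0$.

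With your orientation the line would be $E(\epsilon^{-1})$, which becomes trivial after the twist, and the same reasoning would wrongly give $H^0(F_v,W_E^\ast)\neq 0$. Your conclusion is right; it is the genericity the paper itself invokes in the proof of Lemma \ref{lem_H_1_v_is_2C_free}. But the step ``weight $q_v^{\pm1}$ twisted by $|\cdot|$, hence not weight $0$'' should be replaced by this explicit computation.
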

\begin{proof}
    The first part follows from \cite[Proposition 1.2.2]{All16}. For the second part, we argue as in the final paragraph of the proof of \cite[Lemma 2.19]{New23} (using results of Kisin and Liu) to reduce to \cite[Theorem 1.2.7]{All16}. 
\end{proof}
\begin{lemma}\label{lem_TW_local_triviality}
    Let $C \geq C(\rho)$, let $N \geq 2C$, let $Q$ be a Taylor--Wiles set of level $N$, and let $v \in Q$. Then the maps $H^1(F_v, W_C^\ast) \to H^1(F_v, W_{2C}^\ast)$ and $H^1(F_v, W_C^\ast) \to H^1(F_v, W_N^\ast)$ have the same kernel.
\end{lemma}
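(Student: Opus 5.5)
We need to show: for a Taylor--Wiles place $v \in Q$, the kernels of $H^1(F_v, W_C^\ast) \to H^1(F_v, W_{2C}^\ast)$ and $H^1(F_v, W_C^\ast) \to H^1(F_v, W_N^\ast)$ coincide. Since $N \geq 2C$, the second map factors through the first, so the first kernel is contained in the second. The content is the reverse inclusion.

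\emph{Step 1: local structure.} At $v$ the representation $\rho|_{G_{F_v}}$ is unramified with $\rho(\Frob_v)$ having distinct eigenvalues $\alpha_v, \beta_v \in \cO$, and $\ord_\varpi(\alpha_v-\beta_v)^2 = C_4 \leq C(\rho) \leq C$. Over $\cO$ we therefore have $\rho|_{G_{F_v}} \cong \mathrm{ur}_{\alpha_v} \oplus \mathrm{ur}_{\beta_v}$. To see this, note that $\alpha_v - \beta_v$ is nonzero in $\cO$, so $\rho(\Frob_v)$ is diagonalisable over $E$. The eigenlines meet $\cO^2$ in saturated lines, but their sum may only be a sublattice of index dividing $\varpi^{\ord(\alpha_v-\beta_v)}$. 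I would therefore not insist on an integral splitting. Instead I would work with a decomposition of $W_\cO^\ast$ up to isogeny killed by $\varpi^{C_4}$: multiplication by $\varpi^{C_4}$ factors through $\oplus_\chi \cO(\chi)$, where $\chi$ runs over the three unramified characters $\mathrm{ur}_{q_v^{-1}\alpha_v/\beta_v}$, $\mathrm{ur}_{q_v^{-1}}$, $\mathrm{ur}_{q_v^{-1}\beta_v/\alpha_v}$. Since $q_v \equiv 1 \bmod p^N$, $W^\ast_m$ behaves like $W_m$ for $m\le N$.

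\emph{Step 2: rank one computation.} For an unramified character $\chi$ with $\chi(\Frob_v) = \gamma$, inflation-restriction gives
\[
0 \to \cO/\varpi^m / (\gamma-1) \to H^1(F_v, \cO/\varpi^m(\chi)) \to \bigl(\cO/\varpi^m(\chi\epsilon^{-1})\bigr)[\gamma q_v - 1] \to 0 .
\]
Here the tame inertia quotient acts via $\epsilon$, and $q_v\equiv 1$ mod $\varpi^m$ for $m \le N$. When $\gamma = 1$ this is $(\cO/\varpi^m)^2$, and the transition maps $H^1(\cO/\varpi^C) \to H^1(\cO/\varpi^{m})$ are injective. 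When $\ord(\gamma-1) = d \leq C_4 \leq C$, both pieces are $\cO/\varpi^{d}$ for $m \geq d$. The map induced by $\cO/\varpi^C \hookrightarrow \cO/\varpi^m$ (multiplication by $\varpi^{m-C}$) has kernel which, for every $m \geq C + d$, equals all of the relevant group. In particular it is already the same for $m = 2C$ and $m = N$, since $d\le C$. So on each rank one summand the kernel stabilises once $m \geq 2C$.

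\emph{Step 3: passage from the isogenous split module.} To transfer Step 2 to $W^\ast$ itself, I would use the $\varpi^{C_4}$-isogeny. The cleaner alternative is to avoid it altogether by choosing a basis in which $\rho(\Frob_v)$ is upper triangular over $\cO$, which is always possible since the eigenvalues lie in $\cO$. Then $W^\ast$ has a $\Frob_v$-stable filtration with graded pieces the three characters above. The long exact sequences, together with the stabilisation of each rank one summand for $m \geq 2C$ and the bounded torsion of the relevant $H^0$ terms (exponent at most $C_4 \le C$), would then show that the kernels of $H^1(W_C^\ast) \to H^1(W_m^\ast)$ are independent of $m$ for $2C \le m \le N$.

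\emph{Main obstacle.} The hardest step is this last one: controlling the non-split extension so that the connecting maps do not push the stabilisation threshold beyond $2C$. I would try first to show that the extension class is killed by $\varpi^{\ord(\alpha_v-\beta_v)}$, using that $\alpha_v - \beta_v$ kills the off-diagonal entry after conjugation. I would then absorb the resulting shift using the slack between $C_4$ and $C$, noting that $\ord(\alpha_v-\beta_v) \le C_4/2$. If this fails, the fallback is a direct computation with explicit cocycles: since the tame quotient is pro-cyclic, $H^1(F_v, M) \cong \mathrm{coker}$ of an explicit $2\times 2$ block map in $(\Frob_v - 1, \sigma - 1)$ on $M \oplus M$. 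The kernel comparison then reduces to Smith normal form of an integral matrix whose elementary divisors have valuation at most $C$.
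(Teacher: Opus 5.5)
There is a genuine gap. The only non-formal part of the lemma is your Step 3, and it is not carried out. Kernel stabilisation on the graded pieces of a filtration does not propagate through non-split extensions in any formal way; this is exactly your ``main obstacle''. The filtration induced by an upper-triangular basis is also the awkward one, because it puts the trivial character $\mathrm{ur}_{q_v^{-1}}$ in the \emph{middle} of $W^\ast$. Its $H^0$ is a full $\cO/(\varpi^m)$, so the ``bounded torsion of the relevant $H^0$ terms'' you invoke is not available there. The isogeny route cannot close the gap either: the lemma asserts an exact equality of kernels, whereas a $\varpi^{C_4}$-isogeny only gives comparisons up to bounded error.

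There are also two smaller inaccuracies. First, the integral splitting asserted at the start of Step 1 is false in general, as you then note. Second, in Step 2 the kernel for $m \geq C+d$ is not all of $H^1(F_v, \cO/(\varpi^C)(\chi))$. It is only the unramified part $(\cO/(\varpi^C))/(\gamma-1)$, because $(\cO/(\varpi^C))[\gamma q_v-1] \to (\cO/(\varpi^m))[\gamma q_v-1]$ is an isomorphism. Your stabilisation conclusion survives this.

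The missing idea is to work with $H^0$ rather than $H^1$. From $0 \to W_C^\ast \to W_m^\ast \to W_{m-C}^\ast \to 0$, the kernel of $H^1(F_v, W_C^\ast) \to H^1(F_v, W_m^\ast)$ is the image of the connecting map on $H^0(F_v, W_{m-C}^\ast)$. Identify $W_j^\ast$ with $W_N^\ast[\varpi^j]$ and write $A = H^0(F_v, W_N^\ast)$. The lemma then becomes $A[\varpi^{N-C}] = A[\varpi^C] + \varpi^C A$, which holds as soon as $A \cong \cO/(\varpi^N) \oplus A'$ with $\varpi^C A' = 0$.

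The paper obtains this structure as follows. Since $q_v \equiv 1 \bmod p^N$, there is an exact sequence $0 \to \cO/(\varpi^N) \to W_N^\ast \to B \to 0$ of $G_{F_v}$-modules whose sub has trivial action. One can take the saturation of the line spanned by the image of $\rho(\Frob_v)$ in $\End_\cO(\cO^2)/\cO \cong \Hom_\cO(W_\cO,\cO)$. Here $B$ is an extension of $\cO(\mathrm{ur}_{\beta_v/\alpha_v})/(\varpi^N)$ by $\cO(\mathrm{ur}_{\alpha_v/\beta_v})/(\varpi^N)$, so $\varpi^{C_4} H^0(F_v, B) = 0$. Self-injectivity of $\cO/(\varpi^N)$ then splits the trivial line off $A$, and the complement embeds in $H^0(F_v,B)$.

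Your Smith normal form fallback could also be made to work, but not with the input you name. For unramified $M$ with $(q_v-1)M = 0$, one has functorially $H^1(F_v, M) \cong M/(\Frob_v - 1)M \oplus M^{\Frob_v = 1}$, and the second summand maps injectively. On the first summand, what is needed is the following about $\Frob_v - 1$ on $W_\cO^\ast$:
\begin{itemize}
\item two of its elementary divisors have total valuation at most $C_4$, since their product, the gcd of the $2\times 2$ minors, divides the sum of the principal $2\times 2$ minors, which has valuation $C_4$;
\item the third has valuation at least $N$.
\end{itemize}
A uniform bound by $C$ on all elementary divisors, as you state it, is false because of the trivial character. As written, neither route is carried out.
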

\begin{proof}
    Let $A = H^0(W_N^\ast)$. Writing down the usual exact sequences, we see that we need to show that the map $A[\varpi^C] / \varpi^C A[\varpi^{2C}] \to A[\varpi^{N-C}] / \varpi^C A[\varpi^N]$ is an isomorphism. It is easy to see that this is true provided there is an isomorphism $A \cong \cO / (\varpi^N) \oplus A'$, where $\varpi^C A' = 0$. Since $Q$ has level $N$, there is a short exact sequence of $G_{F_v}$-modules
    \[ 0 \to \cO / (\varpi^N) \to W_N^\ast \to B \to 0, \]
    where $B$ itself sits in a short exact sequence
    \[ 0 \to \cO(\mathrm{ur}_{\alpha_v / \beta_v}) / (\varpi^N) \to B \to \cO(\mathrm{ur}_{\beta_v / \alpha_v}) / (\varpi^N) \to 0. \]
    Since $C \geq 2 \operatorname{ord}_{\varpi}(\alpha_v / \beta_v - 1)$, it follows that $\varpi^C H^0(F_v, B) = 0$; since $\cO / (\varpi^N)$ is injective as a module over itself, it follows that $A$ has the required form. This completes the proof. 
\end{proof}
\begin{lemma}\label{lem_H_1_is_2C_free}
    Suppose that $C \geq C(\rho)$, that $N > 2 C$, and that $Q$ is a Taylor--Wiles set of level $N+C$ such that $\varpi^{C} \Sha^1_{S \cup Q}(W_N^\ast) = 0$. Then:
    \begin{enumerate}
        \item For each $m$ such that $2C \leq m \leq N$, we have
        \[ \ell(\Sha^1_{S \cup Q}(W_m^\ast)) = \ell(\Sha^1_{S \cup Q}(W_N^\ast)). \]
        \item  $H^1(F_{S \cup Q} / F, W_N)$ is a $3C$-free $\cO / (\varpi^N)$-module of rank $2 [F : \mathbb{Q}] + |Q| + \sum_{v \in S} \dim_E H^0(F_v, W_E^\ast)$.
    \end{enumerate}    
\end{lemma}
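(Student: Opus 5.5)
Part (1) compares $\Sha^1_{S \cup Q}(W_m^\ast)$ for different $m$. Part (2) is an Euler characteristic computation for $H^1(F_{S\cup Q}/F, W_N)$ that uses part (1) and Proposition \ref{prop_euler_characteristic}.

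\emph{Part (1).} We use the short exact sequences $0 \to W_m^\ast \to W_N^\ast \to W_{N-m}^\ast \to 0$ (multiplication by $\varpi^{N-m}$ on the first map) and the long exact sequences in global and local cohomology. Since $C \geq C_0$, the global $H^0$ terms are killed by $\varpi^{C}$. The map $H^1(F_{S\cup Q}/F, W_m^\ast) \to H^1(F_{S\cup Q}/F, W_N^\ast)$ therefore has kernel killed by $\varpi^C$, and it sends $\Sha^1(W_m^\ast)$ into $\Sha^1(W_N^\ast)$.

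Consider the composite $\Sha^1(W_N^\ast) \xrightarrow{\varpi^{N-m}} \Sha^1(W_m^\ast) \to \Sha^1(W_N^\ast)$, which is multiplication by $\varpi^{N-m}$. For $m\ge 2C$ and $N-m\ge C$... This is not quite the right route, and I would proceed more carefully as follows.

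First show that the natural map $\Sha^1(W_C^\ast) \to \Sha^1(W_m^\ast)$ has image equal to $\Sha^1(W_m^\ast)[\varpi^C]$, up to the controlled $H^0$-error. Since $\varpi^C$ kills $\Sha^1(W_N^\ast)$, it then suffices to show that $\Sha^1(W_m^\ast)$ is also $\varpi^C$-torsion and that its $\varpi^C$-torsion is independent of $m$. The key input is that local triviality of a class at level $C$ can be tested equally at level $2C$ or at level $N$. For $v \in Q$ this is exactly Lemma \ref{lem_TW_local_triviality}. This is why $Q$ has level $N+C$ rather than $N$: it lets the lemma apply with $N+C$ in place of $N$. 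For $v\in S\cup S_\infty$ the analogous statement follows because the local $H^0$'s have torsion duals killed by $\varpi^{C_v}$ and $\varpi^{C(\rho)}$ (using the preceding lemma at $u$).

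Hence $\Sha^1(W_m^\ast)[\varpi^C]$ is identified with the subgroup of $H^1(F_{S\cup Q}/F, W_C^\ast)$ of classes locally trivial at level $2C$, for every $2C\le m\le N$. It remains to show that $\Sha^1(W_m^\ast)$ has no elements of exponent larger than $C$. Given $x$ with $\varpi^C x\ne 0$, the element $\varpi^{m-2C}$... I would instead push $x$ into $W_N^\ast$ via the inclusion $W_m^\ast\to W_N^\ast$. The image is locally trivial, so it is killed by $\varpi^C$, and the $H^0$-kernel is killed by $\varpi^{C}$. This gives $\varpi^{2C}x = 0$, and the local-triviality comparison then improves this to $\varpi^C x=0$. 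This bookkeeping is the main obstacle, and I would do it by carefully chasing the snake-lemma diagrams. Equal lengths then follow.

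\emph{Part (2).} We apply the Greenberg--Wiles formula to $H^1_{\mathcal{L}}$ with $\mathcal{L}_v = H^1(F_v,W_N)$ for all $v\in S\cup Q$. This expresses $\ell(H^1(F_{S\cup Q}/F,W_N))$ as
\[ \ell(H^0(W_N)) - \ell(H^0(W_N^\ast)) + \ell(\Sha^1(W_N^\ast)) + \sum_{v} \bigl(\ell H^1(F_v,W_N)-\ell H^0(F_v,W_N)\bigr) - \sum_{v\mid\infty}\ell H^0(F_v,W_N). \]
The local terms give rank $N$ times $3|S\cup Q| + \dim H^0(F_v, W_E^\ast)$ corrections via local Euler characteristics, together with the archimedean contribution of $2[F:\mathbb{Q}]$ after the sign bookkeeping. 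To upgrade the length count to $3C$-freeness, we realise $H^1(F_{S\cup Q}/F,W_N)$ in an exact sequence
\[ 0 \to H^1 \to \oplus_v H^1(F_v,W_N) \to H^1(W_N^\ast)^\vee \to \cdots \]
from Poitou--Tate. The local $H^1$'s are $C$-free of the expected ranks, because their local $H^0$ terms have $\varpi^C$-bounded torsion and free parts of rank $\dim H^0(F_v, W_E^\ast)$. The $\Sha$ and $H^0$ terms are killed by $\varpi^C$, and part (1) controls the dual term. Proposition \ref{prop_euler_characteristic}, together with the remark on maps with kernel and cokernel killed by $\varpi^d$, then gives $3C$-freeness of the stated rank, provided $N>2C$ (adjusting constants as needed).
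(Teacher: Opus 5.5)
The main gap is in part (2). A length count at the single level $N$ cannot detect $3C$-freeness, and the Poitou--Tate sequence you propose for the upgrade does not supply it.

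First, that sequence is not exact on the left. The kernel of $H^1(F_{S\cup Q}/F, W_N) \to \oplus_v H^1(F_v, W_N)$ is $\Sha^1_{S\cup Q}(W_N)$, which is dual to $\Sha^2_{S\cup Q}(W_N^\ast)$, and nothing is assumed about that group. Second, its third term $H^1(F_{S\cup Q}/F, W_N^\ast)^\vee$ is a full global $H^1$, whose structure is just as unknown as the one you are computing. The hypothesis and part (1) control only its subgroup $\Sha^1_{S\cup Q}(W_N^\ast)$. So Proposition~\ref{prop_euler_characteristic} has no $c$-free input. (In the paper, this lemma is exactly what later feeds such Poitou--Tate arguments.)

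The missing idea is to apply the Euler characteristic formula at \emph{every} level $m$ with $2C \le m \le N$ and read off the structure from the increments. For $m \geq C$:
\begin{enumerate}
\item the global $H^0$ terms are constant;
\item $h^0_v(W_m^\ast)$ grows by exactly $\dim_E H^0(F_v, W_E^\ast)$ for $v \in S$ (by the $C_v$ and the lemma at $u$), and by $1$ for $v \in Q$ (since $Q$ has level $N+C$);
\item $3m[F:\mathbb{Q}]$ together with the archimedean terms grows by $2[F:\mathbb{Q}]$;
\item by part (1), $\ell(\Sha^1_{S\cup Q}(W_m^\ast))$ is constant.
\end{enumerate}
Put $M = H^1(F_{S\cup Q}/F, W_{E/\cO})[\varpi^N]$ and use $0 \to H^0(F, W_{E/\cO})/(\varpi^m) \to H^1(F_{S\cup Q}/F, W_m) \to M[\varpi^m] \to 0$. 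You get $\ell(M[\varpi^{m+1}]) - \ell(M[\varpi^m]) = r$, the claimed rank, for $2C \le m < N$. This difference counts the cyclic summands of $M$ of order $> m$, so $M \cong (\cO/(\varpi^N))^r \oplus T$ with $\varpi^{2C}T = 0$. The kernel $H^0(F, W_{E/\cO})/(\varpi^N)$ is $\varpi^C$-torsion, which then gives $3C$-freeness. This is why part (1) is stated for the whole range $2C \le m \le N$.

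Part (1) is also incomplete. You isolate the right local input: Lemma~\ref{lem_TW_local_triviality} at level $N+C$, and the bounds $C_v$ on $S$. But an $H^0$-error known only to be killed by $\varpi^C$ is incompatible with an exact equality of lengths. The step `the local-triviality comparison improves $\varpi^{2C}x=0$ to $\varpi^C x=0$' has no mechanism behind it.

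What removes the error exactly is the auxiliary place $u \in S$. For $m \ge C$, both $H^0(F, W^\ast_{E/\cO})$ and $H^0(F_u, W^\ast_{E/\cO})$ are killed by $\varpi^m$, and restriction between them is injective. Hence the subgroup $\ker\bigl(H^0(F, W^\ast_{E/\cO})/(\varpi^m) \to \oplus_v H^0(F_v, W^\ast_{E/\cO})/(\varpi^m)\bigr)$ of $\Sha^1_{S\cup Q}(W_m^\ast)$ vanishes, and $\Sha^1_{S\cup Q}(W_m^\ast)$ injects into $H^1(F, W^\ast_{E/\cO})$.

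One then compares images there. They increase with $m$. Conversely, given $x_N \in \Sha^1_{S\cup Q}(W_N^\ast)$, lift its image $x_\infty$ (killed by $\varpi^C$) to $y_C \in H^1(F_{S\cup Q}/F, W_C^\ast)$. Then $x_{N+C} = y_{N+C}$, because $H^0(F, W^\ast_{E/\cO})/(\varpi^N)$ is $\varpi^C$-torsion, and the local comparison shows $y_{2C}$ is locally trivial.

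Finally, your identification of $\Sha^1_{S\cup Q}(W_m^\ast)[\varpi^C]$ with the level-$C$ classes locally trivial at level $2C$ is not right as stated. The map from the latter group kills the connecting-map image of $H^0(F, W^\ast_{E/\cO})$, which need not vanish. Only the images in $H^1(F, W^\ast_{E/\cO})$ agree.
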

Notation here is as follows: $\ell$ denotes length as $\cO$-module, and $h^i_v(-) = \ell(H^i(F_v, -))$. We define
\[ \Sha^1_{S \cup Q}(W_N^\ast) = \ker( H^1(F_{S \cup Q}/ F, W_N^\ast) \to \oplus_{v \in S \cup Q \cup S_\infty} H^1(F_v, W_N^\ast) ). \]
\begin{proof}
    For (1), we observe that for any $m \geq 1$, there is a short exact sequence
    \[ 0 \to H(m) \to \Sha^1_{S \cup Q}(W_m^\ast) \to \Sha^1_{S \cup Q}(W_{E / \cO}^\ast)[\varpi^m], \]
    where we define
    \[ H(m) = \ker(H^0(F, W_{E / \cO}^\ast) / (\varpi^m) \to \oplus_{v \in S \cup Q} H^0(F_v, W_{E / \cO}^\ast) / (\varpi^m)). \]
    When $m \geq C$, the groups $H^0(F, W_{E / \cO}^\ast)$ and $H^0(F_u, W_{E / \cO}^\ast)$ are annihilated by $\varpi^m$, so in fact $H(m) = 0$. (This is the reason for introducing $u$.) To prove (1), it is therefore enough to show that the image of $\Sha^1_{S \cup Q}(W_m^\ast)$ in $H^1(F, W_{E / \cO}^\ast)$ is independent of $m$ for $2C \leq m \leq N$. Let $x_N \in \Sha^1_{S \cup Q}(W_N^\ast)$, and let $x_\infty$ denote its image in $\Sha^1_{S \cup Q}(W^\ast_{E / \cO})$. Then $\varpi^C x_\infty = 0$, by hypothesis, so we can find $y_C \in H^1(F_S / F, W_C^\ast)$ such that the image $y_\infty$ of $y_C$ in $H^1(F, W^\ast_{E / \cO})$ is equal to $x_\infty$. In particular, $x_N - y_N$ is contained in the group 
    \[ H^0(F, W_{E / \cO}^\ast) / (\varpi^N) \leq H^1(F_S / F, W_N^\ast). \]
    This group is annihilated by $\varpi^C$, so $x_{N+C} = y_{N+C}$. We claim that $y_{2C}$ is locally trivial, hence $y_{2C} \in \Sha^1_{S \cup Q}(W_{2C}^\ast)$. If $v \in Q$, then this is true by Lemma \ref{lem_TW_local_triviality}.  If $v \in S \cup S_\infty$, then this is true because $y_{C, v} \in H^1(F_v, W_C^\ast)$ lies in the group
    \[ \ker(H^1(F_v, W_C^\ast) \to H^1(F_v, W_{N+C}^\ast)), \]
    which may be identified with $H^0(F_v, W_N^\ast) / \varpi^C H^0(F_v, W_{N+C}^\ast)$. The definition of $C$ implies that we have
    \[ H^0(F_v, W_N^\ast) / \varpi^C H^0(F_v, W_{N+C}^\ast) = H^0(F_v, W_C^\ast) / \varpi^C H^0(F_v, W_{2C}^\ast),  \]
    and so $y_{2C, v} = 0$. We have shown that $\Sha^1_{S \cup Q}(W_{2C}^\ast)$ and $\Sha^1_{S \cup Q}(W_N^\ast)$ have the same image in $H^1(F, W_{E / \cO}^\ast)$, which proves (1).

    For (2), we use the following formula, which is a reformulation (using Poitou--Tate duality) of the global Euler characteristic formula (see \cite[p. 68]{Mil06}):
    \begin{multline*}  h^1(F_{S \cup Q / F}, W_m) = \ell(\Sha^1_{S \cup Q}(W_m^\ast)) + h^0(F, W_m) - h^0(F, W_m^\ast) \\ + \sum_{v \in S \cup Q} h^0_v(W_m^\ast)  + 3 m [F : \mathbb{Q}] + \sum_{v | \infty} \left( h^0_{v, T}(W_m^\ast) - h^0_v(W_m) \right). 
    \end{multline*}
    (If $v | \infty$ and $c_v \in G_F$ is a complex conjugation at $v$, then we write $h^0_{v, T}(W_m^\ast)$ for the length of the Tate cohomology group $h^0_{v, T}(W_m^\ast) = (W_m^\ast)^{c_v} / (1+c_v) W_m^\ast$. A simple computation shows that $h^0_{v, T}(W_m^\ast)$ is constant for $m \geq \ord_\varpi 2$, so in particular for $m \geq C$.) 
    It follows that for $C \leq m \leq N$, we have
     \begin{multline*} \ell(H^1(F_{S \cup Q / F}, W_{E / \cO})[\varpi^m]) = \ell(\Sha^1_{S \cup Q}(W_m^\ast)) - h^0(F, W_m^\ast) \\ + \sum_{v \in S \cup Q} h^0_v(W_m^\ast) + 3 m [F : \mathbb{Q}] + \sum_{v | \infty} \left( h^0_{v, T}(W_m^\ast) - h^0_v(W_m) \right). 
    \end{multline*}
    It follows from (1) and the definition of $C$ that when $2C \leq m \leq N-1$, we have
    \begin{multline*} \ell(H^1(F_{S \cup Q} / F, W_{E / \cO})[\varpi^{m+1}] - \ell(H^1(F_{S \cup Q} / F, W_{E / \cO})[\varpi^m]) \\ = \sum_{v \in S} \dim_E H^0(F_v, W_E^\ast) + |Q| + 2 [F : \mathbb{Q}]. \end{multline*}
    We conclude that there is an isomorphism
    \[ H^1(F_{S \cup Q / F}, W_{E / \cO})[\varpi^{N}] \cong (\cO / (\varpi^N))^{\sum_{v \in S} \dim_E H^0(F_v, W_E^\ast) + |Q| + 2 [F : \mathbb{Q}]} \oplus T(N), \]
    where $\varpi^{2C} T(N) = 0$. Using the short exact sequence
    \[ 0 \to H^0(F, W_{E / \cO}) / (\varpi^N) \to H^1(F_{S \cup Q} / F, W_N) \to H^1(F_{S \cup Q}/F, W_{E / \cO})[\varpi^N] \to 0, \]
    we conclude that $H^1(F_{S \cup Q}/F, W_N)$ is $3C$-free of rank $\sum_{v \in S} \dim_E H^0(F_v, W_E^\ast) + |Q| + 2 [F : \mathbb{Q}]$, as required. 
\end{proof}
\begin{lemma}\label{lem_H_1_v_is_2C_free}
    Suppose that $N > 4 C(\rho)$. Then $\oplus_{v \in S} H^1_v(W_N) / \mathcal{L}_{v, N}$ is a $2C(\rho)$-free $\cO / (\varpi^N)$-module of rank $2 [ F : \mathbb{Q}] + \sum_{v \in S} \dim_E H^0(F_v, W_E^\ast)$.
\end{lemma}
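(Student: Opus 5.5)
We treat each place $v \in S$ separately and then take the direct sum; the direct sum of $c$-free modules is $c$-free, and the ranks add. So fix $v \in S$. The quotient $H^1(F_v, W_N)/\mathcal{L}_{v,N}$ is the cokernel of $\mathcal{L}_{v,N} \to H^1(F_v,W_N)$. We aim to show that both $\mathcal{L}_{v,N}$ and $H^1(F_v, W_N)$ are close to free of computable ranks. The quotient is then controlled by the two-term case of Proposition \ref{prop_euler_characteristic}, or more directly by an argument with the kernel and cokernel of a map from a free module.

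\textbf{Step 1: the local cohomology.} By local Tate duality and the local Euler characteristic formula,
\[ h^1_v(W_N) = h^0_v(W_N) + h^0_v(W_N^\ast) + 3N[F_v:\mathbb{Q}_p]\delta_{v|p}. \]
The defining property of $C_v \le C(\rho)$ says that the torsion subgroups of $H^0(F_v, W_{E/\cO})^\vee$ and of $H^0(F_v,W^\ast_{E/\cO})^\vee$ are killed by $\varpi^{C}$. Hence $H^0(F_v,W_N)$ and $H^0(F_v,W_N^\ast)$ are $C$-free. Their ranks are $\dim_E H^0(F_v,W_E)$ and $\dim_E H^0(F_v,W_E^\ast)$, read off from the sequences $0\to H^0(W_{E/\cO})/\varpi^N\to H^1(W_N)\to H^1(W_{E/\cO})[\varpi^N]\to 0$ and their $H^0$ analogues. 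Arguing as in Lemma \ref{lem_H_1_is_2C_free}, compare lengths of $H^1(F_v,W_{E/\cO})[\varpi^m]$ for consecutive $m$. The increments are eventually constant, equal to $\dim H^0(W_E)+\dim H^0(W_E^\ast)+3[F_v:\mathbb{Q}_p]\delta_{v|p}$, so $H^1(F_v,W_N)$ is $2C$-free, or at worst $3C$-free, of that rank.

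\textbf{Step 2: the local condition.} We have $\mathcal{L}^1_{v,N} = \Hom_\cO(P_v/P_v^2, \cO/\varpi^N)$. Since $\varpi^{C}$ kills the torsion of $P_v/P_v^2$, this is $C$-free of rank $\dim_E(P_v/P_v^2)\otimes E$. That rank is $3$ for $v\in\Sigma\cup\{u\}$ and $3+[F_v:\mathbb{Q}_p]$ for $v\in S_p$, by Lemma \ref{lem_dimensions_of_spaces_of_local_conditions}. Passing from $Z^1$ to $H^1$ means quotienting by the coboundaries $W_N/H^0(F_v,W_N)$, which is $C$-free of rank $3-\dim H^0(F_v,W_E)$. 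The term $\ker(P_v^\square/(P_v^\square)^2\to P_v/P_v^2)$ in $C_v$ is what controls the comparison between $\mathcal{L}^1$ and the full $Z^1$, that is, injectivity on the relevant pieces. So $\mathcal{L}_{v,N}$ is roughly $2C$-free of rank $\dim H^0(W_E)$, plus $[F_v:\mathbb{Q}_p]$ if $v|p$.

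\textbf{Step 3: the quotient.} Subtracting, $H^1/\mathcal{L}$ has rank $\dim H^0(F_v,W_E^\ast)$, plus $2[F_v:\mathbb{Q}_p]$ if $v|p$. Summing over $v\in S$ gives $2[F:\mathbb{Q}]+\sum_v\dim H^0(F_v,W_E^\ast)$, as required. The main obstacle is the constant: applying Proposition \ref{prop_euler_characteristic} naively to the two-term cokernel would double the freeness constant. To get $2C$ rather than $4C$, the better route is to work with the dual. By Tate duality, $H^1/\mathcal{L}$ is dual to $\mathcal{L}^\perp$. Alternatively, build an explicit map from a free module to $H^1/\mathcal{L}$, lifting a basis of the free part of the $E$-vector space $(H^1(F_v,W_\cO)/\mathcal{L})\otimes E$. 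Then bound its kernel by $\varpi^{C}$ (from the $P_v/P_v^2$ torsion) and its cokernel by $\varpi^C$ (from the $H^0$ torsion). The hypothesis $N>4C$ is exactly what Lemma \ref{lem_c-free_module} needs for the rank to be well defined with constant $2C$.
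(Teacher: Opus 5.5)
\textbf{There is a genuine gap: the constant $2C(\rho)$ is never reached.} Your rank bookkeeping is correct, but you never prove $2C(\rho)$-freeness, nor do you work under the hypothesis $N > 4C(\rho)$.

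Your route analyses $H^1(F_v,W_N)$ and $\mathcal{L}_{v,N}$ separately and then passes to the cokernel of $\mathcal{L}_{v,N} \hookrightarrow H^1(F_v,W_N)$. This loses at least a factor of two. Dualising and applying Proposition \ref{prop_euler_characteristic} to two $2C$-free modules gives only $4C$-freeness, and needs $N > 8C$. It is worse if $H^1$ is only $3C$-free: $6C$, with $N>12C$.

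You notice this, but neither proposed fix is carried out. Tate duality only replaces the quotient by $(\mathcal{L}^\perp_{v,N})^\vee$, which is no easier to analyse. In the ``explicit map'' route, you assert without argument that the kernel is governed by the torsion of $P_v/P_v^2$ and the cokernel by the $H^0$ torsion. The $H^0$ torsion is not what controls this quotient.

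\textbf{The missing idea is to avoid computing $H^1$ and $\mathcal{L}$ separately.} Coboundaries lie in $\mathcal{L}^1_{v,N}$, so
\[ H^1(F_v,W_N)/\mathcal{L}_{v,N} = Z^1(F_v,W_N)/\mathcal{L}^1_{v,N}, \qquad Z^1(F_v,W_N) = \Hom_\cO\bigl(P_v^\square/(P_v^\square)^2,\cO/(\varpi^N)\bigr). \]
Apply $\Hom_\cO(-,\cO/(\varpi^N))$ to
\[ 0\to K_v\to P_v^\square/(P_v^\square)^2\to P_v/P_v^2\to 0, \]
where $K_v$ is the kernel you mention in Step 2 but never use. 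This embeds the quotient into $\Hom_\cO(K_v,\cO/(\varpi^N))$, with cokernel inside $\mathrm{Ext}^1_\cO(P_v/P_v^2,\cO/(\varpi^N))$. That Ext group is isomorphic to the torsion of $P_v/P_v^2$, so it is killed by $\varpi^{C(\rho)}$.

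The torsion of $K_v$ is killed by $\varpi^{C_v}$; this is exactly why $K_v$ appears in the definition of $C_v$. Hence $\Hom_\cO(K_v,\cO/(\varpi^N))$ is $C(\rho)$-free of rank
\[ \dim_E K_v\otimes_\cO E = \dim_E Z^1(F_v,W_E) - \dim_E (P_v/P_v^2)\otimes_\cO E = 2[F_v:\mathbb{Q}_p] + \dim_E H^0(F_v,W_E^\ast) \]
for $v\in S_p$. Here you use your own local Euler characteristic computation, $\dim_E Z^1(F_v,W_E) = 3+3[F_v:\mathbb{Q}_p]+\dim_E H^0(F_v,W_E^\ast)$, together with Lemma \ref{lem_dimensions_of_spaces_of_local_conditions}.

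A submodule with $\varpi^{C(\rho)}$-torsion cokernel of such a module is $2C(\rho)$-free of the same rank. This is the paper's argument, and it gives the constant in one step.

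For $v\in\Sigma\cup\{u\}$ you also don't need any of this. There $R_v = R_v^\square$, so $\mathcal{L}_{v,N} = H^1(F_v,W_N)$ and the quotient is zero. This is consistent with $H^0(F_v,W_E^\ast)=0$ at those places.
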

\begin{proof}
   It suffices to show that for each $v \in S_p$, $H^1_v(W_N) / \mathcal{L}_{v, N}$ is $2C(\rho)$-free of rank $2 [ F_v : \mathbb{Q}_p] + \dim_E H^0(F_v, W_E^\ast)$. (The other places of $S$ contribute 0 to both sides of the proposed equality -- note that at a semistable ramified place $v$, $\mathrm{WD}(\rho|_{G_{F_v}})$ is necessarily generic.) Let $K_v = \ker(P_v^\square / (P_v^\square)^2 \to P_v / P_v^2)$. Then $\Hom_\cO(K_v, \cO / (\varpi^N))$ is a $C(\rho)$-free $\cO / (\varpi^N)$-module of rank 
   \[ \dim H^0(F_v, W_E^\ast) + 2 [F_v : \mathbb{Q}_p]. \]
   There's a short exact sequence
   \[ 0 \to H^1_v(W_N) / \mathcal{L}_{v, N} \to \Hom_\cO(K_v, \cO / (\varpi^N)) \to (P_v / (P_v)^2)[\varpi^N]^\vee. \]
   The final term here is $\varpi^{C(\rho)}$-torsion, so it follows that $H^1_v(W_N) / \mathcal{L}_{v, N}$ is $2C(\rho)$-free of the expected rank. 
\end{proof}
\begin{proposition}\label{prop_application_of_Poitou--Tate}
    Suppose that $C \geq C(\rho)$, $N > 24 C$, and that $Q$ is a Taylor--Wiles set of level $N + C$ such that $\varpi^{C} H^1_{\mathcal{L}^\perp_{S \cup Q}}(W_N^\ast) = 0$. Then $H^1_{\mathcal{L}_{S \cup Q}}(W_N)$ is a $12 C$-free $\cO / (\varpi^N)$-module of rank $|Q|$.
\end{proposition}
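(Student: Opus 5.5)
We will use the Poitou--Tate exact sequence for the Selmer structure $\mathcal{L}_{S \cup Q}$ with coefficients $W_N$. It reads
\[ 0 \to H^1_{\mathcal{L}_{S \cup Q}}(W_N) \to H^1(F_{S \cup Q}/F, W_N) \to \bigoplus_{v \in S \cup Q} H^1(F_v, W_N)/\mathcal{L}_{v,N} \to H^1_{\mathcal{L}^\perp_{S \cup Q}}(W_N^\ast)^\vee \to \cdots, \]
where we fold in the archimedean terms, which vanish for $p$ odd and are killed by $\varpi^{\ord_\varpi 2}$ in general. The last displayed term is annihilated by $\varpi^C$ by hypothesis. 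Since $\mathcal{L}_{v,N} = H^1(F_v,W_N)$ for $v \in Q$, the local sum reduces to $\bigoplus_{v \in S}$. We truncate to obtain a four-term sequence
\[ 0 \to H^1_{\mathcal{L}_{S \cup Q}}(W_N) \to H^1(F_{S \cup Q}/F, W_N) \to \bigoplus_{v \in S} H^1(F_v,W_N)/\mathcal{L}_{v,N} \to X \to 0, \]
with $X$ a quotient of $H^1_{\mathcal{L}^\perp}(W_N^\ast)^\vee$, so that $\varpi^C X = 0$.

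Next we check the inputs for Lemma \ref{lem_H_1_is_2C_free}. Since $\varpi^C$ kills $H^1_{\mathcal{L}^\perp}(W_N^\ast)$, which contains $\Sha^1_{S\cup Q}(W_N^\ast)$, we also have $\varpi^C\Sha^1_{S \cup Q}(W_N^\ast)=0$. As $Q$ has level $N+C$ and $N > 2C$, Lemma \ref{lem_H_1_is_2C_free} gives that the middle term $H^1(F_{S\cup Q}/F,W_N)$ is $3C$-free of rank $2[F:\mathbb{Q}] + |Q| + \sum_{v\in S}\dim_E H^0(F_v,W_E^\ast)$. Lemma \ref{lem_H_1_v_is_2C_free}, which applies since $N > 4C$, shows that the local quotient is $2C(\rho)$-free, hence $3C$-free, of rank $2[F:\mathbb{Q}] + \sum_{v\in S}\dim_E H^0(F_v,W_E^\ast)$. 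The differences of ranks is then exactly $|Q|$.

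To handle $X$, we replace the third term by the kernel $K$ of $\bigoplus_v H^1(F_v,W_N)/\mathcal{L}_{v,N}\to X$. That map is the identity of the local module onto itself modulo a $\varpi^C$-killed quotient, so the inclusion $K \to \bigoplus_v$ has cokernel killed by $\varpi^C$. By the remark following the definition of $c$-freeness, $K$ is $(3C + C) = 4C$-free of the same rank, which needs $N > 8C$. We can also regard all three terms as $4C$-free. We then apply Proposition \ref{prop_euler_characteristic} to the exact sequence
\[ 0 \to H^1_{\mathcal{L}}(W_N) \to H^1(F_{S\cup Q}/F,W_N) \to K \to 0 \]
with $n=2$ and $c = 4C$. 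This gives that $H^1_{\mathcal{L}_{S\cup Q}}(W_N)$ is $8C$-free, hence $12C$-free, of rank $|Q|$. The condition $N > 2^2\cdot 4C = 16C$ is implied by $N > 24C$, and the final statement is meaningful because $N > 24C = 2\cdot 12C$.

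The main obstacle is justifying the Poitou--Tate sequence with the modified Selmer conditions and torsion coefficients. In particular, we must identify the cokernel term with the dual of $H^1_{\mathcal{L}^\perp}(W_N^\ast)$ (Tate local duality at $v\in S\cup Q$), and control the archimedean contributions, which is where $\ord_\varpi 2\le C(\rho)$ is used. The remaining freeness bookkeeping is routine given Lemmas \ref{lem_H_1_is_2C_free} and \ref{lem_H_1_v_is_2C_free}, and the slack up to $12C$ absorbs any extra loss.
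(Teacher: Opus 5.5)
Your argument is correct and is essentially the paper's (Poitou--Tate sequence, Lemmas \ref{lem_H_1_is_2C_free} and \ref{lem_H_1_v_is_2C_free}, then Proposition \ref{prop_euler_characteristic}). The one difference is in the last step: the paper keeps the $\varpi^C$-torsion cokernel as a rank-$0$ term of a four-term exact sequence and applies Proposition \ref{prop_euler_characteristic} with $n=3$, $c=3C$, which is exactly where $N>24C$ and $12C$ come from. You instead pass to the image $K$ and apply it with $n=2$, $c=4C$, which gives the slightly sharper bound $8C$.

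Two small points. First, the remark after the definition of $c$-freeness transfers freeness from the source of a map to its target, so for the injection $K\hookrightarrow\bigoplus_{v\in S}H^1(F_v,W_N)/\mathcal{L}_{v,N}$ you need the converse, which is equally easy (e.g.\ by Pontryagin duality or Lemma \ref{lem_submodule_of_finite_module}). Second, with the paper's definitions there are no archimedean terms to fold in, since the local condition at $v\mid\infty$ is all of $H^1(F_v,W_N)$ and the dual condition is $0$; the bound $\ord_\varpi 2\le C(\rho)$ is used inside Lemma \ref{lem_H_1_is_2C_free}, not here.
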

\begin{proof}
    We use the Poitou--Tate exact sequence 
    \[ 0 \to H^1_{\mathcal{L}_{S \cup Q}}(W_N) \to H^1(F_{S \cup Q} / F, W_N) \to \oplus_{v \in S} H^1_v(W_N) / \mathcal{L}_{v, N} \to H^1_{\mathcal{L}^\perp_{S \cup Q}}(W_N^\ast)^\vee. \]
    We have $\Sha^1_{S \cup Q}(W_N^\ast) \leq H^1_{\mathcal{L}_{S \cup Q}^\perp}(W_N^\ast)$, so in particular $\varpi^C \Sha^1_{S \cup Q}(W_N^\ast) = 0$. We can thus apply Lemma \ref{lem_H_1_is_2C_free} to conclude that the second term in this sequence is $3C$-free. Lemma \ref{lem_H_1_v_is_2C_free} shows that the third term is $2C$-free. We can make the sequence exact on the right by replacing the last term by a subgroup, annihilated by $\varpi^C$ by hypothesis, and therefore also $2C$-free (of rank $0$). We can therefore apply Proposition \ref{prop_euler_characteristic} to conclude that $H^1_{\mathcal{L}_{S \cup Q}}(W_N)$ is $12C$-free of rank $|Q|$, as required. 
\end{proof}
If $N \geq 1$, we now write $L_N / F$ for the extension of $F(\zeta_{p^\infty})$ cut out by $\rho \text{ mod }\varpi^N$. Then $L_\infty = \cup_{N \geq 1} L_N$.
\begin{lemma}\label{lem_annihilation_of_auxiliary_groups}
    Let $N \geq 1$.
    \begin{enumerate}
        \item We have $\varpi^{C(\rho)} H^0(F, W_N^\ast) = 0$.
        \item For any $\phi \in \Hom_{G_F}(G_{L_N}, W_N^\ast)$ and $0 \leq M \leq N-1$ such that $\varpi^M \phi \neq 0$, we have $\varpi^{N+C(\rho)-M-1} W_N^\ast \leq \cO \cdot \phi(G_{L_N})$.
        \item We have $\varpi^{2C(\rho)} H^1(L_N / F, W_N^\ast) = 0$.
    \end{enumerate}
\end{lemma}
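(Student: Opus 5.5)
Each part reduces to one of the constants $C_0, C_1, C_2$ built into $C(\rho)$, together with the observation that $W_N^\ast$ is a free $\cO/(\varpi^N)$-module of rank 3.

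\emph{Part (1).} We use the short exact sequence $0 \to W_N^\ast \to W_{E/\cO}^\ast \xrightarrow{\varpi^N} W_{E/\cO}^\ast \to 0$. Taking invariants identifies $H^0(F, W_N^\ast)$ with $H^0(F, W_{E/\cO}^\ast)[\varpi^N]$. This group is killed by $\varpi^{C_0}$ by the definition of $C_0$. Since $C_0 \leq C(\rho)$, (1) follows.

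\emph{Part (2).} We argue in three steps.
\begin{itemize}
\item First, $G_{L_N}$ acts trivially on $W_N^\ast$, because $L_N$ contains $\zeta_{p^N}$ and trivializes $\rho \bmod \varpi^N$. Hence $\phi$ is a homomorphism, and $G_F$-equivariance gives $\phi(g h g^{-1}) = g\cdot \phi(h)$.
\item Second, the $\cO$-span $\cO\cdot\phi(G_{L_N})$ is therefore an $\cO[G_F]$-submodule of $W_N^\ast$. In particular it is stable under $\cO[G_{F(\zeta_{p^\infty})}]$. On $W^\ast$ this group acts through $\operatorname{ad}^0\rho$ twisted by $\epsilon$, which is trivial on $G_{F(\zeta_{p^\infty})}$. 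By the definition of $C_1$, the image of $\cO[G_{F(\zeta_{p^\infty})}]$ therefore contains $\varpi^{C_1}\End_\cO(W_\cO^\ast)$, reduced mod $\varpi^N$.
\item Third, since $\varpi^M\phi \neq 0$, some $x = \phi(h)$ has $\varpi^M x \neq 0$ in $W_N^\ast \cong (\cO/\varpi^N)^3$. So $x$ has a coordinate of valuation at most $N-M-1$. Choose an $\cO$-linear endomorphism sending $x$ to $\varpi^{N-M-1}e$ for any basis vector $e$; this is possible after extending the coordinate functional. Multiplying by $\varpi^{C_1}$ shows that $\varpi^{N-M-1+C_1}e$ lies in the submodule. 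Hence $\varpi^{N+C(\rho)-M-1}W_N^\ast \leq \cO\cdot\phi(G_{L_N})$.
\end{itemize}
The only delicate point here is the bookkeeping: endomorphisms of $W^\ast_\cO$ must be reduced mod $\varpi^N$, and lifting $x$ to $W^\ast_\cO$ does not change the valuation estimate.

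\emph{Part (3).} We compare $H^1(L_N/F, W_N^\ast)$ with $H^1(L_\infty/F, -)$ by inflation--restriction and long exact sequences.
\begin{itemize}
\item First apply inflation--restriction to $L_N \subset L_\infty$. This gives $0 \to H^1(L_N/F, W_N^\ast) \to H^1(L_\infty/F, W_N^\ast)$; here we use $H^0(L_\infty, W_N^\ast) = W_N^\ast = H^0(L_N, W_N^\ast)$, which holds because $G_{L_N}$ already acts trivially. It therefore suffices to bound $H^1(L_\infty/F, W_N^\ast)$.
\item From $0 \to W_N^\ast \to W_{E/\cO}^\ast \xrightarrow{\varpi^N} W_{E/\cO}^\ast \to 0$ we get an exact sequence
\[ H^0(L_\infty/F, W_{E/\cO}^\ast) \to H^1(L_\infty/F, W_N^\ast) \to H^1(L_\infty/F, W_{E/\cO}^\ast)[\varpi^N]. \]
\item The left term is a quotient of $H^0(F, W^\ast_{E/\cO})$, so it is killed by $\varpi^{C_0}$. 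The right term is killed by $\varpi^{C_2}$.
\end{itemize}
Hence the middle term is killed by $\varpi^{C_0+C_2}$, which divides $\varpi^{2C(\rho)}$. I expect (2) to be the main obstacle only in the bookkeeping; (1) and (3) are formal.
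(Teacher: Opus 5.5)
Your proof is correct and follows the paper's argument essentially verbatim. Part (1) uses $C_0$. Part (2) uses the $C_1$ bound on the image of $\cO[G_{F(\zeta_{p^\infty})}]$. Part (3) inflates to $L_\infty$ and uses the $\varpi^N$ long exact sequence, so that $\varpi^{C_0+C_2}$, hence $\varpi^{2C(\rho)}$, kills the group. In (2) you are slightly more careful than the paper about passing from $\End_\cO(W_\cO)$ to $\End_\cO(W_\cO^\ast)$; this is justified by transposing, since the $\cO$-span of the image of the group is stable under $g \mapsto g^{-1}$ and $\epsilon$ is trivial on $G_{F(\zeta_{p^\infty})}$.
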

\begin{proof}
    We write $C = C(\rho)$.
    
    (1) We have $H^0(F, W_N^\ast) \leq H^0(F, W_{E / \cO}^\ast)$, and the latter group is annihilated by $\varpi^C$ by definition of $C$.

    (2) Again by definition of $C$, we have $\operatorname{ad}^0 \rho(\cO[G_{F(\zeta_{p^\infty})}]) \supset \varpi^C \End_\cO(W_\cO)$. It follows that $\cO \cdot \phi(G_{L_N})$ contains $\varpi^{N+C-M-1} W_N^\ast$. 

    (3) It suffices to show that $\varpi^{2C} H^1(L_\infty / F, W_N^\ast) = 0$. This is true since, again by definition of $C$, we have 
    \[ \varpi^C H^0(F, W_{E / \cO}^\ast)  = \varpi^C H^1(L_\infty / F, W_{E / \cO}^\ast) = 0. \]
\end{proof}
\begin{lemma}\label{lem_killing_dual_Selmer}
    We can find an integer $q \geq 0$ with the following property: for any $N \geq 1$, and any $C \geq C(\rho)$, there exists a Taylor--Wiles set $Q$ of level $N+C$ with $|Q| = q$ such that $\varpi^{3C(\rho)} H^1_{Q\text{-split}}(F_S / F, W_N^\ast) = 0$.
\end{lemma}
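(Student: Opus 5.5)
We follow the standard Chebotarev argument for killing dual Selmer groups, in the quantitative form of \cite{New23}. Here $H^1_{Q\text{-split}}(F_S / F, W_N^\ast)$ denotes the classes in $H^1(F_S/F, W_N^\ast)$ whose restriction to $G_{F_v}$ vanishes for every $v \in Q$.

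First we fix $q$. Take $q$ to be an upper bound, independent of $N$, for the minimal number of generators of $H^1(F_S/F, W_N^\ast)$ as an $\cO$-module. Such a bound exists because the minimal number of generators of this group is at most $\dim_k H^1(F_S/F, W_1^\ast) + \dim_k H^0(F, W_1^\ast)$, by the long exact sequence for $0 \to W_{N-1}^\ast \to W_N^\ast \to W_1^\ast \to 0$. More precisely, $H^1(F_S/F,W_N^\ast)[\varpi]$ is a quotient of a bounded group, so the number of cyclic summands is bounded by a constant $q_0$. We then set $q = q_0$, adding dummy places if necessary so that $|Q|$ is exactly $q$.

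To construct $Q$, we choose generators $x_1, \dots, x_r$ ($r \le q$) of $H^1(F_S/F, W_N^\ast)$ adapted to its cyclic decomposition. For each $i$, we look for a place $v_i \notin S$ such that:
\begin{enumerate}
\item $v_i$ splits completely in $F(\zeta_{p^{N+C}})$;
\item the discriminant of $\rho(\Frob_{v_i})$ has valuation exactly $C_4$;
\item $x_i|_{G_{F_{v_i}}}$ has large order.
\end{enumerate}
By Chebotarev, it suffices to find $\sigma \in G_{F(\zeta_{p^{N+C}})}$ with the following properties: $\rho(\sigma)$ is congruent modulo $\varpi^{N+C}$ to an element $\rho(\sigma_0)$ with $\sigma_0$ as in the definition of $C_4$, and the cocycle value $x_i(\sigma)$ projects nontrivially, to order at least $\varpi^{M}$, onto the $\sigma$-fixed line $(W_N^\ast)^{\sigma} / (\sigma-1)$. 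For $v$ split in $F(\zeta_{p^{N+C}})$ with regular semisimple Frobenius, $H^1(F_v, W_N^\ast)$ is $H^1$ of the unramified part, which is computed by $W_N^\ast/(\Frob_v - 1)$. That quotient contains a copy of $\cO/\varpi^N$ coming from the eigenvalue-$1$ line, up to $\varpi^{C_4}$-torsion from the other lines.

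The main step is to produce such $\sigma$, and we will follow the inflation--restriction argument. If $x_i$ has exact order $\varpi^{N-M}$, then by Lemma~\ref{lem_annihilation_of_auxiliary_groups}(3) the restriction of $\varpi^{2C}x_i$ to $G_{L_N}$ is a nonzero homomorphism $\phi \in \Hom_{G_F}(G_{L_N}, W_N^\ast)$, of order at least roughly $\varpi^{N-M-2C}$. By Lemma~\ref{lem_annihilation_of_auxiliary_groups}(2), the $\cO$-span of $\phi(G_{L_N})$ contains $\varpi^{N-M+O(C)}W_N^\ast$. We then write $\sigma = \tau\sigma_0$ with $\tau \in G_{L_N}$, so that $x_i(\sigma) = \phi(\tau) + x_i(\sigma_0)$. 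A standard ``one of $\tau$ or $1$ works'' argument shows that we can choose $\tau$ so that the projection of $x_i(\sigma)$ to the eigenvalue-$1$ line has valuation at most $M + O(C)$, with the constant $O(C) \le 3C$ after bookkeeping. The main obstacle is exactly this bookkeeping: controlling the losses ($\varpi^{C}$ from $H^0$, $\varpi^{2C}$ from $H^1(L_N/F)$, $\varpi^{C_4}$ from the non-split part, and the $\varpi^{C_1}$ span defect) to get total exponent $3C(\rho)$. That exponent presumably arises by careful accounting rather than as a crude sum.

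Finally, for each $i$ the class $x_i$ has restriction at $v_i$ whose order is within $\varpi^{3C}$ of the order of $x_i$. One then argues, processing the generators in order of decreasing order (or inductively, replacing $H^1$ by the kernel at already chosen places), that any $x$ which is trivial at all $v \in Q$ satisfies $\varpi^{3C}x = 0$. Doing this requires making the local maps at the chosen places jointly detect the cyclic components. We would do this inductively: at step $i$, apply the above Chebotarev argument to a class of maximal order in the current kernel $\ker(H^1 \to \oplus_{j<i} H^1(F_{v_j}))$. The cyclic-count bound guarantees that the process ends after at most $q$ steps.
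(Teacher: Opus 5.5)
\textbf{Gap: the counting step.} Your local Chebotarev step is essentially the paper's, and it does yield the quantitative statement you want. Apply the paper's dichotomy to $\varpi^{e-3C(\rho)-1}x$: either $\phi(\sigma)\notin(\sigma-1)W_N^\ast$, or one replaces $\sigma$ by $\tau\sigma$ with $\tau\in G_{L_N}$ and $f(\tau)\notin(\sigma-1)W_N^\ast$. Such a $\tau$ exists because $\cO\cdot f(G_{L_N})\supseteq W_N^\ast[\varpi]$, while $(\sigma-1)W_N^\ast$ lies in a rank-$2$ direct summand. This shows that a class $x$ of exact order $\varpi^{e}$ restricts to a class of order at least $\varpi^{e-3C(\rho)}$ at some Taylor--Wiles place of level $N+C$. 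The exponent is $2C(\rho)$ from $H^1(L_N/F,W_N^\ast)$ plus $C(\rho)$ from Lemma~\ref{lem_annihilation_of_auxiliary_groups}(2). There is no $C_4$ loss, because the test ``$\notin(\sigma-1)W_N^\ast$'' is qualitative.

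The genuine gap is the final step. Your claim that the greedy procedure stops after at most $q$ steps is false. Knowing only that the chosen class of maximal order $x$ restricts with order $\ge\operatorname{ord}(x)-c$, where $c=3C(\rho)\ge 3$, gives no control over how the local image interacts with the other cyclic summands.

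For example, let $m\ge 2c$ and $K=\cO e\oplus\cO f\cong(\cO/(\varpi^m))^2$.
\begin{enumerate}
\item The local map $f\mapsto 1$, $e\mapsto 0$ to $\cO/(\varpi^{m-c})$ meets your requirement and leaves the kernel $\langle e\rangle\oplus\langle\varpi^{m-c}f\rangle$.
\item The local map $e\mapsto 1$, $\varpi^{m-c}f\mapsto\varpi^{m-2c}$ to $\cO/(\varpi^{m-c})$ again meets your requirement. It leaves the kernel $\langle\varpi^{m-c}f-\varpi^{m-2c}e\rangle\cong\cO/(\varpi^{2c})$, which is not killed by $\varpi^c$.
\end{enumerate}
So a $2$-generated module is not handled in $2$ steps. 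In general, all the greedy step gives is that the new kernel meets $\langle x\rangle$ in something killed by $\varpi^c$; iterating, after $q$ steps the kernel is only killed by $\varpi^{qc}$. Moreover, you give no bound at all on the number of steps that is independent of $N$, and that independence is the whole content of the lemma.

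\textbf{How the paper avoids this.} It changes the target. First it shows that the kernel $K_\infty$ of localisation from $H=H^1(F_S/F,W_N^\ast)$ to the product over \emph{all} Taylor--Wiles places of level $N+C$ is annihilated by $\varpi^{3C(\rho)}$. This only needs the qualitative statement that a class with $\varpi^{3C(\rho)}[\phi]\ne0$ is non-zero at some such place. Then it applies \cite[Lemma 6.5]{Kis04}. Since $H/K_\infty$ is generated by at most $q$ elements, its socle has dimension at most $q$. Choosing places one at a time, each detecting a non-zero socle element of the remaining kernel in $H/K_\infty$, cuts out exactly $K_\infty$ after at most $q$ places. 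One then pads $Q$ to have exactly $q$ elements.

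\textbf{Repairing your greedy route.} It can be saved at the cost of a larger (still $N$-independent) $q$, but an argument is needed. Let $\Psi_s(M)$ be the number of cyclic summands of $M$ of exponent $>s$. Multiplication by $\varpi^c$ on $K_{i+1}$ kills $K_{i+1}\cap\langle x\rangle$, so $\varpi^cK_{i+1}$ is a quotient of $K_{i+1}/(K_{i+1}\cap\langle x\rangle)$, which embeds in $K_i/\langle x\rangle$. This gives $\Psi_{s+c}(K_{i+1})\le\max(\Psi_s(K_i)-1,0)$. Hence $\varpi^{qc}K_{q+1}=0$, and at most $q^2c$ further steps bring the kernel down to $\varpi^{c}$-torsion.
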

Here we take 
\[ H^1_{Q\text{-split}}(F_S / F, W_N^\ast) = \ker( H^1(F_S / F, W_N^\ast) \to \oplus_{v \in Q} H^1(F_v, W_N^\ast)). \]
\begin{proof}
    Let $q = \dim_k H^0(F, W^\ast_{E / \cO})[\varpi] + \dim_k H^1(F_S / F, W^\ast_{E / \cO})[\varpi]$. Then, for any $N \geq 1$, $H^1(F_S / F, W_N^\ast)$ can be generated as an $\cO$-module by $q$ elements. Fix $N > 2 C$, and let $\mathcal{Q}_N$ denote the set of finite places $v \not\in S$ of $F$ satisfying the following conditions:
    \begin{itemize}
        \item $q_v \equiv 1 \text{ mod }p^{N+C}$.
        \item The eigenvalues $\alpha_v, \beta_v$ of $\rho(\Frob_v)$ lie in $\cO$ and satisfy $\operatorname{ord}_\varpi(\alpha_v-\beta_v)^2 = C_4$.
    \end{itemize}
   We will show that the kernel of the map 
    \[ \operatorname{loc} : H^1(F_S / F, W_N^\ast) \to \prod_{v \in \mathcal{Q}_N} H^1_v(W_N^\ast) \]
    is annihilated by $\varpi^{3C(\rho)}$. Applying \cite[Lemma 6.5]{Kis04}, this will show that there is a subset $Q \subset \mathcal{Q}_N$ of order $q$ such that this kernel is equal to $H^1_{Q\text{-split}}(F_S / F, W_N^\ast)$; and this will prove the lemma. 

    Take $[\phi] \in \ker(\operatorname{loc})$, and suppose for the sake of contradiction that $\varpi^{3 C(\rho)} [\phi] \neq 0$. Let $f = \Res_{L_N / F}([\phi]) \in \Hom_{G_F}(G_{L_N}, W_N^\ast)$. By Lemma \ref{lem_annihilation_of_auxiliary_groups}(3) and the inflation-restriction exact sequence, $\varpi^{C(\rho)} f \neq 0$. By part (2) of the same Lemma, we have $\varpi^{N-1} W_N^\ast = W_{1}^\ast \leq \cO \cdot f(G_{L_N})$. 

    Choose any $\sigma \in G_{F(\zeta_{p^\infty})}$ such that $\rho(\sigma)$ has eigenvalues $\alpha, \beta \in \cO$ satisfying $\operatorname{ord}_\varpi(\alpha - \beta)^2 = C_4$. If $\phi(\sigma) \not\in (\sigma-1) W_N^\ast$, then we can choose, by the Chebotarev density theorem, a Taylor--Wiles place $v$ of level $N+C$ such that $\rho(\Frob_v) \equiv \rho(\sigma) \text{ mod }\varpi^N$ and  $\phi(\Frob_v) = \phi(\sigma)$, implying in particular that the image of $[\phi]$ in $H^1_v(W_N^\ast)$ is non-zero, a contradiction to $[\phi] \in \ker( \operatorname{loc})$. 

    Suppose instead that $\phi(\sigma) \in (\sigma-1) W_N^\ast$. Then we can choose $\tau \in G_{L_N}$ such that $f(\tau) \not\in (\sigma-1) W_N^\ast$. (Indeed, we can argue as in Lemma \ref{lem_TW_local_triviality}: there is a short exact sequence of $\cO[\sigma]$-modules
    \[ 0 \to M \to W_N^\ast \to \cO / (\varpi^N) \to 0, \]
    showing that $(\sigma-1)W_N^\ast$ is contained in a direct summand $\cO / (\varpi^N)$-submodule of $W_N^\ast$ of rank 2. In particular, $(\sigma-1) W_N^\ast$ does not contain $W_N^\ast[\varpi]$.) By the Chebotarev density theorem, there exists a Taylor--Wiles place $v$ of level $N+C$ such that $\rho(\Frob_v) \equiv \rho(\sigma) \equiv \rho(\tau \sigma) \text{ mod }\varpi^N$ and $\phi(\Frob_v) = \phi(\tau \sigma)$. Then $\phi(\Frob_v) \not\in (\Frob_v - 1) W_N^\ast$. This again contradicts $[\phi] \in \ker(\operatorname{loc})$, and completes the proof. 
\end{proof}
\begin{proposition}\label{prop_control_on_augmented_tangent_space}
    We can find an integer $q\geq 0$ with the following property: for any $N \geq 1$, there exists a Taylor--Wiles set $Q$ of level $N$ and with $|Q| = q$, and a morphism $\cO^q \to \mathfrak{p}_Q / \mathfrak{p}_Q^2$ with cokernel annihilated by $\varpi^{40 C(\rho)}$. 
\end{proposition}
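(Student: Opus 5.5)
We will combine Lemma \ref{lem_killing_dual_Selmer}, Proposition \ref{prop_application_of_Poitou--Tate}, Lemma \ref{lem_comparison_of_tangent_spaces} and Lemma \ref{lem_existence_of_homomorphism}. Write $C = C(\rho)$ and take $q$ to be the integer from Lemma \ref{lem_killing_dual_Selmer}. Since a Taylor--Wiles set of level $N'$ is also one of level $N$ for every $N \leq N'$, we may increase $N$, and we may assume $N > 24 \cdot 3C$ and $N$ large relative to $C$. Apply Lemma \ref{lem_killing_dual_Selmer} with $C$ replaced by $3C$: we get a Taylor--Wiles set $Q$ with $|Q| = q$, of level $N + 3C$, such that $\varpi^{3C} H^1_{Q\text{-split}}(F_S/F, W_N^\ast) = 0$.

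\textbf{Step 1: killing the dual Selmer group.} First check that $H^1_{\mathcal{L}^\perp_{S\cup Q}}(W_N^\ast) \leq H^1_{Q\text{-split}}(F_S/F, W_N^\ast)$. For $v \in Q$ we have $\mathcal{L}_{v,N} = H^1(F_v, W_N)$, so $\mathcal{L}_{v,N}^\perp = 0$, and classes in the dual Selmer group are therefore trivial at $Q$. Trivial local restriction at $v \in Q$ forces unramifiedness at $v$, so these classes come from $H^1(F_S/F, W_N^\ast)$. This is exact inflation only up to the $H^0$ subtlety, which Lemma \ref{lem_annihilation_of_auxiliary_groups}(1) controls; in the worst case it costs an extra factor of $\varpi^C$. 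So $\varpi^{3C}$ annihilates the dual Selmer group, possibly $\varpi^{4C}$.

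\textbf{Step 2: the Selmer group is nearly free.} Apply Proposition \ref{prop_application_of_Poitou--Tate} with $C' = 3C$ (or $4C$ as needed) in place of $C$; its hypothesis requires $N > 24C'$. This shows that $H^1_{\mathcal{L}_{S \cup Q}}(W_N)$ is $36C$-free of rank $q$. Concretely, there is a map $(\cO/\varpi^N)^q \to H^1_{\mathcal{L}_{S\cup Q}}(W_N)$ whose kernel and cokernel are killed by $\varpi^{36C}$.

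\textbf{Step 3: passing to the cotangent space.} Lemma \ref{lem_comparison_of_tangent_spaces} gives a map $H^1_{\mathcal{L}_{S\cup Q}}(W_N) \to \Hom_\cO(\mathfrak{p}_Q/\mathfrak{p}_Q^2, \cO/\varpi^N)$ with kernel and cokernel killed by $\varpi^{4C}$. Composing, $\Hom(\mathfrak{p}_Q/\mathfrak{p}_Q^2, \cO/\varpi^N)$ is $40C$-free of rank $q$. Dualising (Pontryagin duality on $\cO/\varpi^N$-modules), $(\mathfrak{p}_Q/\mathfrak{p}_Q^2)/\varpi^N$ receives a map from $(\cO/\varpi^N)^q$ whose cokernel is killed by $\varpi^{40C}$. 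Since $N \geq 40C + 1$, Lemma \ref{lem_existence_of_homomorphism} ((2)$\Rightarrow$(1)) lifts this to a morphism $\cO^q \to \mathfrak{p}_Q/\mathfrak{p}_Q^2$ with cokernel annihilated by $\varpi^{40C}$.

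\textbf{Main obstacle.} The hard part is bookkeeping the exponents so that the total is exactly $36C + 4C = 40C$. This requires the dual Selmer group to be killed by exactly $\varpi^{3C}$, with the level $N + 3C$ matching the hypothesis of Proposition \ref{prop_application_of_Poitou--Tate}. Step 1 is where this could go wrong, so I would first verify that the dual Selmer group embeds into the $Q$-split group without extra loss. If Step 1 does cost an extra $\varpi^C$, the total becomes $4C \cdot 12 + 4C = 52C$, and the statement would need a larger constant; the $40C$ in the statement suggests Step 1 is loss-free.
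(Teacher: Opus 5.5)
Your proposal is correct and is essentially the paper's proof. You apply Lemma \ref{lem_killing_dual_Selmer} with parameter $3C(\rho)$, obtaining a set $Q$ of level $N+3C(\rho)$. Proposition \ref{prop_application_of_Poitou--Tate} then gives $36C(\rho)$-freeness, and Lemma \ref{lem_comparison_of_tangent_spaces} brings this to $40C(\rho)$. Your dualisation step combined with Lemma \ref{lem_existence_of_homomorphism} spells out what the paper compresses into ``the result follows''.

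Your hedge in Step 1 is unnecessary, and the count is exactly $12\cdot 3C(\rho)+4C(\rho)=40C(\rho)$ with no extra loss. Inflation on $H^1$ is always injective. Since $W_N^\ast$ is unramified at $Q$, a class that is locally trivial at $Q$ restricts to zero on $\Gal(F_{S\cup Q}/F_S)$, and hence is inflated from $F_S$. This gives a lossless inclusion $H^1_{\mathcal{L}^\perp_{S\cup Q}}(W_N^\ast)\hookrightarrow H^1_{Q\text{-split}}(F_S/F,W_N^\ast)$.
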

\begin{proof}
   Define $q$ as in Lemma \ref{lem_killing_dual_Selmer}.  We are free to increase $N$, and in particular to assume that $N > 80 C(\rho)$. Combining Lemma \ref{lem_killing_dual_Selmer} and Proposition \ref{prop_application_of_Poitou--Tate} (applied with $C = 3 C(\rho)$), we see that for any $N > 80 C(\rho)$, we can find a Taylor--Wiles set $Q$ of level $N + 3 C(\rho)$ and with $q$ elements such that $H^1_{\mathcal{L}_{S \cup Q}}(W_N)$ is a $36 C(\rho)$-free $\cO / (\varpi^N)$-module of rank $q$. Combining this result with Lemma \ref{lem_comparison_of_tangent_spaces}, we find that $\Hom_\cO(\mathfrak{p}_Q / \mathfrak{p}_Q^2, \cO / \varpi^N)$ is a $40 C(\rho)$-free $\cO / (\varpi^N)$-module of rank $q$. The result follows.
\end{proof}
With these Galois-theoretic preliminaries out of the way, we are now ready to make the connection with automorphic forms. Let $D$ be a definite quaternion algebra over $F$, ramified precisely at $\Sigma$ and at the infinite places (this exists because $[F : \mathbb{Q}]$ and $|\Sigma|$ are both even). Fix a choice of maximal order $\cO_D \leq D$ and, for each finite place $v \not\in \Sigma$, an isomorphism $\cO_D \otimes_{\cO_F} \cO_{F_v} \cong M_2(\cO_{F_v})$. We use this isomorphism to identify $(D \otimes_F F_v)^\times$ with $\GL_2(F_v)$. If $U = \prod_v U_v \leq (D \otimes_F \mathbb{A}_F^\infty)$ is an open compact subgroup, then we write $H(U)$ for the set of functions $f : D^\times \mathbb{A}_F^{\infty, \times} \backslash (D \otimes_F \mathbb{A}_F^{\infty})^\times / U \to \cO$. This is a finite free $\cO$-module.

We next describe the particular level subgroups $U$ that we want to use. We define $U_0 = \prod_v U_{0, v}$, where $U_{0, v} = (\cO_D \otimes_{\cO_{F}} \cO_{F_v})^\times$. If $Q$ is a Taylor--Wiles set, then we define $U_0(Q) = \prod_v U_0(Q)_v$, where: 
\begin{itemize}
    \item If $v \not\in Q$, then $U_0(Q)_v = U_{0, v}$.
    \item If $v \in Q$, then $U_0(Q)_v = \Iw_v$. 
\end{itemize}
We define $U_1(Q) = \prod_v U_1(Q)_v$, where:
\begin{itemize}
    \item If $v \not\in Q$, then $U_1(Q)_v = U_{0, v}$. 
    \item If $v \in Q$, then 
    \[ U_1(Q)_v = \left\{ \left( \begin{array}{cc} a & b \\ c & d \end{array} \right) \in \Iw_v \mid a / d \text{ mod }\varpi_v \mapsto 1 \in k(v)^\times(p) \right\}, \]
\end{itemize}
where $k(v)^\times(p)$ denotes the maximal $p$-power quotient of $k(v)^\times$. We set $\Delta_Q = \prod_{v \in Q} k(v)^\times(p)$. Then $U_1(Q) \leq U_0(Q)$ is a normal subgroup, and there is a canonical isomorphism $U_0(Q) / U_1(Q) \cong \Delta_Q$. If $Q$ has level $N$, then we define $U_1(Q; N) \leq U_0(Q)$ to be the kernel of the induced surjective homomorphism $U_0(Q) \to \Delta_Q / (p^N)$. 

We next describe the Hecke operators we want to use. If $v \not\in \Sigma \cup Q$ is a finite place of $F$, then the usual unramified Hecke operators
\[ T_v = \left[ \GL_2(\cO_{F_v}) \left( \begin{array}{cc} \varpi_v & 0 \\ 0 & 1 \end{array}\right) \GL_2(\cO_{F_v}) \right], \]
\[ S_v = \left[ \GL_2(\cO_{F_v}) \left( \begin{array}{cc} \varpi_v & 0 \\ 0 & \varpi_v \end{array}\right) \GL_2(\cO_{F_v}) \right] \]
act on $H(U_0(Q))$ and $H(U_1(Q; N))$. If $v \in \Sigma$, then we write $U_v$ for the operator given by the double coset $[\Pi_v^\times \cO_{D_v}^\times]$ (where $\Pi_v \in \cO_{D_v}$ is a uniformizer). For the Hecke operators at places $v \in Q$, we use the homomorphisms $t_{v, i} : F_v^\times \to \mathcal{H}(\GL_2(F_v), U_1(Q; N)_v) \otimes_\mathbb{Z} \cO$ defined just before \cite[Proposition 2.2.9]{All23a}. We write $A_v = \cO[ t_{v, 1}( \varpi_v)^{\pm 1}, t_{v, 2}(\varpi_v)^{\pm 1}]$, a freely generated commutative subalgebra of the abstract Hecke algebra, and write $W_v = S_2$ for the group that acts on this by permuting $t_{v, 1}$ and $t_{v, 2}$. Then the algebra $A_v^{W_v}$ lies in the centre of the Hecke algebra. We set $A_Q = \otimes_{v \in Q} A_v$, $W_Q = \prod_{v \in Q} W_v$. The algebra $A_v$ preserves the subspace $H(U_0(Q)) \leq H(U_1(Q; N))$ and, for any $\alpha \in \cO_{F_v}^\times$, the action of $\alpha$ on $H(U_1(Q; N))$ via its projection to $\Delta_Q$ agrees with the action of $t_{v, 1}(\alpha) = t_{v, 2}(\alpha)^{-1}$. It is thus also natural to let $W_Q$ act on $\Delta_Q$ by letting the non-trivial element of $W_v$ act as inversion on $k(v)^\times$. We also define the element $\delta_Q = \prod_{v \in Q} (t_{v, 1}(\varpi_v) - t_{v, 2}(\varpi_v))^2 \in A_Q^{W_Q}$.  

We write $\mathbb{T}^{univ, S \cup Q}$ for the polynomial algebra over $\cO$ in the formal variables $T_v, S_v$ ($v \not\in S \cup Q$). We can identify $\mathbb{T}^{univ, S}$ with a subalgebra of $\mathbb{T}^{univ,S \cup Q} \otimes_\cO A_Q$ by letting the Hecke operators at $Q$ act through $A_Q^{W_Q}$. Thus $H(U_0(Q))$ and $H(U_1(Q; N))$ each have a natural structure of $\mathbb{T}^{univ,S \cup Q} \otimes_\cO A_Q[\Delta_Q]$-module, and hence of $\mathbb{T}^{univ,S}$-module. 

We write $\mathfrak{m} \leq \mathbb{T}^{univ, S}$ for the kernel of the homomorphism $\mathbb{T}^{univ, S} \to k$ that sends $X^2 - T_v X + q_v S_v$ $(v \not\in S)$ to $\det(X - \overline{\rho}(\Frob_v))$. We write $H(U_0(Q))^{ord} \leq H(U_0(Q))$ and $H(U_1(Q; N))^{ord} \leq H(U_1(Q; N))$ for the localisations where each operator $T_v$ ($v | p$) acts with eigenvalue residually equal to the trace of Frobenius on $\mathrm{WD}(\rho|_{G_{F_v}})$. 

Let $m_0 \geq 0$ denote the least common multiple of the exponents of the (abelian) Sylow $p$-subgroups of the groups $F^\times \backslash (U_0 \mathbb{A}_F^{\infty, \times} \cap t^{-1} D^\times t)$, as $t$ ranges over $(D \otimes_F \mathbb{A}_F^\infty)^\times$. Then $m_0$ is finite. The following proposition summarises the required properties of the $H(U_1(Q))$. 
\begin{proposition}\label{prop_properties_of_spaces_of_modular_forms}
    Let $N \geq 1$, and let $Q$ be a Taylor--Wiles datum of level $N + m_0$. Then:
    \begin{enumerate}
        \item The maximal ideal $\mathfrak{m}$ is in the support of $H(U_0)^{ord}$ and $H(U_0(Q))^{ord}$.
        \item $H(U_1(Q; N))_{\mathfrak{m}}^{ord}$ is a $\mathbb{T}^{univ, S \cup Q} \otimes_\cO A_Q[\Delta_Q / (p^N)]$-module, free as $\cO[\Delta_Q / (p^N)]$-module, and there is an isomorphism
        \[ H(U_1(Q; N))^{ord}_{\mathfrak{m}} \otimes_{\cO[\Delta_Q / (p^N)]} \cO \cong H(U_0(Q))^{ord}_{\mathfrak{m}} \]
        of $\mathbb{T}^{univ, S \cup Q} \otimes_\cO A_Q$-modules. 
        \item There exists a structure on $H(U_1(Q; N))^{ord}_{\mathfrak{m}}$ of $R_Q$-module such that if $\Lambda_1, \Lambda_2 : G_F \to R_Q$ are the coefficients of the universal characteristic polynomial, as defined in \cite[\S 1.10]{Che14}, then for any finite place $v\not\in S \cup Q$ of $F$, $\Lambda_1(\Frob_v)$ acts as $T_v$ and $\Lambda_2(\Frob_v)$ acts as $q_v S_v$; and for any $v \in Q$, $\sigma \in W_{F_v}$ with image $\Art_{F_v}(\alpha)$ in $W_{F_v}^{ab}$, $\Lambda_1(\sigma)$ acts as $t_{v, 1}(\alpha) + t_{v, 2}(\alpha)$, and $\Lambda_2(\sigma)$ acts as $t_{v, 1}(\alpha) t_{v, 2}(\alpha)$. In particular, the $\cO[\Delta_Q]^{W_Q}$-module structure on $H(U_1(Q; N))_{\mathfrak{m}}^{ord}$ induced by the homomorphism $\cO[\Delta_Q]^{W_Q} \to R_Q$ agrees with the tautological one. 
    \end{enumerate}
\end{proposition}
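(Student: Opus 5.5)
The three assertions are standard Taylor--Wiles patching inputs for definite quaternion algebras. I would prove them in the order (1), (2), (3), following \cite{Tay06}, \cite{All23a} and \cite{New23}, adapted to the ordinary localisation and to pseudodeformation rings.

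\emph{Step (1).} The hypothesis of Theorem \ref{thm_modularity_by_close_approximation} supplies $\pi$, ordinary of weight $0$ and trivial central character. Since $\mathrm{WD}(\rho'|_{G_{F_v}})$ is ramified and semistable exactly at $v\in\Sigma$, $\pi_v$ is an unramified twist of Steinberg at $v \in \Sigma$ and unramified elsewhere. Jacquet--Langlands therefore transfers $\pi$ to $D^\times$, with a nonzero vector fixed by $U_0$; the twist at $\Sigma$ is harmless since $U_0$ contains $\cO_{D_v}^\times$. Because $\rho\equiv\rho' \bmod \varpi$, its Hecke eigensystem reduces to $\mathfrak{m}$. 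Condition (4) of the theorem shows that the $U_p$-eigenvalue is residually the unit root for $\rho$, so $\mathfrak{m}$ lies in the support of $H(U_0)^{ord}$. For $Q$, the residual eigenvalues $\alpha_v,\beta_v$ at $v\in Q$ are distinct. Hence the $\Iw_v$-invariants of the unramified $\pi_v$ contain $U_v$-eigenvectors, and $\mathfrak{m}$ also lies in the support of $H(U_0(Q))^{ord}$.

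\emph{Step (2).} The key point is the freeness of $H(U_1(Q;N))$ over $\cO[\Delta_Q/(p^N)]$. The set $D^\times\mathbb{A}_F^{\infty,\times}\backslash (D\otimes\mathbb{A}^\infty_F)^\times/U_1(Q;N)$ is a finite set carrying an action of $\Delta_Q/(p^N)=U_0(Q)/U_1(Q;N)$. I would show this action is free. The stabiliser of a point $t$ is the image of $F^\times\backslash(U_0(Q)\mathbb{A}^{\infty,\times}_F\cap t^{-1}D^\times t)$, and its $p$-part has exponent dividing $p^{m_0}$. Since $q_v\equiv 1 \bmod p^{N+m_0}$, the element $a/d$ of a stabiliser element of $p$-power order lands in $(k(v)^\times)^{p^N}$. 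Its image in $\Delta_Q/(p^N)$ is therefore trivial, so stabilisers act trivially. This is exactly the role of $m_0$, and it gives freeness of the whole space, hence of any direct summand such as the $\mathfrak{m}$-ordinary localisation.

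Taking $\Delta_Q$-coinvariants then identifies the quotient with $H(U_0(Q))$. This uses that invariants equal coinvariants for free modules, and that the ordinary projector commutes with the $\Delta_Q$-action. The Hecke operators at $v\in Q$ commute with $\Delta_Q$ by construction, so the identification is $\mathbb{T}^{univ,S\cup Q}\otimes A_Q$-equivariant.

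\emph{Step (3).} This is the main obstacle. First I would construct a $\mathbb{T}$-valued group determinant by interpolation. After inverting $p$, $H(U_1(Q;N))^{ord}_\mathfrak{m}\otimes\overline{\mathbb{Q}}_p$ decomposes into automorphic representations $\pi'$. Each $r_{\pi',\iota}$ has the following properties.
\begin{itemize}
\item It has determinant $\epsilon^{-1}$.
\item It is unramified outside $S\cup Q$.
\item It is crystalline at $p$ with Hodge--Tate weights $\{0,1\}$: level prime to $p$ and ordinarity give good reduction, via Carayol/Saito local--global compatibility.
\item At $v\in Q$, local--global compatibility for Iwahori-with-character level gives $r|_{W_{F_v}}\cong\chi_1\oplus\chi_2$, with $\chi_i\circ\Art=t_{v,i}$ on the relevant eigenspace.
\end{itemize}
The Hecke algebra $\mathbb{T}$ generated by $\mathbb{T}^{univ,S\cup Q}$, $A_Q$ and $\Delta_Q$ is reduced and finite over $\cO$, so these determinants glue via Chenevier's theory \cite{Che14} to a determinant valued in $\mathbb{T}$. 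Its reduction is $\overline D$, which gives a map $R_{\overline D,S\cup Q}\to\mathbb{T}$. It factors through $R_Q$ because the crystalline condition of \cite{Wak19} is closed and is satisfied at every $\overline{\mathbb{Q}}_p$-point.

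The identities $\Lambda_1(\Frob_v)=T_v$ and $\Lambda_2(\Frob_v)=q_vS_v$ then follow from Eichler--Shimura at each point. At $v\in Q$, the claim is that $\Lambda_i(\sigma)$ equals the elementary symmetric functions in the $t_{v,i}(\alpha)$. This holds pointwise by the local--global compatibility above, hence in $\mathbb{T}$ by reducedness, following \cite[Prop.~2.2.9]{All23a}. The final statement about $\cO[\Delta_Q]^{W_Q}$ follows by taking $\alpha\in\cO_{F_v}^\times$, since $t_{v,1}(\alpha)=t_{v,2}(\alpha)^{-1}$ is the diamond action.

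The delicate points are the crystallinity of the pseudodeformation and the local--global compatibility at $Q$. I expect the latter to need the most care, because $\overline\rho$ may be residually reducible. For that reason I would work only with determinants, never with $\mathbb{T}$-valued representations.
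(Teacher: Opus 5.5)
Your outline follows the standard argument that the paper omits (it only cites \cite[Propositions 2.3, 2.5]{New21}), and Step (2) is exactly right: $m_0$ and the level $N+m_0$ are there precisely to make $\Delta_Q/(p^N)$ act freely on the double coset space. Two of your justifications, however, rely on properties that fail in the residually reducible setting this paper is designed for.

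The first problem is in Step (1), where you assert that the residual eigenvalues at $v\in Q$ are distinct. A Taylor--Wiles datum here only requires $\ord_\varpi(\alpha_v-\beta_v)^2=C_4$, and $C_4>0$ is allowed. For example, if $\overline{\rho}^{ss}\cong 1\oplus\overline{\epsilon}^{-1}$, then $q_v\equiv 1 \bmod p$ forces $\alpha_v\equiv\beta_v \bmod \varpi$. This is exactly why one localises only at $\mathfrak{m}\leq\mathbb{T}^{univ,S}$ and carries $\delta_Q$ around later. Part (1) does not need distinctness. For $v\in Q$, $T_v$ and $S_v$ act on $H(U_0(Q))$ through the central subalgebra $A_v^{W_v}$, which restricts to the spherical operators on $\GL_2(\cO_{F_v})$-invariants. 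So the inclusion $H(U_0)\subseteq H(U_0(Q))$ is $\mathbb{T}^{univ,S}$-equivariant, and the claim for $U_0(Q)$ follows from the claim for $U_0$.

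The second problem, which is more serious, is in Step (3): the algebra generated by $\mathbb{T}^{univ,S\cup Q}$, $A_Q$ and $\Delta_Q$ need not be reduced. The elements $t_{v,i}(\varpi_v)$ are not central. If $\pi'_v$ is an unramified principal series with equal Satake parameters, they act on $\pi'^{\Iw_v}_v$ through a non-trivial Jordan block, because the Jacquet module is then a non-split self-extension. Nothing excludes such a constituent $\pi'$ in the localisation, since only $\rho$ is known to satisfy $\alpha_v\neq\beta_v$. So your inference \emph{pointwise, hence in $\mathbb{T}$ by reducedness} is not valid as stated, and neither is the appeal to \emph{the relevant eigenspace}.

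The repair is to glue into the subalgebra generated by $\mathbb{T}^{univ,S\cup Q}$ and the symmetric elements $t_{v,1}(\alpha)+t_{v,2}(\alpha)$ and $t_{v,1}(\alpha)t_{v,2}(\alpha)$. These are central in the local Hecke algebras, so by Schur's lemma they act by scalars on each $\pi'^{\infty,U_1(Q;N)}$. That subalgebra is therefore reduced, and it contains every operator appearing in (3). Two related points also need attention:
\begin{enumerate}
\item Local--global compatibility at $v\in Q$ holds only after semisimplification, because $\Iw_v$-level admits unramified twists of Steinberg.
\item Since $\mathfrak{m}$ may be Eisenstein, your decomposition must include the one-dimensional constituents $\chi\circ\mathrm{Nrd}$. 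Their associated determinant is that of $\chi\oplus\chi\epsilon^{-1}$, which is crystalline with the correct behaviour at $Q$.
\end{enumerate}
Both points are harmless at the level of determinants, so working only with determinants, as you propose, remains the right choice.
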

\begin{proof}
    We omit the details of the proof, which are very similar to the details of e.g. \cite[Propositions 2.3, 2.5]{New21}. 
\end{proof}
We recall that $\mathfrak{p}_\emptyset \leq R_\emptyset$ denotes the kernel of the homomorphism $R_\emptyset \to \cO$ associated to $\rho$. Let $\mathfrak{p}'_\emptyset \leq R_\emptyset$ denote the kernel of the homomorphism associated to $\rho'$. Thus, by assumption, we have an equality $(\mathfrak{p}_\emptyset, \varpi^{80 C(\rho)+1})=(\mathfrak{p}'_\emptyset, \varpi^{80 C(\rho)+1})$ of ideals of $R_\emptyset$.

We will patch these objects using ultrafilters. Using Proposition \ref{prop_control_on_augmented_tangent_space}, we fix an integer $q \geq 1$ and for each $N \geq 1$ a Taylor--Wiles datum $(Q_N, (\alpha_v, \beta_v)_{v \in Q_N})$ of level $N + m_0$ such that there is a morphism $\cO^q \to \mathfrak{p}_{Q_N} / \mathfrak{p}_{Q_N}^2$ with cokernel annihilated by $\varpi^{40 C(\rho)}$. Let $\mathcal{F}$ be a non-principal ultrafilter on $\mathbb{N}$, and let $\mathbf{R} = \prod_{N \geq 1} \cO$. If $I \in \mathcal{F}$, then we define $e_I \in \mathbf{R}$ by $e_{I, N} = 1$ if $N \in I$, $e_{I, N} = 0$ otherwise. Then $\{ e_I \mid I \in \mathcal{F} \}$ is a multiplicative subset of $\mathbf{R}$, and we write $\mathbf{R}_\mathcal{F}$ for the localisation at this multiplicative subset. 

We first define the patched pseudodeformation ring as
\[ R_\infty = \varprojlim_m R_\infty(m), \]
where 
\[ R_\infty(m) = \mathbf{R}_\mathcal{F} \otimes_{\mathbf{R}} \prod_{N \geq 1} R_{Q_N} / \mathfrak{m}_{R_{Q_N}}^m. \]
Writing $\mathbb{T}_{Q_N} = \mathbb{T}^{univ, S}(H(U_1(Q_N)^{ord}_{\mathfrak{m}})$ for the quotient of $\mathbb{T}^{univ, S}$ that acts faithfully on this module, we define a patched Hecke algebra as
\[ \mathbb{T}_\infty = \varprojlim_m \mathbb{T}_\infty(m), \]
where 
\[ \mathbb{T}_\infty(m) = \mathbf{R}_\mathcal{F} \otimes_{\mathbf{R}} \prod_{N \geq 1} \mathbb{T}_{Q_N} / (\mathfrak{m}^m). \]
The maps $R_{Q_N} \to \mathbb{T}_{Q_N}$ determine a map $R_\infty \to \mathbb{T}_\infty$. Similarly, there are canonical maps $R_\infty \to R_\emptyset$ and $\mathbb{T}_\infty \to \mathbb{T}_\emptyset$. 
\begin{lemma}\label{lem_finiteness_of_patched_Hecke_algebra}
    The rings $R_\infty$, $\mathbb{T}_\infty$ are complete Noetherian local $\cO$-algebras, and the maps $R_\infty \to \mathbb{T}_\infty$, $R_\infty \to R_\emptyset$, and $\mathbb{T}_\infty \to \mathbb{T}_\emptyset$ are surjective. 
\end{lemma}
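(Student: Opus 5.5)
Two facts drive the argument. First, the tangent spaces of the $R_{Q_N}$ are uniformly bounded. Second, an ultraproduct of uniformly bounded quotients of a fixed complete Noetherian ring is again such a quotient. We set $d = \dim_k \mathfrak{m}_{R_{\overline{D}, S}}/(\mathfrak{m}^2, \varpi)$ plus $q$, or more precisely a uniform bound $g$ with $\dim_k \mathfrak{m}_{R_{Q_N}}/(\mathfrak{m}_{R_{Q_N}}^2,\varpi) \leq g$ for all $N$. Such a bound holds because the reduced tangent space of a pseudodeformation ring is controlled by $H^1(F_{S\cup Q_N}/F, \operatorname{ad}\overline{\rho})$ together with a bounded contribution from the reducible locus, and each place of $Q_N$ adds at most a bounded amount (at most $|Q_N| = q$ extra dimensions, with local contribution bounded by $\dim_k H^1(F_v,-)$). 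Concretely, I would use the fact that $R_{\overline{D},S\cup Q}$ is topologically generated by the values $\Lambda_i(\sigma)$, and that the Cayley--Hamilton algebra has a bounded number of generators (cf.\ \cite{Che14}). Alternatively, one can note that the Taylor--Wiles places contribute only through tame inertia, which is procyclic. Granting $g$, we choose for each $N$ a surjection $\cO\llbracket X_1,\dots,X_g\rrbracket \to R_{Q_N}$.

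Next, we show that $R_\infty(m)$ is a quotient of $A_m = \cO\llbracket X_1,\dots,X_g\rrbracket/(\mathfrak{m}_A^m)$, which is a finite ring. The product $\prod_N R_{Q_N}/\mathfrak{m}^m$ is a quotient of $\prod_N A_m = \mathbf{R}\otimes_{\cO}A_m$ (using finiteness of $A_m$). Moreover, $\mathbf{R}_{\mathcal F}\otimes_{\mathbf R}\prod_N A_m \cong A_m$, since localising $\prod_N$ of a finite set at the ultrafilter just picks out the $\mathcal F$-limit. Here one uses that a finite set has only finitely many possibilities for each coordinate, and that for each element of $\prod_N A_m$ the ultrafilter selects a unique value. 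Hence $R_\infty(m) = A_m/I_m$ for ideals $I_m$ that are compatible in $m$. The same ultrafilter argument shows that the transition maps are surjective. Passing to the limit, $R_\infty = \cO\llbracket X\rrbracket/\varprojlim I_m$ is a complete Noetherian local $\cO$-algebra with residue field $k$.

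For $\mathbb{T}_\infty$, we use the surjections $R_{Q_N}\to\mathbb{T}_{Q_N}$ from Proposition \ref{prop_properties_of_spaces_of_modular_forms}(3): the image contains all $T_v,S_v$ for $v\notin S\cup Q_N$, which generate $\mathbb{T}_{Q_N}$. These surjections patch to surjections $R_\infty(m)\to\mathbb{T}_\infty(m)$, because the exactness of $\mathbf{R}_{\mathcal F}\otimes_{\mathbf R}-$ (a localisation) preserves surjections. Hence $\mathbb{T}_\infty$ is likewise a quotient of $\cO\llbracket X\rrbracket$, and in particular is complete Noetherian local, with $R_\infty\to\mathbb{T}_\infty$ surjective. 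We obtain $R_{Q_N}\to R_\emptyset$ as the quotient obtained by imposing unramifiedness at $Q_N$, which is surjective. Similarly, $\mathbb{T}_{Q_N}\to\mathbb{T}_\emptyset$ is surjective via $H(U_0(Q_N))$ and the comparison of Proposition \ref{prop_properties_of_spaces_of_modular_forms}(2) with $H(U_0)$. The patched maps to the constant targets are then surjective at each finite level $m$ by the same exactness, and hence surjective in the limit, by a Mittag-Leffler argument on finite sets.

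The main obstacle is the uniform bound $g$ on the number of generators of $R_{Q_N}$ in the pseudodeformation setting, where $\overline{\rho}$ may be reducible. I would try first to bound the reduced tangent space of $R_{\overline D,S\cup Q}$ by $\dim$ of $\operatorname{Ext}$-groups in the generalized matrix algebra attached to $\overline{D}$, each of which is bounded by a Galois cohomology group $H^1(F_{S\cup Q}/F,-)$ of rank at most a constant plus $O(|Q|)$.
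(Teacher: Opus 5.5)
Your proposal is correct and follows the paper's argument. The paper likewise reduces everything to presenting each $R_{Q_N}$ as a quotient of $\cO\llbracket X_1,\dots,X_g\rrbracket$ with $g$ independent of $N$ (citing the argument of \cite[Lemma 3.28]{Tho15}), after which the ultraproduct and Mittag-Leffler steps you spell out give the Noetherian property and the surjectivity of $R_\infty\to\mathbb{T}_\infty$, $R_\infty\to R_\emptyset$ and $\mathbb{T}_\infty\to\mathbb{T}_\emptyset$. Your parenthetical count of ``at most $q$ extra dimensions'' is not justified when $\overline{\rho}$ is reducible, since the natural tangent-space bounds there involve products of $\operatorname{Ext}^1$-dimensions; but only a bound depending on $q=|Q_N|$ is needed, and your $\operatorname{Ext}$/Cayley--Hamilton route supplies one.
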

\begin{proof}
    We can find an integer $g \geq 0$ such that, for any $N \geq 1$, $R_{Q_N}$ is a quotient of $\mathcal{O} \llbracket X_1, \dots, X_g \rrbracket$ (use the argument of \cite[Lemma 3.28]{Tho15}). This implies that $R_\infty$ is a complete Noetherian local ring. Since each map $R_{Q_N} \to \mathbb{T}_{Q_N}$ is surjective, we deduce the corresponding result for $\mathbb{T}_\infty$. The remaining claims follow from the definition.
\end{proof}
We write $\mathfrak{p}_\infty$, $\mathfrak{p}'_\infty \leq R_\infty$ for respective the pullbacks of the ideals $\mathfrak{p}_\emptyset$, $\mathfrak{p}'_\emptyset \leq R_\emptyset$. 
\begin{lemma}
    There exist homomorphisms of $\cO$-modules
    \[ \cO^q \to \mathfrak{p}_\infty / (\mathfrak{p}_\infty)^2, \]
    \[ \cO^q \to \mathfrak{p}'_\infty/ (\mathfrak{p}'_\infty)^2, \]
    each with cokernel annihilated by $\varpi^{40  C(\rho)}.$
\end{lemma}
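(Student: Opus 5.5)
The plan is to pass to the limit the finite-level statement of Proposition \ref{prop_control_on_augmented_tangent_space}, using the ultrafilter in the usual way, and then to handle $\mathfrak{p}'_\infty$ using the congruence between $\rho$ and $\rho'$.

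\emph{The ideal $\mathfrak{p}_\infty$.} First I will identify $\mathfrak{p}_\infty/\mathfrak{p}_\infty^2$ with an ultraproduct of the modules $\mathfrak{p}_{Q_N}/\mathfrak{p}_{Q_N}^2$. The homomorphisms $R_{Q_N}\to\cO$ attached to $\rho$ patch to $R_\infty\to\cO$, and its kernel is $\mathfrak{p}_\infty$, since $R_\infty\to R_\emptyset$ is compatible with these maps. Each $\mathfrak{p}_{Q_N}/\mathfrak{p}_{Q_N}^2$ is a finite $\cO$-module generated by at most $g$ elements. The argument of \cite[Lemma 3.28]{Tho15} gives a uniform $g$, and presentations by $\cO\llbracket X_1,\dots,X_g\rrbracket$ can be chosen compatibly. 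It is cleaner to work modulo $\varpi^{M}$ for each $M$. The module $\mathfrak{p}_\infty/(\mathfrak{p}_\infty^2,\varpi^M)$ agrees with $\mathbf{R}_\mathcal{F}\otimes_{\mathbf{R}}\prod_N \mathfrak{p}_{Q_N}/(\mathfrak{p}_{Q_N}^2,\varpi^M)$. This holds because the ideal $(\mathfrak{p},\varpi^M)$ contains a fixed power of the maximal ideal, so the quotients are computed inside $R_{Q_N}/\mathfrak{m}^m$ for $m$ large, uniformly in $N$. There are only finitely many isomorphism classes of $\cO/\varpi^M$-modules generated by $g$ elements, so the ultraproduct is isomorphic to one fixed finite module occurring for an $\mathcal{F}$-large set of $N$.

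Proposition \ref{prop_control_on_augmented_tangent_space} gives maps $\cO^q\to\mathfrak{p}_{Q_N}/\mathfrak{p}_{Q_N}^2$ with cokernel killed by $\varpi^{40C(\rho)}$. Reducing these modulo $\varpi^{40C(\rho)+1}$ and taking the ultraproduct gives a map $\cO^q\to \mathfrak{p}_\infty/(\mathfrak{p}_\infty^2,\varpi^{40C(\rho)+1})$ with cokernel killed by $\varpi^{40C(\rho)}$. Lemma \ref{lem_existence_of_homomorphism}, applied with $M=\mathfrak{p}_\infty/\mathfrak{p}_\infty^2$, which is a finite $\cO$-module because $R_\infty$ is Noetherian by Lemma \ref{lem_finiteness_of_patched_Hecke_algebra}, then lifts this to a map $\cO^q\to\mathfrak{p}_\infty/\mathfrak{p}_\infty^2$ with the same annihilation property.

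\emph{The ideal $\mathfrak{p}'_\infty$.} The key input is that $\psi,\psi':R_\infty\to\cO$, the maps with kernels $\mathfrak{p}_\infty$ and $\mathfrak{p}'_\infty$, are congruent modulo $\varpi^{80C(\rho)+1}$. This follows from the equality $(\mathfrak{p}_\emptyset,\varpi^{80C(\rho)+1})=(\mathfrak{p}'_\emptyset,\varpi^{80C(\rho)+1})$ pulled back to $R_\infty$. I then claim that the isomorphism class of $\mathfrak{p}/(\mathfrak{p}^2,\varpi^{M})$ depends only on $\psi$ modulo $\varpi^{M+1}$, or at worst modulo $\varpi^{2M}$. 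Choose a presentation $R_\infty=\cO\llbracket X_1,\dots,X_g\rrbracket/(f_1,\dots,f_s)$. Then $\mathfrak{p}/\mathfrak{p}^2$ is the cokernel of the transposed Jacobian $M_f(\underline a)^t$, evaluated at the point $\underline a$ corresponding to $\psi$. If $\underline a\equiv\underline a'$ modulo $\varpi^{80C(\rho)+1}$, the two Jacobians are congruent modulo the same power. So the cokernels are isomorphic after reduction modulo $\varpi^{40C(\rho)+1}$, since $80C(\rho)+1\ge 40C(\rho)+1$. The map obtained for $\mathfrak{p}_\infty$ then gives a map $\cO^q\to\mathfrak{p}'_\infty/(\mathfrak{p}_\infty'^2,\varpi^{40C(\rho)+1})$ with cokernel killed by $\varpi^{40C(\rho)}$. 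Applying Lemma \ref{lem_existence_of_homomorphism} once more finishes the proof.

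\emph{Main obstacle.} I expect the hardest step to be the rigorous commutation of the cotangent space with the ultraproduct and the inverse limit over $m$. This needs uniform bounds: a uniform number $g$ of generators, and the observation that $(\mathfrak{p}_{Q_N}^2,\varpi^M)\supseteq\mathfrak{m}^{m}$ for some $m$ independent of $N$. The latter holds because $\mathfrak{p}_{Q_N}$ is generated by $X_i-a_i$ in compatible coordinates, so $\mathfrak{m}^{2M}\subseteq(\mathfrak{p}^2,\varpi^M)$.
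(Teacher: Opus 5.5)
Your proposal is correct. For $\mathfrak{p}_\infty$ it is essentially the paper's argument; for $\mathfrak{p}'_\infty$ it takes a different route.

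\textbf{The ideal $\mathfrak{p}_\infty$.} You and the paper both take an ultraproduct of the finite-level maps from Proposition \ref{prop_control_on_augmented_tangent_space}. The paper keeps maps $\cO^q\to M_m=(\mathfrak{p}_{Q_N}+\mathfrak{m}^m)/(\mathfrak{p}_{Q_N}^2+\mathfrak{m}^m)$ and passes to the inverse limit over $m$. You instead work at the single precision $D=40C(\rho)+1$ and lift with Lemma \ref{lem_existence_of_homomorphism}. That is a harmless variant, but one step is left implicit:
\begin{itemize}
\item Truncating to $R/\mathfrak{m}^m$ naturally computes $(\mathfrak{p}+\varpi^D)/(\mathfrak{p}^2+\varpi^D)$. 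This is the reading your bound $\mathfrak{m}^{2M}\subseteq(\mathfrak{p}^2,\varpi^M)$ requires.
\item Lemma \ref{lem_existence_of_homomorphism}, however, needs $(\mathfrak{p}/\mathfrak{p}^2)\otimes_\cO\cO/(\varpi^D)$.
\item These agree because $\mathfrak{p}\cap(\mathfrak{p}^2+\varpi^D R)=\mathfrak{p}^2+\varpi^D\mathfrak{p}$. This holds since $R/\mathfrak{p}\cong\cO$ is $\varpi$-torsion-free.
\end{itemize}
You should state this identity explicitly.

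\textbf{The ideal $\mathfrak{p}'_\infty$.} The identity above is exactly the paper's key step here. The paper combines it with the ideal equality $\mathfrak{p}_\infty+(\varpi^D)=\mathfrak{p}'_\infty+(\varpi^D)$. That equality forces $\mathfrak{p}_\infty^2+(\varpi^D)=(\mathfrak{p}'_\infty)^2+(\varpi^D)$, so the two cotangent modules reduced mod $\varpi^D$ are literally the same module, and no coordinates are needed.

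You instead fix a presentation $R_\infty=\cO\llbracket X_1,\dots,X_g\rrbracket/(f_1,\dots,f_s)$. You identify each cotangent module with the cokernel of $M_f^t$ evaluated at the corresponding point. Congruent points give congruent Jacobians, so by right exactness the cokernels coincide after applying $\otimes_\cO\cO/(\varpi^D)$. This is equally valid, and it matches the Jacobian viewpoint of Theorem \ref{thm_application_of_Hensel}. It also yields the tensor-product reductions directly and shows that only $\psi\equiv\psi'\bmod\varpi^{40C(\rho)+1}$ matters. The paper's route is shorter and intrinsic; yours is more concrete but requires choosing coordinates.
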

\begin{proof}
    By construction, $\mathfrak{p}_\infty / (\mathfrak{p}_\infty)^2$ is an inverse limit of finite length $\cO$-modules $M_m$, each isomorphic to 
    \[ \frac{\mathfrak{p}_{Q_{N}} + \mathfrak{m}_{R_{Q_{N}}}^{m}}{ \mathfrak{p}_{Q_{N}}^2 + \mathfrak{m}_{R_{Q_{N}}}^{m}}  \]
    (with $N$ depending on $m$), with surjective transition maps, and a uniformly bounded number of generators. Thus for every $m \geq 1$, we can find a homomorphism $\cO^q \to M_m$ with cokernel annihilated by $\varpi^{40  C(\rho)}$. This property is preserved under passage to the inverse limit. 

    We want to deduce the corresponding result for $\mathfrak{p}'_\infty$. We first note that, for any $D \geq 1$, there is an isomorphism
    \[ (\mathfrak{p}'_\infty / (\mathfrak{p}'_\infty)^2) \otimes_\cO \cO / (\varpi^D) \cong (\mathfrak{p}'_\infty, \varpi^D) / ((\mathfrak{p}'_\infty)^2, \varpi^D). \]
    Indeed, it suffices to show that $((\mathfrak{p}'_\infty)^2, \varpi^D) \cap \mathfrak{p}'_\infty = ((\mathfrak{p}'_\infty)^2, \varpi^D \mathfrak{p}'_\infty)$ (equality as ideals of $R_\infty$). The right-hand side is clearly contained in the left-hand side. On the other hand, if $x \in (\mathfrak{p}'_\infty)^2$, $y \in R_\infty$, and $x + \varpi^D y \in \mathfrak{p}'_\infty$, then $y \in \mathfrak{p}'_\infty$ (since $\mathfrak{p}'_\infty$ is prime and $\varpi \not\in \mathfrak{p}'_\infty$), hence $x + \varpi^D y \in ((\mathfrak{p}'_\infty)^2, \varpi^D \mathfrak{p}'_\infty)$, so the left-hand side is contained in the right-hand side, and they're equal.

    Taking $D = 40 C(\rho) + 1$, we have $\mathfrak{p}_\infty + \varpi^D = \mathfrak{p}'_\infty + \varpi^D$, hence there is an isomorphism of $\cO$-modules
    \[ (\mathfrak{p}'_\infty / (\mathfrak{p}'_\infty)^2) \otimes_\cO \cO / (\varpi^{40 C(\rho) +1}) \cong (\mathfrak{p}_\infty / (\mathfrak{p}_\infty)^2) \otimes_\cO \cO / (\varpi^{40 C(\rho)+1}). \]
    We can now apply Lemma \ref{lem_existence_of_homomorphism} to conclude the existence of a homomorphism $\cO^q \to (\mathfrak{p}'_\infty / (\mathfrak{p}'_\infty)^2)$ with cokernel annihilated by $\varpi^{40  C(\rho)}$, as required. 
\end{proof}
\begin{proposition}
    The ideals $\mathfrak{p}'_\infty$, $\mathfrak{p}_\infty$ lie in the closed subset $\Spec \mathbb{T}_\infty \subset \Spec R_\infty$.
\end{proposition}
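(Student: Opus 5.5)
The plan is to prove the statement first for $\mathfrak{p}'_\infty$, using the modularity of $\rho'$. We then transfer it to $\mathfrak{p}_\infty$ using Theorem \ref{thm_application_of_Hensel} together with the preceding lemma, which supplies the maps $\cO^q \to \mathfrak{p}'_\infty/(\mathfrak{p}'_\infty)^2$ and $\cO^q \to \mathfrak{p}_\infty/(\mathfrak{p}_\infty)^2$ with cokernels killed by $\varpi^{40C(\rho)}$.

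\emph{Step 1: $\mathfrak{p}'_\infty$ lies in $\Spec \mathbb{T}_\infty$.} By hypothesis (1) of Theorem \ref{thm_modularity_by_close_approximation}, $\rho'$ comes from $\pi$, ordinary of weight $0$ with trivial central character.
\begin{itemize}
\item By hypothesis (3), $\pi_v$ is an unramified twist of Steinberg exactly for $v \in \Sigma$ and is unramified elsewhere. So $\pi$ transfers by Jacquet--Langlands to $D^\times$ and contributes an eigenvector in $H(U_0)$.
\item By hypothesis (4), this eigenvector lies in the ordinary part localized at $\mathfrak{m}$, since $\overline{\rho'} \cong \overline{\rho}$.
\item By Proposition \ref{prop_properties_of_spaces_of_modular_forms}(3) (with $Q = \emptyset$), the resulting homomorphism $R_\emptyset \to \cO$ classifying $\det(X - \rho')$ factors through $\mathbb{T}_\emptyset$.
\end{itemize}
Thus $\mathfrak{p}'_\emptyset \in \Spec \mathbb{T}_\emptyset$. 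The surjections $R_\infty \to R_\emptyset \to \mathbb{T}_\emptyset$ and $R_\infty \to \mathbb{T}_\infty \to \mathbb{T}_\emptyset$ are compatible (Lemma \ref{lem_finiteness_of_patched_Hecke_algebra}), so the pullback $\mathfrak{p}'_\infty$ contains $\ker(R_\infty \to \mathbb{T}_\infty)$.

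\emph{Step 2: a dimension bound.} We need $\dim (R_\infty)_{\mathfrak{p}'_\infty} \geq q$, which is hypothesis (1) of Theorem \ref{thm_application_of_Hensel} with $\psi_1$ the point $\mathfrak{p}'_\infty$. I expect this to be the main obstacle, since it is the only step requiring a genuine patching input beyond what has been set up.
\begin{itemize}
\item Patch the modules $H(U_1(Q_N;N))^{ord}_{\mathfrak{m}}$ with the same ultrafilter to get $M_\infty$. By Proposition \ref{prop_properties_of_spaces_of_modular_forms}(2), $M_\infty$ is finite free over $S_\infty = \cO\llbracket \Delta_\infty \rrbracket \cong \cO\llbracket y_1,\dots,y_q \rrbracket$, a regular local ring of dimension $q+1$.
\item By Proposition \ref{prop_properties_of_spaces_of_modular_forms}(3), the action of $S_\infty$ on $M_\infty$ factors through $\mathbb{T}_\infty$, and $\mathbb{T}_\infty$ acts faithfully on $M_\infty$.
\item Since $M_\infty$ is maximal Cohen--Macaulay over $S_\infty$ and finite over it, every associated prime of $M_\infty$ has dimension $q+1$. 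Hence every minimal prime of $\mathbb{T}_\infty$ has coheight $q+1$.
\item Localizing at the height-one-from-the-top point $\mathfrak{p}'_\infty$ (an $\cO$-point, so $\dim R_\infty/\mathfrak{p}'_\infty = 1$) gives $\dim (\mathbb{T}_\infty)_{\mathfrak{p}'_\infty} \geq q$. Therefore $\dim (R_\infty)_{\mathfrak{p}'_\infty} \geq q$.
\end{itemize}

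\emph{Step 3: transfer to $\mathfrak{p}_\infty$.} Apply Theorem \ref{thm_application_of_Hensel} with $R = R_\infty$, $\psi_1$ the point $\mathfrak{p}'_\infty$, $\psi_2$ the point $\mathfrak{p}_\infty$, and $C = 40C(\rho)$. Hypothesis (2) there requires $\psi_1 \equiv \psi_2 \bmod \varpi^{80C(\rho)+1}$. This holds because $(\mathfrak{p}_\emptyset,\varpi^{80C(\rho)+1}) = (\mathfrak{p}'_\emptyset,\varpi^{80C(\rho)+1})$ and the ideals pull back to $R_\infty$.

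The theorem gives three conclusions: $(R_\infty)_{\mathfrak{p}'_\infty}$ is regular (of dimension exactly $q$), there is a unique minimal prime $Q \subseteq \mathfrak{p}'_\infty$, and $Q \subseteq \mathfrak{p}_\infty$. From Step 2, some minimal prime $\mathfrak{q}$ of $\mathbb{T}_\infty$ under $\mathfrak{p}'_\infty$ has coheight $q+1$. Its pullback $\tilde{\mathfrak{q}}$ is a prime of $R_\infty$ contained in $\mathfrak{p}'_\infty$ with $\dim R_\infty/\tilde{\mathfrak{q}} = q+1$. But $R_\infty/Q$ localized at $\mathfrak{p}'_\infty$ is regular of dimension $q$, so $\dim R_\infty/Q = q+1$. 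Since $Q \subseteq \tilde{\mathfrak{q}}$ and both quotients are domains of the same dimension, $\tilde{\mathfrak{q}} = Q$. Hence
\[ \ker(R_\infty \to \mathbb{T}_\infty) \subseteq Q \subseteq \mathfrak{p}_\infty, \]
so $\mathfrak{p}_\infty \in \Spec \mathbb{T}_\infty$.
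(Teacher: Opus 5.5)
\paragraph{Verdict.} Steps 1 and 3 are sound. The dimension comparison in Step 3 is a legitimate substitute for the paper's Auslander--Buchsbaum argument. Step 2, however, has a genuine gap.

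\paragraph{The gap in Step 2.} Proposition \ref{prop_properties_of_spaces_of_modular_forms}(2) makes $H(U_1(Q_N;N))^{ord}_{\mathfrak m}$ free over $\cO[\Delta_{Q_N}/(p^N)]$ of rank $\operatorname{rank}_\cO H(U_0(Q_N))^{ord}_{\mathfrak m}$. Ultraproduct patching needs this rank bounded independently of $N$, and nothing here bounds it.
\begin{enumerate}
\item The ideal $\mathfrak m$ sees the Hecke operators at $v \in Q_N$ only through $A_{Q_N}^{W_{Q_N}}$; no $U_v$-eigenvalue is chosen.
\item The Taylor--Wiles places only satisfy $\ord_\varpi(\alpha_v-\beta_v)^2 = C_4$, and $C_4$ may be positive. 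It is forced to be positive, e.g., when $\overline\rho^{ss} \cong 1 \oplus \overline\epsilon^{-1}$, which is exactly the case the paper is after.
\item In that case $\overline\rho(\Frob_v)$ has a repeated eigenvalue and $q_v \equiv 1$. So the level-raising condition holds at every $v \in Q_N$, and forms that are Steinberg at $v$ contribute to $H(U_0(Q_N))^{ord}_{\mathfrak m}$ in numbers that need not stay bounded as $q_v \to \infty$.
\end{enumerate}
Consequently the pieces $H(U_1(Q_N;N))^{ord}_{\mathfrak m}/(\varpi^m,\mathfrak a^r)$ have no uniform bound on their number of generators. The Hecke action on them does not factor through a fixed $\mathbb T_{Q_N}/\mathfrak m^s$, so the ultraproduct $M_\infty$ is neither finite over $S_\infty$ nor a $\mathbb T_\infty$-module. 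Your global claim that every minimal prime of $\mathbb T_\infty$ has coheight $q+1$ is therefore not available.

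There is also a smaller error. Proposition \ref{prop_properties_of_spaces_of_modular_forms}(3) only says that $\cO[\Delta_Q]^{W_Q}$ acts through $R_Q$, so only $S_\infty^W$ acts through $\mathbb T_\infty$, not $S_\infty$. This is harmless, since $S_\infty$ is finite free over the regular ring $S_\infty^W$, but it should be stated correctly.

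\paragraph{How the paper gets around this.} The paper uses the Newton--Thorne device and patches $\delta_{Q_N}^{2r}\,H(U_1(Q_N;N))^{ord}_{\mathfrak m}/(\varpi^m,\mathfrak a^r)$ instead.
\begin{enumerate}
\item These modules are generated by a number of elements depending only on $r$.
\item Via \cite[Proposition 3.1]{New23}, they are compared with $H(U_0)^{ord}_{\mathfrak m}\otimes_{A^W}A$ up to multiplication by $\delta^2$.
\item Each $\delta_i$ acts through an element of $R_\infty$ outside $\mathfrak p'_\infty$, since modulo $\mathfrak p'_\infty$ it is congruent to some $(\alpha_v-\beta_v)^2$, of valuation $C_4$. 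This uses $\rho'\equiv\rho$.
\item After localizing at $\mathfrak p'_\infty$ the $\delta$'s become invertible, and flatness over $S_{\infty,\mathfrak a}/(\mathfrak a^r)$ is recovered.
\item Nonvanishing of the localized module comes from $\pi$ via Jacquet--Langlands.
\end{enumerate}
The output is purely local at $\mathfrak p'_\infty$: a nonzero finite free module $\widehat H_\infty$ over the completion $\widehat S_\infty$ of $S_{\infty,\mathfrak a}$, which has dimension $q$, not $q+1$. On it, $\widehat S_\infty^W$ acts through $\widehat R_\infty$. This gives $\dim R_{\infty,\mathfrak p'_\infty}\ge q$. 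Once $\widehat R_\infty$ is known to be regular, Auslander--Buchsbaum then gives $\widehat R_\infty\cong\widehat{\mathbb T}_\infty$. That isomorphism places $\ker(R_\infty\to\mathbb T_\infty)$ inside the unique minimal prime through $\mathfrak p'_\infty$. Your Step 3 argument could be run in this local form, but its input must come from the localized patching, not from naive patching of $H(U_1(Q_N;N))^{ord}_{\mathfrak m}$.
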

\begin{proof}
   We use a patching argument. Let $A = \cO[ \{ x_{i, j}^{\pm 1} \}_{i = 1, \dots, q}, {j = 1, 2}]$, $W = S_2^q$, $\delta_i = (x_{i, 1} - x_{i, 2})^2$ ($i = 1, \dots, q$), $\delta = \prod_{i=1}^q \delta_i$. Fixing for each $N \geq 1$ an ordering $\{ v_{N, 1}, \dots, v_{N, q} \}$ of $Q_N$, we can identify $A_{Q_N} = A$ and $W_{Q_N} = W$. Let $S_\infty = \cO \llbracket \mathbb{Z}_p^q \rrbracket$ and let $\mathfrak{a} \leq S_\infty$ denote the augmentation ideal. We let $W$ act on $\mathbb{Z}_p^q$ by inversion in each factor. Fixing for each $v \in Q_N$ a surjection $\mathbb{Z}_p \to k(v)^\times(p)$, we get a surjection $\mathbb{Z}_p^q \to \Delta_{Q_N}$, equivariant for the action of $W$.
   
   If $r \geq 1$, we define 
\[ H_r = \varprojlim_{m} H_r(m), \]
where
\[ H_r(m) = \mathbf{R}_\mathcal{F} \otimes_\mathbf{R} \prod_{N \geq m+r} \delta_{Q_N}^{2r} (H(U_1(Q_N; N))^{ord}_{\mathfrak{m}} / (\varpi^m, \mathfrak{a}^r)). \]
Then $H_r(m)$, and hence $H_r$, has a natural structure of $\mathbb{T}^{univ, S} \otimes_{\cO} A$-module, where $A$ acts on $H_r(m)$ via the diagonal map $A \to \prod_{N \geq 1} A_{Q_N}$. 
We observe that $H_r(m)$, and hence $H_r$, has a natural structure of $\mathbb{T}_\infty$-module. Indeed, it suffices to show that for fixed $m, r \geq 1$, and for any $N \geq m+r$, there is an integer $s(m, r)$ such that the action of $\mathbb{T}_{Q_N}$ on $\delta_{Q_N}^{2r} (H(U_1(Q_N; N))^{ord}_{\mathfrak{m}} / (\varpi^m, \mathfrak{a}^r))$ factors through $\mathbb{T}_{Q_N} / (\mathfrak{m}^{s(m,r)})$. This is true because these $\cO$-modules may be generated by a number of elements depending only on $r$ (cf. \cite[Lemma 4.11]{Aca26}). 

We claim that the localisation $H_{r, \mathfrak{p}'_\infty}$ is a non-zero finite free $S_{\infty, \mathfrak{a}} / (\mathfrak{a}^r)$-module. Indeed, let
\[ H'_r = \varprojlim_{m} \left( \mathbf{R}_\mathcal{F} \otimes_\mathbf{R} \prod_{N \geq m+r}  H(U_1(Q_N; N))^{ord}_{\mathfrak{m}} / (\varpi^m, \mathfrak{a}^r) \right). \]
If $N \geq m+r$ then $S_\infty / (\varpi^m, \mathfrak{a}^r)$ is a quotient of $\cO[ \Delta_{Q_N} ]$, so  
\[ H(U_1(Q_N; N))^{ord}_{\mathfrak{m}} / (\varpi^m, \mathfrak{a}^r) \]
is a flat module over $S_\infty / (\varpi^m, \mathfrak{a}^r)$. The proof of \cite[Lemma 4.9]{New23} applies without change to show that $H'_r$ is a flat $S_{\infty} / (\mathfrak{a}^r)$-module. There are maps $H_r \to H'_r$, $H'_r \to H_r$ of $\mathbb{T}^{ univ, S} \otimes_\cO S_\infty$-modules whose composite in each direction is equal to multiplication by $\delta^{2r}$. Therefore $H_r[\delta^{-1}]$ is a flat $S_\infty / (\mathfrak{a}^r)$-module. 

On the other hand, the action of each $\delta_i$ on $H_r$ agrees with the action of an element of $R_\infty$ which is not contained in $\mathfrak{p}'_\infty$: more precisely, it is the element of $R_\infty$ whose image in $R_\infty(m)$ is 
\[ \left( \operatorname{disc} \det( X - \Frob_{v_{N, i}} ) \right)_{N \geq 1}, \]
and whose image in $R_\infty(m) / (\mathfrak{p}'_\infty) \cong \cO / (\varpi^m)$ is congruent mod $(\varpi^m, \varpi^{80 C(\rho)+1})$ to $(\alpha_{v_{N, i}} - \beta_{v_{N, i}})^2$ (for some $N$ depending on $m$); in particular, non-zero as soon as $m > C_4$. It follows that $\delta = \delta_1 \dots \delta_q$ agrees with the action of an element of $R_\infty$ not contained in the prime ideal $\mathfrak{p}'_\infty$, and therefore that $H_{r, \mathfrak{p}'_\infty}$ is a localisation of $H_r[\delta^{-1}]$, and so is flat over $S_{\infty, \mathfrak{a}} / (\mathfrak{a}^r)$. 

Then, using \cite[Proposition 3.1]{New23}, we see that there are maps 
\[ \alpha_r : H(U_0)^{ord}_{\mathfrak{m}} \otimes_{A^W} A \to H'_r / (\mathfrak{a}), \]
\[ \beta_r : H'_r / (\mathfrak{a}) \to H(U_0)^{ord}_{\mathfrak{m}} \otimes_{A^W} A, \]
of $\mathbb{T}^{univ, S}$-modules, whose composite in each direction is given now by multiplication by $\delta^2$. This shows that $H_{r, \mathfrak{p}'_\infty}$ is non-zero and finitely generated as $S_{\infty, \mathfrak{a}}$-module, since $\beta_r$ induces  an isomorphism
\[ H_{r, \mathfrak{p}'_\infty} / (\mathfrak{a}) \cong \left( H(U_0)^{ord}_{\mathfrak{m}} \otimes_{A^W} A \right)_{\mathfrak{p}'_\infty} = \left( H(U_0)^{ord}_{\mathfrak{m}} \otimes_{A^W} A \right)_{\mathfrak{p}'_\emptyset}, \]
and the right-hand module is non-zero by the existence of $\pi$ and the Jacquet--Langlands correspondence. 

We now define
\[ \widehat{H}_\infty = \varprojlim_r H_{r, \mathfrak{p}'_\infty}. \]
This is a finite free module over the completion $\widehat{S}_\infty$ of $S_{\infty, \mathfrak{a}}$, that moreover receives an action of the completion $\widehat{R}_\infty$ of $R_{\infty, \mathfrak{p}'_\infty}$ via the completion $\widehat{\mathbb{T}}_\infty$ of $\mathbb{T}_{\infty, \mathfrak{p}'_\infty}$. By construction, there is an isomorphism $\widehat{H}_\infty / (\mathfrak{a}) \cong (H(U_0)^{ord}_{\mathfrak{m}} \otimes_{A^W} A)_{\mathfrak{p}'_\emptyset}$. 

The action of $\widehat{S}_\infty^W$ on $\widehat{H}_\infty$ factors through the action of $\widehat{R}_\infty$. Since $\widehat{S}_\infty^W$ is a regular local ring of dimension $q$, it follows that $\widehat{H}_\infty$ is a non-zero finite free $\widehat{S}_\infty^W$-module, hence a Cohen--Macaulay $\widehat{R}_\infty$-module, and therefore that each irreducible component of the support of $\widehat{H}_\infty$ in $\Spec \widehat{R}_\infty$ has dimension $q$. In particular, $\dim \widehat{R}_{\infty, \mathfrak{p}'_\infty} \geq q$, hence $\dim R_{\infty, \mathfrak{p}'_\infty} \geq q$. 

We may now apply Theorem \ref{thm_application_of_Hensel}, with $R = R_\infty$, $P_1 = \mathfrak{p}'_\infty$, and $P_2 = \mathfrak{p}_\infty$, to conclude that $\Spec R_\infty$ has a unique irreducible component containing $\mathfrak{p}'_\infty$, and that this irreducible component contains $\mathfrak{p}_\infty$. To prove the proposition, it is therefore enough to show that the map $R_\infty \to \mathbb{T}_\infty$ induces an isomorphism after passage to complete local rings at $\mathfrak{p}'_\infty$: or in other words, that the surjective homomorphism $\widehat{R}_\infty \to \widehat{\mathbb{T}}_\infty$ is an isomorphism. This is true: $\widehat{R}_\infty$ is regular, and the Auslander--Buchsbaum formula implies that $\widehat{H}_\infty$ is a non-zero free $\widehat{R}_\infty$-module. Since the action of $\widehat{R}_\infty$ on $\widehat{H}_\infty$ factors through the map to $\widehat{\mathbb{T}}_\infty$, this map must be an isomorphism. 
\end{proof}
To finish the proof, we need to show that $\mathfrak{p}_\infty \leq \mathbb{T}_\infty$ is pulled back from $\mathbb{T}_\emptyset$. However, it seems difficult to show this directly without a better understanding of the space of modular forms as a module for the Hecke algebra. We therefore now introduce spaces of Hilbert modular forms, for which multiplicity 1 can be established using the $q$-expansion principle, in order to conclude the proof. 

Let $\mathfrak{n} \leq \cO_F$ be a non-zero ideal satisfying the following conditions:
\begin{itemize}
    \item $\mathfrak{n}$ is prime to $S$.
    \item For any quadratic CM extension $K / F$ such that either $K = F(\zeta)$ for some root of unity $\zeta$ or $K = F(\sqrt{\beta})$ for some $\beta \in \cO_F^\times$, and for any prime $r$ such that $\zeta_r \in K$, there exists a non-zero prime ideal $\mathfrak{q}$ of $\cO_F$ dividing $\mathfrak{n}$ which is prime to $r$ and inert in $K$.
\end{itemize}
(There are infinitely many such ideals, and the choice is unimportant: $\mathfrak{n}$ is used only to rigidify the moduli problem underlying the definition of the spaces of Hilbert modular forms.) 
Let $V_0 = \prod_v V_{0, v} \leq \prod_v \GL_2(\cO_{F_v}) \leq \GL_2(\mathbb{A}_F^\infty)$ be the open compact subgroup defined as follows:
\begin{itemize}
    \item If $v \nmid \mathfrak{n}$ and $v \not\in \Sigma$, then $V_{0, v}= \GL_2(\cO_{F_v})$.
    \item If $v | \mathfrak{n}$, then
    \[ V_{0, v} = \left\{ \left( \begin{array}{cc} a & b \\ c & d \end{array} \right) \in \Iw_v \mid d \equiv 1 \text{ mod } \mathfrak{n} \cO_{F_v} \right \}. \]
    \item If $v \in \Sigma$, then $V_{0, v} = \Iw_v$.
\end{itemize}
Then $V_0$ is sufficiently small, in the sense of \cite{Dia24}.

If $Q$ is a Taylor--Wiles set, then we define further open compact subgroups $V_1(Q) \leq V_0(Q) \leq V_0$ as follows:
\begin{itemize}
    \item If $v \not\in Q$, then $V_1(Q)_v = V_0(Q)_v = V_{0, v}$.
    \item If $v \in Q$, then $V_1(Q)_v = U_1(Q)_v$ and $V_0(Q)_v = U_0(Q)_v$.
\end{itemize}
We define spaces of Hilbert modular forms following \cite{Dia24}. For an open compact subgroup $V = \prod_v V_v \leq V_0$ with $V_p = V_{0, p}$, and $R = \cO$ or $\cO / (\varpi^M)$ for some $M \geq 1$, let $S(V, R)$ denote the space defined as $S_{\overset{\rightarrow}{k}, \overset{\rightarrow}{m}}(V, R)$ on \cite[p. 285]{Dia24}, with 
\[ \overset{\rightarrow}{k} = (2, 2, \dots, 2),\,\, \overset{\rightarrow}{m} = 0. \]
By definition, it is a subspace of the space of sections of a certain coherent sheaf on the base change to $R$ of the minimal compactification of the Pappas--Rapoport model of the Hilbert modular variety. This model depends on an auxiliary choice (of ordering of embeddings of $F_v$ in $E$, for each $v | p$) but this choice is immaterial for the arguments given here. 
\begin{proposition}\label{prop_basic_properties_of_HMF}
    \begin{enumerate}
        \item For any $V$ as above, $S(V, \cO)$ is a finite free $\cO$-module, and there is a canonical isomorphism of $S(V, \cO) \otimes_{\cO, \iota} \mathbb{C}$ with the space of cuspidal automorphic forms $f : \GL_2(\mathbb{A}_F) \to \mathbb{C}$ satisfying the following conditions:
        \begin{enumerate}
            \item For all $u \in U$, $u_\infty \in \mathrm{SO}_2(F \otimes_{\mathbb{Q}} \mathbb{R})$, $g \in \GL_2(\mathbb{A}_F)$, $f(g u u_\infty) = \det(u_\infty) j(u_\infty, i)^{-2} f(g)$.
            \item For all $g^\infty \in \GL_2(\mathbb{A}_F^\infty)$, the function $f_{g^\infty} : \mathfrak{h}^{\Hom(F, \mathbb{C})} \to \mathbb{C}$ defined by $f_{g^\infty}(g_\infty i) = \det(g_\infty)^{-1} j(g_\infty, i)^2 f(g^\infty g_\infty)$ ($g_\infty \in \GL_2(F_\infty)^\circ$) is holomorphic. 
        \end{enumerate}
        \item The above isomorphism is compatible with the action of Hecke operators, in the following sense: for every finite place $v$ of $F$, the action of the Hecke algebra $\mathcal{H}(\GL_2(F_v), V_v)$ on the right-hand side preserves $S(V, \cO)$. 
        \item If $V' = \prod_v V'_v$ is a normal, open compact subgroup of $V$ such that $V'_p = V_p = V_{0, p}$, then $S(V, R) = S(V', R)^{V / V'}$.
    \end{enumerate}
\end{proposition}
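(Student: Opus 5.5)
This proposition is a collection of standard facts about the integral theory of Hilbert modular forms in \cite{Dia24}. I would deduce all three parts from properties of the Pappas--Rapoport model and its minimal compactification, rather than proving anything from scratch. The level $V \leq V_0$ is sufficiently small, so the moduli problem is representable by a scheme $Y_V$ over $\cO$ (we are working over $\cO$, with $p$-adic level hyperspherical at $p$). $Y_V$ is flat, with a projective minimal compactification $Y_V^{\min}$ and the automorphic line bundle $\omega^{\vec k}\otimes\delta^{\vec m}$. $S(V,R)$ is the $R$-module of sections on the toroidal compactification (equivalently on $Y^{\min}_V$, by Koecher's principle, which holds since $F \neq \mathbb{Q}$ or by the cusp condition) vanishing along the boundary.

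For (1), I would proceed in two steps.

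\textbf{Finite freeness.} This is properness of $Y_V^{\min}$, which makes $S(V,\cO)$ finitely generated, together with $\cO$-torsion-freeness, which holds because $Y_V$ is flat over $\cO$ and the sheaf is locally free.

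\textbf{Comparison over $\mathbb{C}$.} I would invoke flat base change $S(V,\cO)\otimes_\cO E \cong S(V,E)$, coherent cohomology commuting with flat extension of scalars. Then I would use GAGA on the complex fibre via $\iota$, and the classical dictionary between sections of $\omega^{(2,\dots,2)}$ vanishing at cusps and holomorphic cuspidal Hilbert modular forms of parallel weight $2$. The translation into functions on $\GL_2(\mathbb{A}_F)$ with the stated $\mathrm{SO}_2$-equivariance (a) and holomorphy (b) is the usual adelization, with $\vec m=0$ fixing the normalization of the central twist $\det(u_\infty)$.

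For (2), I would use the integral Hecke action constructed in \cite{Dia24}. Away from $p$, double cosets $V_v g V_v$ act through prime-to-$p$ isogeny correspondences on $Y_V$, which extend to the integral models and compactifications and therefore preserve $S(V,\cO)$. At $v\mid p$, $V_v=\GL_2(\cO_{F_v})$, so the operators $T_v,S_v$ must be defined via the correspondence of $p$-isogenies of the Pappas--Rapoport model. The integrality of $T_v$ in parallel weight $2$ with $\vec m=0$ is exactly the normalization chosen in \cite{Dia24}; I would cite that rather than recompute it. Compatibility with the complex action follows because both are defined by the same correspondence on the generic fibre.

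For (3), $V'\trianglelefteq V$ with the same $p$-component gives a finite étale Galois cover $Y_{V'}\to Y_V$ with group $V/V'$, using sufficient smallness to ensure freeness of the action. Sections on $Y_V$ are then the $V/V'$-invariant sections on $Y_{V'}$, by étale descent for the pulled-back line bundle. I would check the cusp condition and the extension to compactifications by $q$-expansions at the cusps. Since descent works over any base $R$, including $\cO/(\varpi^M)$, this gives the equality for both choices of $R$.

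The main obstacle is the integrality of the Hecke operators at $p$ in part (2), because the Pappas--Rapoport model is only needed when $p$ ramifies in $F$. That is where I would defer entirely to \cite{Dia24}.
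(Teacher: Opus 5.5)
Your proposal is correct and follows essentially the same route as the paper. The paper likewise defers (1) to the minimal-compactification theory of \cite{Dia23}, takes the integral Hecke operators in (2) from \cite{Dia24}, and proves (3) by descent along the free action of $V/V'$ on the open Pappas--Rapoport model, using the Koecher principle where you use $q$-expansions at the cusps. The one point where the paper is more careful is the complex comparison in (2): instead of saying that both actions come from \emph{the same correspondence on the generic fibre}, it computes the action on $q$-expansions to check that the operators of \cite{Dia24} agree exactly with the classical ones after extending scalars to $\mathbb{C}$. That check pins down the normalizing factors at $v \mid p$, which your argument takes on trust from the reference.
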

\begin{proof}
    For the first part, see \cite[\S 3.5]{Dia23}. We omit the (standard) definition of the factor of automorphy $j(g, \tau)$. The second follows from the construction of Hecke operators on integral Hilbert modular forms in e.g.\ \cite{Dia24}, together with the verification (by computation of action on $q$-expansions) that they agree with the classical ones after scalar extension to the complex numbers. The third part follows from the Koecher principle, since $V / V'$ acts freely on the open Pappas--Rapoport model of the Hilbert modular variety. 
\end{proof}
In particular, if $Q$ is a Taylor--Wiles set, then we have $S(V_1(Q), R)^{\Delta_Q} = S(V_0(Q), R)$. 

We now define augmented Hecke algebras. If $Q$ is a Taylor--Wiles set, let $\widetilde{\mathbb{T}}^{univ}_Q$ denote the polynomial algebra over $\cO$ in the variables $T_v, S_v$ ($v \not\in \Sigma \cup Q$ and $v \nmid \mathfrak{n}$) and $U_v, S_v$ ($v \in \Sigma \cup Q$ or $v | \mathfrak{n}$). Then $S(V_0(Q),R)$ and $S(V_1(Q),R)$ are naturally $\widetilde{\mathbb{T}}^{univ}_Q$-modules and the inclusion $S(V_0(Q)) \to S(V_1(Q))$ is compatible with this action. We note that if $v \in Q$ then $U_v, S_v$ generate, with their inverses, the subalgebra $A_v$ of the abstract local Hecke algebra. Accordingly, we can identify $\widetilde{\mathbb{T}}^{univ}_Q$ with a subalgebra of $\widetilde{\mathbb{T}}^{univ}_\emptyset \otimes_{A_Q^{W_Q}} A_Q$.  

We define $\widetilde{\mathfrak{m}} \leq \widetilde{\mathbb{T}}^{univ}_\emptyset$ to be any maximal ideal in the support of $\iota^{-1}(\pi^\infty)^{V_0}$. Then $\widetilde{\mathfrak{m}}$ has residue field $k$ (because of our assumption at the start of this section that $E$ is sufficiently large), and $\mathfrak{m} \leq \widetilde{\mathfrak{m}}$. (The only ambiguity in choosing $\widetilde{\mathfrak{m}}$ is which $U_v$-eigenvalue to take at each place $v | \mathfrak{n}$.) We define $\widetilde{\mathfrak{m}}_Q = (\widetilde{\mathfrak{m}}, \mathfrak{a}, \{ t_{v, 1}(\varpi_v) - \alpha_v, t_{v, 2}(\varpi_v) - \beta_v \}_{v \in Q}) \leq \widetilde{\mathbb{T}}^{univ}_Q[\Delta_Q]$; then $\widetilde{\mathfrak{m}}_Q$ is another maximal ideal with residue field $k$, which now occurs in the support of $(\iota^{-1} \pi^\infty)^{V_0(Q)}$.  We write $\widetilde{\mathbb{T}}_Q$ for the quotient of $\widetilde{\mathbb{T}}^{univ}_Q[\Delta_Q]$ that acts faithfully on $S(V_1(Q), \cO)_{\widetilde{\mathfrak{m}}_Q}$.
\begin{proposition}\label{prop_multiplicity_1}
    If $Q$ is a Taylor--Wiles set, then $\Hom_\cO(S(V_1(Q), \cO)_{\widetilde{\mathfrak{m}}_Q}, \cO)$ is a free $\widetilde{\mathbb{T}}_Q$-module of rank $1$. 
\end{proposition}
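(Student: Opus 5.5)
The approach is the standard $q$-expansion argument for multiplicity one. Set $\mathbb{T} = \widetilde{\mathbb{T}}_Q$ and $M = S(V_1(Q), \cO)_{\widetilde{\mathfrak{m}}_Q}$, and write $M^\ast = \Hom_\cO(M, \cO)$. The plan has three steps.

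\textbf{Step 1: a $\mathbb{T}$-equivariant pairing.} Fix a cusp, i.e.\ a connected component of the Hilbert modular variety of level $V_1(Q)$, with its $q$-expansion. For $f \in S(V_1(Q), \cO)$ let $a(f) \in \cO$ be the first Fourier coefficient at the unit ideal, after twisting by the ideles needed to reach every component (equivalently, the coefficient $c(\cO_F, f)$ in the adelic normalisation of \cite{Dia24}). Define
\[ \langle t, f \rangle = a(tf), \qquad t \in \mathbb{T},\ f \in M. \]
Here $tf$ is computed inside $S(V_1(Q), \cO)$, which makes sense because localisation at $\widetilde{\mathfrak{m}}_Q$ is a direct summand. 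This gives a $\mathbb{T}$-linear map $\mathbb{T} \to M^\ast$.

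\textbf{Step 2: a formula for all Fourier coefficients.} The key input is that every Fourier coefficient of $f$ is $a(T f)$ for a suitable Hecke operator $T$ in our algebra. For ideals prime to the level, $T$ is $T_\mathfrak{a}$. At $v \in \Sigma \cup Q$ and at $v \mid \mathfrak{n}$, the operators $U_v$ are available, which is exactly why $U_v$ was included at those places. For the action of $\Delta_Q$ and of the components, $S_v$ and the diamond operators are available. The character of $\Delta_Q$ on each component is recorded by the group-ring variables, so $\mathbb{T}$ already contains these operators. One point needs care: at $v \in Q$ with level $U_1(Q)_v$, one must check that $U_v = t_{v,1}(\varpi_v)$, acting on the $\widetilde{\mathfrak{m}}_Q$-part, gives the standard recursion $a_{\mathfrak{a}\mathfrak{q}}(f) = a_{\mathfrak{a}}(U_v f)$. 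This should follow from the definition of $t_{v,i}$ in \cite{All23a}, as in the Iwahori case.

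\textbf{Step 3: perfectness, and where the difficulty lies.} We show that $\mathbb{T}/\varpi \times M/\varpi \to k$ is perfect on both sides.
\begin{enumerate}
\item If $\langle t, f \rangle = 0$ for all $t$, then all Fourier coefficients of $f$ vanish. By the $q$-expansion principle, which holds integrally and mod $\varpi^M$ on the Pappas--Rapoport model because $V_0$ is sufficiently small, this forces $f = 0$. The principle must be applied at every component, which is handled by the $S_v$ and diamond operators.
\item If $\langle t, f \rangle = 0$ for all $f$, then $a(tsf) = 0$ for all $s$ and $f$. So every $q$-expansion coefficient of $tf$ vanishes, hence $tf = 0$ and $t = 0$ by faithfulness.
\end{enumerate}
Doing this mod $\varpi^M$ requires $S(V,\cO)/\varpi^M \to S(V, \cO/\varpi^M)$ to be injective with torsion-free cokernel compatibly with the localisation, i.e.\ base change for cusp forms. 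Proposition \ref{prop_basic_properties_of_HMF} together with vanishing of $H^1$ of the cuspidal sheaf, as in \cite{Dia23}, should supply this.

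With perfectness in hand, $\mathbb{T} \to M^\ast$ is injective, since $M$ is $\cO$-free and $\mathbb{T}$ acts faithfully. It is surjective by Nakayama's lemma applied to the perfect pairing mod $\varpi$. Hence $M^\ast \cong \mathbb{T}$ is free of rank $1$.

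The main obstacle is Step 2 together with the integral $q$-expansion principle mod $\varpi^M$ for the Pappas--Rapoport compactification at $p$. That is where the choice of $\mathfrak{n}$ (for rigidity) and of weight $(2,\ldots,2)$ are used.
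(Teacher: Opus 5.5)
Your route is sound in outline, but it differs from the paper's and is heavier exactly where you locate the difficulty.

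\textbf{The paper's route.} The paper never builds a pairing. Since $\Hom_\cO(S(V_1(Q),\cO)_{\widetilde{\mathfrak{m}}_Q},\cO)$ is faithful over $\widetilde{\mathbb{T}}_Q$, Nakayama reduces freeness of rank one to cyclicity, i.e.\ to $\dim_k (S(V_1(Q),\cO)\otimes_\cO k)[\widetilde{\mathfrak{m}}_Q] \le 1$. This space sits inside $S(V_1(Q),k)[\widetilde{\mathfrak{m}}_Q]$. Because $\widetilde{\mathfrak{m}}_Q$ contains the augmentation ideal of $\cO[\Delta_Q]$, every vector it kills is $\Delta_Q$-invariant. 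Such a vector therefore lies in $S(V_1(Q),k)^{\Delta_Q} = S(V_0(Q),k)$ by Proposition \ref{prop_basic_properties_of_HMF}(3). At level $V_0(Q)$ (Iwahori at $Q$), the recursions $c_{\mathfrak{b}}(T_v f) = c_{v\mathfrak{b}}(f) + q_v c_{v^{-1}\mathfrak{b}}(S_v f)$ and $c_{\mathfrak{b}}(U_v f) = c_{v\mathfrak{b}}(f)$ show that an $\widetilde{\mathfrak{m}}_Q$-eigenform is determined by $c_{\mathfrak{d}^{-1}}(f)$.

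\textbf{What your route needs instead.} Your duality argument requires the full $q$-expansion principle, and the claim that every coefficient is $a(Tf)$, at level $V_1(Q)$ itself. That means checking the $U_v$-recursion at the non-Iwahori level $U_1(Q)_v$. It also means bookkeeping across components: for $p=2$, $\det U_1(Q)_v$ has index $2$ in $\cO_{F_v}^\times$, so $V_1(Q)$ can have more connected components than $V_0(Q)$, linked only by diamond operators. These are precisely the points you leave as ``should follow''.

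\textbf{How to close it.} You can import the paper's trick directly. The left kernel of your pairing mod $\varpi$ is a $\widetilde{\mathbb{T}}_Q$-submodule of your $M/\varpi$. If it is nonzero, it contains a nonzero $\widetilde{\mathfrak{m}}_Q$-torsion vector, which then has level $V_0(Q)$. Only the Iwahori-level recursion is then needed. What your approach buys in exchange is an explicit perfect pairing realising $\Hom_\cO(M,\cO)\cong\widetilde{\mathbb{T}}_Q$.

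\textbf{Two side remarks.}
\begin{enumerate}
\item Your right-nondegeneracy mod $\varpi$ ``by faithfulness'' is not valid as argued. Faithfulness of $\mathbb{T}$ on $M$ does not imply that $\mathbb{T}/\varpi$ acts faithfully on $M/\varpi$. However, you do not need it: injectivity of $\mathbb{T}\to M^\ast$ over $\cO$, together with left-nondegeneracy mod $\varpi$, already gives the isomorphism via Nakayama.
\item No base change or $H^1$-vanishing for the cuspidal sheaf is required. You only use the injection $S(V,\cO)\otimes_\cO k \hookrightarrow S(V,k)$, which is left exactness of global sections.
\end{enumerate}
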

\begin{proof}
    It's a faithful module, so we just need to show that it's cyclic, or equivalently that $\dim_k (S(V_1(Q), \cO) \otimes_\cO k)[\widetilde{\mathfrak{m}}_Q] = 1$. Since $S(V_1(Q), \cO) \otimes_\cO k \leq S(V_1(Q), k)$, it suffices to show the stronger statement that $\dim_k S(V_1(Q), k)[\widetilde{\mathfrak{m}}_Q] = 1$. By Proposition \ref{prop_basic_properties_of_HMF}(3), it even suffices to show that $\dim_k S(V_0(Q), k)[\widetilde{\mathfrak{m}}_Q] = 1$. This appears to be a standard fact, as in the  classical case $F = \mathbb{Q}$, but for want of a suitable reference we give a proof using the formulae for the action of Hecke operators given on $q$-expansions given in \cite{Dia24}.

    Let $\mathfrak{d} = \mathfrak{d}_{F / \mathbb{Q}}$ be the different. If $t \in (\mathbb{A}_F^{p, \infty})^\times$, then we define $J_t = F \cap t^{-1} \widehat{\cO}_F$. According to \cite[\S 6.4]{Dia24}, we can define, for any $t \in (\mathbb{A}_F^{p, \infty})^\times$, $m \in (\mathfrak{d}^{-1} J_t)_+$, and $f \in S(V_0(Q), k)$, a $q$-expansion coefficient $r^t_m(f)$. These coefficients have the property that if $t_1, \dots, t_h$ is a set of representatives for $\cO_{F, (p), +} \backslash (\mathbb{A}_F^{p, \infty})^\times / \widehat{\cO}_F^{p, \times}$, then the map
    \[ S(V_0(Q), k) \to \oplus_{i=1}^{t_i} \left\{ \sum_{m \in (\mathfrak{d}^{-1} J_{t_i})_+} r_m^{t_i} q^m \mid r_m^{t_i} \in k \right\}, \]
    \[ f \mapsto \left( \sum r_m^{t_i}(f) q^m \right)_{i=1}^h, \]
    is injective. If $t' = \alpha t u$ represent the same coset (equivalently, $J_t, J_{t'}$ define the same narrow ideal class), then $J_{t'} = \alpha^{-1} J_t$ and we have the formula 
    \begin{equation}\label{eqn_q_exp_formula}
    r_m^{t'}(f) = r_{\alpha m}^t(f)
    \end{equation} for all $m \in (\mathfrak{d}^{-1} J_{t'})_+$. We can phrase this slightly differently as follows: let $\mathfrak{b} \leq \mathfrak{d}^{-1}$ be a fractional ideal, and suppose that $\mathfrak{b} = m J_t^{-1}$ for $m$, $t$, as above. Write $c_{\mathfrak{b}}(f) = r^t_m(f)$. Then $c_{\mathfrak{b}}(f)$ is well-defined, and $f$ is determined by the coefficients $c_{\mathfrak{b}}$ as $\mathfrak{b}$ ranges over the set of all fractional ideals contained in $\mathfrak{d}^{-1}$.

    The effect of Hecke operators on $q$-expansions is given as follows: if $v \not\in \Sigma \cup Q$ and $v \nmid \mathfrak{n}$, then
    \[ c_{\mathfrak{b}}(T_v f) = c_{v \mathfrak{b}}(f) + q_v c_{v^{-1} \mathfrak{b}}(S_v f), \]
    where by convention $c_{v^{-1} \mathfrak{b}} = 0$ if $v^{-1} \mathfrak{b} \not\subset \mathfrak{d}^{-1}$. If $v \in \Sigma \cup Q$ or $v | \mathfrak{n}$, then
    \[ c_{\mathfrak{b}}(U_v f) = c_{v \mathfrak{b}}(f). 
    \]
    (These formulae are easily deduced from the ones for the coefficients $r_m^t$ proved in \cite[\S 6]{Dia24}.) 

    Suppose then that $f \in S(V_0(Q), k)[\widetilde{\mathfrak{m}}_Q]$. Using the above formulae, one easily checks by induction on the index $[ \mathfrak{d}^{-1} : \mathfrak{b}]$ that $f$ is determined by the coefficient $c_{\mathfrak{d}^{-1}}(f)$. In particular, $S(V_0(Q), k)[\widetilde{\mathfrak{m}}_Q]$ has dimension at most 1.  
\end{proof}
Let $W_0$, $W_0(Q), W_1(Q; N) \leq (D \otimes_F \mathbb{A}_F^\infty)^\times$ denote the respective subgroups of $U_0$, $U_0(Q)$, $U_1(Q; N)$ whose local components at places $v | \mathfrak{n}$ lie in $V_{0, v}$. 

We can now complete the proof of Theorem \ref{thm_modularity_by_close_approximation}.
\begin{proof}[Proof of Theorem \ref{thm_modularity_by_close_approximation}]
Let $\widetilde{\mathbb{T}}_{Q_N}'$ be the quotient of $\widetilde{\mathbb{T}}^{univ}_{Q_N}$ that acts faithfully on $H(W_1(Q_N; N))_{\widetilde{\mathfrak{m}}_{Q_N}}$, and let $\mathbb{T}'_{Q_N}$ be the image of $\mathbb{T}^{univ, S}$ in $\widetilde{\mathbb{T}}_{Q_N}'$.
Define 
\[ \mathbb{T}_\infty' = \varprojlim_m \mathbf{R}_{\mathcal{F}} \otimes_{\mathbf{R}} \prod_{N \geq 1} \mathbb{T}'_{Q_N} / (\mathfrak{m}^m), \]
\[ \widetilde{\mathbb{T}}_\infty' = \varprojlim_m \mathbf{R}_{\mathcal{F}} \otimes_{\mathbf{R}} \prod_{N \geq 1} \widetilde{\mathbb{T}}'_{Q_N} / (\mathfrak{m}^m), \]
and 
\[ \widetilde{\mathbb{T}}_\infty = \varprojlim_m  \mathbf{R}_{\mathcal{F}} \otimes_{\mathbf{R}} \prod_{N \geq 1} \widetilde{\mathbb{T}}_{Q_N} / (\mathfrak{m}^m). \]
Then $\mathbb{T}'_\infty$ is a complete Noetherian local $\cO$-algebra equipped with a surjective map $\mathbb{T}'_\infty \to \mathbb{T}_\infty$ (by the same argument as in the proof of Lemma \ref{lem_finiteness_of_patched_Hecke_algebra}). The rings $\widetilde{\mathbb{T}}'_\infty$ and $\widetilde{\mathbb{T}}_\infty$ are also complete Noetherian local $\cO$-algebras. Indeed, it suffices to show that e.g.\ $\widetilde{\mathbb{T}}'_{Q_N}$ is a quotient of $\cO \llbracket X_1, \dots, X_g \rrbracket$, for some $g$ independent of $N$. This will follow if we can show that $\widetilde{\mathbb{T}}'_{Q_N}$ is a finite $\mathbb{T}'_{Q_N}$-algebra, with number of generators bounded independently of $N$. As an algebra, it is generated by the Hecke operators $U_v$ ($v \in \Sigma \cup Q$ or $v | \mathfrak{n}$) and $T_v$ ($v | p$) and the elements of $\Delta_{Q_N}$. The $U_v$ operators satisfy quadratic polynomials over the Bernstein centre. For each $v | p$, the `unit root' $\alpha_v$ of $X^2 - T_v X + q_v S_v$ satisfies the quadratic polynomial $\det(X - \rho(\Frob_v))$ (where $\Frob_v$ is an arbitrary choice of Frobenius lift), and $T_v = \alpha_v + q_v \alpha_v^{-1}$, so $T_v$ is also integral over the anaemic Hecke algebra. The image of $S_\infty^{W}$ in $\widetilde{\mathbb{T}}'_{Q_N}$ is contained in $\mathbb{T}'_{Q_N}$. The finiteness follows. An identical argument applies to $\widetilde{\mathbb{T}}_{Q_N}$.

Since the homomorphism $\mathbb{T}'_{Q_N} \to \widetilde{\mathbb{T}}'_{Q_N}$ is injective, the Fitting ideal 
\[\operatorname{Fitt}_{\mathbb{T}'_{Q_N} / (\mathfrak{m}^m)} \widetilde{\mathbb{T}}'_{Q_N} / (\mathfrak{m}^m) \]
is zero. By passage to the limit, it follows that $\operatorname{Fitt}_{\mathbb{T}'_\infty} \widetilde{\mathbb{T}}'_\infty = 0$. On the other hand, $\widetilde{\mathbb{T}}'_\infty$ is a finite $\mathbb{T}'_\infty$-algebra, so it follows that we can extend the pullback of $\mathfrak{p}_\infty$ to $\mathbb{T}_\infty'$ to a prime ideal $\widetilde{\mathfrak{p}}_\infty \leq \widetilde{\mathbb{T}}'_\infty$, kernel of a homomorphism $\widetilde{\mathbb{T}}'_\infty \to \cO$ (using here our hypothesis that $\rho'$ is defined over $\cO_1$ and $\cO$ contains the roots of every monic quadratic polynomial in $\cO_1[X]$). 

Fix $M \geq 1$, and consider the homomorphism $\widetilde{\mathbb{T}}'_\infty \to \cO / (\varpi^M)$ obtained by reduction modulo $\varpi^M$. This determines, for some $N \geq M$, a homomorphism $\pi_M : \widetilde{\mathbb{T}}'_{Q_N} \to \cO / (\varpi^M)$. Let $A = H(W_1(Q_N; N))_{\widetilde{\mathfrak{m}}_{Q_N}}$, let $A^{nt} \leq A$ denote the $\cO$-submodule spanned by functions $f : (D \otimes_F \mathbb{A}_F^\infty)^\times \to \cO$ that factor through the reduced norm, and let $A^{cusp} = A / A^{nt}$. Then $A^{nt}$ is a $\widetilde{\mathbb{T}}'_{Q_N}$-submodule; let $\widetilde{\mathbb{T}}''_{Q_N}$ denote the quotient of $\widetilde{\mathbb{T}}'_{Q_N}$ that acts faithfully on $A^{cusp}$, and define
\[ I_N = \ker( \widetilde{\mathbb{T}}'_{Q_N} \to \widetilde{\mathbb{T}}''_{Q_N}). \]
Then $\widetilde{\mathbb{T}}''_{Q_N}$ is naturally a quotient of $\widetilde{\mathbb{T}}_{Q_N}$, by the Jacquet--Langlands correspondence. On the other hand, choosing an unramified place $v \not\in S \cup Q_N$ such that $q_v \equiv 1 \text{ mod }p^M$ and $\ord_\varpi \disc \det(X - \rho(\Frob_v)) = C_4$, we have $(T_v^2 - (q_v+1)^2 S_v) I_N = 0$ (because $(T_v^2 - (q_v+1)^2 S_v)$ annihilates $A^{nt}$) and $(\pi_M(T_v^2 - (q_v+1)^2 S_v)) = (\varpi^{C_4})$, hence $\varpi^{C_4} \pi_M(I_N) = 0$. We deduce that $\pi_M \text{ mod }\varpi^{M - C_4}$ factors through a homomorphism $\widetilde{\mathbb{T}}''_{Q_N} \to \cO / (\varpi^{M - C_4})$, and therefore lifts to a homomorphism $\widetilde{\mathbb{T}}_{Q_N} \to \cO / (\varpi^{M - C_4})$.

 By Proposition \ref{prop_multiplicity_1}, $(S(V_1(Q), \cO) / (\varpi^{M-C_4})) [ \widetilde{\mathfrak{p}}_\infty ]$ is a free $\cO / (\varpi^{M-C_4})$-module of rank 1. By construction, $\mathfrak{p}_\infty$ contains $\mathfrak{a}^W$; since $\widetilde{\mathfrak{p}}_\infty$ is prime, it must contain $(\mathfrak{a})$, so we find that 
\[ (S(V_1(Q_N), \cO) / (\varpi^{M-C_4})) [ \widetilde{\mathfrak{p}}_\infty ] = (S(V_0(Q_N), \cO) / (\varpi^{M-C_4})) [ \widetilde{\mathfrak{p}}_\infty ], \]
and so the right-hand module is also free of rank 1 over $\cO / (\varpi^{M-C_4})$, and we obtain a homomorphism $\widetilde{\mathbb{T}}^{univ}_{Q_N}(S(V_0(Q_N), \cO))_{\widetilde{\mathfrak{m}}_{Q_N}} \to \cO / (\varpi^{M-C_4})$. On the other hand, the inclusion
\[ S(V_0, \cO)^{ \mathbb{F}_2^q} \to S(V_0(Q_N), \cO),  \]
\[ (f_{\underline{x}})_{{\underline{x}} \in \mathbb{F}_2^q} \mapsto \sum_{\underline{x}} \left( \prod_{j=1 : x_j = 1}^q \left( \begin{array}{cc} \varpi_{v_j} & 0 \\ 0 & 1 \end{array}\right) \right) f_{\underline{x}} \]
is injective, and determines a surjective homomorphism
\[ \widetilde{\mathbb{T}}^{univ}_{Q_N}(S(V_0(Q_N),\cO)) \to \widetilde{\mathbb{T}}^{univ}_{Q_N}(S(V_0, \cO)^{\mathbb{F}_2^q}). \]
Writing $J_N$ for the kernel of this surjective homomorphism, we see that we have 
\[ \left(\prod_{v \in Q_N} (t_{v, 1}(\varpi_v) - q_v t_{v, 2}(\varpi_v))(t_{v, 2}(\varpi_v) - q_v t_{v, 1}(\varpi_v)) \right) J_N = 0 \]
in $\widetilde{\mathbb{T}}^{univ}_{Q_N}(S(V_0(Q_N), \cO))$ (because 
\[ \left(\prod_{v \in Q_N} (t_{v, 1}(\varpi_v) - q_v t_{v, 2}(\varpi_v))(t_{v, 2}(\varpi_v) - q_v t_{v, 1}(\varpi_v)) \right) \]
annihilates the Iwahori invariants of the Steinberg representation). On the other hand, the image of the displayed element in $\cO / (\varpi^{M-C_4})$ divides $\varpi^{q C_4}$. Therefore, the  homomorphism 
\[ \widetilde{\mathbb{T}}^{univ}_{Q_N}(S(V_0(Q_N))) \to \cO / (\varpi^{M - C_4}) \]
determines a homomorphism 
\[ \widetilde{\mathbb{T}}^{univ}_{Q_N}(S(V_0)^{\mathbb{F}_2^q}) \to \cO / (\varpi^{M-(q+1)C_4}), \]
and hence a homomorphism 
\[ \mathbb{T}^{univ, S}( S(V_0, \cO) ) \to \cO / (\varpi^{M-(q+1)C_4}) \]
(simply by restriction to the subalgebra $\mathbb{T}^{univ, S}( S(V_0, \cO)^{\mathbb{F}_2^q} ) \cong \mathbb{T}^{univ, S}( S(V_0, \cO) )$). Now letting $M \to \infty$ gives the result. 
\end{proof}

    \section{Deduction of Theorem \ref{introthm_potmod}}

    In this section, we deduce our main theorem from the introduction, Theorem \ref{introthm_potmod}. For the convenience of the reader, we restate it here:
    \begin{theorem}
    Let $F$ be a totally real number field, let $p$ be a prime, and let $\rho : G_F \to \GL_2(\overline{\mathbb{Q}}_p)$ be a continuous, irreducible representation satisfying the following conditions:
    \begin{enumerate}
        \item $\rho$ is unramified at all but finitely many places. 
        \item For each place $v | p $ of $F$, $\rho|_{G_{F_v}}$ is potentially crystalline and ordinary of Hodge--Tate weights $\{0, 1 \}$. 
        \item $\det \rho$ is totally odd. 
    \end{enumerate}
     Then $\rho$ is potentially modular, in the sense that there is a finite totally real extension $F' / F$, an isomorphism $\iota : \overline{\mathbb{Q}}_p \to \mathbb{C}$, and a cuspidal, regular algebraic automorphic representation $\pi$ of $\GL_2(\mathbb{A}_{F'})$ such that $\rho|_{G_{F'}} \cong r_{\pi, \iota}$. 
\end{theorem}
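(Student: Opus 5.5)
The plan is to reduce to the situation of \S4 by a sequence of harmless base changes and twists, and then combine Theorem \ref{thm_pot_mod} with Theorem \ref{thm_modularity_by_close_approximation}. The main subtlety is making the various auxiliary extensions compatible with the invariance of $C(\rho)$.

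\textbf{Step 1: normalising $\rho$.} By compactness, $\rho$ is conjugate to a representation valued in $\GL_2(\cO_0)$ for some coefficient field $E_0$, which we enlarge so that it contains all $p$-adic embeddings of $F$. Write $\det \rho = \psi \epsilon^{-1}$. Since $\rho|_{G_{F_v}}$ is ordinary of weights $\{0,1\}$ for $v|p$, the character $\psi$ has finite order. Since $\det\rho$ and $\epsilon^{-1}$ are both totally odd, $\psi$ is totally even, so the field cut out by $\psi$ is totally real.

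We then pass to a finite totally real (solvable) extension $F_1/F$ with the following properties.
\begin{itemize}
\item $\psi|_{G_{F_1}}$ is trivial.
\item $\rho|_{G_{F_{1,w}}}$ is crystalline ordinary at every $w|p$. This is possible because $\rho$ is potentially crystalline at $p$.
\item $\rho|_{G_{F_{1,w}}}$ is semistable at every finite $w$. This uses Grothendieck's monodromy theorem away from $p$.
\item $[F_1:\mathbb{Q}]$ is even.
\end{itemize}
Since potential modularity over $F_1$ implies potential modularity over $F$, we may replace $F$ by $F_1$.

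\textbf{Step 2: the non-Zariski-dense case.} Next we check the Zariski density hypothesis of \S4. Suppose the projective image of $\rho|_{G_{F(\zeta_{p^\infty})}}$ is not Zariski dense. Then, since $\rho$ is irreducible with distinct Hodge--Tate weights, $\rho$ becomes reducible over an open subgroup and is therefore induced from a character of a quadratic extension $K/F$. Because $\det\rho$ is totally odd, $K$ is CM, and $\rho$ is automorphic by automorphic induction of an algebraic Hecke character. We treat this case separately and assume the projective image is Zariski dense from now on.

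We also need $|\Sigma|$ to be even. This can be arranged by a further quadratic totally real base change in which the places of $\Sigma$ split into an even number of places, keeping $[F:\mathbb{Q}]$ even.

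\textbf{Step 3: producing a modular approximation.} Compute $C=C(\rho)$ and set $n = 80C+1$. Apply Theorem \ref{thm_pot_mod} with this $n$. This gives the following data.
\begin{itemize}
\item A totally real extension $F'/F$, in which $\Sigma\cup S_p$ splits and with $(\rho\times\epsilon)(G_{F'})=(\rho\times\epsilon)(G_F)$.
\item A modular polarised $M$-HBAV $A$ over $F'$.
\item A prime $\mathfrak{p}$ of $M$ with $M_\mathfrak{p}$ a compositum of quadratic extensions of $E_0$, so that $\cO_{M_\mathfrak{p}}\subset\cO_1$.
\end{itemize}
The representation $\rho' := (T_\mathfrak{p}A)^\vee$ is valued in $\GL_2(\cO_1)$ and has the following properties.
\begin{itemize}
\item It is modular, via an ordinary weight $0$ cuspidal $\pi$ with trivial central character. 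The Weil pairing forces $\det\rho'=\epsilon^{-1}$.
\item It is congruent to $\rho|_{G_{F'}}$ modulo $\varpi^{80C+1}$.
\item It is semistable everywhere.
\item $\mathrm{WD}(\rho'|_{G_{F'_w}})$ is ramified exactly above $\Sigma$, because $A$ has multiplicative reduction there and good reduction elsewhere.
\item At places above $p$, its unit root agrees residually with that of $\rho$.
\end{itemize}

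We must also preserve the parity conditions of hypothesis (5). Theorem \ref{thm_moret-bailly} lets us take $F'$ linearly disjoint from any given field, and all places of $\Sigma$ split. So after possibly one more quadratic totally real extension, linearly disjoint from the field cut out by $\rho\times\epsilon$ and split at $\Sigma\cup S_p$, both $[F':\mathbb{Q}]$ and $|\Sigma_{F'}|$ are even. The image is unchanged by this extension, and the approximation survives restriction.

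\textbf{Step 4: concluding.} By the lemma following Theorem \ref{thm_modularity_by_close_approximation}, $C(\rho|_{G_{F'}})=C(\rho)$, since the image is preserved and $\Sigma\cup S_p$ splits. Hence the congruence modulo $\varpi^{80C+1}$ is exactly what hypothesis (2) of that theorem requires. The remaining hypotheses of \S4 also persist over $F'$:
\begin{itemize}
\item Zariski density, because the image is preserved.
\item Semistability.
\item $\det=\epsilon^{-1}$.
\end{itemize}
Theorem \ref{thm_modularity_by_close_approximation} therefore shows that $\rho|_{G_{F'}}$ is modular, which gives the required potential modularity.

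\textbf{Main obstacle.} I expect the hardest part to be the bookkeeping in Steps 2 and 3. Each auxiliary extension must simultaneously keep $\Sigma\cup S_p$ split, preserve the image of $\rho\times\epsilon$, and respect the parity conditions. I would handle this with the linear-disjointness argument at the end of the proof of Theorem \ref{thm_pot_mod}, choosing every quadratic extension disjoint from the finite field $L$ built there from the simple Galois subextensions cut out by $\rho\times\epsilon$.
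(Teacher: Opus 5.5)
Your proposal is correct and follows essentially the same route as the paper. You dispose of the induced case by automorphic induction, reduce by a totally real base change to the standing hypotheses preceding Theorem \ref{thm_modularity_by_close_approximation}, and feed Theorem \ref{thm_pot_mod} with precision $80C(\rho)+1$ into Theorem \ref{thm_modularity_by_close_approximation}, using that $C(\rho)$ is unchanged over image-preserving, $\Sigma \cup S_p$-split extensions.

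You are more explicit than the paper about normalising $\det\rho = \epsilon^{-1}$. The paper's $F_0$ is required to preserve the image of $\rho \times \epsilon$, so it could not kill your $\psi$; your base change is not required to preserve the image, so it can.

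Two small corrections:
\begin{itemize}
\item In the induced case, $K$ is CM because the Hodge--Tate weights are distinct, not because $\det\rho$ is totally odd.
\item Your final extra quadratic extension is unnecessary, since a $\Sigma$-split $F'/F$ already preserves both parities.
\end{itemize}
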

\begin{proof}
    Consider the Zariski closure $G \leq \GL_2$ of $\rho(G_F)$. Then $G$ is a reductive group, whose image in $\PGL_2$ is either $\PGL_2$, or the normaliser of a rank 1 torus. In the latter case, $\rho$ is induced from a quadratic extension, so one can show that $\rho$ is modular by automorphic induction. Let us therefore suppose that the projective image of $G$ is $\PGL_2$. After replacing $\rho$ by a conjugate, we can assume that it takes values in $\cO_0$, the ring of integers in a coefficient field $E_0 / \mathbb{Q}_p$ that contains the image of every embedding $F \hookrightarrow \overline{\mathbb{Q}}_p$. Let $E_1 / E_0$ be the compositum of all quadratic extensions of $E_0$ in $\overline{\mathbb{Q}}_p$, and let $E / E_1$ be the compositum of all quadratic extensions of $E_1$. Let $\cO_1 \leq E_1$ and $\cO \leq E$ be the respective rings of integers. 
    
    We can choose a finite, totally real extension $F_0 / F$ with the following properties:
    \begin{itemize}
        \item $(\rho \times \epsilon)(G_F) = (\rho \times \epsilon)(G_{F_0})$.
        \item For each finite place $v$ of $F_0$, $\mathrm{WD}(\rho|_{G_{F_{0, v}}})$ is semistable. 
        \item Writing $\Sigma_0$ for the set of places of $F_0$ such that $\mathrm{WD}(\rho|_{G_{F_{0, v}}})$ is ramified, we have that $[F_0 : \mathbb{Q}]$ and $|\Sigma_0|$ are both even. 
    \end{itemize}
    This is possible by (the proof of) \cite[Lemma 5.1]{New23}. Fix an isomorphism $\iota : \overline{\mathbb{Q}}_p \to \mathbb{C}$, and let $S_0$ denote the union of $\Sigma_0$ with the set of $p$-adic places of $F_0$. By Theorem \ref{thm_modularity_by_close_approximation}, we can find a constant $C \geq 1$ with the following property:
    \begin{itemize}
        \item Let $F_1 / F_0$ be an $S_0$-split totally real extension such that $(\rho \times \epsilon)(G_{F_1}) = (\rho \times \epsilon)(G_{F_0})$, and suppose there exists a continuous representation $\rho' : G_{F_1} \to \GL_2(\cO_1)$ with the following properties:
        \begin{itemize}
            \item $\rho' \otimes_{\cO_1} \overline{\mathbb{Q}}_p$ is modular, associated to a cuspidal, $\iota$-ordinary regular algebraic automorphic representation $\pi$ of $\GL_2(\mathbb{A}_{F_1})$ of weight 0 and trivial central character.
            \item $\pi$ is semistable, and ramified precisely at those places $v$ of $F_1$ at which $\mathrm{WD}(\rho|_{G_{F_{1, v}}})$ is ramified.
            \item For each $v | p$ of $F_1$, the unit eigenvalues of $\Frob_v$ on $\mathrm{WD}(\rho|_{G_{F_{1, v}}})$ and $\mathrm{WD}(\rho'|_{G_{F_{1, v}}})$ have the same image in $k$.
        \end{itemize}
    \end{itemize}
    Then, if $\rho' \text{ mod } \varpi^C \cong \rho|_{G_{F_1}} \text{ mod }\varpi^C$, then $\rho|_{G_{F_1}}$ is modular. 
    
    To complete the proof, we therefore just need to exhibit a pair $(F_1, \rho')$ with the given list of properties (as we can take $F' = F_1$). The existence of a pair with the required properties is now precisely the content of Theorem \ref{thm_pot_mod}, noting that the extension $M_{\mathfrak{p}}$ constructed there may be embedded in $E_1$. 
\end{proof}

\bibliographystyle{alpha}
\bibliography{FM}

\end{document}